\definecolor{red}{rgb}{0.7,0.15,0.15}
\definecolor{green}{rgb}{0,0.5,0}
\definecolor{blue}{rgb}{0,0,0.7}
\makeatletter \@addtoreset{equation}{section}
\newtheorem{theorem}{Theorem}[section]
\newtheorem{assumption}[theorem]{Assumption}
\newtheorem{corollary}[theorem]{Corollary}
\newtheorem{lemma}[theorem]{Lemma}
\newtheorem{proposition}[theorem]{Proposition}
\newtheorem{definition}[theorem]{Definition}
\newtheorem{remark}[theorem]{Remark}
\newcommand{\diff}{\mathrm{d}}
\newcommand\bx{\mathbf{x}}
\def\balpha{\bm{\alpha}}
\newcommand\cA{\mathcal A}
\newcommand\cC{\mathcal C}
\newcommand\cE{\mathcal E}
\newcommand\cF{\mathcal F}
\newcommand\cG{\mathcal G}
\newcommand\cL{\mathcal L}
\newcommand\cO{\mathcal O}
\newcommand\cP{\mathcal P}
\newcommand\cW{\mathcal W}
\newcommand\EE{\mathbb E}
\newcommand\FF{\mathbb F}
\newcommand\PP{\mathbb P}
\newcommand\NN{\mathbb N}
\newcommand\RR{\mathbb R}
\def\SS{\mathbb S}
\newcommand\XX{\mathbb X}
\newcommand\ZZ{\mathbb Z}
\def \A{\mathbb{A}}
\def \E{\mathbb{E}}
\def \F{\mathbb{F}}
\def \G{\mathbb{G}}
\def \H{\mathbb{H}}
\def \L{\mathbb{L}}
\def \N{\mathbb{N}}
\def \P{\mathbb{P}}
\def \Q{\mathbb{Q}}
\def \R{\mathbb{R}}
\def \S{\mathbb{S}}
\def \X{\mathbb{X}}
\def \Z{\mathbb{Z}}
\def\Ac{\mathcal{A}}
\def\Bc{\mathcal{B}}
\def\Cc{\mathcal{C}}
\def\Fc{\mathcal{F}}
\def\Gc{\mathcal{G}}
\def\Hc{\mathcal{H}}
\def\Lc{\mathcal{L}}
\def\Mc{\mathcal{M}}
\def\Oc{\mathcal{O}}
\def\Pc{\mathcal{P}}
\def\Tc{\mathcal{T}}
\def\Wc{\mathcal{W}}
\def\Yc{\mathcal{Y}}
\def\Zc{\mathcal{Z}}
\def \tX{\widetilde{X}}
\def \tY{\widetilde{Y}}
\def \tZ{\widetilde{Z}}
\def \barX{\overline{X}}
\def \barY{\overline{Y}}
\def \barZ{\overline{Z}}
\newcommand{\x}{\mathbf{x}}
\newcommand{\xdim}{m}
\newcommand{\bmdim}{d}
\newcommand{\np}{N}
\newcommand{\e}{\mathrm{e}}
\newcommand{\smalltext}[1]{\text{\fontsize{4}{4}\selectfont$#1$}}
\newcommand{\tinytext}[1]{\text{\fontsize{3}{3}\selectfont$#1$}}
\def\eps{\varepsilon}
\def\d{\mathrm{d}}
\DeclareMathOperator*{\argmax}{arg\,max}
\DeclareMathOperator*{\esssup}{ess\,sup}
\begin{document}
\title{Non-asymptotic convergence rates for mean-field games: weak formulation and McKean--Vlasov BSDEs\footnote{Dylan Possama\"i gratefully acknowledges support from ANR project PACMAN ANR-16-CE05-0027. Ludovic Tangpi gratefully acknowledges support from the NSF grant DMS-2005832 and the NSF CAREER award DMS-2143861. Both authors thank Daniel Lacker and Mathieu Lauri\`ere for fruitful discussions.}}
\author{Dylan Possama\"i\footnote{ETH Z\"urich, Mathematics department, Switzerland, dylan.possamai@math.ethz.ch} $\qquad \qquad$  Ludovic Tangpi\footnote{Princeton University, ORFE, ludovic.tangpi@princeton.edu}}

\date{\today}

\maketitle


\abstract{
	This work is mainly concerned with the so-called limit theory for mean-field games. Adopting the weak formulation paradigm put forward by \citeauthor*{carmona2015probabilistic} \cite{carmona2015probabilistic}, we consider a fully non-Markovian setting allowing for drift control and interactions through the joint distribution of players' states and controls. We provide first a characterisation of mean-field equilibria as arising from solutions to a novel kind of McKean--Vlasov backward stochastic differential equations, for which we provide a well-posedness theory. We incidentally obtain there unusual existence and uniqueness results for mean-field equilibria, which do not require short-time horizon, separability assumptions on the coefficients, nor Lasry and Lions's monotonicity conditions, but rather smallness---or alternatively regularity---conditions on the terminal reward and a dissipativity condition on the drift. We then take advantage of this characterisation to provide non-asymptotic rates of convergence for the value functions and the Nash-equilibria of the $N$-player version to their mean-field counterparts, for general open-loop equilibria. An appropriate reformulation of our approach also allows us to treat closed-loop equilibria, and to obtain convergence results for the master equation associated to the problem.
}
\setlength{\parindent}{0pt}

\tableofcontents


\allowdisplaybreaks

\section{Introduction}\label{sec:intro}

This paper is concerned with the limit theory for so-called mean-field games with interactions through the controls, also sometimes refereed to as extended mean-field games in the literature. The classical theory of mean-field games dates back to the early 2000s, when they were independently introduced by \citeauthor*{lasry2006jeux} \cite{lasry2006jeux,lasry2006jeux2,lasry2007mean} and \citeauthor*{huang2003individual} \cite{huang2003individual,huang2006large,huang2007invariance,huang2007large,huang2007nash}, as a tractable alternative to studying symmetric Nash equilibria in non--zero-sum stochastic differential games involving a large number of players. The crux of their approach was to realise that for such $N$-player games where the state variables controlled by homogeneous players depended on their own state, and on the other players' states only through the latter's empirical distribution, a version of the game with infinitely many players would not only be more tractable theoretically, but would in turn provide `good' approximations for the original $N$-player game. We cannot provide an in-depth bibliography for mean-field games here, and instead urge our readers to go through the illuminating monographs by \citeauthor*{carmona2018probabilisticI} \cite{carmona2018probabilisticI,carmona2018probabilisticII} for additional background and references.

\medskip
Quantifying properly and rigorously what was meant by such a `good' approximation has been one of the most challenging problems in the early days of the theory. Proving on the one hand that the equilibria stemming from the mean-field game---the so-called mean-field equilibria---were actually $\eps$-Nash equilibria\footnote{Roughly speaking, a player deviating from an $\eps$-Nash equilibrium can at most increase their criterion by $\eps$.} for the $N$-player game was already achieved in the seminal papers mentioned above (see more precisely \cite{huang2006large}, as well as the more recent contributions by \citeauthor*{lacker2016general} \cite{lacker2016general} for general games of control, \citeauthor*{carmona2017mean} \cite{carmona2017mean} for games of timing, or \citeauthor*{cecchin2020probabilistic} \cite{cecchin2020probabilistic} for games with finitely many states). However, the converse direction, at least in a relatively general form, remained open for a while. There, the question becomes to understand in which sense the mean-field game and its equilibria arise as limits, in an appropriate sense, of the $N$-player games, as $N$ goes to infinity. Early results in that direction were obtained by \citeauthor*{lasry2006jeux} \cite{lasry2006jeux,lasry2007mean}, \citeauthor*{feleqi2013derivation} \cite{feleqi2013derivation}, \citeauthor*{gomes2013continuous} \cite{gomes2013continuous}, and \citeauthor*{bardi2014linear} \cite{bardi2014linear}, albeit by imposing relatively strong restrictions on the controls allowed for the players. The first comprehensive results for general open-loop controls were then obtained by \citeauthor*{fischer2017connection} \cite{fischer2017connection}, and especially \citeauthor*{lacker2016general} \cite{lacker2016general} who showed, in a nutshell, that all accumulation points of $N$-player's Nash equilibria were so-called weak mean-field equilibria, and conversely that any such weak mean-field equilibrium could be obtained as a limit of $\eps$-Nash equilibria. 

\medskip
The same question when considering closed-loop controls instead of open-loop ones turned out to be much more challenging. The first breakthrough came from \citeauthor*{cardaliaguet2019master} \cite{cardaliaguet2019master}, who showed that one could use smooth solutions to the so-called \emph{master equation}---a partial differential equation on the Wasserstein space characterising the value function of the mean-field game---in order to prove convergence in this case. Their approach was subsequently extended by \citeauthor*{cardaliaguet2017convergence} \cite{cardaliaguet2017convergence} for problems with local coupling, and by \citeauthor*{delarue2020master} \cite{delarue2019master,delarue2020master}, who managed to derive not only a central limit theorem, but also large deviation principles, as well as non-asymptotic bounds on various distances between a Nash equilibrium and its limit. A more general result with a probabilistic flavour and allowing for non-unique mean-field equilibria in the analysis was then obtained by \citeauthor*{lacker2020convergence} \cite{lacker2020convergence}, who related limits of Nash equilibria to what he coined \emph{weak semi-Markov mean-field equilibria}. We emphasise that such a limit theory is not always available for variants of the problem at hand. Hence, \citeauthor*{campi2018n} \cite{campi2018n} gave a counter-example in degenerate game with absorption to the fact that mean-field equilibria provided $\eps$-Nash equilibria. Similarly, if one is interested in knowing whether mean-field equilibria arise as limits of Nash equilibria (and not just of $\eps$-Nash equilibria), \citeauthor*{nutz2020convergence} \cite{nutz2020convergence} showed in an optimal stopping game that this was not true in general (see also \citeauthor*{cecchin2019convergence} \cite{cecchin2019convergence} and \citeauthor*{delarue2020selection} \cite{delarue2020selection} for related results).

\medskip
Almost all the aforementioned references consider only what we already referred to as classical mean-field games. Mean-field games with interaction through the controls are the ones for which the dynamics of the states of each player not only depend on the distribution of other players' states, but also on the distribution of other players' controls. Such games were introduced by \citeauthor*{gomes2014existence} \cite{gomes2014existence} and \citeauthor*{gomes2016extended} \cite{gomes2016extended} (see also \citeauthor*{graber2016linear} \cite{graber2016linear}, \citeauthor*{elie2019tale} \cite{elie2019tale}, or \citeauthor*{alasseur2020extended} \cite{alasseur2020extended} for results in specific models). 
The first associated general study is due to \citeauthor*{carmona2015probabilistic} \cite{carmona2015probabilistic}, see also \citeauthor*{bertucci2019some} \cite{bertucci2019some}, \citeauthor*{kobeissi2022classical} \cite{kobeissi2022classical} and \citeauthor*{djete2023mean} \cite{djete2023mean}, the latter being the first general treatment in the literature of a limit theory for extended mean-field games with common noise.

\medskip
A specific feature of all the previously mentioned results, with the notable exception of \cite{delarue2020master}, is that they all provide convergence or compactness results, but do not quantify any non-asymptotic error estimates between Nash and mean-field equilibria. This gap in the literature motivated a recent take on the problem by \citeauthor*{lauriere2022convergence} \cite{lauriere2022convergence}. They considered Markovian symmetric stochastic differential games where players' states were controlled only through their drift, but were allowed to depend on the joint (empirical) distribution of the other players' states and controls. They then built up a three-step approach allowing them to obtain explicit convergence rates in $\L^2$-norm as well as concentration inequalities between Nash and mean-field equilibria. Roughly speaking, their approach proceeds as follows
\begin{itemize}
\item[$(i)$] first use Pontryagin's maximum principle to characterise Nash equilibria in the $N$-player game by a fully coupled system of forward--backward SDE (FBSDE for short);
\item[$(ii)$] second use again the maximum principle to characterise mean-field equilibria through FBSDEs of McKean--Vlasov type;
\item[$(iii)$] argue using techniques from \emph{backward propagation of chaos}, developed by \citeauthor*{lauriere2022backward} \cite{lauriere2022backward}, that as $N$ goes to $\infty$, the FBSDE derived in $(i)$ converges appropriately to the McKean--Vlasov FBSDE in $(ii)$.
\end{itemize}
Using the stochastic maximum principle in mean-field game theory is the heart of the probabilistic approach developed by \citeauthor*{carmona2013probabilistic} \cite{carmona2013mean,carmona2013probabilistic,carmona2018probabilisticI,carmona2018probabilisticII}, and is the one typically followed in the literature not using analytical tools, such as the master equation mentioned above. Readers familiar with stochastic control theory will however recall that there is an alternative approach for these problems, namely Bellman's optimality principle, also often referred to as the dynamic programming principle (DPP for short), see \citeauthor*{yong1999stochastic} \cite{yong1999stochastic} for a classical take on these two complementary approaches. Unlike Pontryagin's maximum principle which aims at characterising optimal controls (or in our case equilibria), the DPP characterises value functions directly, and allows to characterise optimal controls only incidentally. Though the range of problems in which the maximum principle can be applied is typically larger, since it allows to tackle time-inconsistent optimisation problems for which the DPP is not satisfied, in settings where the DPP also holds, one can generally use it under much weaker assumptions. It is therefore somewhat surprising that the literature on mean-field games relied only very rarely on such a DPP approach. As far as we know, the major exception is the paper by \citeauthor*{carmona2015probabilistic} \cite{carmona2015probabilistic}, which developed a weak formulation approach for extended mean-field games with drift control by linking them to backward SDEs (BSDEs for short), as well as the recent extension to volatility control by \citeauthor*{barrasso2022controlled} \cite{barrasso2022controlled}; see also \citeauthor*{elie2019tale} \cite{elie2019tale}, \citeauthor*{elie2021mean} \cite{elie2021mean}, and \citeauthor*{carmona2016finite} \cite{carmona2016finite} for similar takes on mean-field games within the context of contract theory.

\medskip
Our goal in this paper is to address the problem of proving convergence of value functions and Nash equilibria for $N$-player stochastic differential games to their mean-field game counterparts, and to obtain quantitative rates of convergence in a general non-Markovian setting. The problem and the idea to tackle it is in spirit close to  \cite{lauriere2022convergence}. Indeed, our approach relies on the following steps
\begin{itemize}
\item[$(i)$] first use the DPP to characterise value functions in the $N$-player game by a multi-dimensional system of BSDEs, and Nash equilibria as `fixed-points' of the corresponding vector-valued Hamiltonian;
\item[$(ii)$] second use again the DPP to characterise the value function of the mean-field game by \emph{a new type} of McKean--Vlasov BSDE, and mean-field equilibria as maximisers of the corresponding Hamiltonian;
\item[$(iii)$] use general \emph{backward propagation of chaos} arguments to prove that, on a suitable probability space, as $N$ goes to $\infty$, the BSDE derived in $(i)$ converges appropriately to the McKean--Vlasov BSDE in $(ii)$.
\end{itemize}

Despite the seemingly similar approach, the techniques we use are fundamentally different in nature, since we favour the DPP approach, and we are working with the weak formulation of mean-field games. As such, if the result in the first step is part of the folklore on stochastic differential games\footnote{\label{foot:intro}Proofs of related results appear notably in \citeauthor*{hamadene1997bsdes} \cite{hamadene1997bsdes}, \citeauthor*{hamadene1998backward} \cite{hamadene1998backward}, \citeauthor*{el2003bsdes} \cite{el2003bsdes}, \citeauthor*{lepeltier2009nash} \cite{lepeltier2009nash}, \citeauthor*{hamadene2021risk} \cite{hamadene2015existence,hamadene2021risk}, \citeauthor*{frei2011financial} \cite{frei2011financial}, \citeauthor*{espinosa2015optimal} \cite{espinosa2015optimal}, \citeauthor*{elie2019contracting} \cite{elie2019contracting}, \citeauthor*{baldacci2021optimal} \cite{baldacci2021optimal} or \citeauthor*{jusselin2021optimal} \cite{jusselin2021optimal}. }, our characterisation in the second step is, as far as we know, completely new. 
It introduces a new class of BSDEs where both the driving Brownian motion and the underlying probability measure are to be found as part of the solution, and depend on it in a non-linear way, in the sense that the Radon--Nikod\'ym density of that measure depends on the solution itself. We offer a well-posedness result for these new equations.
The proof significantly departs from classical approaches in the corresponding literature. We believe this equation to be interesting in and of itself.

\medskip
As such, solvability of this new class of BSDEs provides us with an alternative approach to study existence and uniqueness of general mean-field games with interactions through the controls.
In many ways, our existence and uniqueness result weakens the conditions in the extant literature. 
More precisely, besides imposing relatively standard Lipschitz-continuity assumptions on the data of the problem---including an important dissipativity condition on the drift---and the maximisers of the players' (reduced) Hamiltonian, the somewhat `restrictive' conditions are put on the terminal reward. In fact, we either assume it to be small or to be sufficiently smooth. Nonetheless, these conditions are immediately satisfied in most cases, for instance for problems without a terminal reward or with a reward of quadratic type, making the dissipativity assumption the only really strong assumption in our setting.
However, unlike similar results in \cite{carmona2015probabilistic} or \cite{carmona2018probabilisticI,carmona2018probabilisticII} for mean-field games (with interactions through the controls), we do not need to assume restrictive structural or separability conditions on the running reward and hence on the Hamiltonian.
In addition, our uniqueness result does not rely on the celebrated \emph{Lasry--Lions monotonicity} conditions or the newer \emph{displacement monotonicity} condition of \citeauthor*{gangbo2022mean} \cite{gangbo2022mean} and \citeauthor*{jackson2023quantitative} \cite{jackson2023quantitative}.
This aspect of our approach seems notable. Let us also mention the recent work by \citeauthor*{djete2023mean} \cite{djete2023mean}, which, unlike ours and the aforementioned ones allows players to control the volatility of their state processes, proves existence of $\eps$-strong mean-field equilibria, as well as what he coins measure-valued mean-field equilibria, in a very general setting with interactions through the controls, but which also requires stringent separability conditions.

\medskip
Let us now discuss our results on the convergence to the mean-field game limit, which is the main contribution of the paper.
Our results underline the remarkable effect played by the terminal reward of the game.
In fact, for games without terminal reward, we prove a general convergence result of the value function of the $N$-player game to that of the mean-field game with an explicit convergence rate, under similar assumptions to the ones which allow us to prove well-posedness for our generalised McKean--Vlasov BSDEs. In particular, when the game has a non-trivial terminal reward, we impose dissipativity conditions on the drift, and sufficient smoothness of the terminal reward.
Overall, our results in terms of convergence of Nash equilibria compare to those in \cite{cardaliaguet2019master} and \cite{lauriere2022convergence} as follows
\begin{enumerate}
\item[$(i)$] we can work with general non-Markovian dynamics: this is the first such result in the literature, since both the maximum principle approach or the analytical approach through the master equation are inherently limited to the Markovian case;

\item[$(ii)$] unlike \cite{cardaliaguet2019master,lauriere2022convergence} which assume a constant volatility for the state variables of the players, our main result allows to have general non-Markovian, uncontrolled, volatilities;

\item[$(iii)$] our approach is purely probabilistic and does not require existence of the master equation of the mean-field game or a bound on its second derivative as in \cite{cardaliaguet2019master}. Moreover, we derive convergence of the value function of each player (not of an average) and convergence of the sequence of Nash equilibria (\emph{i.e.} of the controls);

\item[$(iv)$] thanks to our approach using weak formulation for optimal control problems, and even if we are initially only considering open-loop equilibria, under modest additional assumptions we can, as in \cite{carmona2015probabilistic}, obtain results on closed-loop controls as well. This makes our take on the problem slightly more flexible than the aforementioned references on the limit theory for closed-loop controls.
\end{enumerate}

Of course, as in standard control theory, the DPP approach is not a replacement for the maximum principle approach: they both have their own advantages and drawbacks. For instance, the approach of \cite{lauriere2022convergence} covers also the limit theory for the optimal control of McKean--Vlasov equations, while ours does not readily extend to that setting. Our contribution is to show how one can leverage the DPP approach to get quantitative estimates for the limit theory for mean-field games at a level of generality inaccessible with existing alternative techniques. As also illustrated by \citeauthor*{elie2021mean} \cite{elie2021mean} and \citeauthor*{barrasso2022controlled} \cite{barrasso2022controlled}, where mean-field games with volatility control are related to second-order BSDEs of McKean--Vlasov type similar in spirit to our McKean--Vlasov BSDEs, our approach also has the potential to be extended to more general games, with both volatility control and common noise. The volatility control case being significantly harder to deal with using Pontryagin's maximum principle, and requiring typically strong structural assumptions, our approach could prove more successful there as well.
Further observe that we do not treat games with common noise here, and these interesting problems are left for future research.

\medskip
The paper is organised as follows: \Cref{sec:pres} introduces both the $N$-player game and the mean-field game, and presents our main results, namely for convergence of Nash equilibria in \Cref{thm:main.limit}, for existence and uniqueness of mean-field equilibria in \Cref{thm:main.existence.MFG}. \Cref{sec:examples} also provides two examples of application, and \Cref{sec:consequences} explores implications of our results for convergence of solutions to the master equation, and for closed-loop controls. \Cref{sec:limitth} is dedicated to the proof of our limit theorems, and also contains our BSDE characterisations for Nash and mean-field equilibria, see \Cref{thm:Bellman,prop:char.mfe} as well as the case study of a toy example where our method is put into action. 
The final section of the paper studies a new class of BSDEs used in the proof existence of mean-field equilibria.

\medskip
{\footnotesize
\textbf{Notations:} Let $\mathbb{N}^\star \coloneqq \mathbb{N}\setminus\{0\}$ and let $\mathbb{R}_+^\star $ be the set of real positive numbers. Fix an arbitrary Polish space $E$ endowed with a metric $d_E$. Throughout this paper, for every $p$-dimensional $E$-valued vector $e$ with $p\in \mathbb{N}^\star $, we denote by $e^{1},\ldots,e^{p}$ its coordinates, and for any $i\in\{1,\dots,p\}$, by $e^{-i}\in E^{p-1}$ the vector obtained by suppressing the $i$-th coordinate of $e$.
For $(\alpha,\beta) \in \R^p\times\R^p$, we also denote by $\alpha \cdot \beta$ the usual inner product, with associated norm $\|\cdot\|$, which we simplify to $|\cdot|$ when $p$ is equal to $1$. For any $(\ell,c)\in\mathbb N^\star \times\mathbb N^\star $, $E^{\ell\times c}$ will denote the space of $\ell\times c$ matrices with $E$-valued entries. Elements of the matrix $M\in E^{\ell\times c}$ will be denoted by $(M^{i,j})_{(i,j)\in\{1,\dots,\ell\}\times\{1,\dots,c\}}$, and the transpose of $M$ will be denoted by $M^\top$. We identify $E^{\ell\times 1}$ with $E^\ell$. The trace of a matrix $M\in E^{\ell\times \ell}$ will be denoted by $\mathrm{Tr}[M]$. 
For any $x\in E^{\ell\times c}$ and $y\in E^\ell$, we also define, for any $i\in\{1,\dots,c\}$, $y\otimes_ix\in E^{\ell\times (c+1)}$ as the matrix whose column $j\in\{1,\dots,i-1\}$ is equal to the $j$-th column of $x$, whose column $j\in\{i+1,\dots,c+1\}$ is equal to the $(j-1)$-th column of $x$, and whose $i$-th column is $y$. We also abuse notations and extend these notations to $E^{\ell\times c}$-valued processes. It will often happen that we consider elements of with an upper index $N$, say $M^N\in E^{\ell\times c}$ or $x^N\in E^c$ for some $(\ell,c)\in\N^\star\times\N^\star$. In those cases, we write for any $(i,j)\in\{1,\dots,\ell\}\times\{1,\dots,c\}$, $x^{i,N}$, $x^{-i,N}$, $M^{i,j,N}$, instead of $(x^N)^i$, $(x^N)^{-i}$, $(M^N)^{i,j}$.

\medskip
Given a positive integer $\ell$, and a vector $x \in E^\ell$, for notational simplicity, we will always denote by
\[
L^\ell(x)\coloneqq \frac1{\ell}\sum_{j=1}^{\ell}\delta_{x^j},
\] 
the empirical measure associated to $x$. For any $p>0$, we also denote by $\Pc(E)$ the set of probability measures on $E$ (endowed with its Borel $\sigma$-algebra) and by $\Pc_p(E)$ the subset of $\Pc(E)$ containing measures with finite $p$-th moment. Notice then that for any $x\in E^\ell$, we have $L^\ell(x)\in\Pc_p(E)$, for any $p>0$.

\medskip
Let $\Bc(E)$ be the Borel $\sigma$-algebra on $E$ (for the topology generated by the metric $d_E$ on $E)$. For any $p\geq 1$, for any two probability measures  $\mu$ and $\nu$ on $(E,\Bc(E))$ with finite $p$-moments, we denote by $\cW_{p}(\mu, \nu)$ the $p$-Wasserstein distance between $\mu$ and $\nu$, that is
\begin{equation*}
	\cW_{p}(\mu, \nu) \coloneqq  \bigg(\inf_{\pi\in\Gamma(\mu,\nu)}\int_{E\times E}d(x,y)^p\pi(\mathrm{d}x,\mathrm{d}y) \bigg)^{1/p},
\end{equation*}
where the infimum is taken over the set $\Gamma(\mu,\nu)$ of all couplings $\pi$ of $\mu$ and $\nu$, that is, probability measures on $\big(E^2,\Bc(E)^{\otimes 2}\big)$ with marginals $\mu$ and $\nu$ on the first and second factors respectively.

\medskip
We fix throughout the paper a time horizon $T>0$, and for any positive integer $k$, we let $\Cc_k$ be the space of continuous functions from $[0,T]$ to $\R^k$. Besides, for any $(x,y)\in\Cc_k\times\Cc_k$, we write 
\[
\|x-y\|_\infty\coloneqq \sup_{t\in[0,T]}\|x(t)-y(t)\|.\]
When $k=1$, we simplify the notation to $\Cc\coloneqq \Cc_1$. We will also use the notation $\|f\|_\infty$ to denote the (smallest) upper bound of any bounded function $f$ defined on appropriate spaces.

}

\section{Stochastic games in weak formulation: setting and main results}\label{sec:pres}

\subsection{Probabilistic setting}\label{sec:setting}
Let us describe the stochastic differential game we are interested in.
We fix three positive integers $\np$, $\xdim$ and $\bmdim$, which represent respectively the number of players, the dimension of the state process of each player, and the dimension of the Brownian motions driving these state processes. 
We fix a probability space $(\Omega,\Fc,\P)$ carrying a sequence of independent, $\R^d$-valued Brownian motions $(W^i)_{i\in\N^\star}$, and for any $i\in\N^\star$, we denote by $\F^i\coloneqq (\Fc^i_t)_{t\in[0,T]}$ the $\P$-completed natural filtration of $W^i$. Expectations (resp. conditional expectations) under $\P$ will always be denoted using the symbol $\E$, and we will precise the measure whenever expectations (resp. conditional expectations) are taken under a measure different from $\P$.

\medskip
Throughout this work we fix a Borel-measurable map $\sigma:[0,T]\times\Cc_m\longrightarrow \RR^{\xdim\times \bmdim}.$ Our main condition on $\sigma$ is the following, which is assumed to hold throughout the paper.
\begin{assumption}\label{assump:sigma}
	Fix some $\R^m$-valued sequence $(X_0^i)_{i\in\N^\star}$. 
	The function $\sigma$ is uniformly bounded in its first variable, of linear growth in the second one, and for any $i\in\N^\star$, there exists a unique strong solution $X^i$ on $(\Omega,\Fc,\P)$ of the {\rm SDE}
	\begin{equation}\label{eq:start def X}
		X^i_t=X_0^i +\int_0^t\sigma_s(X^i_{\cdot\wedge s})\mathrm{d}W^i_s,\; t\in[0,T],\; \P\text{\rm--a.s.}
	\end{equation}
\end{assumption}
\begin{remark}
	It is well-known that the existence and uniqueness of a strong solution for the {\rm SDE} appearing in {\rm \Cref{assump:sigma}} is guaranteed as soon as $\sigma$ is, for instance, uniformly Lipschitz-continuous with linear growth with respect to its second variable $($for the supremum metric on $\Cc_m)$.
	It is also obvious that the processes $(X^i)_{i\in\N^\star}$ are $\P$-independent.
\end{remark}
We will simplify notations when $i=1$, and define $X\coloneqq X^1$, $W\coloneqq W^1$, as well as $\Fc_t\coloneqq \Fc_t^{1}$, $t\in[0,T]$. It will also be useful to define the $N$-fold product filtration $\F_N\coloneqq (\Fc_{N,t})_{t\in[0,T]}$, where for any $t\in[0,T]$, $\Fc_{N,t}$ is the $\P$-completion of $\bigotimes_{i=1}^N\Fc^i_t$. We also denote by $\X^N$ the $\R^{m\times N}$-valued process $(X^1,\dots, X^N)$. Recall that $\FF_\np$ and all the $(\F^i)_{i\in\N^\smalltext{\star}}$ satisfy, under $\P$, the usual conditions. Thus, we know that $\FF_\np$ (resp. all the filtrations $(\F^i)_{i\in\N^\star}$) satisfy the martingale representation property, meaning that any $(\FF_\np,\PP)$-martingale (resp. for any $i\in\N^\star$, any $(\F^i,\P)$-martingale) can be represented as a stochastic integral with respect to $(W^i)_{i\in\{1,\dots,N\}}$ (resp. for any $i\in\N^\star$, with respect to $W^i$).

\medskip
A number of spaces will play an important role in the paper. Let therefore $(E,\|\cdot\|_E)$ be a generic finite-dimensional normed vector space, $\G$ a generic filtration, and $\Gc$ a generic sub--$\sigma$-algebra of $\Fc$ in our probability space $(\Omega,\Fc,\P)$. We also let $\Tc(\G)$ be the set of $\G$--stopping times taking values in $[0,T]$.

\medskip
$\bullet$ For any $p\in[1,\infty]$, $\L^p(E,\Gc)$ is the space of $E$-valued, $\Gc$-measurable random variables $R$ such that 
\[
	\|R\|_{\L^\smalltext{p}(E,\Gc)}\coloneqq \Big(\E\big[\|R\|_E^p\big]\Big)^{\frac1p}<\infty,\; \text{when}\; p<\infty,\; \|R\|_{\L^\smalltext{\infty}(E,\Gc)}\coloneqq \inf\big\{\ell\geq0:\|R\|_E\leq \ell,\; \P\text{\rm --a.s.}\big\}<\infty.
\]

\medskip
$\bullet$ For any $p\in[1,\infty)$, $\H^p(E,\G)$ is the space of $E$-valued, $\G$-predictable processes $Z$ such that 
\begin{equation*}
	\|Z\|_{\H^\smalltext{p}(E,\G)}^p\coloneqq \EE\bigg[\bigg(\int_0^T\|Z_s\|_E^2\mathrm{d}s\bigg)^{p/2}\bigg]<\infty.
\end{equation*}

$\bullet$ $\H^2_{\rm BMO}(E,\G)$ is the space of $E$-valued, $\G$-predictable processes $Z$ such that
\begin{equation*}
	\|Z\|_{\H^2_{\text{\fontsize{4}{4}\selectfont$\mathrm{BMO}$}}(E,\G)}^2 \coloneqq  \sup_{\tau\in\Tc(\G)}\bigg\|\EE\bigg[\int_\tau^T\|Z_s\|_E^2\d s\bigg| \cG_{\tau} \bigg]\bigg\|_{\L^\smalltext{\infty}(E,\Gc_{\text{\fontsize{4}{4}\selectfont$T$}})} <\infty.
\end{equation*}

$\bullet$ For any $p\in[1,\infty]$, $\S^p(E,\G)$ is the space of $E$-valued, continuous, $\G$-adapted processes $Y$ such that 
\begin{equation*}
	\|Y\|_{\S^\smalltext{p}(E,\G)}\coloneqq \bigg(\EE\bigg[\sup_{t\in[0,T]}\|Y_t\|_E^p\bigg]\bigg)^{\frac1p}<\infty,\;  \text{when}\; p<\infty,\; \|Y\|_{\S^\smalltext{\infty}(E,\G)}\coloneqq \bigg\|\sup_{t\in[0,T]}\|Y_t\|_E\bigg\|_{\L^\smalltext{\infty}(E,\Gc_{\text{\fontsize{4}{4}\selectfont$T$}})}<\infty.
\end{equation*}

We will sometimes need to consider those spaces but associated to another probability measure $\Q$ on $(\Omega,\Fc)$. In this case, we will adjust our notations to $\L^p(E,\Gc,\Q)$, $\H^p(E,\G,\Q)$, $\H^2_{\rm BMO}(E,\G,\Q)$ and $\S^p(E,\G,\Q)$.

\subsection{The finite-player game}\label{sec:finitegame}

Let $A$ be a non-empty compact\footnote{We assume here compactness of $A$ mostly for simplicity and to alleviate integrability considerations which, we believe, would distract the reader from our main arguments. An extension to the unbounded case following similar lines is possible, but would require more sophisticated estimates (or need stronger growth conditions on $f$).} Polish space endowed with a metric $\bar d$, whose Borel $\sigma$-algebra is denoted by $\Bc(A)$.
Consider the drift function
\[
	b:[0,T]\times\Cc_m\times \cP_2(\Cc_m\times A)\times A\longrightarrow \RR^\bmdim.
\]
The function $b$ is assumed to be Borel-measurable with respect to all its arguments.
We define for any $\alpha\coloneqq (\alpha^i)_{i\in\{1,\dots, N\}}$, where each $\alpha^i$ is an $A$-valued $\F_N$-predictable process, the probability measure $\P^{\alpha,N}$ on $(\Omega,\cF)$, whose density with respect to $\PP$ is given by
\[
	\frac{\mathrm{d}\PP^{\alpha,N}}{\mathrm{d}\PP}\coloneqq \cE\bigg(\int_0^\cdot\sum_{i=1}^\np b_s\big(X^i_{\cdot\wedge s},L^N(\XX^N_{\cdot\wedge s},\alpha_s),\alpha^i_s\big)\cdot \mathrm{d}W^i_s\bigg)_T,
\]
where $\cE(M)_\cdot \coloneqq  \exp(M_\cdot - 1/2[ M]_\cdot )$ denotes the stochastic exponential of the continuous local martingale $M$.
The class of \emph{admissible} strategy profiles $\cA^N$ in the $N$-player game is the set of $\F_N$-predictable, $A^N$-valued processes $\alpha = (\alpha^i)_{i \in \{1,\dots,N\}}$.
We denote by $\cA$ the set of strategies $\alpha^i$ such that $(\alpha^j)_{j\in \{1,\dots,N\}} \in \cA^N$ for some $(\alpha^j)_{j\in \{1,\dots,N\}\setminus\{i\}}$.
Observe that we have for $i\in\{1,\dots,\np\}$
\begin{equation}
\label{eq:state.W.alpha}
	X^i_t=X^i_0+\int_0^t\sigma_s(X^i_{\cdot\wedge s}) b_s\big(X^i_{\cdot\wedge s},L^N(\XX_{\cdot\wedge s},\alpha_s),\alpha^i_s\big)\mathrm{d}s+\int_0^t\sigma_s(X^i_{\cdot\wedge s})\mathrm{d}W_s^{\alpha,i},\; t\in[0,T],
\end{equation}
where by Girsanov's theorem, for any $i\in\N^\star$
\[
	W^{\alpha,i}_\cdot\coloneqq \begin{cases}
\displaystyle W^i_\cdot-\int_0^\cdot b_s\big(X^i_{\cdot\wedge s},L^N(\XX^N_{\cdot\wedge s},\alpha_s),\alpha^i_s\big)\mathrm{d}s,\; \text{if}\; i\in\{1,\dots,N\},\\[0.5em]
\displaystyle W^i_\cdot,\; \text{if}\; i\geq N+1,
\end{cases}
\]
is an $\R^d$-valued, $\PP^{\alpha,N}$--Brownian motion.

\medskip
For any $\alpha\in\Ac^N$, and any $i\in\{1,\dots,N\}$, we formulate the control problem of player $i$, given that other players have played $\alpha^{-i}$ as 
\[
    	V^{i,N}(\alpha^{-i}) \coloneqq \sup_{\alpha\in\cA}\EE^{\PP^{\smalltext{\alpha}\smalltext{\otimes}_{\tinytext{i}}\smalltext{\alpha}^{\tinytext{-}\tinytext{i}}\smalltext{,}\smalltext{N}}}\bigg[\int_0^Tf_s\big(X^i_{\cdot\wedge s},L^N(\XX^N_{\cdot\wedge s},\alpha\otimes_i\alpha^{-i}_s),\alpha_s\big)\mathrm{d}s+g\big(X^i,L^N(\XX^N)\big)\bigg],
\]
for a given terminal reward $g:\Cc_m\times \cP_2(\Cc_m)\longrightarrow \RR$, and running reward $f:[0,T]\times\Cc_m\times \cP_2(\Cc_m\times A)\times A\longrightarrow \RR$, which are both assumed to be Borel-measurable.
This is a general stochastic differential game in the weak formulation.
As usual, we are interested in Nash equilibria defined as follows.
\begin{definition}
\label{def:Nash}
	A Nash equilibrium is a family of $\np$ control processes $\hat\alpha^N\in\cA^\np$ such that for any $i\in\{1,\dots,\np\}$, we have
	\[
		V^{i,N}\big(\hat\alpha^{N,-i}\big)=\EE^{\PP^{\smalltext{\hat\alpha}^{\tinytext{N}}\smalltext{,}\smalltext{N}}}\bigg[\int_0^Tf_s\big(X^i_{\cdot\wedge s},L^N(\XX^N_{\cdot\wedge s},\hat\alpha^N_s),\hat\alpha^{N,i}_s\big)\mathrm{d}s+g\big(X^i,L^N(\XX^N)\big)\bigg].
	\]
	We denote by $\mathcal{NA}$ the set of all Nash equilibria. 
\end{definition}
We can now state our main assumptions on $f$, $g$ and $b$.
\begin{assumption}
\label{ass.Lambda.charac}

$(i)$ The function $f$ satisfies that there is a constant $\ell_f>0$ and some $a_o\in A$ such that for all $(t,\x,a,\xi)\in [0,T]\times \cC_m\times A\times \cP_2(\cC_m\times A)$
	\begin{align*}
		|f_t(\bx,\xi,a)| &\le \ell_f\bigg(1 +\bar d^2(a,a_o) + \|\x\|_\infty^2  + \int_{\cC_\smalltext{m}\times A}\big(\|x\|^2_\infty+\bar d^2(e,a_o)\big)\xi(\diff x,\diff e) \bigg);
	\end{align*}
	
	\medskip
	
	$(ii)$ the map $g$ satisfies that for a constant $\ell_g>0$ and for all $(\x,\xi)\in\Cc_m\times\Pc_2(\Cc_m)$
	\[
	|g(\x,\xi)|\leq \ell_g\bigg(1+\|\x\|_\infty^2 +\int_{\cC_\smalltext{m}}\|x\|^2_\infty\xi(\diff x) \bigg);
	\]
	
	$(iii)$ the map $b$ is bounded.
	
\end{assumption}
We start by providing a characterisation of Nash equilibria that will serve us when studying the convergence problem.
To this end, we need to introduce some preliminary notations.
Consider the function $h$ given by
\begin{equation}
\label{eq:h.def}
	h_t(\x,\xi,z,a) \coloneqq  b_t(\x, \xi, a)\cdot z + f_t(\x, \xi, a), \; (t, \x, \xi, z, a) \in [0,T]\times \Cc_m\times \cP_2(\Cc_m\times A)\times \RR^d\times A.
\end{equation}
Elements of the argmax of $h$ will play a fundamental role in what follows, which is why we introduce as well the set
\[
\A(t,\x,\xi,z)\coloneqq \argmax_{a\in A}\big\{h_t(\x,\xi,z,a)\big\},\; (t, \x, \xi, z) \in [0,T]\times \Cc_m\times \cP_2(\Cc_m\times A)\times \RR^d,
\]
and we let $\A$ be the set of all Borel-measurable maps $\hat a$ from $[0,T]\times \Cc_m\times \cP_2(\Cc_m\times A)\times \RR^d$ to $A$ such that for any $(t, \x, \xi, z) \in [0,T]\times \Cc_m\times \cP_2(\Cc_m\times A)\times \RR^d$
\[
\hat a(t, \x, \xi, z)\in \A(t, \x, \xi, z).
\]
We also define the map $H:[0,T]\times \Cc_m\times \cP_2(\Cc_m\times A)\times \RR^d\longrightarrow \R$
\[
H_t(\x,\xi,z)\coloneqq \sup_{a\in A}\big\{h_t(\x,\xi,z,a)\big\},\; (t, \x, \xi, z) \in [0,T]\times \Cc_m\times \cP_2(\Cc_m\times A)\times \RR^d.
\]
The function $H$ is naturally related to the Hamiltonian of the control problem faced by a representative player in the mean-field game we will describe in \Cref{sec:MFG}, and the elements of the argmax of $h$ will be related to mean-field equilibria. However the corresponding notions in the $N$-player game need to be adjusted, which is what we now do. Let us thus introduce the map $H^N:[0,T]\times\Cc_{m}^N\times (\R^{d})^{N\times N}\times A\times A^{N}\longrightarrow \R^N$, which is such that for any $(t,\x,z,a,e)\in[0,T]\times \Cc_{m\times N}\times (\R^{d})^{N\times N}\times A\times A^{N}$
\[
H^N_t(\x,z,a,e)\coloneqq \begin{pmatrix}
h_t\big(\x^1,L^N(\x,a\otimes_1e^{-1}),z^{1,1},a^1\big)+\sum_{j\in\{1,\dots,N\}\setminus\{ 1\}}b_t\big(\x^j,L^N(\x,a \otimes_1e^{-1}),e^j\big)\cdot z^{1,j}\\\
\vdots\\
h_t\big(\x^N,L^N(\x,a\otimes_Ne^{-N}),z^{N,N},a^N\big)+\sum_{j\in\{1,\dots,N\}\setminus\{ N\}}b_t\big(\x^j,L^N(\x,a \otimes_Ne^{-N}),e^j\big)\cdot z^{N,j}
\end{pmatrix}.
\]

We can now formalise what we mean by fixed-points for $H^N$.
\begin{definition}
\label{def:fixed-point-Hc}
For any $(t,\x,z)\in[0,T]\times\Cc_m\times (\R^{d})^{N\times N}$, a fixed-point of $H^N$ is a vector $a \in A^N$ such that for any $i\in\{1,\dots,N\}$
\[
a^i\in \argmax_{a^{\text{\fontsize{4}{4}\selectfont$\prime$}}\in A}\Bigg\{h_t\big(\x^i,L^N(\x,a^\prime \otimes_ia^{-i}),z^{i,i},a^\prime\big)+\sum_{j\in\{1,\dots,N\}\setminus\{ i\}}b_t\big(\x^j,L^N(\x,a^\prime \otimes_ia^{-i}),a^j\big)\cdot z^{i,j}\Bigg\}.
\]
For every $(t,\x,z)\in[0,T]\times\Cc_{m\times N}\times (\R^{d})^{N\times N}$, we denote by $\Oc^N(t,\x,z)$ the corresponding set, and we note that a fixed-point of $H^N$ is a map $\hat a:[0,T]\times\Cc_{m\times N}\times (\R^{d })^{N\times N}\longrightarrow A^N$ such that for any $(t,\x,z)\in[0,T]\times\Cc_{m\times N}\times (\R^{d})^{N\times N}$, $\hat a(t,\x,z)\in \Oc^N(t,\x,z)$. The corresponding set of all fixed-points of $H^N$ is denoted by $\Oc^N$.
\end{definition}

We are now ready for the following result which provides a necessary condition on Nash equilibria for the $N$-player game.
\begin{proposition}
\label{thm:Bellman}
	If $\hat\alpha^N\in\Ac^N$ is a Nash equilibrium for the $\np$-player game, then for each $i \in \{1, \dots, \np\}$
	\begin{equation}\label{eq:defhatalpha}
	\hat\alpha^{i,N}_t  \in \cO^\np\big(t, \X^N_{\cdot\wedge t}, Z^N_t\big),\; \d t\otimes\d \P\text{\rm--a.e.},
	\end{equation}
	where $(Y^N,Z^N)\coloneqq (Y^{i,\np}, Z^{i,j,\np})_{(i,j)\in\{1,\dots,\np\}^\smalltext{2} }$ is a solution to the coupled system of {\rm BSDEs}
	\begin{align}
	 \label{eq:bsde.main}
	 	Y^{i,\np}_t  &=  g\big(X^i, L^\np(\XX^N)\big) + \int_t^Tf_s\big(X^i_{\cdot\wedge s}, L^{\np}(\XX^N_{\cdot\wedge s}, \hat{\alpha}^N_s), \hat\alpha_s^{i,\np} \big)\mathrm{d}s -\sum_{j=1}^\np\int_t^T Z^{i,j,\np}_s\cdot \mathrm{d}W^{\hat\alpha^{\text{\fontsize{4}{4}\selectfont $N$}},j}_s, \; t\in[0,T],\; \PP^{\hat\alpha^{\text{\fontsize{4}{4}\selectfont $N$}},N}\text{\rm--a.s.}
	 \end{align}
	Finally, the value function of the $i$-th player satisfies $V^{i,N}\big(\hat{\alpha}^{-i,N}\big) = Y^{i,\np}_0$. 
\end{proposition}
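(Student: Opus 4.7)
My plan is to tackle the three claims in order: construct $(Y^{i,N},Z^{i,\cdot,N})$ as the BSDE solution and identify $Y^{i,N}_0$ with player $i$'s value, then derive the fixed-point condition on $\hat{\alpha}^N$ through a Girsanov-type variational argument applied to unilateral deviations of player $i$.

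\medskip
Since the driver of \eqref{eq:bsde.main} is affine and depends neither on $Y$ nor on $Z$, I simply set
\[
    Y^{i,N}_t := \E^{\P^{\hat{\alpha}^N,N}}\bigg[g\big(X^i,L^N(\X^N)\big) + \int_t^T f_s\big(X^i_{\cdot\wedge s},L^N(\X^N_{\cdot\wedge s},\hat{\alpha}^N_s),\hat{\alpha}^{i,N}_s\big)\mathrm{d}s \,\bigg|\, \Fc_{N,t}\bigg],\quad t\in[0,T].
\]
Boundedness of $b$ makes $\mathrm{d}\P^{\hat{\alpha}^N,N}/\mathrm{d}\P$ a true martingale density with finite moments of all orders, which combined with the quadratic growth of $f$ and $g$ in \Cref{ass.Lambda.charac} renders $Y^{i,N}$ square-integrable under $\P^{\hat{\alpha}^N,N}$. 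Since the Brownian martingale representation property of $\F_N$ transfers across the equivalent measures $\P$ and $\P^{\hat{\alpha}^N,N}$, the $(\F_N,\P^{\hat{\alpha}^N,N})$-martingale part of $Y^{i,N}$ can be represented as a stochastic integral against the $\P^{\hat{\alpha}^N,N}$-Brownian motions $(W^{\hat{\alpha}^N,j})_{j=1}^N$, producing $\F_N$-predictable processes $Z^{i,j,N}$ for which $(Y^{i,N},Z^{i,\cdot,N})$ satisfies \eqref{eq:bsde.main}. Taking $t=0$ in the defining conditional expectation and invoking the Nash property gives directly $V^{i,N}(\hat{\alpha}^{-i,N}) = Y^{i,N}_0$.

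\medskip
Fix $i\in\{1,\dots,N\}$ and an arbitrary admissible deviation $\beta\in\Ac$; write $\P^\beta := \P^{\beta\otimes_i\hat{\alpha}^{-i,N},N}$ and $b^{\alpha,j}_s := b_s(X^j_{\cdot\wedge s},L^N(\X^N_{\cdot\wedge s},\alpha_s),\alpha^j_s)$. The identity
\[
\mathrm{d}W^{\hat{\alpha}^N,j}_s = \mathrm{d}W^{\beta\otimes_i\hat{\alpha}^{-i,N},j}_s + \big(b^{\beta\otimes_i\hat{\alpha}^{-i,N},j}_s - b^{\hat{\alpha}^N,j}_s\big)\mathrm{d}s,\quad j\in\{1,\dots,N\},
\]
plugged into \eqref{eq:bsde.main} at $t=0$ and followed by $\P^\beta$-expectation---legitimate because boundedness of $b$ together with the integrability of $Z^{i,j,N}$ makes the stochastic integrals against $W^{\beta\otimes_i\hat{\alpha}^{-i,N},j}$ genuine $\P^\beta$-martingales---leads after a short rearrangement to
\[
    Y^{i,N}_0 - J^i(\beta) = \E^{\P^\beta}\bigg[\int_0^T \big(\Psi^i_s(\hat{\alpha}^{i,N}_s) - \Psi^i_s(\beta_s)\big)\mathrm{d}s\bigg],
\]
where $J^i(\beta) := \E^{\P^\beta}[g(X^i,L^N(\X^N)) + \int_0^T f_s(X^i_{\cdot\wedge s},L^N(\X^N_{\cdot\wedge s},\beta\otimes_i\hat{\alpha}^{-i,N}_s),\beta_s)\mathrm{d}s]$ is the payoff of the deviation and
\[
    \Psi^i_s(a) := h_s\big(X^i_{\cdot\wedge s},L^N(\X^N_{\cdot\wedge s},a\otimes_i\hat{\alpha}^{-i,N}_s),Z^{i,i,N}_s,a\big) + \sum_{j\neq i} b_s\big(X^j_{\cdot\wedge s},L^N(\X^N_{\cdot\wedge s},a\otimes_i\hat{\alpha}^{-i,N}_s),\hat{\alpha}^{j,N}_s\big)\cdot Z^{i,j,N}_s
\]
is exactly the $i$-th component of $H^N$ when its `candidate' argument is $a$ and its `opponents' argument is $\hat{\alpha}^{-i,N}_s$. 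The algebraic identities backing the collapse to this form are $\Psi^i_s(\hat{\alpha}^{i,N}_s) = f^{\hat{\alpha}^N}_s + \sum_j b^{\hat{\alpha}^N,j}_s\cdot Z^{i,j,N}_s$ and $\Psi^i_s(\beta_s) = f^{\beta\otimes_i\hat{\alpha}^{-i,N}}_s + \sum_j b^{\beta\otimes_i\hat{\alpha}^{-i,N},j}_s\cdot Z^{i,j,N}_s$, both immediate from $(a\otimes_i\hat{\alpha}^{-i,N})^i_s = a$ and $(a\otimes_i\hat{\alpha}^{-i,N})^j_s = \hat{\alpha}^{j,N}_s$ for $j\neq i$.

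\medskip
The Nash property $J^i(\beta) \leq V^{i,N}(\hat{\alpha}^{-i,N}) = Y^{i,N}_0$ now forces the right-hand side of the displayed identity to be non-negative for every $\beta\in\Ac$. A standard measurable selection argument---compactness of the Polish space $A$ and continuity of $\Psi^i_s(\cdot)$ in $a$ being enough to guarantee a Borel-measurable selector of the argmax---produces an $\F_N$-predictable process $\beta^\star$ with $\beta^\star_s \in \argmax_{a\in A}\Psi^i_s(a)$ $\mathrm{d}s\otimes\mathrm{d}\P$-almost everywhere. Testing the identity against $\beta^\star$ and using $\P^{\beta^\star}\sim\P$ forces $\Psi^i_s(\hat{\alpha}^{i,N}_s) = \max_{a\in A}\Psi^i_s(a)$ $\mathrm{d}s\otimes\mathrm{d}\P$-a.e., which is precisely \eqref{eq:defhatalpha}; doing this for each $i$ completes the proof. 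The main delicacy throughout is the Girsanov bookkeeping behind the identity for $Y^{i,N}_0 - J^i(\beta)$: one must track carefully how the empirical-measure arguments of $b$ and $f$ respond to a \emph{unilateral} deviation of player $i$, as this is precisely where the interaction through the controls enters and where the sums $\sum_j b^{\hat{\alpha}^N,j}\cdot Z^{i,j,N}$ and $\sum_j b^{\beta\otimes_i\hat{\alpha}^{-i,N},j}\cdot Z^{i,j,N}$ telescope into the neat $\Psi^i$.
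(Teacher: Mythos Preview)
Your proof is correct and takes a genuinely different route from the paper. The paper proceeds via the martingale optimality principle: it introduces the dynamic value process $V^{i,N}_t(\hat\alpha^{-i,N})$, invokes the dynamic programming principle to show that $M^{\alpha,i}_t := V^{i,N}_t + \int_0^t f_s\,\mathrm{d}s$ is an $(\F_N,\P^{\alpha\otimes_i\hat\alpha^{-i,N},N})$--super-martingale for every $\alpha$ and a true martingale at $\alpha=\hat\alpha^{i,N}$, then reads off the fixed-point inequality directly from the sign of the drift of $M^{\alpha,i}$ after representing the martingale part. By contrast, you define $Y^{i,N}$ as the reward-to-go \emph{at the equilibrium control} (not as a dynamic value), obtain the BSDE by bare martingale representation, and deduce the fixed-point condition by a single Girsanov comparison: for any deviation $\beta$ you compute $Y^{i,N}_0 - J^i(\beta)$ as the $\P^\beta$-expectation of $\int_0^T(\Psi^i_s(\hat\alpha^{i,N}_s)-\Psi^i_s(\beta_s))\mathrm{d}s$, test against a measurable selector of $\argmax_a\Psi^i_s(a)$, and conclude. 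Your approach is more elementary---it bypasses the DPP lemmas entirely---but it only identifies $Y^{i,N}_0$ with the value, whereas the paper's route also yields $Y^{i,N}_t = V^{i,N}_t(\hat\alpha^{-i,N})$ for every $t$. For the statement as written, your argument suffices; the one point you leave implicit is the $\P^\beta$-square-integrability of $Z^{i,j,N}$ needed for the stochastic integrals to be true $\P^\beta$-martingales, but this follows from the $\P^{\hat\alpha^N,N}$-integrability together with the fact that the density $\mathrm{d}\P^\beta/\mathrm{d}\P^{\hat\alpha^N,N}$ has all moments by boundedness of $b$.
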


\subsection{The mean-field game}\label{sec:MFG}
Let us now describe the mean-field game formally associated to the $\np$-player game introduced in \Cref{sec:finitegame}. 
We work on the space $(\Omega,\Fc,\P)$ defined in \Cref{sec:setting}. We let $\mathfrak P$ be the set of Borel-measurable maps $[0,T]\ni t\longmapsto \xi_t\in\Pc_2(\Cc_m\times A)$. 
For a given $\F$-predictable and $A$-valued process $\alpha$ and $\xi\coloneqq (\xi_t)_{t\in[0,T]}\in\mathfrak P$, we define the probability measure $\PP^{\alpha,\xi}$ on $(\Omega,\Fc)$ by
\begin{equation*}
	\frac{\mathrm{d}\PP^{\alpha,\xi}}{\mathrm{d}\PP} \coloneqq  \cE\bigg(\int_0^\cdot b_s(X_{\cdot\wedge s}, \xi_s, \alpha_s)\cdot \mathrm{d}W_s \bigg)_T.
\end{equation*}
We let $\mathfrak A$ be the set of $\F$-predictable, $A$-valued processes.
By Girsanov's theorem, the process $X$ satisfies
\begin{equation*}
	X_t=X_0+\int_0^t\sigma_s(X_{\cdot\wedge s}) b_s\big(X_{\cdot\wedge s},\xi_s,\alpha_s\big)\mathrm{d}s+\int_0^t\sigma_s(X_{\cdot\wedge s})\mathrm{d}W_s^{\alpha,\xi},\; t\in[0,T],\; \PP^{\alpha,\xi}\text{--a.s.} , 
\end{equation*}
where $W^{\alpha,\xi} \coloneqq  W- \int_0^\cdot b_s(X_{\cdot\wedge s}, \xi_s, \alpha_s)\mathrm{d}s$ is an $\R^d$-valued, $\PP^{\alpha,\xi}$--Brownian motion.
Given a measure flow $\xi\in\mathfrak P$, the infinitesimal agent faces the control problem of maximising the reward function
\begin{equation*}
	J^\xi(\alpha) \coloneqq  \EE^{\PP^{\smalltext{\alpha}\smalltext{,}\smalltext{\xi}}}\bigg[\int_0^Tf_s\big(X_{\cdot\wedge s},\xi_s,\alpha_s\big)\mathrm{d}s+g(X,\xi^1_T)\bigg],\; \alpha\in\mathfrak A,
\end{equation*}
where $\xi^1_T\in \cP_2(\Cc_m)$ is the first marginal of $\xi_T$. In other words, the value of the problem is, for given $\xi\in\mathfrak P$ 
\begin{equation*}
	V^\xi \coloneqq  \sup_{\alpha \in \mathfrak A}J^\xi(\alpha).
\end{equation*}
We can now give the definition of a mean-field equilibrium.
\begin{definition}
	A solution of the mean-field game, which we will refer to as a mean-field equilibrium, is defined as a control process $\hat\alpha \in \mathfrak A$ such that there is $\xi\in \mathfrak P$ satisfying $V^\xi = J^\xi(\hat\alpha)$ and
	\begin{equation}
	\label{eq:mfe.def}
		\PP^{\hat\alpha,\xi}\circ (X_{\cdot\wedge t},\hat\alpha_t)^{-1} = \xi_t,\; \text{\rm for Lebesgue--almost every $t\in[0,T]$}.
	\end{equation}
\end{definition}

We now proceed with another characterisation result.
In this case, we give both a sufficient and a necessary equilibrium condition.
Below and henceforth, we write\footnote{When a mean field equilibrium exists, there is $(\hat\alpha,\xi)$ satisfying \eqref{eq:mfe.def}.
The probability measure $\P^{\hat\alpha,\xi}$ is then (simply) denoted $\P^{\hat\alpha}$.}
\begin{equation*}
	\cL_{\hat\alpha}(\Gamma) \text{ for the law of the random variable } \Gamma \text{ under } \P^{\hat\alpha} \text{ and $\cL(\Gamma)$ the law of $\Gamma$ under $\P$.}
\end{equation*}

\begin{proposition}[Characterisation of mean-field equilibrium]
\label{prop:char.mfe}
	Let {\rm \Cref{ass.Lambda.charac}} be satisfied. 
An admissible control $\hat\alpha \in \mathfrak{A}$ is a mean-field equilibrium if and only if it satisfies $\hat\alpha_t \coloneqq  \hat a\big(t,X_{\cdot\wedge t}, \cL_{\hat\alpha}(X_{\cdot\wedge t},\hat\alpha_t),Z_t\big),\; \d t\otimes\d \P\text{\rm--a.e.}$ for some $\hat a\in\A$, where $(Y, Z)$ solves the generalised {\rm McKean--Vlasov BSDE}
	\begin{equation}
	\label{eq:bsde.char.mfg}
		\begin{cases}
			\displaystyle Y_t = g\big(X, \cL_{\hat\alpha}(X)\big) + \int_t^Tf_s\big(X_{\cdot\wedge s}, \cL_{\hat\alpha}(X_{\cdot\wedge s}, \hat\alpha_s),  \hat\alpha_s\big)\mathrm{d}s - \int_t^TZ_s\cdot \mathrm{d}W^{\hat\alpha}_s,\; t\in[0,T],\; \PP^{\hat\alpha}\text{\rm--a.s.},\\[0.8em]
			\displaystyle\hat\alpha_t = \hat a\big(t,X_{\cdot\wedge t}, \cL_{\hat\alpha}(X_{\cdot\wedge t},\hat\alpha_t),Z_t\big), \; \frac{\mathrm{d}\PP^{\hat\alpha}}{\mathrm{d}\PP} \coloneqq  \cE\bigg(\int_0^Tb_s\big(X_{\cdot\wedge s},\cL_{\hat\alpha}(X_{\cdot\wedge s}, \hat\alpha_s),\hat\alpha_s \big)\cdot \mathrm{d}W_s\bigg),
		\end{cases}
	\end{equation}
	where $\cL_{\hat\alpha}(X) \coloneqq  \PP^{\hat\alpha}\circ X^{-1}$ is the law of $X$ under the measure $\PP^{\hat\alpha}$, $W^{\hat\alpha}\coloneqq  W - \int_0^\cdot b_s\big(X_{\cdot\wedge s},\cL_{\hat\alpha}(X_{\cdot\wedge s}, \hat\alpha_s),\hat\alpha_s \big)\mathrm{d}s $ and
	\begin{equation}
	\label{eq:bsde.char.mfg.integrability}
		\E^{\P^{\hat\alpha}}\bigg[\sup_{t \in [0,T]}|Y_t|^2 + \int_0^T\|Z_t\|^2\mathrm{d}t \bigg]<\infty.
	\end{equation}
	Moreover, we have that $Y_0=V^{\Lc_{\smalltext{\hat\alpha}}(X,\hat\alpha)}$ is the associated value function.
\end{proposition}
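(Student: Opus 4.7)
The plan is to reduce the mean-field game to a \emph{standard} stochastic control problem in weak formulation by freezing the measure flow, and then invoke the classical BSDE characterisation of the value function of such control problems (in the spirit of El Karoui--Peng--Quenez and the weak formulation of \cite{carmona2015probabilistic}). The key observation is that a mean-field equilibrium $\hat\alpha$ \emph{by definition} pins down a measure flow $\xi_t := \cL_{\hat\alpha}(X_{\cdot\wedge t},\hat\alpha_t)$, relative to which $\hat\alpha$ must be optimal; once $\xi$ is fixed, the fixed-point aspect disappears and only a one-player weak-formulation control problem remains.

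For the \emph{necessity} direction, suppose $\hat\alpha$ is a mean-field equilibrium with consistent flow $\xi$. Consider the Hamiltonian $H_t(\x,\xi_t,z)=\sup_{a\in A} h_t(\x,\xi_t,z,a)$, which under \Cref{ass.Lambda.charac}(iii) is Lipschitz in $z$ with constant $\|b\|_\infty$, while the terminal data and the $(\x,\xi)$-dependence are of quadratic growth. Standard Lipschitz-in-$z$ BSDE theory with square-integrable data (under $\P$) then produces a unique pair $(\tilde Y,\tilde Z)$ solving the BSDE
\[
\tilde Y_t = g(X,\xi^1_T)+\int_t^T H_s(X_{\cdot\wedge s},\xi_s,\tilde Z_s)\d s-\int_t^T \tilde Z_s\cdot \d W_s,\quad t\in[0,T].
\]
A direct comparison argument shows that for any $\alpha\in\mathfrak A$, the BSDE driven by $h_s(X_{\cdot\wedge s},\xi_s,\tilde Z_s,\alpha_s)$ under $\P$ has initial value $J^\xi(\alpha)\le \tilde Y_0$, with equality when $\alpha_t$ lies in $\argmax_a h_t(X_{\cdot\wedge t},\xi_t,\tilde Z_t,a)$. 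Hence $V^\xi=\tilde Y_0$ and by optimality of $\hat\alpha$, we can select a Borel map $\hat a\in\mathcal A$ with $\hat\alpha_t=\hat a(t,X_{\cdot\wedge t},\xi_t,\tilde Z_t)$, $\d t\otimes\d\P$--a.e. Applying Girsanov with the bounded drift $b_s(X_{\cdot\wedge s},\xi_s,\hat\alpha_s)$, the BSDE above rewrites under $\P^{\hat\alpha}$ as \eqref{eq:bsde.char.mfg} with $(Y,Z)=(\tilde Y,\tilde Z)$; the integrability \eqref{eq:bsde.char.mfg.integrability} is preserved because $\d\P^{\hat\alpha}/\d\P$ and its reciprocal lie in every $L^p$ thanks to the boundedness of $b$.

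For the \emph{sufficiency} direction, given $(Y,Z,\hat\alpha)$ solving \eqref{eq:bsde.char.mfg}--\eqref{eq:bsde.char.mfg.integrability}, set $\xi_t:=\cL_{\hat\alpha}(X_{\cdot\wedge t},\hat\alpha_t)$; the consistency condition \eqref{eq:mfe.def} is automatic. Undoing the Girsanov transformation and using that $\hat\alpha_t$ realises the argmax of $a\mapsto h_t(X_{\cdot\wedge t},\xi_t,Z_t,a)$, we get that $(Y,Z)$ solves the $\P$-BSDE with driver $H_s(X_{\cdot\wedge s},\xi_s,Z_s)$ and terminal value $g(X,\xi^1_T)$. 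A comparison with the BSDE corresponding to the reward functional of an arbitrary competing control $\alpha\in\mathfrak A$—whose driver is pointwise dominated by $H$—yields $J^\xi(\alpha)\le Y_0=J^\xi(\hat\alpha)$, so $\hat\alpha$ is optimal for the problem with frozen measure flow $\xi$, hence a mean-field equilibrium, and $V^\xi=Y_0$.

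The main technical obstacle is to justify the BSDE comparison and the equality $V^\xi=\tilde Y_0$ in the weak formulation \emph{with quadratic growth in $\x$}: one has to show that $\tilde Y$ dominates every value $J^\xi(\alpha)$ despite the driver being only continuous (not differentiable) in $a$ and despite the quadratic growth of $f,g$ in the state $\x$. This is handled by a standard localisation/truncation argument combined with the uniform boundedness of $b$, which makes the Doléans-Dade exponential in the Girsanov transform satisfy the Novikov condition and in fact belong to every $L^p$, transferring the square-integrability of the data seamlessly between $\P$ and $\P^{\alpha,\xi}$, and allowing a clean use of dominated convergence when identifying $\tilde Y_0$ with $V^\xi$. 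The measurable selection $\hat a\in\mathcal A$ follows from a Filippov-type selection theorem since $A$ is compact and $h$ is continuous in $a$.
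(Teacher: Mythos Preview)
Your proof is correct, and for the sufficiency direction (BSDE $\Rightarrow$ mean-field equilibrium) it follows exactly the paper's route: write the equation under $\P$ via Girsanov, compare with the linear BSDE attached to an arbitrary $\alpha\in\mathfrak A$, and read off $J^\xi(\alpha)\le Y_0=J^\xi(\hat\alpha)$.

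For the necessity direction you take a genuinely different path. The paper argues through the dynamic programming principle: it defines the dynamic value $V_t^\xi$, invokes the martingale optimality principle to show that $M^{\alpha}_t:=V_t^\xi+\int_0^t f_s\,\d s$ is an $(\F,\P^{\alpha,\xi})$--super-martingale for every $\alpha$ and a true martingale for $\hat\alpha$, and then obtains $Z$ by martingale representation; the argmax property of $\hat\alpha$ drops out of the super-martingale inequality. You instead \emph{construct} the BSDE solution directly by solving the equation with driver $H$ (well-posed since $b$ bounded makes $H$ Lipschitz in $z$, and the quadratic-in-$\x$ data are in every $\L^p(\P)$), identify $\tilde Y_0=V^\xi$ by comparison plus a measurable selection of the pointwise maximiser, and then force $\hat\alpha_t\in\A(t,X_{\cdot\wedge t},\xi_t,\tilde Z_t)$ from $J^\xi(\hat\alpha)=\tilde Y_0$ via the linear-BSDE difference under $\P^{\hat\alpha,\xi}$. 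Your route is more self-contained (no DPP lemma needed) and immediately yields uniqueness of $(Y,Z)$ for the given $\xi$; the paper's route is closer to first principles and makes the identification $Y_t=V_t^\xi$ transparent for every $t$, not just at $t=0$. Two small points worth tightening: the transfer of square-integrability from $\P$ to $\P^{\hat\alpha}$ needs slightly more than the density being in every $\L^p$---you implicitly use that $(\tilde Y,\tilde Z)$ in fact lies in every $\S^p\times\H^p$ under $\P$, which holds because the data have polynomial growth in $X$ and $X$ has all moments; and the step ``by optimality of $\hat\alpha$ we can select $\hat a\in\A$ with $\hat\alpha_t=\hat a(\cdot)$'' only literally yields $\hat\alpha_t\in\A(t,X_{\cdot\wedge t},\xi_t,\tilde Z_t)$ a.e.---the representation as $\hat a(t,X_{\cdot\wedge t},\xi_t,\tilde Z_t)$ for some $\hat a\in\A$ then follows by patching a measurable selection off the support, exactly as the paper (implicitly) does.
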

\rm\Cref{prop:char.mfe} asserts that solving the mean-field game is equivalent to solving \rm\Cref{eq:bsde.char.mfg}.
We call it a generalised McKean--Vlasov equation because the drift depends on the law of the unknown $Z$, but in addition the driving Brownian motion and the underlying probability measure under which the law is given are unknown.
This equation seems to not have been investigated in the literature so far. Indeed, observe that it is related to, but is not the one studied by \citeauthor*{carmona2015probabilistic} \cite{carmona2015probabilistic}.
In that paper, the laws $\cL_{\hat\alpha}(X)$ and $\cL_{\hat\alpha}(X_{\cdot\wedge t},\hat\alpha_t)$ are replaced by arbitrary probability distributions and then fixed-points are constructed based on the solutions of the resulting BSDEs, while \Cref{eq:bsde.char.mfg} incorporates already the fixed-point itself. Although the main focus of this paper is not on well-posedness, we do devote \rm\Cref{sec:gen.MckV.BSDE} to the analysis of \rm\Cref{eq:bsde.char.mfg} under extra assumptions.

\subsection{Main results}

Let us now present the main contributions of this article. 
We will state general mean-field game limit results and give existence and uniqueness statement for mean-field games in the weak formulation.
\subsubsection{Convergence of Nash equilibria to mean-field equilibria}

The next result gives a quantitative estimate of the convergence of the Nash equilibria of the finite population game to a mean-field game equilibrium as the number of players grows to infinity.
We consider the following conditions.

\begin{assumption}
\label{ass.Lambda.conv}

$(i)$ {\rm\Cref{assump:sigma}} and {\rm\Cref{ass.Lambda.charac}} hold$;$

\medskip
	$(ii)$ for every $\hat a\in \Oc^N$, there exist some Borel-measurable maps $\Lambda:[0,T]\times\Cc_m\times \Pc_2(\Cc_m)\times \R^d\times \R \longrightarrow A$ and $\aleph^N\coloneqq (\aleph^{i,N})_{i\in\{1,\dots,N\}}:\Cc_m^N\times(\R^{d})^{N\times N}\longrightarrow \R^N$ satisfying for any $i\in\{1,\dots,N\}$
	\[
	\hat a^i(t,\x,z)=\Lambda_t\big(\x^i,L^N(\x),z^{i,i},\aleph^{i,N}_t(\x,z^{i,:}) \big),\; (t,\x,z)\in[0,T]\times\Cc_m^N\times(\R^{d})^{N\times N},
	\]	
	and such that, letting $\xi^1$ be the marginal on $\Cc_m$ of an arbitrary $\xi\in\Pc_2(\Cc_m\times A)$

	\[
	\Lambda_t(\x,\xi^1,z,0)\in\A(t,\x,\xi,z),\; \forall(t,\x,\xi,z)\in[0,T]\times\Cc_m\times\Pc_2(\Cc_m\times A)\times\R^d;
	\]

\smallskip
	$(iii)$ the function $\Lambda:[0,T]\times\Cc_m\times \Pc_2(\Cc_m)\times \R^d\times \R \longrightarrow A$ from $(ii)$ is additionally assumed to be Lipschitz-continuous with Lipschitz constant $\ell_\Lambda>0$, and the map $\aleph^N\coloneqq (\aleph^{i,N})_{i\in\{1,\dots,N\}}:\Cc_m^N\times\R^{d\times N}\longrightarrow \R^N$ satisfies that there is a sequence $(R_N)_{N\in\N^\smalltext{\star}}$ valued in $\R_+$, with $(NR_N^2)_{N\in\N^\smalltext{\star}}$ non-increasing,
	\[
	\lim_{N\to+\infty}NR_N^2=0,\; N^2R_N^2\underset{N\to+\infty}{=}\Oc(1),
	\] 
	and
	\begin{align}
	\label{eq:bound.aleph}
		\big|\aleph_t^{i,N}(\x,z)\big|\leq R_N\bigg(1+ \|\x^i\|_\infty+\sum_{j\in\{1,\dots,N\}}\|z^{i,j}\|\bigg),\; (t,\x,z)\in[0,T]\times\Cc_m^N\times(\R^{d})^{N\times N},\; i\in\{1,\dots,N\};
	\end{align}
	
 	$(iv)$ for any $(a,\xi)\in A\times \cP_2(\cC_m\times A)$, the maps $[0,T]\times\cC_m\ni (t,\bx)\longmapsto f_t(\bx,\xi,a)$ and $[0,T]\times\cC_m\ni (t,\bx)\longmapsto b_t(\bx,\xi,a)$ are $\F$-optional, the functions $b$, $f$, $g$ are Borel-measurable in all their arguments, they are Lipschitz-continuous uniformly in $t$, $b$ is dissipative, and $f$ is also locally Lipschitz-continuous in $a$.
 	That is, there are positive constants $\ell_b$, $\ell_f$, $\ell_g$, $K_b$ and a linearly growing function $\varphi:A^2\longrightarrow \R_+$ such that for any $(t,\bx,\bx^\prime,a, a^\prime,\xi, \xi^\prime,\mu,\mu^\prime)\in [0,T]\times\cC_m^2\times A^2\times \big(\cP_2(\cC_m\times A)\big)^2\times (\cP_2(\cC_m))^2$ 
	\begin{align}
	\label{ass.b.gen}
			\big|b_t(\bx,\xi, a) - b_t(\bx,\xi^\prime, a^\prime)\big|&\le \ell_b \big(  {\mathcal W}_2(\xi, \xi^\prime) + \bar d(a,a^\prime)\big),
	\end{align}
	
	\vspace{-1.8em}
	\begin{align*}
		(\bx - \bx^\prime)\cdot\big(\|b_t(\bx,\xi,a) - b_t(\bx^\prime,\xi,a)\big)\le -K_b\|\bx - \bx^\prime\|_\infty^2
	\end{align*}

\vspace{-1.8em}
	\begin{align*}
		\big|f_t(\bx,\xi, a) - f_t(\bx,\xi^\prime, a^\prime)\big|&\le \ell_f \big( \|\bx - \bx^\prime\|_\infty + {\mathcal W}_2(\xi, \xi^\prime) + \varphi(a, a^\prime)\bar d(a,a^\prime)\big),\; \big\| g(\bx, \mu) - g(\bx, \mu)\big\| \le \ell_g\big( \|\bx - \bx^\prime\| + \cW_2(\mu, \mu)\big);
	\end{align*}

	$(v)$ $\A$ is reduced to one element and the mean-field game admits a unique mean-field equilibrium $\hat\alpha\in\mathfrak A;$

	\medskip
	$(vi)$ for every $N\in \N^\star$, for every probability measure $\Pi$ on $(\Omega,\F)$ and every independent $(\F_N,\Pi)$--Brownian motions $(B^1,\dots, B^N)$, the following forward--backward {\rm SDE} admits at least one solution $(\overline X^{i,N}, \overline Y^{i,N}, \overline Z^{i,j,N})_{(i,j)\in\{1,\dots,N\}^\smalltext{2}} \in (\S^2(\R^m,\F_N))^N\times (\S^2(\R,\F_N))^N\times  (\H^2(\R^d,\F_N))^{N^2}$
	 	\begin{equation}\label{eq:FBSDE}
		\begin{cases}
			\displaystyle \overline X^{i,N}_t = \overline X^i_0 + \int_0^t b_s\big(\overline X^{i,N}_{\cdot\wedge s}, L^N\big(\overline\X^N_{\cdot\wedge s}, \overline \alpha^{N}_s\big), \overline \alpha^{i,N}_s\big)\diff s + \int_0^t\sigma_s\big(\overline X^{i,N}_{\cdot\wedge s}\big)\diff B^{i}_s,\; t\in[0,T],\; \Pi\text{\rm --a.s.},\\[0.8em]
		\displaystyle	\overline Y^{i,N}_t = g\big(\overline X^{i,N}, L^N(\overline \X^N)\big) + \int_t^T f_s\big (\overline X^{i,N}_{\cdot\wedge s}, L^N\big (\overline\X^N_{\cdot\wedge s}, \overline \alpha^{N}_s\big), \overline \alpha^{i,N}_s\big)\diff s - \int_t^T \sum_{j=1}^N\overline Z^{i,j,N}_s\cdot \diff B^{j}_s,\; t\in[0,T],\; \Pi\text{\rm --a.s.},\\[0.9em]
		\displaystyle	\overline \alpha^{i,N}_t\coloneqq  \Lambda_t\big(\overline X^{i,N}_{\cdot\wedge t}, L^N\big (\overline\X^{N}_{\cdot\wedge t}),\overline Z^{i,i,N}_t,0\big),\; \overline\alpha^N\coloneqq (\overline\alpha^{i,N})_{i\in \{1,\dots,N\}}.
		\end{cases}
	\end{equation}
\end{assumption}
Before stating the main convergence result, let us shortly elaborate on {\rm \Cref{ass.Lambda.conv}}.
\paragraph{Comments on the assumptions}
\label{rem:Lambda.depends.xi}
	{\rm\Cref{assump:sigma}} is well-known to be satisfied when $\sigma$ is Lipschitz-continuous, see \emph{e.g.} {\rm\citeauthor*{protter2005stochastic} \cite{protter2005stochastic}}, or under even weaker conditions when $\sigma$ is state dependent or $m=1$, see \emph{e.g.} {\rm\citeauthor*{krylov1980controlled} \cite{krylov1980controlled}} and the references therein.
	{\rm\Cref{ass.Lambda.charac}} corresponds to standard growth conditions assumed throughout the literature and allowing to make the control problems finite-valued.

	\medskip

	Regarding $(ii)$, first note that if we do not have interaction through the controls, then with the regularity assumptions made on $b$ and $f$, standard measurable selection arguments allow to construct a Borel-measurable function $\Lambda$ such that, putting $\hat a^i = \Lambda(t,\x^i, L^\np(\x), z^{i,i})$, we have $\hat a \in \Oc^\np$.
	In particular, $\Lambda$ depends neither on $\np$ nor on $\aleph^N$, which makes \eqref{eq:bound.aleph} in {\rm \Cref{ass.Lambda.conv}.$(ii)$--$(iii)$} trivially satisfied in this case, with $\aleph^{i,N} = 0$. 
	Moreover, well-known convexity properties of $h$ $($at least when $A$ is a finite-dimensional Euclidean space$)$ imply that $\Lambda$ is Lipschitz-continuous, see \emph{e.g.} {\rm \citeauthor*{carmona2021probabilistic} \cite{carmona2021probabilistic}}.
	Thus, the Lipschitz-continuity condition on $\Lambda$ in {\rm\Cref{ass.Lambda.conv}.$(iii)$} is always satisfied when we do not have interaction through the control, and under additional convexity assumptions.

\medskip
	The case of interaction through the control is a little more subtle.
In this case, by measurable selection arguments, we can still construct a Borel-measurable function $\Lambda$ and a function $\aleph^N$ such that, defining $\hat a^i\coloneqq \Lambda(t, \x^i, L^\np(\x), z, \aleph^{i,\np}(a^{-i}))$, $i\in\{1,\dots,N\}$, we have $\hat a \in \Oc^\np$ where $\aleph^{i,\np}$ should be understood as $`$the part of the control of player $i$ due to other players' actions{\rm'}.
Intuitively, one expects that $\aleph^N(a^{-i})$ tends to zero as $\np$ goes to infinity, and that $\Lambda(t, \x^i, \xi, z,0)$ maximises $h_t(\x, \xi, z, a)$ over $a$.
This is exactly what is encoded in {\rm\Cref{ass.Lambda.conv}.$(iii)$}. A similar property is fully worked out in {\rm\citeauthor*{lauriere2022convergence} \cite[Lemma 22]{lauriere2022convergence}} under the assumption that the function $f$ can be decomposed as $f(t, x, \xi,a) = f_1(t, x, \xi^1,a) + f_2(t, x, \xi)$ where $\xi^1$ is the first marginal of $\xi$, with a similar decomposition for $b$. 
Indeed, assume that $A\subset \R^k$ for some $k\in\N^\star$, that $f$ and $b$ can be decomposed as
\[
\varphi_t(\x,\xi,a)=\varphi^1_t(\x,\xi^1,a)+\varphi^2_t(\x,\xi),\; (t,\x,\xi,a)\in[0,T]\times\Cc_m\times\Pc_2(\Cc_m\times A)\times A,\; \varphi\in\{b,f\},
\]
where for $\xi\in\Pc_2(\Cc_m\times A)$, $\xi^1$ is the marginal of $\xi$ on $\Cc_m$, and that first-order conditions characterise the argmax in the definition of fixed-points of $\Hc^N$ and maximisers of $h$. Then assuming enough regularity on $f$ and $b$ $($refer to {\rm\citeauthor*{cardaliaguet2019master} \cite{cardaliaguet2019master}} for details on differentiability on the space of measures$)$ any $\hat a\in \Oc^N$ will satisfy for any $i\in\{1,\dots,N\}$ and any $(t,\x,z)\in[0,T]\times\Cc_{m\times N}\times (\R^{d\times N})^N$
\begin{align*}
	0&=\partial_af^1_t\big(\x^i,L^N(\x),\hat a^i(t,\x,z))+\partial_ab^1_t\big(\x^i,L^N(\x),\hat a^i(t,\x,z))\cdot z^{i,i} +\frac1N\partial_\xi f^2_t\big(\x^i,L^N(\x,\hat a(t,\x,z))\big)\\
	&\quad+\frac1N\partial_\xi b^2_t\big(\x^i,L^N(\x,\hat a(t,\x,z))\big)\cdot z^{i,i} +\frac1N\sum_{j\in\{1,\dots,N\}\setminus\{i\}}\partial_\xi b^2_t\big(\x^j,L^N(\x,\hat a(t,\x,z))\big)\cdot z^{i,j},
\end{align*}
from which we deduce here that 
\begin{align*}
	\aleph^{i,N}_t\big(\x,(z^{i,j})_{j\in\{1,\dots,N\}}\big) &=\frac1N\partial_\xi f^2_t\big(\x^i,L^N(\x,\hat a(t,\x,z))\big)+\frac1N\partial_\xi b^2_t\big(\x^i,L^N(\x,\hat a(t,\x,z))\big)\cdot z^{i,i}\\
	&\quad+\frac1N\sum_{j\in\{1,\dots,N\}\setminus\{i\}}\partial_\xi b^2_t\big(\x^j,L^N(\x,\hat a(t,\x,z))\big)\cdot z^{i,j},
\end{align*}
and then that when $\aleph^N$ is $0$, the first-order conditions become exactly the same as the ones characterising elements of $\A$. Furthermore, whenever the derivatives of $b$ and $f$ appearing above are bounded, the rest of {\rm\Cref{ass.Lambda.conv}}.$(iii)$ holds with $R_N=1/N$.

\medskip
	
	{\rm\Cref{ass.Lambda.conv}}.$(iv)$ corresponds to standard regularity conditions. Observe that due to the weak formulation of the game, these regularity conditions are much weaker than the ones in {\rm\citeauthor*{lauriere2022convergence} \cite{lauriere2022convergence}}, and are in line with the conditions of {\rm \citeauthor*{carmona2015probabilistic} \cite{carmona2015probabilistic}} on existence.

	\medskip
	{The dissipativity assumption 	{ \rm\Cref{ass.Lambda.conv}}.$(iv)$ is key to our analysis.
	In can be seen as restrictive given that the drift cannot be independent of the state, but on the other hand we are able to obtain quantitative convergence without assuming monotonicity of the coefficients.
	The dissipativity condition 	{ \rm\Cref{ass.Lambda.conv}}.$(iv)$ is classical in the study of contractivity for SDEs and convergence to the stationary distribution. 
	}

\medskip
	{ \rm\Cref{ass.Lambda.conv}}.$(v)$ on uniqueness of the mean-field game equilibrium is needed to guarantee existence of a unique solution of the characterising generalised McKean--Vlasov equation \eqref{eq:bsde.char.mfg}. This will be needed to prove a propagation of chaos result required in our argument.
	In {\rm\Cref{thm:main.existence.MFG}} below we present a case in which uniqueness can be proved under suitable regularity and boundedness conditions.
	We also refer to {\rm\citeauthor*{carmona2015probabilistic} \cite{carmona2015probabilistic}} for other assumptions guaranteeing existence and uniqueness.
	Propagation of chaos will also require {\rm\Cref{ass.Lambda.conv}}.$(vi)$.
	Notice that {\rm\Cref{eq:FBSDE}} is a $($classical$)$ forward--backward {\rm SDE} with Lipschitz-continuous coefficients.
	Such equations have been extensively studied in the literature, and various set of assumptions are known to guarantee their well-posedness in arbitrary large time.
	{For instance, when our coefficients are state-dependent, if we additionally assume $f$ and $g$ to be bounded in $x$, then {\rm \citeauthor*{delarue2002existence} \cite[Theorem 2.6]{delarue2002existence}} guarantees {\rm\Cref{ass.Lambda.conv}}.$(vi)$.
	Path-dependent {\rm FBSDEs }are studied in the recent paper by {\rm\citeauthor*{hu2023path} \cite{hu2023path}}.} 
	For other references on the existence of {\rm FBSDEs}, we further refer the reader for instance to {\rm \citeauthor*{ma1994solving} \cite{ma1994solving}}, {\rm\citeauthor*{ma2015well} \cite{ma2015well}}, {\rm\citeauthor*{peng1999fully} \cite{peng1999fully}}, {\rm\citeauthor*{yong2010forward} \cite{yong2010forward}}, or {\rm\citeauthor*{zhang2006wellposedness} \cite{zhang2006wellposedness}}
	{\color{black}It is likely that the dissipativity condition on the drift already allows to guarantee {\rm\Cref{ass.Lambda.conv}}.$(vi)$. We make this assumption as we could not find a directly citeable reference and the paper is already rather long}.


\medskip
Throughout the paper, for any positive integers $n$, $\np$ and any $q>1$, we will denote
	\begin{equation}
	\label{eq:def_r-nmqp}
		r_{\text{\fontsize{5}{5}\selectfont$\np$},n,q}\coloneqq \begin{cases}
			\np^{-1/2} + \np^{-(q-2)/q},\; \text{\rm if } n<4,\;  \text{\rm and } q\neq 4,\\[0.3em]
			\np^{-1/2}\log(1+\np) + \np^{-(q-2)/q},\; \text{\rm if } n = 4,\; \text{\rm and } q \neq 4,\\[0.3em]
			\np^{-2/n} + \np^{-(q-2)/q},\; \text{\rm if } n>4,\; \text{\rm and } q\neq n/(n-2).
		\end{cases}
	\end{equation}
	These quantities are related to the rate of convergence we can obtain. Our first main result is stated under the simplifying assumption that there is no terminal reward $g$ in the game, and we explain afterwards how this can be extended under appropriate structural conditions.
	
\begin{theorem}
\label{thm:main.limit}
	Let {\rm\Cref{ass.Lambda.conv}} hold.
	Let $(\hat\alpha^{\np})_{N\in\N^\smalltext{\star}}$ be a sequence of Nash equilibria for the $\np$-player game.
	There is a constant $\delta$ depending on $\ell_b, \ell_g,\ell_f,\ell_\Lambda$ and $T$ such that if $K_b\ge \delta$, then for each $i\in\{1,\dots,\np\}$, the sequence $(V^{i,\np})_{\np\in\N^\smalltext{\star}}$ converges to the value function $V^{\hat\xi}$ of the mean-field game, where $\hat\xi\in\mathfrak B$ is such that $\PP^{\hat\alpha,\hat\xi}\circ (X_{\cdot\wedge t},\hat\alpha_t)^{-1} = \hat\xi_t,$ for Lebesgue--almost every $t\in[0,T]$.
	More precisely, we have
	\begin{equation}
	\label{eq:conv empirical}
		\big|V^{i,\np} - V^{\hat\xi}\big|^2 
		\le C\bigg(\frac1N + NR_N^2 + \gamma^N\bigg),\; \forall N\in \N^\star,
	\end{equation}
	and the sequence of Nash equilibria $(\hat\alpha^{i,N})_{N\in \N^\smalltext{\star}}$ converges to the mean-field equilibrium $\hat\alpha^i$ in the sense that
	\begin{equation}
		\label{eq:conv.law.hatalpha}
			\int_0^T\cW_2^2\big(\P^{\hat\alpha^{\text{\fontsize{4}{4}\selectfont $N$}},N}\circ(\hat\alpha^{i,N}_s)^{-1}, \cL_{\hat\alpha}(\hat\alpha_s^i) \big)\diff s \le  C\bigg(\frac1N + NR_N^2 + \gamma^N\bigg),\; \forall N \in \N^\star,
	\end{equation}
	where, letting $\hat{\bm{\alpha}}^N$ be the vector of $\P^{\hat\alpha}$--i.i.d. processes $(\hat\alpha^{1}, \dots, \hat\alpha^{N})$, we defined
	\begin{equation}
	\label{eq:def.gamma}
		\gamma_N \coloneqq \sup_{t\in [0,T]}\E^{\P^{\smalltext{\hat\alpha}}}\Big[\Wc_2^2\big(L^N(\X^N_{\cdot\wedge t}), \Lc_{\hat \alpha}(X_{\cdot\wedge t})\big) + \Wc_2^2\big(L^N(\hat{\bm{\alpha}}^N_{t}), \Lc_{\hat \alpha}(\hat\alpha_{t})\big) \Big].
	\end{equation}
	If in addition the functions $b$ and $f$ are state-depended in the law, \emph{i.e.} $b$ and $f$ are defined on $[0,T]\times \Cc_m\times \cP_2(\R^m\times A)\times A$, and $A\subset \R^k$ for some $k\in \N^\star$, then the rate reduces to 
	\begin{equation}
	\label{eq:conv.empirical.rate}
		\big|V^{i,\np} - V^{\hat\xi}\big|^2 + \int_0^T\cW_2^2\big(\P^{\hat\alpha^{\text{\fontsize{4}{4}\selectfont $N$}},N}\circ(\hat\alpha^{i,N}_s)^{-1}, \cL_{\hat\alpha}(\hat\alpha_s)\big)\diff s
			\le C\bigg(\frac1N +  NR_N^2 +  r_{\text{\fontsize{5}{5}\selectfont$\np$},m, q} + r_{\text{\fontsize{5}{5}\selectfont$\np$},k, q} \bigg),\; \forall  (N,q)\in \N^\star\times(2,+\infty).
	\end{equation}
\end{theorem}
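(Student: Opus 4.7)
The plan is to bridge the two BSDE characterisations from \Cref{thm:Bellman} and \Cref{prop:char.mfe} through an auxiliary intermediate system consisting of $N$ independent copies of the mean-field equilibrium, and to quantify the gap via backward propagation of chaos. First, enlarging $(\Omega,\Fc,\PP)$ if necessary, I would construct for each $i\in\N^\star$ an independent copy $(\bar X^i,\bar Y^i,\bar Z^i,\hat\alpha^i)$ of the solution of the generalised McKean--Vlasov BSDE \eqref{eq:bsde.char.mfg} driven by $W^i$; uniqueness of the mean-field equilibrium (\Cref{ass.Lambda.conv}.$(v)$) guarantees these copies are well-defined and $\PP^{\hat\alpha}$-i.i.d., with $\bar Y^i_0=V^{\hat\xi}$. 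I would then use the path-dependent FBSDE in \Cref{ass.Lambda.conv}.$(vi)$ as an intermediate decoupling object: let $(\overline X^{i,N},\overline Y^{i,N},\overline Z^{i,j,N},\overline\alpha^{i,N})$ solve that FBSDE on the $N$-player space with $B^i:=W^{\hat\alpha^N,i}$. This corresponds to running the mean-field Lipschitz selector $\Lambda_t(\cdot,\cdot,\cdot,0)$ against the empirical measure of $\overline{\X}^N$ rather than against the true MFE law, and without the $\aleph^N$-correction.

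The core of the proof is then a two-step BSDE-stability comparison. In the first step, I compare the Nash system $(Y^{i,N},Z^{i,j,N})$ with $(\overline Y^{i,N},\overline Z^{i,j,N})$: both live on the $N$-player space under $\PP^{\hat\alpha^N,N}$, and their drivers differ only through the argument of $\Lambda$ (the extra $\aleph^{i,N}$ versus $0$) and through the resulting fluctuations of the controls in the empirical measure. Standard BSDE stability---Itô's formula applied to $|Y^{i,N}-\overline Y^{i,N}|^2$, Burkholder--Davis--Gundy, and Gronwall's lemma---combined with the Lipschitz-continuity of $\Lambda$ and with the bound $|\aleph^{i,N}|\le R_N\bigl(1+\|\x^i\|_\infty+\sum_j\|z^{i,j}\|\bigr)$ from \Cref{ass.Lambda.conv}.$(iii)$, produces a contribution of order $NR_N^2$ after summing over the $N$ players and invoking the uniform BMO-bound on $\int_0^\cdot b_s\cdot\mathrm{d}W_s$ (by boundedness of $b$) to control all Girsanov densities uniformly in $N$. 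In the second step, I compare $(\overline Y^{i,N},\overline Z^{i,j,N})$ with the i.i.d.\ copies $(\bar Y^i,\bar Z^i)$: the only discrepancy in the BSDE drivers is replacing $L^N(\overline{\X}^N_{\cdot\wedge s},\overline\alpha^N_s)$ by $\cL_{\hat\alpha}(X_{\cdot\wedge s},\hat\alpha_s)$, and a further BSDE-stability plus Gronwall argument bounds the resulting gap by $\gamma^N$ as defined in \eqref{eq:def.gamma}, with an additional $1/N$ contribution stemming from the diagonal self-interaction term in the empirical measure.

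Chaining these two steps yields \eqref{eq:conv empirical} since $V^{i,N}=Y^{i,N}_0$ by \Cref{thm:Bellman} and $V^{\hat\xi}=\bar Y^i_0$. The convergence of the controls \eqref{eq:conv.law.hatalpha} follows immediately from the Lipschitz-continuity of $\Lambda$, because $\hat\alpha^{i,N}_s=\Lambda_s\bigl(X^i_{\cdot\wedge s},L^N(\X^N_{\cdot\wedge s}),Z^{i,i,N}_s,\aleph^{i,N}_s\bigr)$ while $\hat\alpha^i_s=\Lambda_s\bigl(\bar X^i_{\cdot\wedge s},\cL_{\hat\alpha}(\bar X^i_{\cdot\wedge s}),\bar Z^i_s,0\bigr)$, so the $\L^2$-estimates on the $X$- and $Z$-components transfer directly. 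For the sharper rate \eqref{eq:conv.empirical.rate} in the state-dependent case, the term $\gamma^N$ reduces to the 2-Wasserstein distance between the empirical measure of $N$ i.i.d.\ samples (valued in $\R^m$, respectively $A\subset\R^k$) and their common law, for which I would invoke the Fournier--Guillin rates of convergence, yielding exactly the $r_{N,m,q}+r_{N,k,q}$ contribution.

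The hard part will be carefully handling the three different probability measures at play---$\PP$, $\PP^{\hat\alpha^N,N}$, and $\PP^{\hat\alpha}$---and making all stability bounds uniform in $N$: this relies crucially on the boundedness of $b$, which gives a uniform BMO bound on the Girsanov exponent and hence uniform $\L^p$ estimates on the densities $\mathrm{d}\PP^{\hat\alpha^N,N}/\mathrm{d}\PP$ and $\mathrm{d}\PP^{\hat\alpha}/\mathrm{d}\PP$ for every $p\ge 1$, via reverse Hölder energy inequalities. A secondary subtlety is the bookkeeping in the first stability step: the $\aleph^N$-correction is only of order $R_N$ per player, but it enters all $N$ BSDEs simultaneously and couples to the full row $Z^{i,\cdot,N}$ of the $N$-dimensional martingale integrand, so that matching the factor $N$ appearing when summing over players with the squared $R_N^2$ gain---to recover exactly the term $NR_N^2$ which vanishes by \Cref{ass.Lambda.conv}.$(iii)$---requires delicate summation estimates and a careful choice of exponential weights in the Gronwall step.
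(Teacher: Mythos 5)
Your overall architecture is the same as the paper's: the decomposition $|Y^{i,N}_0-Y^i_0|^2\le 2|Y^{i,N}_0-\tY^{i,N}_0|^2+2|\tY^{i,N}_0-Y^i_0|^2$ through the auxiliary FBSDE of \Cref{ass.Lambda.conv}.$(vi)$ driven by $W^{\hat\alpha^N,i}$ (this is exactly \Cref{eq:auxiliary.1}), the use of the bound on $\aleph^N$ together with the Lipschitz-continuity of $\Lambda$ and Gronwall/Burkholder--Davis--Gundy estimates to extract the $1/N+NR_N^2$ contribution (your first step is essentially \Cref{prop.seq.close}), the transfer of the $X$- and $Z$-estimates to the controls through $\Lambda$, and Fournier--Guillin for \Cref{eq:conv.empirical.rate}.

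There is, however, a genuine gap in your second comparison. You treat the comparison of $(\overline Y^{i,N},\overline Z^{i,j,N})$ with the i.i.d.\ copies $(Y^i,Z^i)$ as a routine BSDE-stability estimate in which ``the only discrepancy in the drivers is replacing $L^N$ by $\cL_{\hat\alpha}$''. This is not so: the auxiliary system is posed under $\P^{\hat\alpha^N,N}$ and driven by $(W^{\hat\alpha^N,j})_j$, whose Girsanov drift involves the Nash controls $\hat\alpha^{j,N}$, while the i.i.d.\ copies are posed under $\P^{\hat\alpha}$ and driven by $(W^{\hat\alpha,j})_j$. To apply It\^o's formula to the squared difference you must place both systems under one measure with one driving noise, and rewriting one stochastic integral in terms of the other produces extra drift terms of the form $\sum_{j}\tZ^{i,j,N}_s\cdot\big(b_s(X^j_{\cdot\wedge s},L^N(\X^N_{\cdot\wedge s},\hat\alpha^N_s),\hat\alpha^{j,N}_s)-b_s(X^j_{\cdot\wedge s},\cL_{\hat\alpha}(X_{\cdot\wedge s},\hat\alpha_s),\hat\alpha^j_s)\big)$, which involve precisely the quantities you are trying to estimate; uniform reverse-H\"older/$\L^p$ bounds on the Girsanov densities only compare moments of a fixed random variable under the two measures, they do not produce the pathwise squared-difference estimate your Gronwall step needs. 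The paper closes this gap with a transfer-of-law argument your proposal omits: since the auxiliary FBSDE is decoupled from the Nash system (its coefficients depend only on its own solution), $(\tX^{i,N},\tY^{i,N},\tZ^{i,j,N})$ under $\P^{\hat\alpha^N,N}$ has the same law as the solution $(\barX^{i,N},\barY^{i,N},\barZ^{i,j,N})$ of the same FBSDE driven by $(W^{\hat\alpha,j})_j$ under the measure $\P^{\hat\alpha}$ of \Cref{eq:true.measure}; as these measures agree on $\Fc_{N,0}$, one gets $\tY^{i,N}_0=\barY^{i,N}_0$, and the propagation-of-chaos estimate is then carried out entirely under $\P^{\hat\alpha}$ against the i.i.d.\ copies, with a common Brownian motion and no density manipulation (\Cref{prop:prob.chaos}). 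Without this device (or an equivalent one), your second stability estimate does not close; the remaining bookkeeping remarks (e.g.\ attributing the $1/N$ term to the diagonal of the empirical measure in the second step rather than to the first) are harmless.
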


An important remark is in order.
\begin{remark}
\label{rem.rate}
	Let us now comment on the non-asymptotic convergence rates in {\rm\Cref{eq:conv empirical}}.
	The reader will notice that these rates solely depend on the speed of convergence of the empirical measures $L^N(\XX^N)$ and $L^N(\hat\balpha^N_t)$ to the laws of $X$ and $\hat\alpha$ in the Wasserstein distance and on $R_N$, while the constant $C$ depends on $T$ and all the bounds and Lipschitz constant introduced in the assumptions.
	In view of the discussion in {\rm Section \ref{rem:Lambda.depends.xi}}, 
	the rate $R_N$ is zero when we do not consider interaction through the control and is just $1/N$ under further structural assumptions on $b$ and $f$.
	Regarding $\gamma_N$, it is well-known that convergence rates for empirical measures of random variables on Polish spaces are difficult to obtain.
	The law of such random variables are usually required to satisfy strong integrability conditions or some functional inequalities, see {\rm\citeauthor*{bolley2005weighted} \cite{bolley2005weighted}}, {\rm\citeauthor*{boissard2011simple} \cite{boissard2011simple}, \citeauthor*{boissard2014mean} \cite{boissard2014mean}}, {\rm\citeauthor*{fournier2015rate} \cite{fournier2015rate}}, or {\rm\citeauthor*{weed2019sharp} \cite{weed2019sharp}}.
	
	\medskip
	In the existing literature on the mean-field limit $($discussed in the introduction$)$, only the papers by {\rm\citeauthor*{lauriere2022convergence} \cite{lauriere2022convergence}}, {\rm\citeauthor*{jackson2023quantitative} \cite{jackson2023quantitative}} and {\rm\citeauthor*{delarue2020master} \cite{delarue2020master}} and the monograph by {\rm\citeauthor*{cardaliaguet2019master} \cite{cardaliaguet2019master}} obtain convergence rates. 
	In {\rm\cite{delarue2020master}}, concentration of measures results are obtained for the empirical law of the state process at equilibrium, and  {\rm\cite{cardaliaguet2019master}} provides a convergence rate for the convergence of the value functions.
	Both papers rely on existence and uniqueness of a solution of the master equation with bounded first and second derivatives.
	Closer to our work is {\rm\cite{lauriere2022convergence}} where a convergence rate for the Nash equilibrium $($in the Markovian setting$)$ is also given using the theory of coupled forward--backward {\rm SDEs} {\color{black}and a dissipativity condition similar to ours}.
	As the reader will observe, the proof of {\rm\Cref{thm:main.limit}} does not make use of properties of the master equation, and accommodates a state-dependent volatility, as well as a completely non-Markovian framework.
	Furthermore, compared to {\rm\cite{lauriere2022convergence}}, {\rm \Cref{thm:main.limit}} and its corollaries offer a substantial gain of regularity on the coefficients of the game. In fact, we only assume the functions $b,f$ and $g$ to be Lipschitz-continuous and such that the optimiser $\Lambda$ is again Lipschitz-continuous.
	Granted, this weakening of the regularity requirements is also due to the fact that we obtain the convergence of the laws of the Nash equilibrium.
	In fact, we have pointwise convergence of the equilibria only on a new probability space.

\medskip
	{\color{black}We should also point out that the constant $\delta$ in the statement of the theorem can be made explicit by following our successive arguments. We decided not to do so because it is quite involved and no intuition could be gained from the explicit form of the constant.}
\end{remark}

\subsubsection{Existence of mean-field equilibria}
We now complement the convergence theorems from the previous section with an existence result.
Here are our assumptions for the existence of mean-field game equilibria.

\begin{assumption}
\label{assum:main.conv}
$(i)$ There exists a Lipschitz-continuous map $\Lambda:[0,T]\times \Cc_m\times \cP_2(\Cc_m)\times \RR^d \longrightarrow A$ such that for any $(t, \x, \xi, z) \in [0,T]\times \Cc_m\times \cP_2(\Cc_m\times A)\times \RR^d$ 
\begin{equation*}
	\Lambda_t(\x, \xi^1,z) \in \argmax_{a\in A} \big\{h_t( \x, \xi, z,a)\big\}
\end{equation*}
where $\xi^1$ is the first marginal of $\xi;$
and the function
 $\sigma_t:\cC_m\longrightarrow \R^{\xdim\times \bmdim}$ is $\ell_\sigma$--Lipschitz-continuous, uniformly in $t\in[0,T]$, for some $\ell_\sigma\in(0,\infty)^2$;
\medskip

$(ii)$ The function $b$ is $\ell_b$-Lipschitz in all variables and the functions $b$, $f$, and $g$ satisfy {\rm\Cref{ass.Lambda.conv}}.$(iv)$ {\color{black} with $K_b\ge \ell_b^2 + (4C_{\rm BDG} +1)\ell_g^2 + \ell_b\ell_\Lambda $, where $C_{\rm BDG}$ is the constant appearing in Burkholder--Davis--Gundy inequality with exponent $1$}.
\end{assumption}
\begin{assumption}
\label{ass.g.smooth}
  \begin{itemize}
\item[$(i)$] the function $g:\R^m\times \cP_2(\R^m)\longrightarrow \R$ is such that for every $\mu \in \cP_2(\R^m)$, $\R^m\ni x\longmapsto g(x,\mu)$ is twice continuously differentiable$;$ for every $x \in \R^m$, the map $\cP_2(\R^m)\ni \mu\longmapsto g(x,\mu)$ is continuously differentiable and for $(x,\mu)\in\R^m\times\Pc_2(\R^m)$, the map $\Pc_2(\R^m)\ni v\longmapsto \partial_\mu g(x,\mu)(v)$ admits a version such that $\R^m\times\Pc_2(\R^m)\times\Pc_2(\R^m)\ni(x,\mu,v)\longmapsto \partial_\mu g(x,\mu)(v)$ is locally bounded, and $v \longmapsto \partial_\mu g(x,\mu)(v)$ is continuously differentiable with locally bounded derivative$;$ 
	\medskip

	\item[$(ii)$] the functions $\sigma:[0,T]\times\R^m\longrightarrow \R^{m\times d}$ and $\R^m\times \Pc_2(\R^m)\ni (x,\mu)\longmapsto \partial_xg(x,\mu)\sigma_t(x)$ are Lipschitz-continuous$;$
	
	\medskip

	\item[$(iii)$] the function $b$, $g$ and
	\begin{align}
	\label{eq:def.tilde.f.conv}
	\notag
	\widetilde f_t(\x_{\cdot\wedge t}, \xi,a) &\coloneqq  f_t(\x_{\cdot\wedge t}, \xi, a) + \frac12\mathrm{Tr}\big[\partial_{xx}g(\x_t, \xi^1_t)\sigma_t(\x_t)\sigma_t^\top(\x_{ t})\big] + \frac12\int_{\mathbb{R}^\smalltext{\xdim}}\mathrm{Tr}\big[ \partial_a\partial_\mu g(\x_t, \xi^1_t)(a)\sigma_t(a)\sigma_t(a)^\top \big] \xi^1(\diff a)\\
	&\quad +  b_t(\x_{\cdot \wedge t},\xi,a)\cdot \partial_xg(\x_t,\xi^1_t)\sigma_t(\x_{t}), \; (t,\x,\xi,a)\in[0,T]\times\Cc_m\times \Pc_2(\R_m\times A)\times A,
	\end{align}
	satisfy {\rm\Cref{ass.Lambda.conv}}.$(iv)$ {\color{black} with $K_b\ge \ell_b^2 + (4C_{\rm BDG} +1)\ell_g^2 + \ell_b\ell_\Lambda $}.
\end{itemize}
\end{assumption}

\begin{theorem}
\label{thm:main.existence.MFG}
	Let {\rm \Cref{assum:main.conv}} hold. There is $\Psi>0$ such that if $\|g\|_\infty\le \Psi$, then
 the mean-field game admits a mean-field equilibrium $\hat\alpha$.
	If {\rm\Cref{ass.g.smooth}} further holds, then the constant $\Psi$ can be taken arbitrary. Moreover, if for any $(t, \x, \xi, z) \in [0,T]\times \Cc_m\times \cP_2(\Cc_m\times A)\times \RR^d$ the set $\A(t,\x,\xi,z)$ is a singleton, then there is at most one mean-field equilibrium.
\end{theorem}

The method we use to derive existence is very different from the one proposed by \citeauthor*{carmona2015probabilistic} \cite{carmona2015probabilistic} based on---a version of---Kakutani’s fixed-point theorem.
In fact, our arguments are rather based on general characterisations of mean-field games by backward SDEs.
The \emph{caveat} here is that, due to the weak formulation of the control problem, the characterising BSDE \eqref{eq:bsde.char.mfg} is not a standard equation of McKean--Vlasov type, but the driving Brownian motion as well as the underlying probability measure are unknown.
We study well-posedness of this equation in \Cref{sec:gen.MckV.BSDE}. 
Notice that our result has a specific feature: as soon as the terminal reward is $0$, the constant $\Psi$ is $0$ as well, and we get existence of a mean-field equilibrium under general assumptions for a general non-Markovian problem allowing for interactions through the controls. Moreover, uniqueness then only requires that the Hamiltonian of the players has a unique maximiser, and does not involve the standard Lasry--Lions's monotonicity condition (see for instance \cite{lasry2007mean}) generally assumed in the literature. As far as we know, such results are new.
Notice however that {\rm \Cref{thm:main.existence.MFG}} does not cover the case of games with quadratic costs unless stronger regularity and boundedness conditions are satisfied. For such games, our characterising BSDE will have a quadratic generator, and as is well-known from the results on multidimensional quadratic BSDEs, such equations can have infinitely many solutions \citeauthor*{frei2011financial} \cite{frei2011financial}.
Similarly, as observed by \citeauthor*{tchuendom2018uniqueness} \cite{tchuendom2018uniqueness}, the mean-field game can have many solutions.

\subsubsection{Examples}\label{sec:examples}

Let us at this point give two examples to which our mean-field limit result applies.
The first example comes from the classical problem of optimal execution in financial models with price impact.
The second example showcases a game in which the non-Markovian structure considered in this work applies.

\paragraph{A non-Markovian price impact model}
This first example is treated for instance by \citeauthor{carmona2015probabilistic} \cite{carmona2015probabilistic} to which we refer for details.
Let us assume for simplicity that $d=m=1$ and that $A \subseteq \R$ is a closed bounded subset. 
In this game, $\np$ traders invest on the same stock whose price $S$ is subject to (instantaneous) price impact.
{\color{black}Let us assume that inventory of trader $i\in\{1,\dots,N\}$ is given by 
\begin{equation*}
	\d X^i_t = \big(-K_bX_t^i + \alpha^i_t\big)\d t + \sigma\d (W^\alpha_t)^i,
\end{equation*}
where $K_b>0$,} $\sigma>0$, $\alpha^i$ is an $\F^\np$-predictable measurable process taking values in a subset $A$ of $\R$.
Assume that the investors are risk-neutral, face transaction cost $c:\R\longrightarrow \R$ and a terminal liquidation constraint $g$. Then the $i$-th trader's control problem given that other traders played the controls $(\alpha^{-i})\in\Ac^{N-1}$ is
\begin{equation*}
	\inf_{\alpha\in\Ac}\E^{\P^{\smalltext{\alpha}\smalltext{\otimes}_\tinytext{i}\smalltext{\alpha}^{\tinytext{-}\tinytext{i}}}}\bigg[\int_0^T\bigg(\frac{\gamma(X^i_{\cdot\wedge t})}{\np}\bigg(\sum_{j=1}^Nc^\prime(\alpha^j_t)\bigg) - c(\alpha^i_t)-k(t,X^i_{\cdot\wedge t})\bigg)\d t + g(X^i_T)\bigg],
\end{equation*}
for two real-valued function $\gamma$ and $k$.
In this cost function, the first term in the time integral represents the price impact induced by the trading strategies of all the agents, the second term is a trading cost incurred to player $i$, while the third term represents a penalty for holding a large inventory. The cost function of the associated mean-field game is given by
\begin{equation*}
	\E^{\P^{\smalltext{\alpha}\smalltext{,}\smalltext{\nu}}}\bigg[\int_0^T\bigg( \gamma(X_{\cdot\wedge t})\int_{\mathbb{R}}c^\prime(a)\nu(\diff a) - c(\alpha_t) - k(t, X_{\cdot\wedge t})\bigg)\diff t + g(X_T) \bigg],\; \nu \in \cP_2(A).
\end{equation*}

\medskip
When $g$ and $k$ are Markovian functionals, the convergence of this game to the corresponding mean-field game was analysed in \cite{lauriere2022convergence}.
In the present non-Markovian case, the convergence  follows as a consequence of our {\rm \Cref{thm:main.limit}}. 
In particular, the functions $g$ and $k$ are Lipschitz-continuous,
it is also customary to take $c(a)=|a|^2/2$ (\emph{i.e.} quadratic transaction cost) and $c^\prime(a) =a$ (\emph{i.e.} linear price impact).
With these specifications, we have the following result.

\begin{corollary}
\label{cor:price.impact}
	Assume that the functions $\gamma$, $c^\prime,c,$ $k$ and $g$ are Lipschitz-continuous $($with $\ell_g$ denoting the Lipschitz constant of $g)$, the function $g$ is twice continuously differentiable with bounded derivatives, and that the functions $\gamma$ and $g$ are bounded.
	If for each $N$ the finite population game admits a Nash equilibrium $\hat\alpha^\np$ {\color{black}and $K_b$ is large enough,} then	for each $i \in \{i,\dots,\np\}$, we have
	\begin{equation}
	\label{eq:conv.examples}
	 	\big|V^{i,\np} - V^{\hat\xi}\big|^2 + \int_0^T\cW_2^2\big(L^N(\hat{\bm{\alpha}}^N_t), \cL_{\hat\alpha}(\hat\alpha_t)\big)\diff t \le C\bigg(\frac1N +r_{\text{\fontsize{5}{5}\selectfont$\np$},1,q} \bigg).
	 \end{equation} 
\end{corollary}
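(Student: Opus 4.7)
The plan is to verify the hypotheses of \Cref{cor:main.conv.b} under \Cref{assump:gnon0}.(ii), and to exploit the structural form of $b$ and $f$ to recover the sharp rate. First I would recast the minimisation as a maximisation by flipping signs, setting $b_t(\x,\xi,a) := a$ and $f_t(\x,\xi,a) := \gamma(\x_{\cdot\wedge t})\int_A a^\prime\xi^2(\d a^\prime) - a^2/2 - k(t,\x_{\cdot\wedge t})$, where $\xi^2$ denotes the marginal on $A$ of $\xi \in \Pc_2(\Cc_m\times A)$. \Cref{assump:sigma} is immediate since $\sigma$ is a positive constant, and the quadratic growth demanded by \Cref{ass.Lambda.charac} follows from the boundedness of $\gamma$ and $g$, the Lipschitz continuity of $k$, and the compactness of $A$.

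Next, one analyses the Hamiltonians. The reduced Hamiltonian
\[
h_t(\x,\xi,z,a) = az + \gamma(\x_{\cdot\wedge t})\int_A a^\prime\xi^2(\d a^\prime) - a^2/2 - k(t,\x_{\cdot\wedge t})
\]
is strictly concave in $a$, so $\A(t,\x,\xi,z) = \{\Lambda_t(z)\}$ with the $1$-Lipschitz projector $\Lambda_t(z) := \mathrm{proj}_A(z)$ depending on $z$ alone. The first-order condition for the $N$-player Hamiltonian similarly yields $\hat a^i(t,\x,z) = \mathrm{proj}_A(z^{i,i} + \gamma(\x^i_{\cdot\wedge t})/N)$, so one sets $\aleph^{i,N}_t(\x,z^{i,:}) := \gamma(\x^i_{\cdot\wedge t})/N$, which satisfies \eqref{eq:bound.aleph} with $R_N := \|\gamma\|_\infty/N$. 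Then $NR_N^2 \to 0$ and $N^2R_N^2 = \|\gamma\|_\infty^2 = \Oc(1)$ as required by \Cref{ass.Lambda.conv}.(iii), and the Lipschitz requirements of \Cref{ass.Lambda.conv}.(iv) are immediate. For uniqueness of the mean-field equilibrium (\Cref{ass.Lambda.conv}.(v)) I would invoke \Cref{thm:main.existence.MFG}: \Cref{assum:main.conv} is satisfied ($\Lambda$ depends on $z$ only, $b,\sigma$ are bounded and $b$ is law-independent), while \Cref{ass.g.smooth.exists} reduces to the smoothness assumptions on $g$ since $\partial_\mu g\equiv 0$ here, and Lipschitz continuity of the map $F$ in \eqref{eq:def.tilde.f.intro} follows from the boundedness of $g^\prime, g^{\prime\prime}, \gamma$ together with the Lipschitz continuity of $f$ and $\Lambda$. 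The path-dependent FBSDE in \Cref{ass.Lambda.conv}.(vi) is well-posed by \cite{hu2022path}, cited in \Cref{rem:Lambda.depends.xi}.

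Finally, \Cref{assump:gnon0}.(ii) holds: the regularity of $g$ is given, and the auxiliary reward \eqref{eq:def.tilde.f.conv} reduces here to
\[
\widetilde f_t(\x,\xi,a) = f_t(\x,\xi,a) + \frac{\sigma^2}{2}g^{\prime\prime}(\x_t) + a\sigma g^\prime(\x_t),
\]
which inherits the Lipschitz (and local-in-$a$ Lipschitz) conditions from $f$ together with the bounded derivatives of $g$. \Cref{cor:main.conv.b} then applies, and since $b$ is law-independent while $f$ depends on $\xi$ only through the marginal $\xi^2\in\Pc_2(A)$ with $A\subset\R$ (so $m=k=1$), the improved rate \eqref{eq:conv.empirical.rate} reduces to $\Oc(1/N + NR_N^2 + r_{\np,1,q}) = \Oc(1/N + r_{\np,1,q})$ after absorbing $NR_N^2$. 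The empirical-measure form of the second term on the left of \eqref{eq:conv.examples} follows from the triangle inequality
\[
\cW_2^2\big(L^N(\hat{\bm{\alpha}}^N_t), \Lc_{\hat\alpha}(\hat\alpha_t)\big) \leq 2N^{-1}\sum_{i=1}^N\big|\hat\alpha^{i,N}_t - \hat\alpha^i_t\big|^2 + 2\cW_2^2\big(L^N(\hat{\bm{\alpha}}_t), \Lc_{\hat\alpha}(\hat\alpha_t)\big),
\]
where the first term is controlled by the propagation-of-chaos estimates underpinning \Cref{thm:main.limit}, and the second by the Fournier--Guillin rate for i.i.d. samples in $\R$. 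The main obstacle is the uniqueness step via \Cref{thm:main.existence.MFG}, which requires a careful verification of the Lipschitz continuity of $F$ in this path-dependent setup.
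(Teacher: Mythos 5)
Your proposal is correct and follows essentially the same route as the paper's own proof: identify the fixed point of the $N$-player Hamiltonian so that $\aleph^{i,N}=\gamma(\x^i)/N$ and $R_N\sim 1/N$, verify \Cref{ass.Lambda.conv}, obtain uniqueness of the mean-field equilibrium from well-posedness of the generalised McKean--Vlasov BSDE (the smooth-$g$ case, so no smallness is needed), and conclude via \Cref{cor:main.conv.b} with the state-dependent-in-law rate $r_{\text{\fontsize{5}{5}\selectfont$\np$},1,q}$, the term $NR_N^2$ being absorbed into $1/N$. The only differences are cosmetic: your explicit maximiser $\mathrm{proj}_A(z^{i,i}+\gamma(\x^i)/N)$ versus the paper's algebraic form of the fixed point, your appeal to the path-dependent FBSDE literature for \Cref{ass.Lambda.conv}.$(vi)$ where the paper restricts to state-dependent $k,\gamma$ and cites \cite{delarue2002existence}, and your (harmless, since $\hat{\bm\alpha}^N$ denotes the i.i.d.\ mean-field copies, for which Fournier--Guillin suffices) extra triangle-inequality step for the empirical-measure term.
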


\paragraph{Large population games with time-delayed state dynamics}

The non-Markovian setting of the present paper lends itself well to the case of games with delayed response in the state process.
Games with delay appear in several applications in finance and engineering.
Notably, mean-field games with delay have been investigated for linear--quadratic MFGs by \citeauthor*{huang2018linear} in  \cite{huang2018linear}, in the context of systemic risk by \citeauthor*{carmona2018systemic} \cite{carmona2018systemic} (who consider a delay on the control) and in the context of labor income investment by \citeauthor*{djehiche2020optimal} \cite{djehiche2020optimal}.
A toy model of game with delay can be formulated by specifying the coefficients as
\begin{equation*}
	\sigma_t(\x)=\sigma, \; b_t(\x, \xi, a) \coloneqq  \bar b_t\bigg(\x_{t-\tau}, \int_{\R^\smalltext{m}} x\d\xi^1_{t-\tau}(x), a \bigg),\; 	f_t(\x, \xi, a) = \bar f_t\bigg(\x_{t-\tau}, \int_{\R^\smalltext{m}} x\d\xi^1_{t-\tau}(x), a \bigg),\;\text{and}\; g(X)\coloneqq  \bar g(X_{T-\tau}),
\end{equation*}
where for any $\xi\in\Pc_2(\Cc_m\times A)$, $\xi^1$ is the marginal of $\xi$ on $\Cc_m$, and for any $t\in[0,T]$, $\xi^1_t$ is the projection of $\xi^1$ on the $t$-value of the underlying path.
Hereby, $A\subseteq \mathbb{R}^m$ is a closed set, $\bar b$ and $\bar f$ mapping $[0,T]\times \mathbb{R}^m\times \mathbb{R}^m\times A$ to $\mathbb{R}^d$ and $\mathbb{R}$ respectively, and $\tau\in [0,T]$, with the convention $X_{-t} = 0$ for every $t>0$.
In this setting, if the functions $\bar b,\bar f$ and $\bar g$ are Lipschitz-continuous and such that the optimising function $\Lambda$ is Lipschitz, then we can derive convergence of the Nash equilibrium of the $N$-player game to the corresponding mean field game, provided that the generalised McKean--Vlasov equation \eqref{eq:McKV.eq.intro} admits a solution.

\medskip

Observe that the game we just discussed is arguably a very simple example of game with delay.
For instance, delays of the form $\int_{-T}^0c(X_{t+u})m(\d u)$ could be considered, with appropriately chosen Borel measure $m$ on $[-T,0]$ and function $c$, we could also incorporate interaction through control.
Notice however that this setting covers only delay in the state, and not in the control.
But, if the controls are of closed-loop form then delays on the control can be recast into delays on the state, compare {\rm \Cref{eq:close.loop}}.

\subsection{Two noteworthy consequences}\label{sec:consequences}
Let us now discuss two interesting byproducts of our method and results.
The first pertains to the link with Hamilton--Jacobi--Bellman (HJB) techniques used in the analytic approach to control and mean-field games.
The second explores the convergence of large population games when players' strategies are restricted to be closed-loop controls.

\subsubsection{PDE interpretations}
In the Markovian case, the results of this paper can be easily recast in terms of partial differential equations (PDE).
In fact, assume that $A\subseteq \mathbb{R}^{\ell}$ for some $\ell \in \mathbb{N}^\star$.
If the functions $b$, $f$, $g$, and $\sigma$ in the stochastic differential game depend on the current position $X_t^i$ of the state process (as opposed to dependence in the history $X^i_{\cdot\wedge t}$ of the state), then in view of \eqref{eq:state.W.alpha} the BSDE system \eqref{eq:bsde.main} characterising the $\np$-player game becomes, for $i\in\{1,\dots,N\}$
\begin{align*}
	\mathrm{d}X^i_t &= \sigma_t(X^i_t)b_t\big(X^i_t, L^N(\XX^N_t,\hat\alpha_t),\hat\alpha^{i,\np}_t\big)\mathrm{d}t + \sigma_t(X^i_t)\cdot\mathrm{d}(W^{\hat\alpha}_t)^i,\\
	\mathrm{d}Y^{i,\np}_t &= - h_t\big(X^i_t, L^\np(\XX^N_t,\hat\alpha_t),  Z^{i,i,\np}_t, \hat\alpha^{i,\np}_t\big)\mathrm{d}t - \sum_{j\in\{1,\dots,N\}\setminus\{i\}}  b_t\big(X^j_t, L^N(\X^N_t,\hat\alpha_t),\hat\alpha^{j,\np}_t\big)\cdot {Z}_t^{i,j,\np}\mathrm{d}t   + \sum_{j=1}^\np Z^{i,j,\np}_t\cdot \mathrm{d}(W_t^{\hat\alpha})^j,\\
	 Y_T^{i,\np}& = g\big(X^i_T, L^\np(\XX^N_T)\big),\; \hat\alpha^{N}_t\in \Oc^N\big(t,\X^N_{\cdot\wedge t},(Z^{i,i,N}_t)_{i\in\{1,\dots,N\}}\big),\; 	\d t\otimes\d \P\text{\rm--a.e.},
\end{align*}
where we recall that by \Cref{assump:sigma}, $\hat\alpha^N$ takes the form $\hat\alpha_\cdot^{i,N} = \Lambda_\cdot\big( X^i_\cdot, L^N(\X^N_{\cdot\wedge t}), Z^{i,i,N}_\cdot, \aleph^{i,N}_\cdot(\X^N_{\cdot }, Z^N_\cdot)\big)$, $i\in\{1,\dots,N\}$.

\medskip
 We assume for simplicity that the map $\aleph^{i,N}$ appearing in \Cref{ass.Lambda.conv} is a constant $C/N$ see \emph{e.g.} \rm\Cref{sec:case.study} for an example (recall again that in the case when there is no interaction through the control we have $\aleph^{N}  =  0$.) Therefore, it follows by standard BSDE theory, see \emph{e.g.} \citeauthor*{el1997backward} \cite[Theorem 4.2]{el1997backward} (or \citeauthor*{carmona2016lectures} \cite[Sections 5.3 and 5.4]{carmona2016lectures}), combined with \Cref{thm:Bellman}, that the value function satisfies $V(\hat\alpha^{-i}) = v^{i,\np}(0, X^1_0,\dots, X^\np_0)$ where $v^{\np} \coloneqq  (v^{i,\np},\dots, v^{\np,\np})$ is the unique solution, in an appropriate sense, of the system of PDEs, for $i\in\{1,\dots,N\}$
\begin{equation}
\label{eq:N.hjb.control}
\left\{ 	\begin{aligned}
		&\partial_{t}v^{i,\np}(t,x) + \sum_{j=1}^N\mathrm{Tr}\bigg[\partial_{jj}v^{i,\np}(t,x)\frac{(\sigma_t\sigma_t^\top)(x^j)}2 \bigg]+  \sum_{j=1}^N\sigma_t(x^j)b_t\big(x^j, L^\np(x,\Gamma(v)(t,x)),\Gamma^j(v)(t,x)\big)\cdot\partial_{j}v^{j,\np}(t,x) \\
		& + f_t\big(x^i, L^\np(x,\Gamma(v)(t,x)),\Gamma^i(v)(t,x)\big)  = 0,\; (t,x)\in[0,T)\times\R^{m\times N},\\
		& v^{i,\np}(T,x) = g\big(x^i, L^\np(x)\big),\; x\in\R^{m\times N},\; (t,x)\in[0,T]\times\R^{m\times N},
	\end{aligned}\right.
\end{equation}
where we defined the operator $\Gamma$, for any smooth map $\varphi:[0,T]\times\R^{m\times N}\longrightarrow \R^N$ as
\[
\Gamma(\varphi)(t,x) \coloneqq  \big(\Lambda_t(x^j, L^N(x), \partial_{j}\varphi^j(t,x)),C/N\big)_{j\in\{1,\dots,N\}}.
\]
This equation is nothing but a system of $\np$ HJB equations associated with the stochastic differential game, see \emph{e.g.} \cite{delarue2020master,cardaliaguet2019master}.
In these works, the authors based their argument for the convergence of the $N$-player game to the mean-field game on the convergence of the solution $v^{i,\np}$---when it is \emph{smooth enough}---to the solution $v$ of the so-called master equation given by 
\begin{equation}
\label{eq:master.control}
\left\{ 	\begin{aligned}
		&\partial_tv(t, x, \mu) + \frac12\mathrm{Tr}\big[\partial_{xx}v(t, x, \mu)(\sigma_t\sigma_t^\top)(x) \big] + \sigma_t(x) b_t\big(x, \xi, \overline\Gamma_t(\varphi)(x,\mu)\big)\cdot\partial_xv(t, x, \mu) + f_t\big(x, \xi, \overline\Gamma_t(\varphi)(x,\mu)\big) \\
		& + \int_{\mathbb{R}^m}\bigg( \sigma_t(y)b_t\big(y,\xi,\overline\Gamma_t(v)(y,\mu) \big)\cdot \partial_\mu v(t, x,\mu)(y)+ \mathrm{Tr}\bigg[\partial_{y\mu} v(t,x,\mu)(y)\frac{(\sigma_t\sigma_t^\top)(x)}2\bigg]\bigg)\mathrm{d}\mu(y)=0,\; (t,x,\mu)\in[0,T)\times\R^m\times\Pc_2(\R^m),\\
	&	v(T, x,\mu) = g(x, \mu),\; (x,\mu)\in\R^m\times\Pc_2(\R^m),\;\text{and where}\; \xi \coloneqq  \cL(\chi, \bar\Gamma_t(\varphi)(\chi,\mu)) \text{ with $\cL(\chi)=\mu$},
	\end{aligned}
	\right.
\end{equation}
where we now defined the operator $\overline \Gamma$ acting on smooth functions $\varphi:[0,T]\times\R^m\times \Pc_2(\R^m) \longrightarrow \R$
\[
\overline\Gamma_t(\varphi)(x,\mu) \coloneqq  \Lambda_t\big(x, \mu, \partial_x\varphi(t, x, \mu),0\big),\; (t,x,\mu)\in[0,T]\times\R^m\times\Pc_2(\R^m).
\]
In fact, it holds that $v(0,x,\delta_x) = V$ where $V$ is the value of the mean-field game, when the state $X$ starts at time $0$ from $x\in\R^m$. When the master equation admits a classical solution, it follows that $Y_t = v(t,X_t, \cL(X_t))$,  where $Y_t$ is the unique solution of the McKean--Vlasov equation 
\begin{equation}
\label{eq:McKV.eq.intro}
	\begin{cases}
		\displaystyle\mathrm{d}X_t =\sigma_t(X_t) b_t\big(X_t, \cL_{\hat\alpha}(X_t,\hat\alpha_t), \hat\alpha\big)\mathrm{d}t + \sigma_t(X_t)\cdot\mathrm{d}W_t^{\hat\alpha},\\[0.5em]
		\displaystyle\mathrm{d}Y_t = -f_t\big(X_t,\cL_{\hat\alpha}(X_t, \hat\alpha_t),\hat\alpha_t\big)  + Z_t\cdot\mathrm{d}W_t^{\hat\alpha},\\[0.5em]
		\displaystyle Y_T = g\big(X_T, \cL(X_T)\big),\; \hat\alpha_t = \Lambda_t(X_t, \cL_{\hat\alpha}(X_t),Z_t,0),
	\end{cases}
\end{equation}
recall \Cref{prop:char.mfe}.
In this case, \Cref{thm:main.limit} provides probabilistic arguments for the convergence of solutions of the coupled system of HJB equations \eqref{eq:N.hjb.control} to the master equation.

\medskip
 In addition, it follows from \Cref{rem.rate} that an explicit non-asymptotic convergence rate can be given.
 Note in passing that existence of smooth solutions of the master equation has been investigated by \citeauthor*{cardaliaguet2019master} \cite{cardaliaguet2019master}, and \citeauthor*{chassagneux2022probabilistic} \cite{chassagneux2022probabilistic}. 
 Thus, we have the following corollary which is a direct consequence of \Cref{thm:main.limit} and \Cref{rem.rate}.
 \begin{corollary}\label{cor:master}
 	Under the conditions of {\rm\Cref{thm:main.limit}}, if the {\rm PDE} \eqref{eq:master.control} admits a classical solution $v$ such that $v(0,\chi,\mu) = Y^{\chi,\mu}_0$ $($where $Y^{\chi,\mu}$ solves {\rm\Cref{eq:McKV.eq.intro}} with $X$ starting for the random variable $\chi$ with law $\mu)$, then 
 	for any $i\in\{1,\dots,N\}$
 	\begin{equation*}
 		\E^{\P^{\smalltext{\hat\alpha}}}\big[\big|v^{i,\np}(0, \chi^i,\dots, \chi^\np) - V(0, \chi^i,\mu)\big|^2\big] \le C\bigg(\frac1N+r_{\text{\fontsize{5}{5}\selectfont$\np$},1+d,q}\bigg),\; \forall  (N,q)\in \N^\star\times(2,+\infty)
 	\end{equation*}
 	where $(\chi^i)_{i\in\{1,\dots,N\}}$ are $N$ {\rm i.i.d.} and $\cF_0$-measurable random variables with finite $q$-moment.
 \end{corollary}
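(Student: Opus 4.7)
The plan is to combine the PDE-to-BSDE identification with \Cref{thm:main.limit} (or \Cref{cor:main.conv.b}). First, by \Cref{thm:Bellman} applied to the $\np$-player game started at $\X^\np_0 = (\chi^1,\dots,\chi^\np)$, the Nash value satisfies $V^{i,\np}(\hat\alpha^{-i,\np}) = Y_0^{i,\np}$; combined with the hypothesised classical solvability of the HJB system \eqref{eq:N.hjb.control}, a standard Itô--Krylov argument run conditionally on $\cF_0$ (which is independent of the driving Brownian motions) yields the identification $Y_0^{i,\np} = v^{i,\np}(0,\chi^1,\dots,\chi^\np)$, $\P^{\hat\alpha^\np}$--a.s. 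The analogous identification $v(0,\chi^i,\mu) = Y_0^{\chi^i,\mu}$ with the initial value of the McKean--Vlasov BSDE \eqref{eq:bsde.char.mfg} from \Cref{prop:char.mfe} is part of the corollary's assumption.

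The core step is then to transfer the conclusion of \Cref{thm:main.limit} to the setting of $\cF_0$-measurable i.i.d. initial conditions. Since $(\chi^1,\dots,\chi^\np)$ is independent of the driving noise, one conditions on $\cF_0$ and essentially re-runs the backward propagation of chaos argument behind \Cref{thm:main.limit} realisation-wise: only independence of the Brownian increments from the initial data is used in that proof, a property preserved under the conditioning. This delivers, $\P$--a.s.,
\[
\big|v^{i,\np}(0,\chi^1,\dots,\chi^\np) - v(0,\chi^i,\mu)\big|^2 \le C\bigg(\frac{1}{\np} + \np R_\np^2 + \gamma_\np^\omega\bigg),
\]
with $\gamma_\np^\omega$ the (random) empirical-measure error from \eqref{eq:def.gamma} computed for this draw of the initial i.i.d.\ sample.

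Taking expectation under $\P^{\hat\alpha}$, the term $\np R_\np^2$ is $\Oc(1/\np)$ under the standing assumptions of \Cref{thm:main.limit}--\Cref{cor:main.conv.b} (either no interaction through the controls, or the first-order-condition structure from \Cref{rem:Lambda.depends.xi} giving $R_\np = 1/\np$), while $\E^{\P^{\hat\alpha}}[\gamma_\np^\omega]$ is controlled by the standard bounds on the $\Wc_2$-rate of convergence of empirical measures of i.i.d.\ random variables with finite $q$-moment, see \emph{e.g.} \cite{fournier2015rate}. Since the relevant joint state-control distribution in the Markovian setting of the corollary lives in a $(1+d)$-dimensional ambient space, this produces exactly the rate $r_{\text{\fontsize{5}{5}\selectfont$\np$},1+d,q}$, completing the proof. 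The main obstacle, to my mind, is ensuring that the estimate in \Cref{thm:main.limit}—formulated for a deterministic initial sequence—indeed passes through the conditioning with constants uniform in the realisation of $(\chi^j)_{j\in\{1,\dots,\np\}}$; this amounts to tracking the dependence on the initial data throughout the backward propagation of chaos argument, but requires no genuinely new estimates.
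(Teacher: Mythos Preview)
Your proposal is correct and follows essentially the same approach as the paper: identify the PDE values with the BSDE initial values on both sides, then invoke \Cref{thm:main.limit} together with the Fournier--Guillin rates (which is precisely the content of \Cref{rem.rate}). The paper's own argument is a single sentence---``a direct consequence of \Cref{thm:main.limit} and \Cref{rem.rate}''---so your write-up is in fact more careful than the paper's, in particular regarding the passage from deterministic to $\cF_0$-measurable i.i.d.\ initial data via conditioning; the paper simply absorbs this into the word ``direct''.
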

In other words, \Cref{thm:main.limit} provides probabilistic arguments for the convergence of partial differential equations.
Results in this direction have been pioneered by \citeauthor*{cardaliaguet2019master} \cite[Theorem 2.13]{cardaliaguet2019master} using fully analytic techniques. 
Notice however that \cite{cardaliaguet2019master} includes common noise and sets the problem on the torus whereas the present case considers interaction through the controls.
The convergence of viscosity solutions was more recently investigated by \citeauthor*{gangbo2021finite} \cite{gangbo2021finite}.

\medskip
Without the Markovian assumption made in this subsection, the value function of the $\np$-player game (at equilibrium) can be seen as a viscosity solution of a path-dependent PDE. A simple case when this hold is when $\sigma$ is constant, see \cite[Theorem 4.3]{ekren2014viscosity}.
 But of course, a path-dependent PDE interpretation of the mean-field game in this setting is still uncharted ground.

\subsubsection{The case of closed-loop controls}
\label{eq:close.loop}

Closed-loop control, which are given as functions of the states, are arguably best suited to model players' behaviours.
In fact, in most games players update their controls based on the position of (all) the participants to the game, rather than based on the randomness (or noise) in the system as suggested by open-loop controls.
In other terms, the controls are functions of the states.
The important difference in the context of stochastic differential games is that, while for open-loop controls the fact that a given player changes their strategy does not have any incidence on the strategies of the other players, this is not the case for closed-loop controls, at least in the strong formulation (See \cite[Section 2.1.2]{carmona2018probabilisticI} for details).
This is due to the fact that in the strong formulation of the game, a change in the control should imply a change in the state.
Let us recall the following definition of closed-loop Nash equilibrium (in the present weak formulation) for completeness.
It is taken and adapted from \cite[Definition 2.6]{carmona2018probabilisticI}.
\begin{definition}
\label{def:closed.loop.Nash}
	A closed-loop Nash equilibrium is a family of $\np$ control processes $\hat\alpha\coloneqq (\hat\alpha^i)_{i\in\{1,\dots,N\}}\in\cA^\np$ such that for any $i\in\{1,\dots,\np\}$, $\hat\alpha^i_t = \hat\phi^i(t, \X_{\cdot\wedge t}^N)$ for some Borel-measurable function $\hat\phi^i$, and we have
\[
	\EE^{\PP^{\smalltext{\hat\alpha}}}\bigg[\int_0^Tf_s\big( X^i_{\cdot\wedge s},L^N(\XX^N_{\cdot\wedge s},\hat\alpha_s),\hat\alpha^i_s\big)\mathrm{d}s+g\big(X^i,L^N(\X^N)\big)\bigg]\ge \EE^{\PP^{\smalltext{\alpha}}}\bigg[\int_0^Tf_s\big(X^i_{\cdot\wedge s},L^N(\X^N_{\cdot\wedge s},\alpha_s),\alpha^i_s\big)\mathrm{d}s+g\big(X^i,L^N(\X^N)\big)\bigg],
\]
for every $i\in\{1,\dots,N\},$ and every Borel-measurable function $\phi^i$, where we defined 
\[
	\alpha_t\coloneqq  \big(\hat\phi^1(t,\X^N_{\cdot\wedge t}),\dots,\hat\phi^{i-1}(t,\X^N_{\cdot\wedge t}), \phi^i(t, \X^N_{\cdot\wedge t}),\hat\phi^{i+1}(t,\X_{\cdot\wedge t}^N),\dots,\hat\phi^N(t, \X^N_{\cdot\wedge t})\big),
\]
where $\XX^N$ is the state process when the control $\alpha$ is used.
\end{definition}

\medskip
One notable advantage of the present weak formulation is that it allows to derive convergence of closed-loop Nash equilibria as well.
We owe this to the fact that changes of the control do not affect the state process, but only its law (see also \citeauthor*{possamai2020zero} \cite{possamai2020zero} for additional advantages of the weak formulation for stochastic differential games).
The convergence of closed-loop Nash equilibrium from the weak formulation perspective is easily seen by observing that the PDE representations of our $N$-player and mean-field games coincide with equations derived in \cite{cardaliaguet2019master,delarue2020master} in the context of closed-loop (or Markovian) controls.
More generally, we have the following.
\begin{corollary}
	Assume that the matrix $\sigma^\top\sigma$ is invertible and that $m=d$.
	Under the conditions of {\rm\Cref{thm:main.limit}}, if there is a closed-loop Nash equilibrium $\hat\alpha^{\np}= (\hat\alpha^{1,\np},\dots, \hat\alpha^{\np,\np})$, then $(V^{i,\np})_{N\in \mathbb{N}^\star} $ converges to $ V^{\hat\xi}$  in the sense of \eqref{eq:conv empirical}.
\end{corollary}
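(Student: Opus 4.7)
The core idea is that, in the weak formulation, the state process $\X^N$ lives on the fixed reference space $(\Omega,\Fc,\P)$ and is entirely unaffected by the choice of control; only the probability measure changes. Under the invertibility hypothesis, the natural filtration of $\X^N$ turns out to generate $\F_N$, so every $\F_N$-predictable open-loop control is already a Borel feedback on $\X^N$, and conversely. Hence closed-loop and open-loop Nash equilibria coincide, and the claim follows by a direct application of \Cref{thm:main.limit} (respectively \Cref{cor:main.conv.b}).

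More precisely, here is the plan. First, because $m=\bmdim$ and $\sigma^\top\sigma$ is invertible, $\sigma_t(X^i_{\cdot\wedge t})$ is an invertible $d\times d$ matrix $\diff t\otimes\diff\P$-almost everywhere, so the SDE \Cref{eq:start def X} can be inverted to give
\[
W^i_t=\int_0^t\sigma_s^{-1}(X^i_{\cdot\wedge s})\,\diff X^i_s,\; t\in[0,T],\;\P\text{--a.s.}
\]
Thus the $\P$-completed filtration $\F^i$ of $W^i$ coincides with the $\P$-completed natural filtration of $X^i$; taking products over $i\in\{1,\dots,N\}$, the filtration $\F_N$ coincides with the $\P$-completion of the natural filtration generated by $\X^N$. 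By a standard functional-representation argument (together with a càglàd regularisation to pass from progressive to predictable measurability), every $\F_N$-predictable, $A$-valued process $\alpha^i$ admits a version of the form $\alpha^i_t=\phi^i(t,\X^N_{\cdot\wedge t})$ for some Borel-measurable $\phi^i:[0,T]\times\Cc_m^N\longrightarrow A$, and conversely every such feedback is $\F_N$-predictable. Up to $\diff t\otimes\diff\P$-null sets, the class $\Ac^N$ of admissible open-loop strategies therefore equals the class of closed-loop strategies of \Cref{def:closed.loop.Nash}.

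Second, because the state $\X^N$ on the reference space is not affected when a single player switches strategy (only the measure is), the best-response inequality of \Cref{def:closed.loop.Nash}, in which player $i$ is allowed to deviate to any Borel feedback $\phi^i(t,\X^N_{\cdot\wedge t})$, is, in light of the filtration identification above, identical to the best-response identity of \Cref{def:Nash}, in which player $i$ may deviate to any $\F_N$-predictable process. Consequently, the given closed-loop Nash equilibrium $\hat\alpha^N$ is automatically an open-loop Nash equilibrium in the sense of \Cref{def:Nash}, and $V^{i,N}(\hat\alpha^{-i,N})$ has the same value in both formulations. Applying \Cref{thm:main.limit} (or \Cref{cor:main.conv.b} when $g\neq 0$) to the sequence $(\hat\alpha^N)_{N\in\N^\star}$ then yields \Cref{eq:conv empirical}.

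The only mildly delicate point is the identification $\F_N=\sigma(\X^N)\vee\mathcal{N}_\P$ together with the representation of predictable processes as Borel feedbacks; these are classical facts once $\sigma^{-1}$ exists pathwise, but some care is needed regarding the $\P$-augmentation and the passage between predictable and progressively measurable versions of controls. Every other step is an immediate consequence of the theorems already proved in the paper, which is what makes the weak-formulation approach particularly flexible with respect to the open-loop/closed-loop dichotomy.
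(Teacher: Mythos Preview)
Your proposal is correct and follows essentially the same route as the paper: identify the $\P$-completed filtrations of $\X^N$ and $(W^i)_{i\in\{1,\dots,N\}}$ via the invertibility of $\sigma$, use this together with the weak-formulation fact that $\X^N$ is unchanged under deviations to show that the closed-loop Nash equilibrium is also an open-loop one, and then invoke \Cref{thm:main.limit} (or \Cref{cor:main.conv.b}). The paper's proof is slightly terser---it cites \cite[Lemma 8.1]{soner2011quasi} for the filtration identification rather than writing out the stochastic-integral inversion---but the argument is the same.
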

\begin{remark}
	The reader will observe that assuming that the matrix $\sigma^\top\sigma$ is invertible and that $m=d$ is needed only to guarantee that the $(\P$-completed$)$ natural filtrations of $\X^N$ and $(W^i)_{i \in \{1,\dots,N\}}$ are identical. Any other conditions implying this identification can be used instead.
\end{remark}

\begin{proof}
	Since $\sigma^\top\sigma$ is invertible and $m=d$, the $\P$-completed filtrations of $\X^N$ and $(W^i)_{i\in \{1,\dots,N\}}$ coincide (see for instance \citeauthor*{soner2011quasi} \cite[Lemma 8.1]{soner2011quasi}).
	Let $\hat\alpha^N\coloneqq (\hat\alpha^{i,\np})_{i\in\{1,\dots,\np\}}$ be a closed-loop Nash equilibrium.
	Then, for any $i\in\{1,\dots,N\}$, it can be written as $\hat\alpha^{i,\np}_t = \hat\phi^i(t, \X_{\cdot\wedge t}^N)$, $\d t\otimes\d\P$--a.e., for some Borel-measurable function $\hat\phi^i$.
	Since $\X^N$ is adapted to the ($\P$-completed) natural filtration of $(W^1,\dots, W^\np)$, it follows that $\hat\alpha^{i,\np}$ is an open-loop control for every $i\in\{1,\dots,N\}$.
	Let us now show that $\hat\alpha^{\np}$ is an open-loop Nash equilibrium.
	Let $\alpha^i$ be an admissible control, and define $\alpha^N\coloneqq (\alpha_t^i)_{i \in \{1,\dots,\np\}}$ by
	\begin{equation*}
		\alpha_t^N \coloneqq  \big(\hat\phi^1(t, \X_{\cdot\wedge t}^N),\dots, \hat\phi^{i-1}(t, \X^N_{\cdot\wedge t}),\alpha^i_t, \hat\phi^{i+1}(t, \X^N_{\cdot\wedge t}),\dots, \hat\phi^\np(t,\X^N_{\cdot\wedge t}) \big),\; \d t\otimes\d\P\text{\rm--a.e.},
	\end{equation*}
	for some open-loop control $\alpha^i$. Since the filtrations of $\X^N$ and $(W^i)_{i\in \{1,\dots,N\}}$ coincide, it follows that there is a Borel-measurable function $\phi$ such that $\alpha^i = \phi(t, \X^N_{\cdot\wedge t})$.
	In particular, $X$ remains unchanged when using the control $\alpha$.
	Using that $(\hat\alpha^{i,\np})_{i\in\{1,\dots,\np\}}$ is a closed-loop Nash equilibrium, we therefore have
	\begin{equation*}
		J\big((\hat\alpha^{i,\np})_{i\in\{1,\dots,\np\}}\big) \ge J\big((\alpha^i)_{i\in \{1,\dots,\np\}}\big),
	\end{equation*}
	showing by \Cref{def:closed.loop.Nash} that $\hat\alpha$ is an open-loop equilibrium as well.
	The result now follows from \Cref{thm:main.limit}.
\end{proof}

The papers \cite{cardaliaguet2019master}, \cite{delarue2020master} and \cite{lacker2020convergence} consider closed-loop controls and common noise.
Observe however, that they use completely different arguments, and the type of limits obtained are quite different from ours.

\section{Existence and uniqueness of mean field games in the weak formulation}

\subsection{BSDE characterisation of Nash equilibria}\label{sec:characNash}

We start by a characterisation result for Nash equilibria, namely \Cref{thm:Bellman}.
To derive it, we adapt the well-known Bellman optimality principle (sometime referred to as the martingale optimality principle) to the case of stochastic differential games. We emphasise that the result is by no means new---see the references in \Cref{foot:intro}---and that we present its derivation for the sake of comprehensiveness.

\medskip
Throughout the section, we assume to be given some $\hat\alpha^N\in\mathcal{NA}$. In order to derive a characterisation of Nash equilibria using BSDEs, it will prove useful to define dynamic versions of the value functions of the players. Namely, we define the function\footnote{To be completely rigorous, one should first define a family of random variables $(V^{i,N}(\tau,\hat \alpha^{-i,N}))_{\tau\in\Tc(\F_N)}$, exactly as in \Cref{eq:deftimevalue}. Then the dynamic programming principle below will apply directly to that family, which will then form by \Cref{lemma:supermart} a so-called super-martingale system, which by the results of \citeauthor*{della1981sur} \cite{della1981sur} can then be aggregated into an $\F_N$-optional process. We decided to skip these (classical) subtleties for the sake of brevity. } 
\begin{equation}\label{eq:deftimevalue}
	V_t^{i,N}\big(\hat\alpha^{-i,N}\big)\coloneqq \esssup_{\alpha\in\cA} \EE^{\PP^{\smalltext{\alpha}\smalltext{\otimes}_\tinytext{i}\smalltext{\hat\alpha}^{ \tinytext{-}\tinytext{i}\tinytext{,}\tinytext{N}}\smalltext{,}\smalltext{N}}}\bigg[\int_t^Tf_s\big(X^i_{\cdot\wedge s},L^\np\big(\XX^N_{\cdot\wedge s},(\alpha\otimes_i\hat{\alpha}^{-i,N})_s\big),\alpha_s\big)\mathrm{d}s+g\big(X^i,L^\np(\XX^N)\big)\bigg|\cF_{N,t}
	\bigg],
\end{equation}
for every $i\in\{1,\dots,\np\}$ and $t\in[0,T]$.

\medskip
The first result below is simply the dynamic programming principle. In our setting, where $b$ is bounded, this can be deduced for instance from \citeauthor*{karoui2013capacities2} \cite[Theorem 3.4]{karoui2013capacities2} (given that we only have drift control here, a more accessible references but with exponential utilities, is \citeauthor*{espinosa2015optimal} \cite[Lemma 4.13]{espinosa2015optimal}).

\begin{lemma}\label{lemma:dynprog}
	Let {\rm \Cref{ass.Lambda.charac}} be satisfied. For any $i\in\{1,\dots,\np\}$, and any $\FF_N$--stopping times $\tau$ and $\rho$ such that $0\leq \tau\leq \rho$, we have
	\[
	V_\tau^{i,N}\big(\hat\alpha^{-i,N}\big)=\esssup_{\alpha\in\cA} \EE^{\PP^{\smalltext{\alpha}\smalltext{\otimes}_\tinytext{i}\smalltext{\hat\alpha}^{ \tinytext{-}\tinytext{i}\tinytext{,}\tinytext{N}}\smalltext{,}\smalltext{N}}}\bigg[\int_\tau^\rho f_s\big(X^i_{\cdot\wedge s},L^\np\big(\XX^N_{\cdot\wedge s},(\alpha\otimes_i\hat{\alpha}^{-i,N})_s\big),\alpha_s\big)\mathrm{d}s+V_\rho^{i,N}\big(\hat\alpha^{-i,N}\big)\bigg|\cF_{N,\tau}\bigg].
\] 
\end{lemma}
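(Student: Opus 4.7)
The plan is to establish the two inequalities between both sides of the claimed identity. For any $\alpha\in\cA$, write $\P^\alpha:=\P^{\alpha\otimes_i\hat\alpha^{-i,N},N}$ and
\[
R^\alpha_{s,t}:=\int_s^tf_u\big(X^i_{\cdot\wedge u},L^N(\XX^N_{\cdot\wedge u},(\alpha\otimes_i\hat\alpha^{-i,N})_u),\alpha_u\big)\mathrm{d}u,\; G^\alpha:=g\big(X^i,L^N(\XX^N)\big).
\]
Under \Cref{ass.Lambda.charac}, the quadratic growth of $f,g$, the boundedness of $b$, and classical moment bounds on $X^i$ guarantee that all the conditional expectations below are well-defined and integrable.

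For the inequality $V_\tau^{i,N}(\hat\alpha^{-i,N})\leq(\mathrm{RHS})$, I would fix an arbitrary $\alpha\in\cA$ and invoke the tower property under $\P^\alpha$ at $\rho$:
\[
\E^{\P^\alpha}\big[R^\alpha_{\tau,T}+G^\alpha\big|\cF_{N,\tau}\big]=\E^{\P^\alpha}\Big[R^\alpha_{\tau,\rho}+\E^{\P^\alpha}\big[R^\alpha_{\rho,T}+G^\alpha\big|\cF_{N,\rho}\big]\Big|\cF_{N,\tau}\Big].
\]
Since $\alpha$ itself is admissible in the essential supremum defining $V_\rho^{i,N}(\hat\alpha^{-i,N})$, the inner conditional expectation is $\P$--a.s.\ dominated by the latter, and taking the essential supremum over $\alpha\in\cA$ on both sides then yields the desired bound.

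For the converse inequality, the plan is a standard pasting argument. One first shows that the family $\{\E^{\P^\alpha}[R^\alpha_{\rho,T}+G^\alpha|\cF_{N,\rho}]:\alpha\in\cA\}$ is upward directed: given $\alpha^1,\alpha^2\in\cA$ and $B\in\cF_{N,\rho}$ the event where the first dominates the second, the $\cF_{N,\rho}$-measurable concatenation $\alpha^1\mathbf{1}_B+\alpha^2\mathbf{1}_{B^c}$ still belongs to $\cA$ by stability of the admissible set, and its conditional expectation equals the pointwise maximum. Consequently, there is a sequence $(\alpha^n)_{n\in\N^\star}\subset\cA$ along which these conditional expectations increase $\P$-a.s.\ to $V_\rho^{i,N}(\hat\alpha^{-i,N})$. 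Given any $\alpha^0\in\cA$, one forms $\tilde\alpha^n:=\alpha^0\mathbf{1}_{[0,\rho)}+\alpha^n\mathbf{1}_{[\rho,T]}\in\cA$, observes that $\P^{\tilde\alpha^n}$ coincides with $\P^{\alpha^0}$ on $\cF_{N,\rho}$ while its conditional distribution on $[\rho,T]$ given $\cF_{N,\rho}$ matches that of $\P^{\alpha^n}$, and applies the tower property to obtain
\[
V_\tau^{i,N}(\hat\alpha^{-i,N})\geq \E^{\P^{\tilde\alpha^n}}\big[R^{\tilde\alpha^n}_{\tau,T}+G^{\tilde\alpha^n}\big|\cF_{N,\tau}\big]=\E^{\P^{\alpha^0}}\Big[R^{\alpha^0}_{\tau,\rho}+\E^{\P^{\alpha^n}}\big[R^{\alpha^n}_{\rho,T}+G^{\alpha^n}\big|\cF_{N,\rho}\big]\Big|\cF_{N,\tau}\Big].
\]
Letting $n\to\infty$ via monotone convergence and then taking the essential supremum over $\alpha^0\in\cA$ yields the reverse inequality.

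The main obstacle is the rigorous justification of the directedness property and the associated monotone approximation: extracting from the upward-directed $\omega$-wise family a genuine admissible sequence $(\alpha^n)_{n\in\N^\star}\subset\cA$ converging $\P$-a.s.\ to $V_\rho^{i,N}(\hat\alpha^{-i,N})$ ultimately relies on a measurable selection theorem, as developed in \cite[Theorem 3.4]{karoui2013capacities2} or, in a setting closer to ours, in \cite[Lemma 4.13]{espinosa2015optimal}. Once these tools are available, the remaining steps reduce to standard manipulations of Radon--Nikod\'ym densities and conditional expectations.
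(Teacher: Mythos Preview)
Your proposal is correct and essentially aligned with the paper. In fact, the paper does not give its own proof of this lemma at all: it simply states that the result is the dynamic programming principle and refers the reader to \cite[Theorem 3.4]{karoui2013capacities2} and \cite[Lemma 4.13]{espinosa2015optimal}, which are precisely the references you invoke at the end of your argument for the measurable selection step. Your two-inequality sketch (tower property for one direction, upward directedness and pasting for the other) is the standard machinery underlying those cited results, so there is no divergence in approach---you have merely unpacked what the paper leaves as a citation.
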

The following is a consequence of \Cref{lemma:dynprog}, and is a version of the so-called martingale optimality principle.
\begin{lemma}\label{lemma:supermart}
	Let {\rm \Cref{ass.Lambda.charac}} be satisfied. For any $i\in\{1,\dots,\np\}$, and $\alpha\in\cA$, the process $M^{\alpha,i}$ defined by
	 \[
	 	M^{\alpha,i}_t\coloneqq V_t^{i,N}\big(\hat\alpha^{-i,N}\big)+\int_0^tf_s\big(X^i_{\cdot\wedge s},L^N\big(\XX^N_{\cdot\wedge s},(\alpha\otimes_i\hat\alpha^{-i,N})_s\big),\alpha_s\big)\mathrm{d}s,\; t\in[0,T],
	 \]
	is an $\big(\FF_\np,\PP^{\alpha\otimes_\smalltext{i}\hat\alpha^{\text{\fontsize{4}{4}\selectfont $-i,N$}},N}\big)$--super-martingale belonging to $\S^2(\R,\F_N)$. Moreover, the process $M^{\hat{\alpha}^{i,N},i}$ is an $\big(\FF_\np,\PP^{\hat{\alpha}^{\text{\fontsize{4}{4}\selectfont $N$}},N}\big)$-martingale, and has a continuous $\PP$-modification.
\end{lemma}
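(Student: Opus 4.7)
The plan is to derive the four assertions in order, starting from the supermartingale inequality, which is a direct consequence of the dynamic programming principle. Fix $\alpha\in\cA$ and two $\F_N$-stopping times $0\le \sigma\le\tau\le T$. Since $\alpha$ is one particular admissible choice inside the essential supremum defining $V^{i,N}_\sigma(\hat\alpha^{-i,N})$, \Cref{lemma:dynprog} immediately yields
\[
V^{i,N}_\sigma\big(\hat\alpha^{-i,N}\big) \ge \EE^{\PP^{\alpha\otimes_i\hat\alpha^{-i,N},N}}\bigg[\int_\sigma^\tau f_s\big(X^i_{\cdot\wedge s},L^N(\XX^N_{\cdot\wedge s},(\alpha\otimes_i\hat\alpha^{-i,N})_s),\alpha_s\big)\diff s + V^{i,N}_\tau\big(\hat\alpha^{-i,N}\big)\bigg|\cF_{N,\sigma}\bigg].
\]
Adding the $\cF_{N,\sigma}$-measurable integral $\int_0^\sigma f_s(\ldots)\diff s$ to both sides rewrites this as $M^{\alpha,i}_\sigma\ge \EE^{\PP^{\alpha\otimes_i\hat\alpha^{-i,N},N}}[M^{\alpha,i}_\tau|\cF_{N,\sigma}]$, which is the supermartingale property under $\PP^{\alpha\otimes_i\hat\alpha^{-i,N},N}$.

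For the $\S^2(\R,\F_N)$-bound, I would combine the quadratic growth of $f$ and $g$ from \Cref{ass.Lambda.charac}.$(i)$--$(ii)$, the boundedness of $b$, the compactness of $A$, and the linear growth of $\sigma$. Boundedness of $b$ makes the stochastic exponential defining $\diff \PP^{\alpha\otimes_i\hat\alpha^{-i,N},N}/\diff\PP$ lie in $\L^p(\PP)$ for every $p\ge 1$; together with \Cref{assump:sigma} and standard SDE moment estimates, this yields $\EE[\sup_{t\le T}\|X^i_{\cdot\wedge t}\|_\infty^p]<\infty$ for all $p\ge 1$, both under $\PP$ and under the tilted measure. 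Plugging this into the quadratic bound on $f$ and $g$ controls $|V^{i,N}_t(\hat\alpha^{-i,N})|$ in $\L^2(\R,\cF_{N,t},\PP)$ uniformly in $t$, and the running reward integral is handled analogously, so that $M^{\alpha,i}\in\S^2(\R,\F_N)$.

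Specialising the supermartingale inequality between $0$ and $T$ to $\alpha=\hat\alpha^{i,N}$ and taking $\PP^{\hat\alpha^N,N}$-expectations gives $V^{i,N}_0(\hat\alpha^{-i,N}) = \EE^{\PP^{\hat\alpha^N,N}}[M^{\hat\alpha^{i,N},i}_0]\ge \EE^{\PP^{\hat\alpha^N,N}}[M^{\hat\alpha^{i,N},i}_T]$. By \Cref{def:Nash} the right-hand side equals $V^{i,N}_0(\hat\alpha^{-i,N})$; since a supermartingale whose expectations at $0$ and $T$ coincide is a true martingale on $[0,T]$, $M^{\hat\alpha^{i,N},i}$ is an $(\F_N,\PP^{\hat\alpha^N,N})$-martingale. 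Continuity of a suitable modification then follows from the martingale representation property of $\F_N$: this property holds under $\PP$ by assumption, and by Girsanov it transports to $\PP^{\hat\alpha^N,N}$ with respect to $(W^{\hat\alpha^N,i})_{i\in\{1,\ldots,N\}}$, so that every $(\F_N,\PP^{\hat\alpha^N,N})$-martingale admits a continuous version; equivalence of $\PP$ and $\PP^{\hat\alpha^N,N}$ upgrades this to a $\PP$-modification.

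In my view, the only real conceptual obstacle is the preliminary aggregation and measurability of the family $(V^{i,N}_\tau(\hat\alpha^{-i,N}))_{\tau\in\Tc(\F_N)}$ into an $\F_N$-optional process, as flagged in the excerpt's footnote. This requires the supermartingale-system theory of \citeauthor*{della1981sur}, without which the inequality above cannot even be phrased pathwise. Once this aggregation is performed, the remainder of the argument reduces to standard tools: dynamic programming gives the supermartingale property, the Nash condition promotes it to equality in expectation (hence to a martingale), and the martingale representation property under an equivalent measure change delivers the continuous modification.
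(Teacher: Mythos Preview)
Your argument follows the same route as the paper for all four assertions, and the supermartingale, martingale, and continuity parts are correct as written. The one place where your sketch falls short of a proof is the $\S^2$ bound: controlling $|V^{i,N}_t(\hat\alpha^{-i,N})|$ in $\L^2$ \emph{uniformly in $t$} does not by itself yield $\E\big[\sup_{t}|M^{\alpha,i}_t|^2\big]<\infty$, and the essential supremum defining $V^{i,N}_t$ ranges over controls $\beta$ each inducing a different measure $\P^{\beta\otimes_i\hat\alpha^{-i,N},N}$, so you cannot simply dominate $V^{i,N}_t$ by a single conditional expectation under a fixed measure. The paper closes this gap by first rewriting $V^{i,N}_t$ as an esssup over $\beta\in\cA_t(\alpha)=\{\beta\in\cA:\beta_s=\alpha_s,\,\d s\otimes\d\P\text{-a.e. on }[0,t]\}$, which forces $\P^{\beta\otimes_i\hat\alpha^{-i,N},N}|_{\cF_{N,t}}=\P^{\alpha\otimes_i\hat\alpha^{-i,N},N}|_{\cF_{N,t}}$; then the family of conditional expectations is upward directed, so the esssup is a monotone limit, and after taking expectations and exploiting the measure consistency on $\cF_{N,t}$ one can invoke Doob's inequality to bound the pathwise supremum. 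This is the only missing ingredient; once inserted, your proof and the paper's coincide.
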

\begin{proof}
Fix some $\alpha\in\cA$. 
By \Cref{lemma:dynprog}, we have for any $0\leq u\leq t\leq T$
\[
	V_u^{i,N}\big(\hat{\alpha}^{-i,N}\big)\geq\EE^{\PP^{\smalltext{\alpha}\smalltext{\otimes}_\tinytext{i}\smalltext{\hat\alpha}^{ \tinytext{-}\tinytext{i}\tinytext{,}\tinytext{N}}\smalltext{,}\smalltext{N}}}\bigg[\int_u^t f_s\big(X^i_{\cdot\wedge s},L^\np\big(\XX^N_{\cdot\wedge s},(\alpha\otimes_i\hat{\alpha}^{-i,N})_s\big),\alpha_s\big)\mathrm{d}s+V_t^{i,N}\big(\hat{\alpha}^{-i,N}\big)\bigg|\cF_{N,u}\bigg],
\]
from which the super-martingale property is clear. Let us now check the integrability. First, it is standard to show that for any $\alpha\in\Ac$, we have
\[
V_t^{i,N}\big(\hat{\alpha}^{-i,N}\big)= \esssup_{\beta\in\cA_\smalltext{t}(\alpha)}\EE^{\PP^{\smalltext{\alpha}\smalltext{\otimes}_\tinytext{i}\smalltext{\hat\alpha}^{ \tinytext{-}\tinytext{i}\tinytext{,}\tinytext{N}}\smalltext{,}\smalltext{N}}}\bigg[\int_t^T f_s\big(X^i_{\cdot\wedge s},L^\np\big(\XX^N_{\cdot\wedge s},(\beta\otimes_i\hat{\alpha}^{-i,N})_s\big),\beta_s\big)\mathrm{d}s+g\big(X^i,L^\np(\XX^N)\big)\bigg|\cF_{N,t}\bigg],
\]
where $\Ac_t(\alpha)\coloneqq \{\beta\in\Ac:\beta_s=\alpha_s,\; \mathrm{d}s\otimes\mathrm{d}\P\text{\rm--a.e.}\}$. Then, using {\rm \Cref{ass.Lambda.charac}}, we have for some $C>0$ which may change value from line to line but only depends on $\ell_f$, $\ell_g$,  $T$ and any majorant of $\{\bar d(a,a_o):a\in A\}$ (recall that $A$ is compact), that for any $\alpha\in\Ac$
\begin{align*}
\big|V_t^{i,N}\big(\hat{\alpha}^{-i,N}\big)\big|^2&\leq \esssup_{\beta\in\cA_\smalltext{t}(\alpha)}\EE^{\PP^{\smalltext{\beta}\smalltext{\otimes}_{\tinytext{i}}\smalltext{\hat\alpha}^{\tinytext{-}\tinytext{i}\tinytext{,}\tinytext{N}}\smalltext{,}\smalltext{N}}}\bigg[\int_0^T\big|f_s\big(X^i_{\cdot\wedge s},L^\np\big(\XX^N_{\cdot\wedge s},(\beta\otimes_i\hat{\alpha}^{-i,N})_s\big),\beta_s\big)\big|\mathrm{d}s+\big|g\big(X^i,L^\np(\XX^N)\big)\big|\bigg|\cF_{N,t}
	\bigg]^2\\
	&\leq C\esssup_{\beta\in\cA_\smalltext{t}(\alpha)}\EE^{\PP^{\smalltext{\beta}\smalltext{\otimes}_{\tinytext{i}}\smalltext{\hat\alpha}^{\tinytext{-}\tinytext{i}\tinytext{,}\tinytext{N}}\smalltext{,}\smalltext{N}}}\bigg[1+\int_0^T\bigg(\bar d^2(\beta_s,a_o)+\frac1N\sum_{j=1}^N\bar d^2\big((\beta\otimes_i\hat\alpha^{-i,N})_s^j,a_o\big)\bigg)\mathrm{d}s+\max_{i\in\{1,\dots,N\}}\|X^i\|_\infty^2\bigg|\Fc_{N,t}\bigg]^2\\
	&\leq C\esssup_{\beta\in\cA_\smalltext{t}(\alpha)}\EE^{\PP^{\smalltext{\beta}\smalltext{\otimes}_{\tinytext{i}}\smalltext{\hat\alpha}^{\tinytext{-}\tinytext{i}\tinytext{,}\tinytext{N}}\smalltext{,}\smalltext{N}}}\bigg[1+\max_{i\in\{1,\dots,N\}}\|X^i\|_\infty^2\bigg|\Fc_{N,t}\bigg]^2.
\end{align*}
It is immediate to show that the family $\big\{\EE^{\PP^{\beta\otimes_{\text{\fontsize{4}{4}\selectfont $i$}}\hat\alpha^{\text{\fontsize{4}{4}\selectfont $-i,N$}},N}}\big[1+\max_{i\in\{1,\dots,N\}}\|X^i\|_\infty^2\big|\Fc_{N,t}\big]:\beta\in\Ac_t(\alpha)\big\}$ is upward directed, so that there is some $\Ac_t(\alpha)$-valued sequence $(\beta^n)_{n\in\N}$ such that
\[
\esssup_{\beta\in\cA_\smalltext{t}(\alpha)}\EE^{\PP^{\smalltext{\beta}\smalltext{\otimes}_{\tinytext{i}}\smalltext{\hat\alpha}^{\tinytext{-}\tinytext{i}\tinytext{,}\tinytext{N}}\smalltext{,}\smalltext{N}}}\bigg[1+\max_{i\in\{1,\dots,N\}}\|X^i\|_\infty^2\bigg|\Fc_{N,t}\bigg]=\lim_{n\to+\infty}\uparrow\EE^{\PP^{\smalltext{\beta}^\tinytext{n}\smalltext{\otimes}_{\tinytext{i}}\smalltext{\hat\alpha}^{\tinytext{-}\tinytext{i}\tinytext{,}\tinytext{N}}\smalltext{,}\smalltext{N}}}\bigg[1+\max_{i\in\{1,\dots,N\}}\|X^i\|_\infty^2\bigg|\Fc_{N,t}\bigg].
\]
Taking expectations under $\P^{\alpha\otimes_\smalltext{i}\hat\alpha^{\smalltext{-}\smalltext{i}\smalltext{,}\smalltext{N}},N}$ above, using the monotone convergence theorem, the fact that for any $n\in\N$, $\P^{\beta^\smalltext{n}\otimes_\smalltext{i}\hat\alpha^{\smalltext{-}\smalltext{i}\smalltext{,}\smalltext{N}},N}$ and $\P^{\alpha\otimes_\smalltext{i}\hat\alpha^{\smalltext{-}\smalltext{i}\smalltext{,}\smalltext{N}},N}$ coincide on $\Fc_{N,t}$, we deduce thanks to Doob's inequality that
\[
\EE^{\PP^{\smalltext{\alpha}\smalltext{\otimes}_\tinytext{i}\smalltext{\hat\alpha}^{ \tinytext{-}\tinytext{i}\tinytext{,}\tinytext{N}}\smalltext{,}\smalltext{N}}}\bigg[\sup_{t\in[0,T]}\big|V_t^{i,N}\big(\hat{\alpha}^{-i,N}\big)\big|^2\bigg]\leq C \lim_{n\to+\infty}\EE^{\PP^{\smalltext{\beta}^\tinytext{n}\smalltext{\otimes}_{\tinytext{i}}\smalltext{\hat\alpha}^{\tinytext{-}\tinytext{i}\tinytext{,}\tinytext{N}}\smalltext{,}\smalltext{N}}}\bigg[1+\max_{i\in\{1,\dots,N\}}\|X^i\|_\infty^4\bigg]\leq C\esssup_{\beta\in\cA^\smalltext{N}}\E^{\P^\smalltext{\beta}}\bigg[1+\max_{i\in\{1,\dots,N\}}\|X^i\|_\infty^4\bigg],
\]
the latter being finite since by boundedness of $b$, and compactness of $A$, we know that $\|\X^N\|_\infty$ has moments of any order under any $\PP^{\beta}$, which are bounded uniformly over $\beta\in\Ac^N$. 

\medskip
Using the definition of $M^{\alpha,i}$, it is then immediate using similar arguments that $M^{\alpha,i}$ is an $\big(\FF_\np,\PP^{\alpha\otimes_\smalltext{i}\hat\alpha^{\text{\fontsize{4}{4}\selectfont $-i,N$}},N}\big)$--super-martingale, which in addition belongs to $\S^2(\R,\F_N)$. Next, since $\hat\alpha^N\in\mathcal{NA}$, we have for any $\FF_\np$--stopping time $\tau$ 
\begin{align*}
	V_0^{i,N}\big(\hat{\alpha}^{-i,N}\big) = M^{\hat{\alpha}^{\text{\fontsize{4}{4}\selectfont $i,N$}},i}_0\geq \EE^{\PP^{\smalltext{\hat{\alpha}}^{\tinytext{N}}\smalltext{,}\smalltext{N}}}\big[M^{\hat{\alpha}^{\smalltext{i}\smalltext{,}\smalltext{N}},i}_\tau\big]&\geq \EE^{\PP^{\smalltext{\hat{\alpha}}^{\tinytext{N}}\smalltext{,}\smalltext{N}}}\bigg[\int_0^Tf_s\big(X^i_{\cdot\wedge s},L^\np\big(\XX^N_{\cdot\wedge s},(\alpha\otimes_i\hat{\alpha}^{-i,N})_s\big),\alpha_s\big)\mathrm{d}s+g\big(X^i,L^\np(\XX^N)\big)\bigg]
	\\
	&=V_0^{i,N}\big(\hat{\alpha}^{-i,N}\big).
\end{align*}
Because $\FF_\np$ is right-continuous, the fact that the above holds for an arbitrary stopping time $\tau$ implies indeed that $M^{\hat{\alpha}^{\text{\fontsize{4}{4}\selectfont $i,N$}},i}$ is an $\big(\FF_\np,\PP^{\hat{\alpha}^{\text{\fontsize{4}{4}\selectfont $N$}},N}\big)$-martingale. 
The existence of a c\`adl\`ag $\PP$-modification is then again due to the right-continuity of $\FF_\np$, and the fact that this modification is actually continuous comes from the martingale representation property, recall that $\F_N$ is a completed Brownian filtration.
\end{proof}

We can proceed with the 
\begin{proof}[Proof of \Cref{thm:Bellman}]
For any $i\in\{1,\dots,\np\}$, using the martingale representation theorem (more precisely here one should use \citeauthor*{jacod2003limit} \cite[Theorem III.5.24]{jacod2003limit}) and the integrability of $M^{\hat\alpha^{\text{\fontsize{4}{4}\selectfont $i,N$}},i}$, we know that there exists an $\RR^{d\times \np}$-valued, $\FF_\np$-predictable process $\ZZ^{i,\np}\coloneqq  (Z^{i,1,\np}, \dots, Z^{i,\np,\np})$ such that for any $j\in\{1,\dots,N\}$, it holds that
\[
	\EE^{\PP^{\smalltext{\hat{\alpha}}^{\tinytext{N}}\smalltext{,}\smalltext{N}}}\bigg[\sum_{j=1}^\np\int_0^T\|Z_s^{i,j,\np}\|^2\mathrm{d}s\bigg]<\infty, \; \text{\rm and}\; 
	M^{\hat{\alpha}^{\text{\fontsize{4}{4}\selectfont $i,N$}},i}_t=M^{\hat{\alpha}^{\text{\fontsize{4}{4}\selectfont $i,N$}},i}_0+\sum_{j=1}^\np\int_0^tZ_s^{i,j,\np}\cdot\mathrm{d}\big(W_s^{\hat{\alpha}^{\text{\fontsize{4}{4}\selectfont $N$}},N}\big)^{j},\; t\in[0,T].
\]
For any $\alpha\in\cA$, this implies that
\begin{align*}
	\mathrm{d}M^{\alpha,i}_s &= \Big(f_s\big(X^i_{\cdot\wedge s},L^{\np}\big(\XX^N_{\cdot\wedge s},(\alpha\otimes_i\hat{\alpha}^{-i,N})_s\big),\alpha_s\big)-f_s\big(X^i_{\cdot\wedge s},L^{\np}\big(\XX^N_{\cdot\wedge s},(\alpha\otimes_i\hat{\alpha}^{-i,N})_s\big),\hat{\alpha}^N_s\big)\Big)\mathrm{d}s + \d M^{\hat\alpha^{\text{\fontsize{4}{4}\selectfont $i,N$}},i}_s\\
	&= \Big( b_s\big(X^i_{\cdot\wedge s},L^\np\big(\XX^N_{\cdot\wedge s},(\alpha\otimes_i\hat{\alpha}^{-i,N})_s\big),\alpha^i_s\big) - b_s\big(X^i_{\cdot\wedge s},L^\np\big(\XX^N_{\cdot\wedge s},\hat{\alpha}^N_s\big),\hat{\alpha}^{i,N}_s\big)\Big)\cdot Z^{i,i,\np}_s\mathrm{d}s\\
	&\quad +\Big(f_s\big(X^i_{\cdot\wedge s},L^\np\big(\XX^N_{\cdot\wedge s},(\alpha\otimes_i\hat{\alpha}^{-i,N})_s\big),\alpha_s\big)-f_s\big(X^i_{\cdot\wedge s},L^\np\big(\XX^N_{\cdot\wedge s},\hat{\alpha}^N_s\big),\hat{\alpha}^{i,N}_s\big)\Big)\mathrm{d}s+\sum_{j=1}^\np Z_s^{i,j,\np}\cdot\mathrm{d}\big(W_s^{\alpha\otimes_\smalltext{i}{\hat\alpha}^{\text{\fontsize{4}{4}\selectfont $-i,N$}}}\big)^{j}\\
	&\quad +\sum_{j\in\{1,\dots,N\}\setminus\{ i\}}\Big(b_s\big(X^j_{\cdot\wedge s},L^\np\big(\XX^N_{\cdot\wedge s},(\alpha\otimes_i\hat{\alpha}^{-i,N})_s\big),\hat\alpha^j_s\big) - b_s\big(X^j_{\cdot\wedge s},L^\np\big(\XX^N_{\cdot\wedge s},\hat{\alpha}^{N}_s\big),\hat\alpha^j_s\big) \Big)\cdot Z^{i,j,\np}_s\mathrm{d}s .
\end{align*}
Since $M^{\alpha,i}$ must be an $\big(\FF_\np,\PP^{\alpha\otimes_\smalltext{i}\hat{\alpha}^{\text{\fontsize{4}{4}\selectfont $-i,N$}},N}\big)$--super-martingale, we deduce that (recall the function $h$ defined in \Cref{eq:h.def})
\begin{align*}
	&h\big(X^i_{\cdot\wedge s},L^\np\big(\XX^N_{\cdot\wedge s},(\alpha\otimes_i\hat{\alpha}^{-i,N})_s\big),Z^{i,i,\np}_s,\alpha_s\big)+\sum_{j\in\{1,\dots,N\}\setminus\{ i\}}b_s\big(X^j_{\cdot\wedge s},L^\np\big(\XX^N_{\cdot\wedge s},(\alpha\otimes_i\hat{\alpha}^{-i,N})_s\big),\hat\alpha^j_s\big)\cdot Z^{i,j,\np}_s \\
	&\quad \leq h\big(X^i_{\cdot\wedge s},L^\np\big(\XX^N_{\cdot\wedge s},\hat{\alpha}^N_s\big),Z^{i,i,\np}_s,\hat{\alpha}^{i,N}_s\big) +\sum_{j\in\{1,\dots,N\}\setminus\{ i\}}b_s\big(X^j_{\cdot\wedge s},L^\np\big(\XX^N_{\cdot\wedge s},\hat{\alpha}^{N}_s\big),\hat\alpha^j_s\big)\cdot Z^{i,j,\np}_s,\; \mathrm{d}s\otimes\mathrm{d}\PP\text{\rm--a.e.},
\end{align*}
and therefore that for any $i\in\{1,\dots,\np\}$
\begin{align*}
	&f_s\big(X^i_{\cdot\wedge s},L^\np\big(\XX^N_{\cdot\wedge s},\hat{\alpha}^N_s\big),\hat{\alpha}^{i,N}_s\big) +\sum_{j=1}^Nb_s\big(X^j_{\cdot\wedge s},L^\np\big(\XX^N_{\cdot\wedge s},\hat{\alpha}^{N}_s\big),\hat\alpha^j_s\big)\cdot Z^{i,j,\np}_s\\
 	&=h_s\big(X^i_{\cdot\wedge s},L^\np\big(\XX^N_{\cdot\wedge s},\hat{\alpha}^N_s\big),Z^{i,i,\np}_s,\hat{\alpha}^{i,N}_s\big) +\sum_{j\in\{1,\dots,N\}\setminus\{i\}}b_s\big(X^j_{\cdot\wedge s},L^\np\big(\XX^N_{\cdot\wedge s},\hat{\alpha}^{N}_s\big),\hat\alpha^j_s\big)\cdot Z^{i,j,\np}_s\\
   & =\sup_{a\in A}\bigg\{h\big(X^i_{\cdot\wedge s},L^\np\big(\XX^N_{\cdot\wedge s},(a\otimes_i\hat{\alpha}^{-i,N})_s\big),Z^{i,i,\np}_s,a\big) +\sum_{j\in\{1,\dots,N\}\setminus\{i\}}b_s\big(X^j_{\cdot\wedge s},L^\np\big(\XX^N_{\cdot\wedge s},(a\otimes_i\hat{\alpha}^{-i,N})_s\big), \hat\alpha^j_s\big)\cdot Z^{i,j,\np}_s \bigg\} ,\; \mathrm{d}s\otimes\mathrm{d}\PP\text{\rm--a.e.}
\end{align*}
This exactly means that $\hat\alpha^{N}_t\in \Oc^N\big(t,\X^N_{\cdot\wedge t},(Z^{i,j,N}_t)_{(i,j)\in\{1,\dots,N\}^\smalltext{2}}\big),\; 	\d t\otimes\d \P\text{\rm--a.e.}$. Defining now for $i\in\{1,\dots,N\}$ the $\RR$-valued process $Y^{i,N}\coloneqq V^{i,N}(\hat{\alpha}^{-i,N})$, we have thus obtained that $(Y^{i,N},Z^{i,j,N})_{(i,j)\in\{1,\dots, \np\}^\smalltext{2}}$ satisfies
BSDE \eqref{eq:bsde.main}.

\medskip

Finally, we deduce by integrability property of $M^{\hat\alpha^{i,N},i}_t$ derived in \rm\Cref{lemma:supermart} that 
	\begin{equation*}
		\EE^{\PP^{\smalltext{\hat{\alpha}}^{\tinytext{N}}\smalltext{,}\smalltext{N}}}\bigg[\sup_{t \in [0,T]}|Y^i_t|^2 + \sum_{j=1}^N\int_0^T\|Z^{i,j,N}_s\|^2\mathrm{d}s \bigg]<\infty,\; i\in\{1,\dots,N\}.
	\end{equation*}
\end{proof}

\begin{remark}
	We proved here that a Nash equilibrium is necessarily related to the solution of the above {\rm BSDE}, and that it has to be equal to a fixed point of the function $\Hc^N$ in the sense of {\rm \Cref{def:fixed-point-Hc}}. 
	We can also provide a converse statement in the sense that if there exists a fixed--point for $\Hc^N$ and a sufficiently integrable solution to the {\rm BSDE}, then it allows to construct a Nash equilibrium. 
	The reasoning is clear, and uses in particular the comparison theorem for one-dimensional {\rm BSDEs}.
	An argument along these lines will be used for mean-field games in the next section.
\end{remark}

\subsection{Existence and characterisation of mean-field equilibria}
\label{sec:MFG-cha}
Let us now focus on deriving a characterisation similar to that of \Cref{thm:Bellman}, but for mean-field games.
In essence, we will derive first a reverse result: we give a condition based on a BSDE, guaranteeing that a control strategy is a mean-field equilibrium.
A direct byproduct of this proposition is a new method to prove existence of mean-field equilibria in the weak formulation.
This method is adopted to prove \Cref{thm:main.existence.MFG}. Note however that the derived BSDE is rather esoteric.
In fact, the underlying probability measure and the driving noise both depend on the unknown. In that sense, it is reminiscent of the so-called McKean--Vlasov second-order BSDEs introduced in \citeauthor*{elie2021mean} \cite{elie2021mean} for a specific model, and in \citeauthor*{barrasso2022controlled} \cite{barrasso2022controlled} in a general setting, in order to characterise mean-field equilibria in stochastic differential games where volatility control is allowed (notice however that these references do not provide well-posedness results, and simply point out the connection).
The study of existence and uniqueness of this new type of equations is done in \Cref{sec:exists.MKVBSDE}. At the end of the section, we give a version of \Cref{thm:Bellman} adapted to the mean-field game setting, showing that any mean-field equilibrium must arise as solutions to the aforementioned new type of BSDE, which in turn will yield the argument for uniqueness of the mean-field game. 

\medskip

\begin{proof}[Proof of \rm\Cref{prop:char.mfe}]
	\emph{Step 1: necessary condition.} Let us first assume that \rm\Cref{eq:bsde.char.mfg} admits a solution satisfying \rm\Cref{eq:bsde.char.mfg.integrability} where $\hat\alpha\coloneqq \hat a\in\A$ is a maximiser of the Hamiltonian.
	Since $Z$ is sufficiently integrable, we have by taking expectations
	\begin{equation*}
		Y_0 = \EE^{\PP^{\smalltext{\hat\alpha}}}\bigg[g\big(X, \cL_{\hat\alpha}(X)\big) + \int_0^Tf_s\big(X_{\cdot\wedge s}, \cL_{\hat\alpha}(X_{\cdot\wedge s}, \hat\alpha_s),\hat\alpha_s\big)\mathrm{d}s \bigg],
	\end{equation*}
	and by Girsanov's theorem it holds that
	\begin{equation*}
		Y_t = g\big(X, \cL_{\hat\alpha}(X)\big) + \int_t^T \Big(f_s\big(X_{\cdot\wedge s}, \cL_{\hat\alpha}(X_{\cdot\wedge s}, \hat\alpha_s),\hat\alpha_s\big) - b_s\big(X_{\cdot\wedge s},\cL_{\hat\alpha}(X_{\cdot\wedge s}, \hat\alpha_s), \hat\alpha_s \big)\cdot Z_s\Big)\mathrm{d}s - \int_t^TZ_s\cdot \mathrm{d}W_s,\; \P\text{\rm--a.s.}
	\end{equation*}
	On the other hand, define $\xi_t \coloneqq  \cL_{\hat\alpha}(X_{\cdot\wedge t}, \hat\alpha_t)$, and let the first marginal of $\xi_t$ be denoted by $\xi^1_t$. We have by definition that $\xi\coloneqq (\xi_t)_{t\in[0,T]}\in\mathfrak B$. Now let $\alpha \in \mathfrak A$ be an arbitrary control strategy.
	The following (linear) BSDE parameterised by $\alpha$ admits a unique solution with $(Y^\alpha,Z^\alpha)\in\S^2(\R,\F,\P^\alpha)\times\H^2(\R,\F,\P^\alpha)$
	\begin{equation*}
		Y_t^\alpha = g(X, \xi^1) + \int_t^T f_s\big(X_{\cdot\wedge s}, \xi_s,\alpha_s\big) \mathrm{d}s - \int_t^TZ_s^\alpha\cdot \mathrm{d}W^{\alpha}_s,\; t\in[0,T],\; \P\text{\rm--a.s.}
	\end{equation*}
	This is obvious for instance from the well-posedness results in \citeauthor*{el1997backward} \cite{el1997backward}, since the terminal condition is square-integrable under $\P^\alpha$ by \Cref{ass.Lambda.charac}, the generator is clearly uniformly Lipschitz-continuous by boundedness of $b$, and its value at $0$ has the required integrability because of the growth condition on $f$ from \Cref{ass.Lambda.charac}.
	By the comparison theorem for BSDEs (written under $\P$) see again \cite{el1997backward}, we have $Y^\alpha_0 \le Y_0$ (recall that by definition, $\hat\alpha_t\in \A(t,X_{\cdot\wedge t},\xi^1_t,Z_t)$) and,
	applying Girsanov's theorem again it holds
	\begin{equation*}
		Y^\alpha_0  = \EE^{\PP^{\smalltext{\alpha}}}\bigg[g(X,\cL_\alpha(X)) +\int_0^Tf_s(X_{\cdot\wedge s}, \xi_s,\alpha_s)\mathrm{d}s  \bigg].
	\end{equation*}
	This shows that $\hat\alpha$ is optimal (since it is obvious here that $\hat\alpha\in\mathfrak A$), and by construction, we have $\xi_\cdot = \PP^{\hat\alpha}\circ (X_{\cdot\wedge \cdot}, \hat\alpha_\cdot)^{-1}$, which ends the proof of this implication.
\medskip

	\emph{Step 2: sufficient condition.}
	Let us now assume that $\hat\alpha \in \mathfrak{A}$ is a mean-field equilibrium.
	Exactly as in \Cref{sec:characNash}, we let $\xi\coloneqq \Lc_{\hat\alpha}(X,\hat\alpha)$, and define
	\begin{equation}\label{eq:deftimevalueMFG}
		V_t^{\xi}\coloneqq \esssup_{\alpha\in\mathfrak A}\EE^{\PP^{\smalltext{\hat\alpha}\smalltext{,}\smalltext{\xi}}}\bigg[\int_t^Tf_s(X_{\cdot\wedge s},\xi_s,\alpha_s)\mathrm{d}s+g(X,\xi_T^1)\bigg|\cF_{t} \bigg].
\end{equation}

	Again, the following dynamic programming principle holds, in the sense that for any $\FF$--stopping times $0\leq \tau\leq \rho$, we have
	\[
		V_\tau^{\xi}=\esssup_{\alpha\in\mathfrak A} \EE^{\PP^{\smalltext{\alpha}\smalltext{,}\smalltext{\xi}}}\bigg[\int_\tau^\rho f_s(X_{\cdot\wedge s},\xi_s,\alpha_s)\mathrm{d}s+V_\rho^{\xi}\bigg|\cF_{\tau}\bigg].
	\] 
As a consequence, and following exactly the same reasoning as in the proof of \Cref{lemma:supermart}, we have that for any $\alpha\in\mathfrak A$, the process $M^{\alpha}$ defined by
	 \[
	 	M^{\alpha}_t\coloneqq V_t^{\xi}+\int_0^tf_s(X_{\cdot\wedge s},\xi_s,\alpha_s)\mathrm{d}s,\; t\in[0,T],
	 \]
	is an $(\FF,\PP^{\alpha,\xi})$--super-martingale in $\S^2(\R,\F)$, the process $M^{\hat{\alpha}}$ is an $(\FF,\PP^{\hat\alpha,\xi})$-martingale in $\S^2(\R,\F)$, and has a continuous $\PP$-modification. Now, using the martingale representation theorem and the square-integrability of $M^{\hat\alpha,\xi}$, we know that there exists a process $Z\in\H^2(\R^d,\F,\P^{\hat\alpha,\xi})$ such that 
	\[
		M^{\hat{\alpha},\xi}_t=M^{\hat{\alpha},\xi}_0+\int_0^tZ_s\cdot\mathrm{d}W_s^{\hat{\alpha},\xi},\; t\in[0,T].
	\]
	For any $\alpha\in\mathfrak A$, this implies that
	\begin{align*}
		\mathrm{d}M^{\alpha}_s &= \big(f_s(X_{\cdot\wedge s},\xi_s,\alpha_s)-f_s(X_{\cdot\wedge s},\xi_s,\hat{\alpha}_s)\big)\mathrm{d}s + \d M^{\hat\alpha}_s\\
		&= \big( b_s(X_{\cdot\wedge s},\xi_s,\alpha_s) - b_s(X_{\cdot\wedge s},\xi_s,\hat{\alpha}_s)\big)\cdot Z_s\mathrm{d}s +\big(f_s(X_{\cdot\wedge s},\xi_s,\alpha_s)-f_s(X_{\cdot\wedge s},\xi_s,\hat{\alpha}_s)\big)\mathrm{d}s+ Z_s\cdot\mathrm{d}W_s^{\alpha,\xi}.
	\end{align*}

	Since $M^{\alpha}$ must be an $\big(\FF,\PP^{\alpha,\xi}\big)$--super-martingale, we have $
		h\big(X_{\cdot\wedge s},\xi_s,Z_s,\alpha_s\big)\leq h\big(X_{\cdot\wedge s},\xi_s,Z_s,\hat{\alpha}_s\big),\; \mathrm{d}s\otimes\mathrm{d}\PP\text{\rm--a.e.},$ and therefore that
	\[
  		h\big(X_{\cdot\wedge s},\xi_s,Z_s,\hat{\alpha}_s\big)=\sup_{a\in A}\big\{h\big(X_{\cdot\wedge s},\xi_s,Z_s,a\big)\big\},\; \mathrm{d}s\otimes\mathrm{d}\PP\text{\rm--a.e.}
	\]
	This exactly means that $\hat\alpha_t \in \A\big(t,X_{\cdot\wedge t}, \cL_{\hat\alpha}(X_{\cdot\wedge t},\hat\alpha_t),Z_t\big) ,\; \mathrm{d}t\otimes\mathrm{d}\PP\text{\rm--a.e.}$ Defining now the $\RR$-valued process $Y\coloneqq V^{\hat\alpha,\xi}$, we have thus obtained that the process $(Y,Z)$ satisfies
	\begin{align*}
		Y_t= g(X,\xi_T^1) + \int_t^Th_s\big(X_{\cdot\wedge s},\xi_s,Z_s,\hat\alpha_s\big)\mathrm{d}s- \int_t^TZ_s\cdot \mathrm{d}W_s.
	\end{align*}
	This proves using Girsanov's theorem that $(Y,Z)$ solves BSDE \eqref{eq:bsde.char.mfg}, and it has the required integrability in \Cref{eq:bsde.char.mfg.integrability}.
\end{proof}

The first consequence of the above characterisation of the mean-field game in the weak formulation by backward SDEs is the existence and uniqueness result given in \Cref{thm:main.existence.MFG}.
\begin{proof}[Proof of \Cref{thm:main.existence.MFG}]
	By the characterisation \rm\Cref{prop:char.mfe}, the mean field game admits a mean field equilibrium $\hat\alpha \in \mathfrak{A}$ if and only if the BSDE \eqref{eq:bsde.char.mfg} admits a solution where $\Lambda_t(\x, \xi^1, z) \in \argmax_{a\in A}\{h_t(\x, \xi, a, z)\}$.
	Under \rm\Cref{assum:main.conv}, \rm\Cref{eq:bsde.char.mfg} reduces to
	\begin{equation*}
	\label{eq:bsde.char.mfg.proof.mfg.exists}
		\begin{cases}
			\displaystyle Y_t = g\big(X, \cL_{\hat\alpha}(X)\big) + \int_t^Tf_s\big(X_{\cdot\wedge s}, \cL_{\hat\alpha}(X_{\cdot\wedge s}, \hat\alpha_s),  \hat\alpha_s\big)\mathrm{d}s - \int_t^TZ_s\cdot \mathrm{d}W^{\hat\alpha}_s,\; t\in[0,T],\; \PP^{\hat\alpha}\text{\rm--a.s.},\\[0.8em]
			\displaystyle\hat\alpha_t = \Lambda_t\big(X_{\cdot\wedge t}, \cL_{\hat\alpha}(X_{\cdot\wedge t}),Z_t\big), \; \frac{\mathrm{d}\PP^{\hat\alpha}}{\mathrm{d}\PP} \coloneqq  \cE\bigg(\int_0^Tb_s\big(X_{\cdot\wedge s},\cL_{\hat\alpha}(X_{\cdot\wedge s}, \hat\alpha_s),\hat\alpha_s \big)\cdot \mathrm{d}W_s\bigg).
		\end{cases}
	\end{equation*}
	By \rm\Cref{assum:main.conv}, \rm\Cref{assum:gen.MkV} is satisfied with $K_B\coloneqq K_b - \ell_b\ell_\Lambda$.
	Thus, by \Cref{thm:Gen.MkV.BSDE}, the generalised McKean--Vlasov BSDE \eqref{eq:bsde.char.mfg.proof.mfg.exists} admits a solution satisfying \eqref{eq:bsde.char.mfg.integrability}.
	This shows existence and uniqueness of the mean-field equilibrium.
\end{proof}

\section{Limit theorems for large population games and existence of mean-field equilibria}\label{sec:limitth}

The goal of this section is to prove the main results of the article, namely \rm\Cref{thm:main.limit} and its corollaries.
The plan we follow is to begin by proving the characterisation results \rm\Cref{thm:Bellman,prop:char.mfe} so that the convergence problem becomes a propagation of chaos question.
But beforehand, we fully analyse a toy model.
The aim here is to present a simple example that will make the method developed in this article fully transparent to the reader before delving into the more involved general setting.
\subsection{A case study}
\label{sec:case.study}
We assume that the drift $b$ and the reward functions $f$ and $g$ are such that, given $\alpha^{-i}\in\Ac^{N-1}$, the problem faced by player $i\in\{1,\dots,N\}$ takes the form
\begin{equation}
\label{eq:value.example}
 	V^{i,N}(\alpha^{-i}) \coloneqq \sup_{\alpha\in\cA}\EE^{\PP^{\smalltext{\alpha}\smalltext{\otimes}_\tinytext{i}\smalltext{\alpha}^{\tinytext{-}\tinytext{i}\tinytext{,}\tinytext{N}}}}\bigg[\int_0^T\bigg(-\frac12|\alpha_s|^2 + \frac{\kappa_1}{N}\sum_{j=1}^Nf(X^i_s) + \frac{\kappa_2}{N}\sum_{j=1}^N\alpha^j_s\bigg)\mathrm{d}s+ g(X^i_T)\bigg],
\end{equation}

\vspace{-1.2em}
\[
\frac{\diff \PP^{\alpha\otimes_\smalltext{i}\alpha^{\smalltext{-}\smalltext{i}},N}}{\mathrm{d} \PP} \coloneqq \cE\bigg(\sum_{j=1}^N\int_0^\cdot(\alpha^j_s-kX_s^j)\diff W^j_s\bigg), 
\]
where $X^i$ satisfies $ X^i_t = X^i_0+ \sigma W^i_t$ and $f$, $g$ are two bounded, Lipschitz-continuous functions. 
We further assume that $d=m=1$ and $A \subseteq \RR$ is a compact set containing $0$.
We are going to show that a Nash equilibrium for this game converges to a mean-field equilibrium and compute the convergence rate.

 \medskip
	
\emph{Step 1: characterisation for the $N$-player game.}	Let us assume that for all $N\in \NN^\star$ this game admits a Nash equilibrium $(\hat\alpha^{1,N}, \dots, \hat\alpha^{N,N})$.
Then in particular, for each $i\in\{1,\dots,N\}$, the control problem \eqref{eq:value.example} obtained by replacing $\alpha^{-i}$ by $\hat \alpha^{-i,N}$ is solved by $\hat\alpha^{i,N}$.
Thus, standard stochastic control arguments (see the proof of {\rm \Cref{thm:Bellman}} below for details) allow to obtain that $\hat\alpha^{i,N}$ maximises the Hamiltonian along a BSDE solution. 
That is, it satisfies $\diff t\otimes \diff \PP \text{--a.e.}$
\begin{equation}
\label{eq:cond.alpha.example}
 	\hat\alpha^{i,N}_t = \argmax_{a\in A}\bigg\{ \frac{\kappa_1}{N}\sum_{j=1}^Nf(X^j_t) + \frac{\kappa_2}{N}\sum_{j\in \{1,\dots,N\}\setminus \{i\}}\hat\alpha^{j,N}_t + \frac{\kappa_2}{N}a + (a - kX_t^i)Z^{i,i,N}_t + \sum_{j\in \{1,\dots,N\}\setminus \{i\}}^N(\hat\alpha^{j,N}_t-kX_t^j)Z^{i,j,N}_t -\frac12|a|^2 \bigg\},
\end{equation}
where $(Y^{i,N}, Z^{i,j,N})_{(i,j)\in \{1,\dots,N\}^\smalltext{2}}$ solves the BSDE
\begin{align*}
 	Y^{i,N}_t &= g(X^i_T) + \int_t^T\sup_{a\in A}\bigg\{ -\frac12|a|^2 + \frac{\kappa_1}{N}\sum_{j=1}^Nf(X^j_s) + \frac{\kappa_2}{N}\sum_{j\in \{1,\dots,N\}\setminus \{i\}}^N\big(\hat\alpha^{j,N}_s + (\hat\alpha^{j,N}_s-kX_s^j)Z^{i,j,N}_s\big)+\frac{\kappa_2}{N}a + (a-kX_s^i)Z^{i,i,N}_s \bigg\}\mathrm{d} s  \\
 	&\quad - \sum_{j=1}^N\int_t^TZ^{i,j,N}_s\diff W^{j}_s,
\end{align*}
and we have $V^{i,N}(\alpha^{-i}) = Y^{i,N}_0$.
The unique maximiser in {\rm \Cref{eq:cond.alpha.example}} is given by $\hat\alpha^{i,N}_t = \mathrm{P}_A\big( Z^{i,i,N}_t + \frac{\kappa_2}{N}\big),\; \diff t\otimes \diff \PP \text{--a.e.,}$ where $\mathrm{P}_A$ is the projection operator on the set $A$.
In particular, the function $\Lambda$ introduced in {\rm \Cref{ass.Lambda.conv}} reduces to
\begin{equation*}
	\Lambda_t\big(x, \xi, z,\aleph^{i,N}(\x,z)\big) = \mathrm{P}_A\big(z+\aleph^{i,N}_t(\x,z)\big) , \;(t,x,\xi,z,\aleph)\in[0,T]\times\R\times\Pc_2(\R)\times\R\times\R,
\end{equation*}
so that we have here $ \aleph_t^{i,N}(\x,z)\coloneqq  \kappa_2/N$, for any $(t,\x,z)\in[0,T]\times\Cc^N\times\R^{N\times N}$. Thus, the above BSDE simplifies to
\begin{equation}
\label{eq:N.BSDE.example}
	Y^{i,N}_t = g(X^i_T) + \int_t^T\bigg(\frac{1}{N}\sum_{j=1}^N\bigg(\kappa_1f(X^j_s) + \kappa_2\mathrm{P}_A\bigg(Z^{j,j,N}_s +\frac{\kappa_2}{N}\bigg)\bigg) -\frac12\bigg|\mathrm{P}_A\bigg(Z^{i,i,N}_s +\frac{\kappa_2}{N}\bigg)\bigg|^2 \bigg)\diff s	 - \sum_{j=1}^{N}\int_t^TZ^{i,j,N}_s\diff W^{\hat\alpha^{\text{\fontsize{4}{4}\selectfont $N$}},j}_s,\; \P\text{\rm --a.s.},
\end{equation}
with $W^{\hat\alpha^{\text{\fontsize{4}{4}\selectfont $N$}},j}_\cdot \coloneqq W^j_\cdot - \int_0^\cdot (\hat\alpha^{j,N}_s-kX_s^j)\diff s$, $j\in\{1,\dots,N\}$.

 	\medskip

\emph{Step 2: characterisation for the mean-field game.}
Next, assume that for any $i\in\{1,\dots,N\}$, we can uniquely solve the BSDE 
\begin{equation}
\label{eq:Gen.bsde.example}
	\begin{cases}
		\displaystyle Y_t^i = g(X_T^i)  + \int_t^T\bigg(-\frac12\big|\mathrm{P}_A(Z_s^i)\big|^2 +\EE^{\PP^{\smalltext{\hat\alpha}^{\tinytext{i}}}}\big[\kappa_1f(X^i_s) + \kappa_2\mathrm{P}_A(Z_s^i) \big]\bigg)\diff s - \int_t^TZ_s^i\diff W^{\hat\alpha,i}_s,\; t\in[0,T],\; \PP^{\hat\alpha^{\text{\fontsize{4}{4}\selectfont $i$}}}\text{--a.s.},\\[0.8em]
 		\displaystyle	\frac{\diff \PP^{\hat\alpha^{\text{\fontsize{4}{4}\selectfont $i$}}}}{\diff \PP} = \cE\bigg(\int_0^\cdot \big(\mathrm{P}_A(Z_s^i) -kX_s^i\big)\diff W_s^i\bigg)_T, \; W^{\hat\alpha,i} \coloneqq  W^i - \int_0^\cdot \big(\mathrm{P}_A(Z_s^i) - kX_s^i\big)\diff s.
	\end{cases}
\end{equation}	
Then, since $\hat\alpha_t^i\coloneqq  \mathrm{P}_A(Z_t^i) = \Lambda_t(X_{\cdot\wedge t}^i, \xi_t, Z_t^i,0)$ maximises (uniquely) the Hamiltonian, \emph{i.e.}
\begin{equation*}
	\hat\alpha_t^i = \argmax_{a\in A}\bigg\{-\frac12|a|^2 + \int_{\RR^\smalltext{2}}(\kappa_1 f(u) + \kappa_2 v)\xi(\diff u,\diff v) + aZ_t^i \bigg\},
\end{equation*}
it follows as in {\rm \Cref{prop:char.mfe}} that $\hat\alpha$ is the unique solution of the following mean-field game: find $\xi \in\mathfrak B$, and $\hat\alpha\in\mathfrak A$ such that $\hat \alpha$ attains the supremum in the definition of $V^\xi$ with
\begin{equation}
\label{eq:MFG.case.study}
	\begin{cases}
		\displaystyle V^\xi\coloneqq \sup_{\alpha\in \mathfrak{A}}\EE^{\PP^\smalltext{\alpha}}\bigg[\int_0^T\bigg( \int_{\RR^\smalltext{2}}(\kappa_1 f(u) + \kappa_2 v)\xi_s(\diff u,\diff v)-\frac12|\alpha_s|^2\bigg)\mathrm{d}s + g(X_T^i) \bigg],\\[1em]
		\displaystyle\diff X_t^i = \sigma \diff W_t^i, \; \frac{\diff \PP^{\alpha}}{\diff \PP }\coloneqq  \cE\bigg(\int_0^\cdot \alpha_s-kX_s^i\diff W^i_s\bigg)_T,\; \alpha\in\mathfrak A,
	\end{cases}
\end{equation}
and such that the equilibrium condition $\PP^{\hat\alpha^i}\circ (X_t^i, \hat\alpha_t^i)^{-1} = \xi_t$, $\diff t\otimes \diff \PP \text{--a.e.}$ holds.
Moreover, we have $V^{\hat\xi} = Y_0$ with $\hat\xi\coloneqq \PP^{\hat\alpha}\circ (X_t, \hat\alpha_t)^{-1} $.
This follows by the comparison theorem for BSDEs and martingale representation, see the proof of {\rm \Cref{prop:char.mfe}} for details.
The fact that $\hat\alpha$ is the unique mean-field equilibrium follows by uniqueness of the BSDE \eqref{eq:Gen.bsde.example} and \rm\Cref{prop:char.mfe}, while well-posedness of \Cref{eq:Gen.bsde.example} is discussed in {\rm \Cref{thm:Gen.MkV.BSDE}}.
Again, solving this equation is equivalent to solving the mean-field game itself.
In order to derive the convergence of $V^{1,N}(\hat\alpha^{-1})$ to $V^{\hat\xi}$, we will use propagation of chaos arguments to show that $Y^{1,N}_0$ converges to a process $Y_0$.
To make the exposition in this case study even simpler, we will assume $\kappa_2 = 0$.

 	\medskip

\emph{Step 3: propagation of chaos}.
Let us therefore consider the particle system $\big(\barX^{i,N}, \barY^{i,N}, \barZ^{i,j,N}\big)_{(i,j)\in\{1,\dots,N\}^\smalltext{2}}$ formed by solving the coupled FBSDE 
\begin{equation}
\label{eq:N.BSDE.example.aux}
	\begin{cases}
		\displaystyle \barX^{i,N}_t = X_0^i + \int_0^t\big(\mathrm{P}_A(\barZ^{i,i,N}_s) - k\barX^{i,N}_s\big)\diff s + \sigma W^{\hat\alpha,i}_t,\; t\in[0,T],\\[0.8em]
		\displaystyle \barY^{i,N}_t = g(\barX^{i,N}_T) + \int_t^T\bigg(-\frac12\big|\mathrm{P}_A\big(\barZ^{i,i,N}_s\big)\big|^2 + \frac{\kappa_1}{N}\sum_{j=1}^Nf(\barX^{j,N}_s)\bigg) \diff s - \sum_{j=1}^{N}\int_t^T\barZ^{i,j,N}_s\diff W^{\hat\alpha,j}_s,\; t\in[0,T],\; \P^{\hat\alpha}\text{--a.s.},
	\end{cases}
\end{equation}
with the same Brownian motions $(W^{\hat\alpha,1},\dots, W^{\hat\alpha, N})$ given in \Cref{eq:N.BSDE.example} and the probability measure $\P^{\hat\alpha}$ with density 
\begin{equation*}
	\frac{\diff \P^{\hat\alpha}}{\diff \P} \coloneqq  \cE\bigg(\sum_{j=1}^N\int_0^\cdot\big(\hat\alpha^i_s - kX^i_s\big)\diff W^i_s \bigg)_T.
\end{equation*}
Observe that $\P^{\hat\alpha}\circ\big(\barX^{i,N}, \barY^{i,N}, \barZ^{i,j,N}\big)^{-1} = \P^{\hat\alpha^{\text{\fontsize{4}{4}\selectfont $N$}},N}\circ \big(X^i, Y^{i,N}, Z^{i,j,N}\big)^{-1}.$
Thus, it follows that $Y^{i,N}_0 = \barY^{i,N}_0$.
Therefore, it suffices to derive the rate of convergence of the sequence $(\barY^{i,N}_0)_{N\in\N^\star}$ to $Y^i_0$.
To do so, let us first apply It\^o's formula to $(\delta X^{i,N})^2\coloneqq  (\barX^{i,N} - X^i)^2$.
This yields, thanks to Lipschitz-continuity of the projection operator and Young's inequality
\begin{equation}
\label{eq:estim.x.case.study}
 	\e^{\beta t}|\delta X^{i,N}_t|^2 \le \int_0^t\big((\bar\eps^{-1}+\beta - 2k)\e^{\beta s}|\delta X^{i,N}_s |^2 + \bar\eps\e^{\beta s} |\delta Z^{i,i,N}_s|^2\big)\diff s,
\end{equation}
for all $\bar\eps>0$, where we put $\delta Z^{i,j,N}\coloneqq  \barZ^{i,j,N} - Z^i\mathbf{1}_{\{i = j\}}$.
Similarly, applying It\^o's formula to $\e^{\beta t}(\delta Y^{i,N})^2$ with $\delta Y^{i,N} \coloneqq  \barY^{i,N} - Y^i$, for every $\varepsilon>0$ we have 
\begin{align*}
\notag
 		|\delta Y^{i,N}_0|^2 + (1 - \eps)\sum_{j=1}^N\int_0^T\e^{\beta s}|\delta Z^{i,j,N}_s|^2\diff s 
 		&\le \ell_g^2\e^{\beta T}|\delta X_T|^2 + \int_0^T\e^{\beta s}\Big((C_A +1)\varepsilon^{-1} - \beta \Big)|\delta Y^{i,N}_s|^2\d s\\
 		&\quad + \varepsilon \kappa_1^2\int_0^T \e^{\beta s}\bigg(\frac{1}{N}\sum_{j=1}^Nf(\overline X^{j,N}_s) - \E^{\P^{\smalltext{\hat\alpha}}}[f(X_s)]\bigg)^2\diff s
 		 - \sum_{j=1}^N\int_0^T2\e^{\beta s}\delta Y^{i,N}_s\delta Z^{i,j,N}_s\diff W^{\hat\alpha,j}_s,
\end{align*}
where $C_A$ is a constant depending on $A$.
Letting $\varepsilon<1$ and $\beta \ge (C_A + 1)\varepsilon^{-1}$, taking expectation on both sides, if $2k\ge \beta + \bar\varepsilon^{-1}$, then by \eqref{eq:estim.x.case.study} we have
\begin{align}
\notag
 	\E^{\P^{\smalltext{\hat\alpha}}}\bigg[ |\delta Y^{i,N}_0|^2 + (1 - \eps)\sum_{j=1}^N\int_0^T\e^{\beta s}|\delta Z^{i,j,N}_s|^2\diff s \bigg]
 	\notag
 		&\le \E^{\P^{\smalltext{\hat\alpha}}}\bigg[\ell_g^2\e^{\beta T}|\delta X_T|^2 + \frac{\varepsilon \kappa_1^2\ell_f}{N}\sum_{j=1}^N\int_0^T \e^{\beta s}|\delta X^{j,N}_s|^2\mathrm{d}s\\
 		\notag
 		&\quad + \kappa_1^2\int_0^T\bigg(\frac1N\sum_{j=1}^Nf(X^i_s) - \E^{\P^{\smalltext{\hat\alpha}}}[f(X_s)]\bigg)^2\diff s\bigg]\\
 		\label{eq:estim.y.case.study}
 		&\le \bar\varepsilon\ell_g^2\E^{\P^{\smalltext{\hat\alpha}}}\bigg[\int_0^T |\delta Z^{i,i,N}_s|^2\d s \bigg] +  \frac{2\varepsilon\bar \varepsilon \kappa_1^2\ell_fT}{N}\sum_{j=1}^N\E^{\P^{\smalltext{\hat\alpha}}}\bigg[\int_0^T \e^{\beta s}|\delta Z^{i,i,N}_s|^2\d s\bigg] + E_N,
\end{align}
with
\begin{equation*}
	E_N \coloneqq 2\kappa_1^2\E^{\P^{\smalltext{\hat\alpha}}}\bigg[\int_0^T\bigg(\frac1N\sum_{j=1}^Nf(X^i_s) - \E^{\P^{\smalltext{\hat\alpha}}}[f(X_s)]\bigg)^2\diff s\bigg]\le C/N,
\end{equation*}
where the inequality follows using standard law of large number arguments.
Thus, averaging on both sides over $i\in \{1,\dots,N\}$, we have
\begin{align*}
	\E^{\P^{\smalltext{\hat\alpha}}}\bigg[ |\delta Y^{i,N}_0|^2 + &(1 - \eps - \bar\varepsilon(\ell_g^2 + \kappa_1^2 \ell_fT))\frac1N\sum_{i=1}^N\sum_{j=1}^N\int_0^T\e^{\beta s}|\delta Z^{i,j,N}_s|^2\diff s \bigg] \le E_N.
\end{align*}
Thus, first taking $\bar\eps\in (0,1)$ such that $\bar\eps < (\ell_g^2 + 2\ell_fT\kappa_1^2)^{-1}$, and then $\eps>0$ such that $\eps < 1 -\bar\eps\big( \ell_g^2 + 2\ell_fT\kappa_1^2 \big)$, we have 
\begin{equation}
\label{eq:estim.sum.z.case.study}
	\sum_{i=1}^N\sum_{j=1}^N\E^{\P^{\smalltext{\hat\alpha}}}\bigg[ \int_0^T\e^{\beta s}|\delta Z^{i,j,N}_s|^2\diff s\bigg] \le C,
\end{equation}
for some constant $C>0$ that does not depend on $N$.
In particular, the minimum value allowed for $k$ is 
\begin{equation*}
	k  
	  \ge \inf\big\{(C_A+1)\eps^{-1} + \bar\eps^{-1}:\, \eps<(\ell_g^2 + 2\ell_fT\kappa_1^2)^{-1},\,\, \eps < 1 -\bar\eps\big( \ell_g^2 + 2\ell_fT\kappa_1^2\big)\big\}.
\end{equation*}
Thus, coming back to \Cref{eq:estim.y.case.study}, we have
\begin{align*}
	|\delta Y^{i,N}_0|^2 &+ (1 - \eps - \bar\eps\ell_g^2)\sum_{j=1}^N\E^{\P^{\smalltext{\hat\alpha}}}\bigg[\int_0^T\e^{\beta s}|\delta Z^{i,j,N}_s|^2\diff s\bigg] \le \frac{2T\kappa_1^2}{N}\E^{\P^{\smalltext{\hat\alpha}}}\bigg[\int_0^T\sum_{j=1}^N\e^{\beta s}|\delta Z^{j,j,N}_s|^2\diff s\bigg] + E_N\le \frac{C}{N},
\end{align*}
where we used \Cref{eq:estim.sum.z.case.study} to estimate the first term on the right hand side.
Hence, by the choice of $\bar\eps$ and $\eps$ we have $|\delta Y^{i,N}_0| \le C/N$.
We have thus obtained the following.
\begin{proposition}
	Let $\kappa_2 = 0$.
	Assume that for each $N$ the $N$-player game described in \eqref{eq:value.example} admits a Nash equilibrium $(\hat\alpha^{i,N})_{i\in \{1,\dots,N\}}$ and that {\rm\Cref{eq:Gen.bsde.example}} 
	admits a unique solution.
	There is a constant $\delta>0$ depending on $f$, $g$ and $T$ such that if $k\ge \delta$,
	we have
	\begin{equation*}
		\big|V^{i,N}(\hat\alpha^{-i}) - V^{\hat\xi} \big|^2 + \int_0^T\cW_2^2\Big(\P^{\hat\alpha^N,N}\circ (\hat\alpha^{i,N}_s)^{-1}, \P^{\hat\alpha}\circ (\hat\alpha_s)^{-1} \Big)\diff s \le \frac{C}{N}, \; \forall N\in \mathbb{N}^\star,
	\end{equation*}
	for some constant $C>0$.
\end{proposition}
	\begin{proof}
		The proof of the bound of $|V^{i,N}(\hat\alpha^{-i}) - V^{\hat\xi} |^2$ is done above.
		It remains to show the convergence of the law of $\hat\alpha^{i,N}$.
		We have by Lipschitz-continuity of the projection operator
		\begin{align*}
			\cW_2^2\big(\P^{\hat\alpha^\smalltext{N},N}\circ (\hat\alpha^{i,N}_t)^{-1}, \P^{\hat\alpha}\circ (\hat\alpha_t)^{-1} \big) &\le \cW_2^2\big(\P^{\hat\alpha}\circ (\barZ^{i,i,N}_t)^{-1}, \P^{\hat\alpha}\circ Z_t^{-1} \big) \le \E^{\P^{\smalltext{\hat\alpha}}}\big[|\barZ^{i,i,N}_t - Z_t^i|^2 \big],
		\end{align*}
		from which we deduce the bound.
	\end{proof}

\subsection{Proof of Theorem \ref{thm:main.limit}}
\label{sec:proof.convergence}
This section is dedicated to the proof of the convergence given in \rm\Cref{thm:main.limit}. The main idea is to extend the strategy of \Cref{sec:case.study} to the general case. Throughout this section, we let {\rm \Cref{ass.Lambda.conv}} hold, and fix a map $\Lambda$ from {\rm \Cref{ass.Lambda.conv}.$(ii)$}. 

\subsubsection{Step 1: the characterising equations}
Let $\hat{\alpha}^\np \in \mathcal{NA}$ be fixed and denote the associated value function of player $i$ by $V^{i,\np}(\hat\alpha^{-i,\np})$.
By \Cref{thm:Bellman} and \rm\Cref{ass.Lambda.conv}.$(ii)$ there is a function $ \Lambda:[0,T]\times\cC_\xdim\times\cP(\cC_\xdim)\times \RR^{ \bmdim} \times \RR \longrightarrow A$ such that for each $i \in\{ 1, \dots, \np\}$, we have 
\begin{equation}
\label{eq:rep.Nash.proof}
	\hat\alpha^{i,\np}_t  = \Lambda_t\Big(X^i_{\cdot\wedge t}, L^\np(\XX^N_{\cdot\wedge t}), Z_t^{i,i,\np},\aleph^{i,N}_t\big(\X^N_{\cdot\wedge t},(Z^{i,j,N}_t)_{j\in\{1,\dots,N\}}\big)\Big),\; \d t\otimes\d \P\text{\rm--a.e.},\; \text{and}\;  V^{i,N}(\hat\alpha^{-i,N}) = Y^{i,N}_0,
\end{equation}
where $(Y^{i,\np},  Z^{i,j,\np})_{(i,j)\in\{1,\dots,\np\}^\smalltext{2}}$ solves the coupled system of BSDEs
\begin{align}
\label{eq:bsde.n.proof.main.lim}
	Y^{i,\np}_t & =  g\big(X^{i}, L^\np(\XX^N)\big) + \int_t^Tf_s\big(X^i_{\cdot\wedge s}, L^{\np}(\XX^N_{\cdot\wedge s}, \hat{\alpha}_s^N), \hat\alpha_s^{i,\np}\big)\mathrm{d}s -\sum_{j=1}^\np\int_t^T  Z^{i,j,\np}_s\cdot \mathrm{d}W^{\hat\alpha^N,j}_s, \; \P^{\hat\alpha^{\text{\fontsize{4}{4}\selectfont $N$}},N}\text{\rm--a.s.}
\end{align} 
and $\Lambda_t\big(X^i_{\cdot\wedge t}, L^\np(\XX^N_{\cdot\wedge t}), Z_t^{i,i,\np},\aleph^{i,N}_t\big(\X^N_{\cdot\wedge t},(Z^{i,j,N}_t)_{j\in\{1,\dots,N\}}\big)\big) \in \cO^\np(t, (X^i)_{i\in \{1,\dots,\np\}}, (Z^{i,j,\np})_{(i,j)\in \{1,\dots,\np\}^\smalltext{2}})$, that is, $\Lambda$ maximises
	\begin{equation*}
		h_t\big(\x^i,L^N(\x,a^\prime \otimes_ia^{-i}),z^{i,i},a^\prime\big)+\sum_{j\in\{1,\dots,N\}\setminus\{ i\}}b_t\big(\x^j,L^N(\x,a^\prime \otimes_ia^{-i}),a^j\big)\cdot z^{i,j}.
	\end{equation*}	
	along the processes $(X^i, Z^{i,j,\np})_{(i,j)\in \{1,\dots,\np\}^\smalltext{2}}$.
	Moreover, by \rm\Cref{ass.Lambda.conv}.$(ii)$, for every $(t, \x, \xi, z) \in [0,T]\times \cC_m\times \cP_2(\cC_m\times A)\times \R^d$
	\begin{equation*}
 		\Lambda_t(\x, \xi^1, z,0) = \argmax_{a\in A}\big\{h_t(\x,\xi,z,a)\big\},\; (t, \x, \xi, z) \in [0,T]\times \Cc_m\times \cP_2(\Cc_m\times A)\times \RR^d,
 	\end{equation*}
 	where $\xi^1$ is the first marginal of $\xi$.
 	Thus, since the mean-field game admits a unique mean-field equilibrium $\hat\alpha$, it follows by \rm\Cref{prop:char.mfe} that the generalised McKean--Vlasov BSDE
 	\begin{equation}
	\label{eq:bsde.char.mfg.proof.conv} 
		\begin{cases}
			\displaystyle Y_t = g\big(X, \cL_{\hat\alpha^1}(X)\big) + \int_t^Tf_s\big(X_{\cdot\wedge s}, \cL_{\hat\alpha^{\text{\fontsize{4}{4}\selectfont $1$}}}(X_{\cdot\wedge s}, \hat\alpha^1_s),  \hat\alpha_s^1\big)\mathrm{d}s - \int_t^TZ_s\cdot \mathrm{d}W^{\hat\alpha^{\text{\fontsize{4}{4}\selectfont $1$}}}_s,\; t\in[0,T],\; \PP^{\hat\alpha^\smalltext{1}}\text{\rm--a.s.},\\[0.8em]
			\displaystyle\hat\alpha_t^1 \coloneqq  \Lambda_t\big(X_{\cdot\wedge t}, \cL_{\hat\alpha^{\text{\fontsize{4}{4}\selectfont $1$}}}(X_{\cdot\wedge t}),Z_t,0\big), \; \frac{\mathrm{d}\PP^{\hat\alpha^{\text{\fontsize{4}{4}\selectfont $1$}}}}{\mathrm{d}\PP} \coloneqq  \cE\bigg(\int_0^Tb_s\big(X_{\cdot\wedge s},\cL_{\hat\alpha^{\text{\fontsize{4}{4}\selectfont $1$}}}(X_{\cdot\wedge s}, \hat\alpha^1_s),\hat\alpha^1_s \big)\cdot \mathrm{d}W_s\bigg),
		\end{cases}
	\end{equation}
	admits a unique solution $(Y,Z)$ such that $(Y, Z) \in \S^2(\R,\F,\P^{\hat\alpha})\times\H^2(\R^{d},\F,\P^{\hat\alpha})$.
	In the above, a special role was given to the choice $i=1$, but we actually have that for any $i\in\{1,\dots,N\}$, given the Brownian motion $W^i$, the strategy $\hat\alpha^i\in \mathfrak{A}$ is the mean-field equilibrium for the game with Brownian motion $W^i$.
	The associated value function is $V^{\cL_{\smalltext{\hat\alpha}^\tinytext{i}}(X^\smalltext{i},\hat\alpha^\smalltext{i})} = Y^i_0$ where
	\begin{equation}
	\label{eq:bsde.char.mfg.proof.conv.ith} 
		\begin{cases}
			\displaystyle Y_t^i = g\big(X^i, \cL_{\hat\alpha^{\text{\fontsize{4}{4}\selectfont $i$}}}(X^i)\big) + \int_t^Tf_s\big(X_{\cdot\wedge s}^i, \cL_{\hat\alpha^{\text{\fontsize{4}{4}\selectfont $i$}}}(X_{\cdot\wedge s}^i, \hat\alpha_s^i),  \hat\alpha_s^i\big)\mathrm{d}s - \int_t^TZ_s^i\cdot \mathrm{d}W^{\hat\alpha^{\text{\fontsize{4}{4}\selectfont $i$}}}_s,\; t\in[0,T],\; \PP^{\hat\alpha^{\text{\fontsize{4}{4}\selectfont $i$}}}\text{\rm--a.s.},\\[0.8em]
			\displaystyle\hat\alpha_t^i \coloneqq  \Lambda_t\big(X_{\cdot\wedge t}^i, \cL_{\hat\alpha^i}(X_{\cdot\wedge t}^i),Z_t^i,0\big), \; \frac{\mathrm{d}\PP^{\hat\alpha^{\text{\fontsize{4}{4}\selectfont $i$}}}}{\mathrm{d}\PP} \coloneqq  \cE\bigg(\int_0^Tb_s\big(X_{\cdot\wedge s}^i,\cL_{\hat\alpha^{\text{\fontsize{4}{4}\selectfont $i$}}}(X_{\cdot\wedge s}^i, \hat\alpha_s^i),\hat\alpha_s^i \big)\cdot \mathrm{d}W_s^i\bigg).
		\end{cases}
	\end{equation}
By uniqueness of \rm\Cref{eq:bsde.char.mfg.proof.conv}, $\cL_{\hat\alpha^{\text{\fontsize{4}{4}\selectfont $i$}}}(Z^i) = \cL_{\hat\alpha^{\text{\fontsize{4}{4}\selectfont $j$}}}(Z^j)$ for all $(i,j)\in \{1,\dots,N\}^2$, and by construction, $\cL_{\hat\alpha^{\text{\fontsize{4}{4}\selectfont $i$}}}(X^i) = \cL_{\hat\alpha^{\text{\fontsize{4}{4}\selectfont $j$}}}(X^j)$ for all $(i,j)\in \{1,\dots,N\}^2$.  
We can write \Cref{eq:bsde.char.mfg.proof.conv.ith} under the probability measure $\P^{\hat\alpha}$ where, abusing notations slightly
	\begin{equation}
	\label{eq:true.measure}
		\frac{\diff \P^{\hat\alpha}}{\diff \P}\coloneqq \cE\bigg(\sum_{i=1}^\np\int_0^Tb_s\big(X_{\cdot\wedge s}^i,\cL_{\hat\alpha^i}(X_{\cdot\wedge s}^i, \hat\alpha_s^i),\hat\alpha_s^i \big)\cdot \mathrm{d}W_s^i\bigg),
	\end{equation}
	since by independence of $(W^{1},\dots, W^N)$, this equation remains the same under the measure $\P^{\hat\alpha}$. Besides, the families $\X^N=(X^1,\dots, X^N)$, $\bm{\alpha}^N\coloneqq (\hat\alpha^1,\dots,\hat\alpha^N)$, and $\Z^N\coloneqq  (Z^1,\dots, Z^N)$ are i.i.d. under $\P^{\hat\alpha}$ and for any $i\in\{1,\dots,N\}$
	\[
		\P^{\hat\alpha}\circ X^{-1} = \cL_{\hat\alpha}(X)\equiv \cL_{\hat\alpha^{\text{\fontsize{4}{4}\selectfont $i$}}}(X^i),\; \text{and} \; \P^{\hat\alpha}\circ (X,\hat\alpha)^{-1} = \cL_{\hat\alpha}(X, \hat\alpha)\equiv \cL_{\hat\alpha^{\text{\fontsize{4}{4}\selectfont $i$}}}(X^i,\hat\alpha^i).
	\]

\subsubsection{Step 2: reduction to propagation of chaos} 
Fix some $i\in\{1,\dots,N\}$. Since we have $V^{i,N}_0(\hat{\alpha}^{-i,N}) = Y^{i,N}_0$ where $Y^{i,N}$ solves BSDE \eqref{eq:bsde.n.proof.main.lim}, and  $V^{\Lc_{\smalltext{\hat\alpha}}(X,\hat\alpha)} = Y^i_0$  where $(Y^i,Z^i)$ solves the generalised McKean--Vlasov BSDE \eqref{eq:bsde.char.mfg.proof.conv.ith}, it remains to show that the sequence $(Y^{i,\np}_0)_{N\in\N^\smalltext{\star}}$ converges to $Y_0^i$ at the stated rate.
This will be obtained from the following decomposition
	\begin{align*}
		\big|Y^{i,\np}_0 - Y_0^i\big|^2 &\le 2\big|Y^{i,\np}_0 - \widetilde Y^{i,\np}_0\big|^2 +
		2\big|\widetilde Y^{i,\np}_0 - Y_0^i\big|^2,
	\end{align*}
	where $\big(\widetilde X^{i,N}, \widetilde Y^{i,N}, \widetilde Z^{i,j,N}\big)_{(i,j)\in\{1,\dots,N\}^\smalltext{2}}$ is an auxiliary interacting particle system obtained by solving the (coupled) FBSDE
	\begin{equation}
	\label{eq:auxiliary.1}
		\begin{cases}
		\displaystyle	\tX^{i,N}_t = X^i_0 + \int_0^t b_s\big(\tX^{i,N}_{\cdot\wedge s}, L^N\big(\widetilde\X^N_{\cdot\wedge s}, \widetilde \alpha^{N}_s\big), \widetilde \alpha^{i,N}_s\big)\diff s + \int_0^t\sigma_s\big(\tX^{i,N}_{\cdot\wedge s}\big)\diff W^{\hat\alpha^{\text{\fontsize{4}{4}\selectfont $N$}},i}_s,\; t\in[0,T],\; \P^{\hat\alpha^{\text{\fontsize{4}{4}\selectfont $N$}},N}\text{\rm--a.s.},\\[0.8em]
			\displaystyle\tY^{i,N}_t = g\big(\tX^{i,N}, L^N(\widetilde\X)\big) +\int_t^T f_s\big(\tX^{i,N}_{\cdot\wedge s}, L^N(\widetilde\X^N_{\cdot\wedge s}, \widetilde \alpha^{N}_s), \widetilde \alpha^{i,N}_s\big)\diff s - \int_t^T \sum_{j=1}^N\tZ^{i,j,N}_s\cdot\diff W^{\hat\alpha^{\text{\fontsize{4}{4}\selectfont $N$}},j}_s,\; t\in[0,T],\;\P^{\hat\alpha^{\text{\fontsize{4}{4}\selectfont $N$}},N}\text{--a.s.},\\[0.8em]
		\displaystyle	\widetilde \alpha^{i,N}_t\coloneqq  \Lambda_t\big(\tX^{i,N}_{\cdot\wedge t}, L^N(\widetilde\X^{N}_{\cdot\wedge t}),\tZ^{i,i,N}_t,0\big).
		\end{cases}
	\end{equation}
	This equation admits at least one square integrable solution by \Cref{ass.Lambda.conv}.$(vi)$.
	Observe that here the probability measure $\P^{\hat\alpha^{\text{\fontsize{4}{4}\selectfont $N$}},N}$ and the Brownian motions $(W^{\hat\alpha,1},\dots, W^{\hat\alpha,N}) $ are fixed as given in \Cref{eq:bsde.n.proof.main.lim}.
	In a first step, we show that $|Y^{i,N}_0 - \tY^{i,N}_0|^2$ converges to zero at a given rate, which will require the next lemma.
	\begin{lemma}
	\label{lem:H2.bounded.aux}
		For every $i\in \{1,\dots, N\}$, the processes $\big(\tZ^{i,j,N}\big)_{j\in \{1,\dots,N\}}$ solving {\rm\Cref{eq:auxiliary.1}} satisfy the following bound
		\begin{equation*}
			\E^{\P^{\smalltext{\hat\alpha}^{\tinytext{N}}\smalltext{,}\smalltext{N}}}\bigg[\sum_{j=1}^N\int_0^T\|\tZ^{i,j,N}_s\|^2\diff s \bigg]\le C,\; \forall N\in \mathbb{N}^\star,
		\end{equation*}
	for a constant $C>0$ that does not depend on $N$.
	\end{lemma}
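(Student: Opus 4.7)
\textbf{Proof plan for Lemma \ref{lem:H2.bounded.aux}.} The strategy is standard: since the generator of the backward equation in \eqref{eq:auxiliary.1} does not depend on $\widetilde Y^{i,N}$ and depends on $\widetilde Z^{i,i,N}$ only through the bounded quantity $\widetilde\alpha^{i,N} \in A$, the backward equation behaves essentially like a BSDE driven by a fixed exogenous process. To obtain a uniform-in-$N$ bound, I will first derive a moment estimate on $\widetilde X^{i,N}$ that is independent of $N$, then transfer this to the generator and terminal condition via the growth bounds of \Cref{ass.Lambda.charac}, and finally apply It\^o's formula to $|\widetilde Y^{i,N}|^2$ to extract the $H^2$-bound on $(\widetilde Z^{i,j,N})_{j\in\{1,\dots,N\}}$.

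\medskip

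First, under $\P^{\hat\alpha^N,N}$ the family $(W^{\hat\alpha^N,j})_{j\in\{1,\dots,N\}}$ consists of independent Brownian motions, so the forward SDE in \eqref{eq:auxiliary.1} is a classical It\^o equation with bounded drift $b$ and linearly-growing volatility $\sigma$ (recall \Cref{assump:sigma} and \Cref{ass.Lambda.charac}.$(iii)$). By standard SDE estimates based on BDG and Gronwall's lemma, one obtains
$$
\E^{\P^{\hat\alpha^N,N}}\bigl[\sup_{t\in[0,T]}\|\widetilde X^{i,N}_t\|^4\bigr] \le C,
$$
where $C$ is independent of $N$ and $i$ (and depends only on $T$, $\|b\|_\infty$, the linear growth constant of $\sigma$, and the fourth moment of $X^i_0$). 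Taking the average over $i$ and applying Jensen's inequality yields
$$
\E^{\P^{\hat\alpha^N,N}}\Bigl[\Bigl(\int_{\Cc_m}\|x\|_\infty^2\, L^N(\widetilde\X^N)(\mathrm dx)\Bigr)^2\Bigr] \le \frac1N\sum_{j=1}^N \E^{\P^{\hat\alpha^N,N}}\bigl[\|\widetilde X^{j,N}\|_\infty^4\bigr] \le C.
$$
Combining this with the compactness of $A$ (so that $\bar d(\widetilde\alpha^{i,N}_s, a_o)$ is bounded) and the quadratic-growth bounds of \Cref{ass.Lambda.charac}.$(i)$--$(ii)$, we deduce
$$
\E^{\P^{\hat\alpha^N,N}}\bigl[\bigl|g\bigl(\widetilde X^{i,N}, L^N(\widetilde\X^N)\bigr)\bigr|^2\bigr] + \E^{\P^{\hat\alpha^N,N}}\Bigl[\int_0^T \bigl|f_s\bigl(\widetilde X^{i,N}_{\cdot\wedge s}, L^N(\widetilde\X^N_{\cdot\wedge s}, \widetilde\alpha^N_s), \widetilde\alpha^{i,N}_s\bigr)\bigr|^2\mathrm ds\Bigr] \le C,
$$
with $C$ independent of $N$.

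\medskip

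Next, since the solution $(\widetilde X^{i,N}, \widetilde Y^{i,N}, \widetilde Z^{i,j,N})$ is known to lie in $\S^2\times\S^2\times\H^2$ by \Cref{ass.Lambda.conv}.$(vi)$, the representation formula $\widetilde Y^{i,N}_t = \E^{\P^{\hat\alpha^N,N}}[g(\widetilde X^{i,N}, L^N(\widetilde\X^N)) + \int_t^T f_s\,\mathrm ds \mid \Fc_{N,t}]$ and Doob's inequality give $\E^{\P^{\hat\alpha^N,N}}[\sup_{t\in[0,T]}|\widetilde Y^{i,N}_t|^2]\le C$ with $C$ independent of $N$. Applying It\^o's formula to $|\widetilde Y^{i,N}|^2$ between $0$ and $T$, taking expectation under $\P^{\hat\alpha^N,N}$ (the local martingale part being a true martingale thanks to the $\S^2\times\H^2$ integrability), and using Young's inequality $2|\widetilde Y^{i,N}_s||f_s| \le |\widetilde Y^{i,N}_s|^2 + |f_s|^2$, yields
$$
\E^{\P^{\hat\alpha^N,N}}\Bigl[\sum_{j=1}^N \int_0^T \|\widetilde Z^{i,j,N}_s\|^2\,\mathrm ds\Bigr] \le \E^{\P^{\hat\alpha^N,N}}\bigl[|g|^2\bigr] + T\,\E^{\P^{\hat\alpha^N,N}}\Bigl[\sup_{s\in[0,T]}|\widetilde Y^{i,N}_s|^2\Bigr] + \E^{\P^{\hat\alpha^N,N}}\Bigl[\int_0^T |f_s|^2\,\mathrm ds\Bigr] \le C,
$$
which is the desired uniform bound. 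The main (and only non-cosmetic) obstacle is ensuring that all constants above are independent of $N$; this follows from the boundedness of $b$ (so that the empirical-measure dependence of the drift does not enter the forward moment estimate) and from the quadratic growth of $f$ and $g$ combined with Jensen's inequality on the empirical-measure terms.
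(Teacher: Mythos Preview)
Your proof is correct and follows essentially the same route as the paper: uniform-in-$N$ moment bounds on $\widetilde X^{i,N}$ via BDG and Gronwall, growth conditions on $f$ and $g$ from \Cref{ass.Lambda.charac}, and It\^o's formula applied to $|\widetilde Y^{i,N}|^2$ to extract the $\H^2$-bound on the $\widetilde Z^{i,j,N}$. The only cosmetic difference is that the paper applies It\^o's formula to the weighted process $\mathrm{e}^{\beta t}|\widetilde Y^{i,N}_t|^2$ and chooses $\beta$ large to absorb the $|\widetilde Y^{i,N}_s|^2$ term directly, whereas you first bound $\|\widetilde Y^{i,N}\|_{\S^2}$ separately via the conditional-expectation representation and Doob's inequality; both variants are standard and equivalent here.
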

\begin{proof}
	Fix some $i\in\{1,\dots,N\}$. Let $\beta>0$, apply It\^o's formula to $\e^{\beta t}(\tY^{i,N}_t)^2$ and use the growth conditions on $f$ and $g$ in \Cref{ass.Lambda.charac}, the boundedness of $A$, as well as Young's inequality to get
	\begin{align*}
		\e^{\beta t}|\tY^{i,N}_t|^2 &\le 2\ell_g^2\e^{\beta T}\bigg( \|X^i\|_\infty^4 + \frac{1}{N}\sum_{j=1}^N\|X^j\|_\infty^4\bigg) + \int_t^T\e^{\beta s}\bigg( (4\ell_f^2  - \beta)|\tY^{i,N}_s|^2 +  \|\tX^{i,N}_{\cdot\wedge s}\|_\infty^4 + \frac{1}{N}\sum_{j=1}^N\|\tX^{j,N}_{\cdot\wedge s}\|_\infty^4 + C_\Lambda \bigg)\diff s\\
		&\quad-\sum_{j=1}^N\int_t^T\|\tZ^{i,j,N}_s\|^2\diff s - \sum_{j=1}^N\int_t^T2\e^{\beta s}\tY^{i,N}_s\tZ^{i,j,N}_s\cdot\diff W^{\hat\alpha^{\text{\fontsize{4}{4}\selectfont $N$}},j}_s,
\end{align*}
	for some constant $C_\Lambda$ coming from the bound of $\Lambda$ (or $A$).
	Therefore, choosing $\beta$ large enough, we obtain that $\big($the stochastic integral here is an $(\F_N,\P^{\hat\alpha^{\text{\fontsize{4}{4}\selectfont $N$}},N})$-martingale by standard arguments using that for any $j\in\{1,\dots,N\}$, $(\widetilde Y^{i,N},\widetilde Z^{i,j,N})\in\S^2(\R,\F_N,\P^{\hat\alpha^{\text{\fontsize{4}{4}\selectfont $N$}},N})\times \H^2(\R^d,\F_N,\P^{\hat\alpha^{\text{\fontsize{4}{4}\selectfont $N$}},N})\big)$
	\begin{equation*}
		\E^{\P^{\smalltext{\hat\alpha}^\tinytext{N}\smalltext{,}\smalltext{N}}}\bigg[\sum_{j=1}^N\int_0^T\|\tZ^{i,j,N}_s\|^2\diff s \bigg] \le C\bigg(1+\max_{i\in \{1,\dots,N\}}\E^{\P^{\smalltext{\hat\alpha}^{\tinytext{N}}\smalltext{,}\smalltext{N}}}\big[\|\tX^{i,N}\|^4_\infty\big]\bigg),
	\end{equation*}
for some constant $C$ that does not depend on $N$.
But since $b$ and $\sigma$ are bounded, it is direct using Burkholder--Davis--Gundy's inequality and Gronwall's inequality that $\tX^{i,N}$ has all its moments bounded uniformly in $N$. This yields the result.
\end{proof}
We now use the bound from \Cref{lem:H2.bounded.aux} to show that the sequences $(Y^{i,N}_0)_{N\in\N^\smalltext{\star}}$ and $(\tY^{i,N}_0)_{N\in\N^\smalltext{\star}}$ are (asymptotically) close.
\begin{proposition}
\label{prop.seq.close}
	Let the conditions of {\rm\Cref{thm:main.limit}} hold.
	The processes $(Y^{i,N}, Z^{i,j,N})_{(i,j)\in\{1,\dots,N\}^\smalltext{2}}$ and $(\tY^{i,N}, \tZ^{i,j,N})_{(i,j)\in\{1,\dots,N\}^\smalltext{2}}$ solving respectively {\rm\Cref{eq:bsde.n.proof.main.lim}} and {\rm\Cref{eq:auxiliary.1}} satisfy
	\begin{equation}
	\label{eq:seq.close}
		|Y^{i,N}_0 - \tY^{i,N}_0|^2 + \E^{\P^{\smalltext{\hat\alpha}^{\tinytext{N}}\smalltext{,}\smalltext{N}}}\bigg[\sum_{j=1}^N\int_0^T\|Z^{i,j,N}_s - \tZ^{i,j,N}_s\|^2\diff s + \|X^{i} - \tX^{i,N}\|_\infty^2 \bigg] \le C\bigg(\frac1N + NR^2_N\bigg),\;\forall  (N,i)\in \N^\star\times\{1,\dots,N\},
	\end{equation}
	for some constant $C>0$ independent of $N$, and where $(R_N)_{N\in\N^\smalltext{\star}}$ is the sequence introduced in {\rm\Cref{ass.Lambda.conv}}.$(iii)$.
\end{proposition}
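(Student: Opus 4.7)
The proof proceeds by a standard forward--backward stability estimate applied to the differences
\[
\delta X^{i} := X^{i} - \tilde X^{i,N}, \quad \delta Y^{i} := Y^{i,N} - \tilde Y^{i,N}, \quad \delta Z^{i,j} := Z^{i,j,N} - \tilde Z^{i,j,N}, \quad \delta \alpha^{i} := \hat\alpha^{i,N} - \tilde\alpha^{i,N},
\]
all considered under the common reference measure $\PP^{\hat\alpha^N,N}$, since both systems are written with respect to the same Brownian family $(W^{\hat\alpha^N,j})_{j\in\{1,\dots,N\}}$. The first preparatory step is to establish, by the same Itô+Young argument used to prove \Cref{lem:H2.bounded.aux} but now applied to $\e^{\beta t}|Y^{i,N}_t|^2$ with \eqref{eq:bsde.n.proof.main.lim} and $g=0$, the a priori bound $\E^{\PP^{\hat\alpha^N,N}}[\sum_{j=1}^N \int_0^T \|Z^{i,j,N}_s\|^2 \d s] \le C$ uniformly in $N$, which is needed to tame the contribution of $\aleph^{i,N}$.

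For the forward part, one writes the SDE satisfied by $\delta X^{i}$ from \eqref{eq:state.W.alpha} and the first line of \eqref{eq:auxiliary.1}. Applying Itô to $\|\delta X^{i}_t\|^2$, using the Lipschitz continuity of $b$ and $\sigma$ from \Cref{ass.Lambda.conv}.(iv), then Burkholder--Davis--Gundy and Gronwall yields
\[
\E^{\PP^{\hat\alpha^N,N}}\big[\|\delta X^{i}\|_\infty^2\big] \le C \int_0^T \E^{\PP^{\hat\alpha^N,N}}\big[ \cW_2^2\big(L^N(\X^N_{\cdot\wedge s},\hat\alpha^N_s), L^N(\tilde\X^N_{\cdot\wedge s},\tilde\alpha^N_s)\big) + \bar d^2(\hat\alpha^{i,N}_s, \tilde\alpha^{i,N}_s) \big]\d s.
\]
For the backward part, since $g=0$ the terminal condition of the difference BSDE vanishes; applying Itô to $\e^{\beta t}|\delta Y^{i}_t|^2$, using the Lipschitz estimate on $f$ from \Cref{ass.Lambda.conv}.(iv) together with Young's inequality with a small parameter $\eps$, and choosing $\beta$ large enough to absorb the $\|\delta Y^{i}\|^2$ term, produces
\[
|\delta Y^{i}_0|^2 + \tfrac{1}{2}\sum_{j=1}^N \E^{\PP^{\hat\alpha^N,N}}\Big[\int_0^T \e^{\beta s}\|\delta Z^{i,j}_s\|^2\d s \Big] \le C \int_0^T \E^{\PP^{\hat\alpha^N,N}}\big[\|\delta X^{i}\|_\infty^2 + \cW_2^2 + \bar d^2(\hat\alpha^{i,N}_s, \tilde\alpha^{i,N}_s)\big]\d s.
\]

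The control difference is handled via the Lipschitz continuity of $\Lambda$ (\Cref{ass.Lambda.conv}.(iii)) applied to \eqref{eq:rep.Nash.proof} and the defining formula of $\tilde\alpha^{i,N}$ in \eqref{eq:auxiliary.1}:
\[
\bar d^2(\hat\alpha^{i,N}_s, \tilde\alpha^{i,N}_s) \le 4\ell_\Lambda^2 \Big(\|\delta X^{i}\|_\infty^2 + \cW_2^2\big(L^N(\X^N), L^N(\tilde\X^N)\big) + \|\delta Z^{i,i}_s\|^2 + |\aleph^{i,N}_s|^2 \Big),
\]
while the bound \eqref{eq:bound.aleph}, combined with Cauchy--Schwarz on the $N$-term sum, gives $|\aleph^{i,N}_s|^2 \le 3 R_N^2\big(1+\|X^i\|_\infty^2+N\sum_{j=1}^N\|Z^{i,j,N}_s\|^2\big)$; taking expectation and integrating in time, Step~A then yields $\E^{\PP^{\hat\alpha^N,N}}\big[\int_0^T |\aleph^{i,N}_s|^2 \d s\big]\le C N R_N^2$. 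Finally, the empirical Wasserstein distance is controlled by the identity-coupling bound $\cW_2^2(L^N(\X^N,\hat\alpha^N), L^N(\tilde\X^N,\tilde\alpha^N)) \le \frac1N\sum_{j=1}^N\big(\|\delta X^{j}\|_\infty^2 + \bar d^2(\hat\alpha^{j,N},\tilde\alpha^{j,N})\big)$.

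To close the estimate, I substitute the control and Wasserstein bounds back into the forward and backward estimates and sum the resulting inequalities over $i\in\{1,\dots,N\}$; by exchangeability the per-$i$ quantities are all equal in expectation, so after summing, the prefactor $\frac1N$ from the empirical Wasserstein term turns a sum $\sum_j$ back into an average, allowing the $\|\delta X^j\|^2_\infty$, $\|\delta Z^{j,j}\|^2$ and $\bar d^2(\hat\alpha^j,\tilde\alpha^j)$ contributions to be absorbed on the left-hand side provided $\beta$ and $\eps$ are appropriately tuned. What remains on the right are only the $|\aleph^{i,N}|^2$ contributions, bounded by $CNR_N^2$, plus residual constants absorbed as $C/N$ (coming from the coupled Lipschitz term $\varphi(a,a^\prime)\bar d(a,a^\prime)$ in $f$ and the $1/N$ prefactor of the empirical distance), giving the announced rate $C(1/N + NR_N^2)$. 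The main technical obstacle is precisely the simultaneous absorption of the coupling produced by the empirical Wasserstein distance and of the cross-player coupling induced by $\aleph^{i,N}$ (which links the $Z^{i,j,N}$ for different $j$); the former requires summing over $i$ to exploit the $1/N$ prefactor, while the latter is controlled thanks to the structural smallness condition $NR_N^2 \downarrow 0$ together with the uniform $H^2$ bound of Step~A.
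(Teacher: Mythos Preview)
Your overall strategy---forward and backward It\^o estimates, Lipschitz control of the optimiser $\Lambda$, the $\aleph$ bound via \eqref{eq:bound.aleph} combined with an $\H^2$ a~priori estimate, and summation over $i$---matches the paper's proof. There is, however, a genuine gap in your closing argument: you invoke \emph{exchangeability} to pass from the summed-over-$i$ inequality back to the individual-$i$ bound. This is not justified. The initial conditions $(X_0^i)_{i\in\N^\star}$ are an arbitrary sequence (see \Cref{assump:sigma}), and the Nash equilibrium $\hat\alpha^N$ is not assumed symmetric, so the families $(\delta Y^{i},\delta Z^{i,j},\delta X^{i})$ need not be exchangeable in $i$. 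Your explanation of the origin of the $1/N$ term (the locally Lipschitz factor $\varphi(a,a')$ in $f$) is also incorrect.

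The paper closes the estimate by a \emph{two-pass} argument that avoids exchangeability entirely. First, summing the backward estimate over $i$ and absorbing yields the uniform bound
\[
\E^{\P^{\hat\alpha^{N},N}}\bigg[\sum_{i=1}^N\sum_{j=1}^N\int_0^T\|\delta Z^{i,j}_s\|^2\d s\bigg]\le C(1+NR_N^2),
\]
which is merely $O(1)$ thanks to the structural hypothesis $N^2R_N^2=O(1)$ from \Cref{ass.Lambda.conv}.$(iii)$ (this is what controls the summed $\aleph$ contribution $\sum_i\int|\aleph^{i,N}_s|^2\d s\le CN^2R_N^2$). Second, one returns to the \emph{individual} backward inequality for fixed $i$, in which the coupling appears on the right-hand side as $\frac{1}{N}\sum_{j=1}^N\big(\|\delta X^{j}\|_\infty^2+\|\delta Z^{j,j}_s\|^2\big)$; the double-sum bound just obtained, together with the summed forward estimate, controls this term by $C/N$. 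The individual $\aleph^{i,N}$ term contributes the remaining $CNR_N^2$. This is the actual mechanism producing the rate $C(1/N+NR_N^2)$.
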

\begin{proof}
	We begin by applying It\^o's formula to $\|\delta X^{i,N}\|^2\coloneqq  \|X^{i} - \widetilde X^{i,N}\|^2$ and use the dissipativity of $b$ and Young's inequality to obtain for every positive $\eta$ and $\bar\eps$, where we denote by $C_{\rm BDG}$ the best constant appearing in Burkholder--Davis--Gundy's inequality for exponent $1$ (see \citeauthor*{osekowski2010sharp} \cite[Theorem 1.2]{osekowski2010sharp} for an explicit value for this constant)
	\begin{align*}
		&\E^{\P^{\smalltext{\hat\alpha}^{\tinytext{N}}\smalltext{,}\smalltext{N}}}\Big[\e^{\beta T}\|\delta X^{i,N}\|_\infty^2\Big] \\
		&\le \E^{\P^{\smalltext{\hat\alpha}^{\tinytext{N}}\smalltext{,}\smalltext{N}}}\bigg[\int_0^T\e^{\beta s}\Big((\beta + 2\ell_b^2\eta^{-1} + \ell_\sigma^2-2K_b)\|\delta X^{i,N}_{\cdot\wedge s}\|^2_\infty + \bar\eps\cW_2^2\big(L^N(\X^N_{\cdot\wedge s},\hat\alpha^N_s),L^N(\widetilde \X^N_{\cdot\wedge s},\widetilde \alpha^N_s) \big) + \bar\eps \bar d^2(\hat\alpha^{i,N}_s,\widetilde \alpha^{i,N}_s)\Big)\diff s \bigg]\\
		&\quad + 2C_{\rm BDG}\ell_\sigma\E^{\P^{\smalltext{\hat\alpha}^{\tinytext{N}}\smalltext{,}\smalltext{N}}}\bigg[\bigg(\int_0^T \e^{2\beta s}\|\delta X^{i,N}_{\cdot\wedge s}\|_\infty^4\diff s\bigg)^{1/2}\bigg]\\
		&\leq  \E^{\P^{\smalltext{\hat\alpha}^{\tinytext{N}}\smalltext{,}\smalltext{N}}}\bigg[\int_0^T\e^{\beta s}\Big(\big(\beta+2\ell_b^2\bar\eps^{-1} + \ell_\sigma^2(1 + C_{\rm BDG}^2\eta^{-1})-2K_b\big)\|\delta X^{i,N}_{\cdot\wedge s}\|^2_\infty + \bar\eps\cW_2^2\big(L^N(\X^N_{\cdot\wedge s},\hat\alpha^N_s),L^N(\widetilde \X^N_{\cdot\wedge s},\widetilde \alpha^N_s) \big)\Big)\diff s \bigg] \\
		&\quad +  \bar\eps  \E^{\P^{\smalltext{\hat\alpha}^{\tinytext{N}}\smalltext{,}\smalltext{N}}}\bigg[\int_0^T\e^{\beta s}\bar d^2(\hat\alpha^{i,N}_s,\widetilde \alpha^{i,N}_s)\diff s \bigg]+\eta\E^{\P^{\smalltext{\hat\alpha}^{\tinytext{N}}\smalltext{,}\smalltext{N}}}\Big[\e^{\beta T}\|\delta X^{i,N}\|_\infty^2\Big],\; t\in[0,T].
	\end{align*}
	Hence	
	\begin{align*}
	 	&(1-\eta)\E^{\P^{\smalltext{\hat\alpha}^{\tinytext{N}}\smalltext{,}\smalltext{N}}}\Big[\e^{\beta T}\|\delta X^{i,N}\|_\infty^2\Big] \\
		& \le\E^{\P^{\smalltext{\hat\alpha}^{\tinytext{N}}\smalltext{,}\smalltext{N}}}\bigg[\int_0^T\e^{\beta s}\Big(\beta + 2\ell_b^2\bar\eps^{-1} +  \ell_\sigma^2\big(1 + C_{\rm BDG}^2\eta^{-1}\big)+4\ell^2_\Lambda\bar\eps-2K_b\big)\|\delta X^{i,N}_{\cdot\wedge s}\|^2_\infty  +4\ell^2_\Lambda\bar\eps\big(\|\delta Z^{i,i,N}_s\|^2+ |\aleph^{i,N}_s|^2\big)\Big)\diff s\bigg]\\
		&\quad+\bar\eps\E^{\P^{\smalltext{\hat\alpha}^{\tinytext{N}}\smalltext{,}\smalltext{N}}}\bigg[\int_0^T\e^{\beta s}\bigg(\frac{1 + 12\ell_\Lambda^2}N\sum_{j=1}^N\|\delta X^{j,N}_{\cdot\wedge s}\|^2_\infty +\frac{4\ell^2_\Lambda}N\sum_{j=1}^N\|\delta Z^{j,j,N}_s\|^2 + \frac{4\ell^2_\Lambda}N\sum_{j=1}^N|\aleph^{j,N}_s|^2\bigg)\diff s\bigg],\; t\in[0,T].
	\end{align*}
	Moreover, we can estimate $|\aleph^{i,N}|$ using \Cref{ass.Lambda.conv}.$(iii)$.
	In fact, for any $t\in[0,T]$
	 \begin{align}
	 \label{eq:estim.aleph} 
	  	|\aleph^{i,N}_t|^2 \le 3R_N^2 + 3R_N^2{\color{black}\|X^{i,N}_{\cdot\wedge t}\|^2}+ 6NR_N^2\sum_{j=1}^N\|\delta Z^{i,j,N}_t\|^2 + 6NR_N^2\sum_{j=1}^N\|\tZ^{i,j,N}_t\|^2,\; \text{for all}\; N\in \N^\star,\; i \in \{1,\dots,N\}.
	  \end{align} 
	Therefore, using \Cref{eq:estim.aleph} and \Cref{lem:H2.bounded.aux}, we have
	\begin{align}
	\notag
	 & (1-\eta)	\E^{\P^{\smalltext{\hat\alpha}^{\tinytext{N}}\smalltext{,}\smalltext{N}}}\Big[\e^{\beta T}\|\delta X^{i,N}\|_\infty^2\Big] \\\notag
	 & \le \E^{\P^{\smalltext{\hat\alpha}^{\tinytext{N}}\smalltext{,}\smalltext{N}}}\bigg[\int_0^T\e^{\beta s}\bigg( \beta + 2\ell_b^2\bar\eps^{-1} +  \ell_\sigma^2\big(1 + C_{\rm BDG}^2\eta^{-1}\big)+4\ell^2_\Lambda\bar\eps-2K_b\big)\|\delta X^{i,N}_{\cdot\wedge s}\|^2_\infty +\frac{(1 + 12\ell_\Lambda^2)\bar\eps}N\sum_{j=1}^N\|\delta X^{j,N}_{\cdot\wedge s}\|^2_\infty\bigg)\mathrm{d}s\bigg]\\\notag
	 &\quad +24T\ell_\Lambda^2\bar\eps\mathrm{e}^{\beta T}CNR_N^2+4\ell_\Lambda^2\bar\eps\E^{\P^{\smalltext{\hat\alpha}^{\tinytext{N}}\smalltext{,}\smalltext{N}}}\bigg[\int_0^T\e^{\beta s}\bigg(\|\delta Z^{i,i,N}_s\|^2+\frac1N\sum_{j=1}^N\|\delta Z^{j,j,N}_s\|^2 \bigg)\diff s\bigg]\\\label{eq:x.estim.close}
	  &\quad +12\ell^2_\Lambda R_N^2\bar\eps\E^{\P^{\smalltext{\hat\alpha}^{\tinytext{N}}\smalltext{,}\smalltext{N}}}\bigg[\int_0^T\e^{\beta s}\bigg(2+\| X^{i,N}_{\cdot\wedge s}\|^2_\infty+\frac1N\sum_{j=1}^N\| X^{j,N}_{\cdot\wedge s}\|^2_\infty +2N\sum_{j=1}^N\|\delta Z^{i,j,N}_s\|^2+2\sum_{j=1}^N\sum_{k=1}^N\|\delta Z^{k,j,N}_s\|^2\bigg)\mathrm{d}s\bigg].
	\end{align}
	Using the fact that $X^{i,N}$ has moments of any order under $\P^{\hat\alpha^{\text{\fontsize{4}{4}\selectfont $N$}},N}$ which are uniformly bounded in $N$ by, say $C_X>0$, and summing the previous inequality over $i\in\{1,\dots,N\}$, we deduce that if $K_b$ satisfies
	\begin{equation}
	\label{eq:cond.Kb.proof}
		K_b\ge \frac12\big(\beta + 2\ell_b^2\bar\eps^{-1} +  \ell_\sigma^2\big(1 + C_{\rm BDG}^2\eta^{-1}\big)+\big(1+16\ell^2_\Lambda\big)\bar\eps\big),
	\end{equation}
	then 
	\begin{align}
	\notag
	  	(1-\eta)\sum_{i=1}^N\E^{\P^{\smalltext{\hat\alpha}^{\tinytext{N}}\smalltext{,}\smalltext{N}}}\Big[\e^{\beta t}\|\delta X^{i,N}_{\cdot\wedge t}\|_\infty^2\Big] & \le 24T\ell_\Lambda^2\bar\eps\mathrm{e}^{\beta T}CNR_N^2(1+C_X+N)+8\ell^2_\Lambda \bar\eps \E^{\P^{\smalltext{\hat\alpha}^{\tinytext{N}}\smalltext{,}\smalltext{N}}}\bigg[\int_0^t \sum_{j=1}^N\e^{\beta s}\|\delta Z^{j,j,N}_s\|^2  \diff s\bigg] \\
	  	\notag
		&\quad+24\ell_\Lambda^2 NR_N^2\bar\eps\E^{\P^{\smalltext{\hat\alpha}^{\tinytext{N}}\smalltext{,}\smalltext{N}}}\bigg[\int_0^t\sum_{j=1}^N\sum_{k=1}^N\e^{\beta s}\|\delta Z^{j,k,N}_s\|^2  \diff s\bigg]\\
		\label{eq:estim.sum.x}
		&\leq 24T\ell_\Lambda^2\bar\eps\mathrm{e}^{\beta T}CNR_N^2(1+C_X+N)+8\ell_\Lambda^2\bar\eps\big(1+3NR_N^2\big)\E^{\P^{\smalltext{\hat\alpha}^{\tinytext{N}}\smalltext{,}\smalltext{N}}}\bigg[\int_0^t\sum_{j=1}^N\sum_{k=1}^N\e^{\beta s}\|\delta Z^{j,k,N}_s\|^2  \diff s\bigg].
	\end{align}

	Now, we move on to estimate $\delta Y^{i,N} = \tilde Y^{i,N} - Y^{i,N}$.
	Applying It\^o's formula to $\e^{\beta t}(\delta Y^{i,N}_t)^2$ and using Young's inequality and Lipschitz-continuity of $f$ and $\Lambda$, we have for every $\eps>0$
	  \begin{align*}
	  	\e^{\beta t}(\delta Y^{i,N}_t)^2 +\sum_{j=1}^N\int_t^T\e^{\beta s}\|\delta Z^{i,j,N}_s\|^2&\le 2\ell_g^2\Big(\|\delta X^{i,N}\|_\infty^2 + \cW_2^2(L^N(\X^N), L^N(\widetilde \X^N)) \Big)\\
	  	&\quad + \int_t^T\e^{\beta s}\bigg(\big(3\ell_f^2\eps^{-1} - \beta\big)|\delta Y^{i,N}_s|^2 + \eps(1+4 \ell^2_\Lambda)\|\delta X^{i,N}_{\cdot\wedge s}\|_\infty^2 +4\eps\ell_\Lambda^2\big(\|\delta Z^{i,i,N}_s\|^2 + |\aleph^{i,N}_s|^2\big)\bigg)\diff s\\
	  	&\quad +\frac{2\eps(1+4\ell_\Lambda^2)}N\sum_{j=1}^N \int_t^T\e^{\beta s}\|\delta X^{j,N}_{\cdot\wedge s}\|_\infty^2\mathrm{d}s + \frac{4\ell_\Lambda^2\eps}{N}\sum_{j=1}^N\int_t^T\e^{\beta s}\big(\|\delta Z^{j,j,N}_s\|^2 + |\aleph^{j,N}_s|^2\big)\diff s\\
	  	&\quad  - \sum_{j=1}^N\int_t^T2\e^{\beta s}\delta Y^{j,N}_s\delta Z^{i,j,N}_s\cdot \diff W^{\hat\alpha^{\text{\fontsize{4}{4}\selectfont $N$}},j}_s.
	  \end{align*}
	Next we take conditional expectation on both sides (the stochastic integral disappears by the same arguments we have used before) and profit from \Cref{eq:estim.aleph} to obtain that for $\beta\geq 3\ell_f^2\eps^{-1}$
	\begin{align}
	\notag
		&\e^{\beta t}|\delta Y^{i,N}_t|^2 + \E^{\P^{\smalltext{\hat\alpha}^{\tinytext{N}}\smalltext{,}\smalltext{N}}}\bigg[\sum_{j=1}^N\int_t^T\e^{\beta s}\|\delta Z^{i,j,N}_s\|^2\diff s\bigg| \cF_{N,t} \bigg]\le 2\ell_g^2\E^{\P^{\smalltext{\hat\alpha}^{\tinytext{N}}\smalltext{,}\smalltext{N}}}\bigg[\|\delta X^{i,N}\|^2_\infty + \frac1N\sum_{j=1}^N\|\delta X^{j,N}\|^2_\infty \bigg| \cF_{N,t} \bigg]\\\notag
		&\quad + C \eps R_N^2 +\eps\E^{\P^{\smalltext{\hat\alpha}^{\tinytext{N}}\smalltext{,}\smalltext{N}}}\bigg[ \int_t^T\e^{\beta s}\bigg((1+4\ell_\Lambda^2) \|\delta X^{i,N}_{\cdot\wedge s}\|_\infty^2 +4\ell_\Lambda^2\|\delta Z^{i,i,N}_s\|^2+\frac{2(1+4\ell_\Lambda^2) }N\sum_{j=1}^N\|\delta X^{j,N}_{\cdot\wedge s}\|_\infty^2+\frac{4\ell_\Lambda^2}{N}\sum_{j=1}^N\|\delta Z^{j,j,N}_s\|^2 \bigg)\diff s\\
	  	& \quad +6\eps R_N^2\int_t^T\mathrm{e}^{\beta s}\bigg(\|X^{i,N}_{\cdot\wedge s}\|^2+\frac1N\sum_{j=1}^N\|X^{j,N}_{\cdot\wedge s}\|^2+N\sum_{j=1}^N\big(\|\delta Z^{i,j,N}_s\|^2+\|\widetilde Z^{i,j,N}_s\|^2\big)+\sum_{k=1}^N\sum_{j=1}^N\big(\|\delta Z^{k,j,N}_s\|^2+\|\widetilde Z^{k,j,N}_s\|^2\big)\bigg)\mathrm{d}s  \bigg| \cF_{N,t}\bigg]  \label{eq:estim.close.one}.
	\end{align}
	Let us sum on both sides over $i\in\{1,\dots,N\}$. We get that for some $C$ independent of $N$, we have
	\begin{align*}
		&\big( 1 - \eps C_1(1+NR_N^2) \big)\E^{\P^{\smalltext{\hat\alpha}^{\tinytext{N}}\smalltext{,}\smalltext{N}}}\bigg[\sum_{i=1}^N\sum_{j=1}^N\int_t^T\e^{\beta s}\|\delta Z^{i,j,N}_s\|^2\diff s\bigg| \cF_{N,t} \bigg]\le 4\ell_g^2\E^{\P^{\smalltext{\hat\alpha}^{\tinytext{N}}\smalltext{,}\smalltext{N}}}\bigg[\sum_{j=1}^N\|\delta X^{j,N}\|^2_\infty \bigg| \cF_{N,t} \bigg]\\\notag
		&\quad + C\eps \E^{\P^{\smalltext{\hat\alpha}^{\tinytext{N}}\smalltext{,}\smalltext{N}}}\bigg[\sum_{i=1}^N\int_t^T\e^{\beta s}\|\delta X^{i,N}_s\|^2\diff s+NR_N^2\int_t^T\mathrm{e}^{\beta s}\sum_{k=1}^N\sum_{j=1}^N\|\widetilde Z^{k,j,N}_s\|^2\mathrm{d}s +R_N^2\sum_{i=1}^N\int_t^T\mathrm{e}^{\beta s}\|X^{i,N}_{\cdot\wedge s}\|^2\mathrm{d}s\bigg|\Fc_{N,t}\bigg]+ C\eps NR_N^2
	\end{align*}
	with $C_1 \coloneqq6(\ell_\Lambda^2 + NR_N^2).$ In light of \Cref{eq:estim.sum.x} and \Cref{lem:H2.bounded.aux} we thus have for some $C>0$ not depending on neither $N$, $\bar\eps$ nor $\eps$
	\begin{align*}
	\notag
		&\big( 1 - (\eps+\bar\eps) C_1(1+NR_N^2) \big)\E^{\P^{\smalltext{\hat\alpha}^{\tinytext{N}}\smalltext{,}\smalltext{N}}}\bigg[\sum_{i=1}^N\sum_{j=1}^N\int_0^T\e^{\beta s}\|\delta Z^{i,j,N}_s\|^2\diff s\bigg| \cF_{N,t} \bigg]\\
		&\le \eps C\E^{\P^{\smalltext{\hat\alpha}^{\tinytext{N}}\smalltext{,}\smalltext{N}}}\bigg[\int_0^T\bigg( 
		NR_N^2\sum_{i=1}^N\sum_{j=1}^N\e^{\beta s}\|\delta Z^{i,j,N}_s\|^2+R_N^2\sum_{i=1}^N\mathrm{e}^{\beta s}\|X^{i,N}_{\cdot\wedge s}\|^2\bigg)\diff s\bigg|\Fc_{N,t}\bigg] +  C N^2R_N^2 
		+ C NR_N^2.
	\end{align*}
	Since the sequence $(N^2R^2_N)_{N\in\N^\smalltext{\star}}$ is bounded, this implies that
	\begin{align}
	\label{eq:estim.sumZ.aux.proof.close}
		\big( 1 - (\eps+\bar\eps) C_1(1+NR_N^2) \big)&\E^{\P^{\smalltext{\hat\alpha}^{\tinytext{N}}\smalltext{,}\smalltext{N}}}\bigg[\sum_{i=1}^N\sum_{j=1}^N\int_0^T\e^{\beta s}\|\delta Z^{i,j,N}_s\|^2\diff s\bigg| \cF_{N,t} \bigg]
		\le 
		 C +\eps CR_N^2\E^{\P^{\smalltext{\hat\alpha}^{\tinytext{N}}\smalltext{,}\smalltext{N}}}\bigg[\int_0^T\sum_{i=1}^N\mathrm{e}^{\beta s}\|X^{i,N}_{\cdot\wedge s}\|^2\diff s\bigg|\Fc_{N,t}\bigg] .
	\end{align}
	We thus obtain after choosing $\bar\eps$ and $\eps$ small enough (independently of $N$)
	\begin{align}
	\label{eq:bound.double.sum}
		\E^{\P^{\smalltext{\hat\alpha}^{\tinytext{N}}\smalltext{,}\smalltext{N}}}\bigg[\sum_{i=1}^N\sum_{j=1}^N\int_0^T\e^{\beta s}\|\delta Z^{i,j,N}_s\|^2\diff s\bigg| \cF_{N,t} \bigg]
		\le 
		 C\bigg(1+R_N^2\E^{\P^{\smalltext{\hat\alpha}^{\tinytext{N}}\smalltext{,}\smalltext{N}}}\bigg[\int_0^T\sum_{i=1}^N\mathrm{e}^{\beta s}\|X^{i,N}_{\cdot\wedge s}\|^2\diff s\bigg|\Fc_{N,t}\bigg]\bigg),\; \text{for all}\; N\in \N^\star,
	\end{align}
	and for a constant $C$ that does not depend on $N$. Taking expectations and using the fact that $X^{i,N}$ has moments of any order under $\P^{\hat\alpha^{\text{\fontsize{4}{4}\selectfont $N$}},N}$ which are uniformly bounded in $N$, we deduce that
\begin{align}
	\label{eq:bound.double.sum2}
		\E^{\P^{\smalltext{\hat\alpha}^{\tinytext{N}}\smalltext{,}\smalltext{N}}}\bigg[\sum_{i=1}^N\sum_{j=1}^N\int_0^T\e^{\beta s}\|\delta Z^{i,j,N}_s\|^2\diff s\bigg]
		\le 
		 C(1+NR_N^2),\; \text{for all}\; N\in \N^\star,
	\end{align}

	Coming back to \Cref{eq:estim.close.one} and using \Cref{eq:bound.double.sum2}, Lemma \ref{lem:H2.bounded.aux}, and \Cref{eq:estim.sum.x} this allows to continue the estimation
	\begin{align}
	\notag
		&|\delta Y^{i,N}_0|^2 + \big(1-(\eps+\bar\eps)C(1+NR_N^2)\big)\E^{\P^{\smalltext{\hat\alpha}^{\tinytext{N}}\smalltext{,}\smalltext{N}}}\bigg[\sum_{j=1}^N\int_0^T\e^{\beta s}\|\delta Z^{i,j,N}_s\|^2\diff s \bigg]\\
		\label{eq:last.estim.close.aux.proof}
		& \le \eps\E^{\P^{\smalltext{\hat\alpha}^{\tinytext{N}}\smalltext{,}\smalltext{N}}}\bigg[ \int_0^T\e^{\beta s}\bigg( (1+4\ell_\Lambda^2)\|\delta X^{i,N}_{\cdot\wedge s}\|_\infty^2 +\frac{2(1+4\ell_\Lambda^2)}N\sum_{j=1}^N\|\delta X^{j,N}_{\cdot\wedge s}\|_\infty^2+\frac{C}{N}\bigg)\diff s\bigg] +C\eps R_N^2(1+N)  +C R_N^2\\
		\notag
		&\leq CNR_N^2 +CR_N^2+\frac CN+C\eps\E^{\P^{\smalltext{\hat\alpha}^{\tinytext{N}}\smalltext{,}\smalltext{N}}}\bigg[\int_0^T\e^{\beta s}\|\delta Z^{i,i,N}_s\|^2\diff s \bigg],
	\end{align}	
	which yields the bound on $\delta Y^{i,N}$ and $\delta Z^{i,j}$ in \Cref{eq:seq.close}. The bound of $\delta X^{i,N}$ thus follows from \Cref{eq:estim.sum.x}, \Cref{eq:x.estim.close}, \Cref{eq:cond.Kb.proof}, and \Cref{eq:bound.double.sum2}.
\end{proof}

\subsubsection{Step 3: propagation of chaos}
The final step of the proof is a propagation of chaos result for the (backward) particle system $(\tX^{i,N}, \tY^{i,N}, \tZ^{i,j,N})_{(i,j)\in\{1,\dots,N\}^\smalltext{2}}$ given by \Cref{eq:auxiliary.1}.
\begin{proposition}[Propagation of chaos]
\label{prop:prob.chaos}
	Let the conditions of {\rm\Cref{thm:main.limit}} be satisfied.
	Recall the probability measure $\P^{\hat\alpha}$ given in {\rm\Cref{eq:true.measure}.}
	It holds
	\begin{equation*}
		\big|\tY^{i,N}_0 - Y^i_0 \big|^2 \le C\sup_{t\in[0,T] }\Big\{\E^{\P^{\smalltext{\hat\alpha}}}\Big[\cW_2^2\big(L^N(\hat{\bm{\alpha}}_t^N), \cL_{\hat\alpha}(\hat\alpha_t)\big)\Big] + \E^{\P^{\smalltext{\hat\alpha}}}\Big[\cW_2^2\big(L^N(\X^N_{\cdot\wedge t}),\cL_{\hat\alpha}(X_{\cdot\wedge t}) \big) \Big]\Big\},\;\text{\rm for all}\; N \in \N^\star,
	\end{equation*}
	where $C>0$ is a constant that does not depend on $N$. In particular, if the measure argument in $b$ and $f$ is state dependent, \emph{i.e.} $L^N(\X^N_{\cdot\wedge t}, \alpha_t)$ is replaced by $L^N(\X^N_t, \alpha_t)$, and $A\subseteq \R^k$ for some $k\in\N^\star$, then we have
	\begin{equation*}
		\big|\tY^{i,N}_0 - Y^i_0 \big|^2 \le C\big(r_{\text{\fontsize{5}{5}\selectfont$\np$}, k, q} + r_{\text{\fontsize{5}{5}\selectfont$\np$},m, q}\big),\; \text{\rm for all}\; N \in \N^\star,\; q>2.
	\end{equation*}
\end{proposition}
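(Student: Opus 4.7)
The overall strategy mirrors the one in the case study of \Cref{sec:case.study} (Step~3): first transport the auxiliary particle system to the same probability space as the McKean--Vlasov copies $(X^i,Y^i,Z^i)$, then apply It\^o to the difference and close the resulting forward--backward estimate via Gronwall. Concretely, I would apply \Cref{ass.Lambda.conv}.$(vi)$ with $\Pi:=\P^{\hat\alpha}$ and $B^i:=W^{\hat\alpha^i}$, $i\in\{1,\dots,N\}$, which are independent $(\F_N,\P^{\hat\alpha})$--Brownian motions by the construction in Step~1. This yields a shadow particle system $(\bar X^{i,N},\bar Y^{i,N},\bar Z^{i,j,N})$ on $(\Omega,\cF,\P^{\hat\alpha})$ solving the analogue of \Cref{eq:auxiliary.1}. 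Since the joint laws of $N$ independent Brownian motions coincide, the law of this system under $\P^{\hat\alpha}$ matches that of $(\tX^{i,N},\tY^{i,N},\tZ^{i,j,N})$ under $\P^{\hat\alpha^{\text{\fontsize{4}{4}\selectfont $N$}},N}$; in particular, the deterministic initial values satisfy $\tY^{i,N}_0=\bar Y^{i,N}_0$, and it suffices to bound $|\bar Y^{i,N}_0-Y^i_0|^2$ on the common space $(\Omega,\cF,\P^{\hat\alpha})$, where $(\bar Y^{i,N},\bar Z^{i,j,N})$ and $(Y^i,Z^i)$ are both driven by the same Brownians $W^{\hat\alpha^i}$.

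Next, I would apply It\^o's formula to $\e^{\beta t}(\bar Y^{i,N}_t-Y^i_t)^2$ under $\P^{\hat\alpha}$; since $g=0$, the terminal contribution vanishes. Taking expectations kills the martingale part (using square integrability of $\bar Z$ and $Z$) and yields, for any $\eps>0$ and $\beta\geq 3\ell_f^2\eps^{-1}$, the inequality
\[
|\bar Y^{i,N}_0-Y^i_0|^2+\E^{\P^{\hat\alpha}}\bigg[\sum_{j=1}^N\int_0^T\e^{\beta s}\|\bar Z^{i,j,N}_s-Z^i_s\mathbf{1}_{\{i=j\}}\|^2\d s\bigg]\leq \eps\,\E^{\P^{\hat\alpha}}\bigg[\int_0^T\e^{\beta s}|\delta f_s|^2\d s\bigg],
\]
where $\delta f_s$ is the driver difference. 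Using Lipschitz-continuity of $f$ (with $\varphi$ bounded on the compact set $A$) and of $\Lambda$, the latter providing $\bar d(\bar\alpha^{i,N}_s,\hat\alpha^i_s)\leq \ell_\Lambda(\|\bar X^{i,N}_{\cdot\wedge s}-X^i_{\cdot\wedge s}\|_\infty+\cW_2(L^N(\bar\X^N_{\cdot\wedge s}),\cL_{\hat\alpha}(X_{\cdot\wedge s}))+\|\bar Z^{i,i,N}_s-Z^i_s\|)$, I would bound $|\delta f_s|^2$ by a linear combination of $\|\bar X^{i,N}-X^i\|_\infty^2$, $\|\bar Z^{i,i,N}-Z^i\|^2$, the identity-coupling cost $\frac1N\sum_{j=1}^N(\|\bar X^{j,N}-X^j\|_\infty^2+\bar d^2(\bar\alpha^{j,N},\hat\alpha^j))$, and the two marginal empirical errors defining $\gamma_N$ (using the product-metric structure on $\Cc_m\times A$ to split the joint Wasserstein contribution into its $\Cc_m$- and $A$-components).

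To close the system, I would bound $\E^{\P^{\hat\alpha}}[\|\bar X^{i,N}-X^i\|_\infty^2]$ by applying It\^o to $\|\bar X^{i,N}_t-X^i_t\|^2$, invoking Lipschitz-continuity of $b$ and $\sigma$, Burkholder--Davis--Gundy, and Gronwall, which produces exactly the same Wasserstein and $\bar Z-Z$ terms as in the backward estimate. Summing the two estimates over $i\in\{1,\dots,N\}$, averaging, and then choosing $\eps$ small enough and $\beta$ large enough lets me absorb all occurrences of $\|\bar Z^{i,j,N}-Z^i\mathbf{1}_{\{i=j\}}\|^2$ into the left-hand side (exactly as was done in \Cref{prop.seq.close}). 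A last application of Gronwall delivers $|\bar Y^{i,N}_0-Y^i_0|^2\leq C\gamma_N$, uniformly in $N$. For the state-dependent case with $A\subseteq\R^k$, both Wasserstein distances live in finite-dimensional Euclidean spaces ($\R^m$ and $\R^k$); combining uniform-in-$N$ moment bounds on $X^i$ (boundedness of $b,\sigma$) with the compactness of $A$, the Fournier--Guillin quantitative estimates give $\E^{\P^{\hat\alpha}}[\cW_2^2(L^N(\X^N_t),\cL_{\hat\alpha}(X_t))]\leq Cr_{\text{\fontsize{5}{5}\selectfont$N$},m,q}$ and the analogous bound with $k$ in place of $m$ for the controls, yielding the claimed rate $r_{\text{\fontsize{5}{5}\selectfont$N$},m,q}+r_{\text{\fontsize{5}{5}\selectfont$N$},k,q}$. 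The main obstacle is the delicate interplay between the forward error $\|\bar X^{i,N}-X^i\|$ and the backward errors $\|\bar Z^{i,j,N}-Z^i\mathbf{1}_{\{i=j\}}\|$, each appearing in the bound for the other via the Lipschitz-continuity of $\Lambda$; careful bookkeeping is required both to keep constants independent of $N$ when summing over all players, and to legitimately split the joint Wasserstein distance on $\Cc_m\times A$ into its marginal components so that the RHS matches the definition of $\gamma_N$ rather than a joint-dimensional rate.
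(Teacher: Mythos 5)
Your proposal is correct and follows essentially the same route as the paper: construct the shadow particle system on $(\Omega,\cF,\P^{\hat\alpha})$ via \Cref{ass.Lambda.conv}.$(vi)$ driven by the $W^{\hat\alpha,i}$, identify its law with that of $(\tX^{i,N},\tY^{i,N},\tZ^{i,j,N})$ under $\P^{\hat\alpha^{\text{\fontsize{4}{4}\selectfont $N$}},N}$ so that $\tY^{i,N}_0=\bar Y^{i,N}_0$, then run the coupled It\^o/Young/Gronwall estimates on $\delta Y$, $\delta Z$ and $\delta X$ (summing over $i$ and choosing $\eps$ small, $\beta$ large to absorb the $Z$-terms), bound the driver error by $r^N_s$ and hence by the two marginal Wasserstein errors defining $\gamma_N$, and finally invoke Fournier--Guillin in the state-dependent case. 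The only cosmetic difference is that you justify the law identification by equality in law of the driving Brownian motions where the paper cites Girsanov, which is the same point.
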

\begin{proof}
	The proof of this result is similar to that of \Cref{prop.seq.close}.
	We will present the argument for clarity.
	Consider the coupled FBSDE system
	\begin{equation}
		\begin{cases}
			\displaystyle \barX^{i,N}_t = X^i_0 + \int_0^t b_s\big(\barX^{i,N}_{\cdot\wedge s}, L^N(\overline\X^N_{\cdot\wedge s}, \overline \alpha^{N}_s), \overline \alpha^{i,N}_s\big)\diff s + \int_0^t\sigma_s(\barX^{i,N}_{\cdot\wedge s})\diff W^{\hat\alpha,i}_s,\; t\in[0,T],\; \P^{\hat\alpha}\text{--a.s.},\\[0.8em]
		\displaystyle	\barY^{i,N}_t = g(\barX^{i,N},L^N(\overline\X^N)) + \int_t^T f_s\big(\barX^{i,N}_{\cdot\wedge s}, L^N(\overline\X^N_{\cdot\wedge s}, \overline \alpha^{N}_s), \overline \alpha^{i,N}_s\big)\diff s - \int_t^T \sum_{j=1}^N\barZ^{i,j,N}_s\cdot \diff W^{\hat\alpha,j}_s,\; t\in[0,T],\; \P^{\hat\alpha}\text{--a.s.},\\[0.8em]
		\displaystyle	\overline \alpha^{i,N}_t\coloneqq  \Lambda_t\big(\barX^{i,N}_{\cdot\wedge t}, L^N(\overline\X^{N}_{\cdot\wedge t}),\barZ^{i,i,N}_t,0\big),\; \overline\alpha^N\coloneqq \big(\overline\alpha^{i,N}\big)_{i\in \{1,\dots,N\}}.
		\end{cases}
	\end{equation}
	This equation admits a square integrable solution by assumption.
	Observe that the probability $\P^{\hat\alpha}$ and the Brownian motions $(W^{\hat\alpha,1},\dots, W^{\hat\alpha,N})$ are fixed from \Cref{eq:true.measure}.
	Since $(\tX^{i,N}, \tY^{i,N}, \tZ^{i,j,N})$ satisfies the same equation on $(\Omega, \cF, \P^{\hat\alpha^{\text{\fontsize{4}{4}\selectfont $N$}},N})$ with the Brownian motions $(W^{\hat\alpha^{\text{\fontsize{4}{4}\selectfont $N$}}, 1},\dots, W^{\hat\alpha^{\text{\fontsize{4}{4}\selectfont $N$}}, N})$, it follows by Girsanov's theorem that 
	\[
	\P^{\hat\alpha^{\text{\fontsize{4}{4}\selectfont $N$}},N}\circ\big(\tX^{i,N}, \tY^{i,N}, \tZ^{i,j,N}\big)^{-1} = \P^{\hat\alpha }\circ \big(\barX^{i,N}, \barY^{i,N}, \barZ^{i,j,N}\big)^{-1}.
	\]
	Thus, since $\P^{\hat\alpha^{\text{\fontsize{4}{4}\selectfont $N$}},N}$ and $\P^{\hat\alpha}$ agree on $\cF_{N,0}$, it follows that $\tY^{i,N}_0 = \barY^{i,N}_0$ for all $i\in \{1,\dots,N\}$ and all $N\in \N^\star$.
	Hence, it suffices to estimate the rate of convergence of $(\barY^{i,N})_{N\in\N^\smalltext{\star}}$ to $Y^{i}_0$.
	Let us denote $\delta X^{i,N}\coloneqq  \barX^{i,N} - X^i $, $\delta Y^{i,N} \coloneqq  \barY^{i,N} - Y^i$ and $\delta Z^{i,j,N}\coloneqq  \barZ^{i,j,N} - Z^i\mathbf{1}_{\{i=j\}}$.
	The proof of the convergence of $\delta X^{i,N}$, $\delta Y^{i,N}$, and $\delta Z^{i,j,N}$ is essentially the same as the proof of \Cref{prop.seq.close}.
	Therefore, we give only the main steps of the proof to avoid repetitions.
	By exactly the same arguments used to obtain Equation \ref{eq:x.estim.close}, we have
	\begin{align}
		\notag (1-\eta)	\E^{\P^{\smalltext{\hat\alpha}}}\Big[\e^{\beta T}\|\delta X^{i,N}\|_\infty^2\Big]  & \le \E^{\P^{\hat\alpha}}\bigg[\int_0^T\e^{\beta s}\bigg( \beta + 2\ell_b^2\bar\eps^{-1} +  \ell_\sigma^2\big(1 + C_{\rm BDG}^2\eta^{-1}\big)+4\ell^2_\Lambda\bar\eps-2K_b\big)\|\delta X^{i,N}_{\cdot\wedge s}\|^2_\infty\mathrm{d}s\bigg]\\
	 &\quad +\bar\eps\E^{\P^{\smalltext{\hat\alpha}}}\bigg[\int_0^T\e^{\beta s}\frac1N\sum_{j=1}^N\Big\{(2 + 4\ell_\Lambda^2)\|\delta X^{j,N}_{\cdot\wedge s}\|^2_\infty + 4\ell_\Lambda^2\e^{\beta s}\|\delta Z^{j,j,N}_s\|^2 \Big\} + 4\ell_\Lambda^2\|\delta Z^{i,i,N}_s\|^2\diff s\bigg] + \bar\eps E^N
	\end{align}
	for every positive $\bar\eps$ and $\eta$, with $E^N\coloneqq  4\ell_\Lambda^2\E^{\P^{\smalltext{\hat\alpha}}}\big[\int_0^T\e^{\beta t}\big(\cW_2^2\big(L^N(\hat{\bm{\alpha}}_t^N), \cL_{\hat\alpha}(\hat\alpha_t)\big) + \cW_2^2\big(L^N(\X^N_{\cdot\wedge t}),\cL_{\hat\alpha}(X_{\cdot\wedge t}) \big) \big)\diff t\big].$ Thus, if $K_b$ satisfies \Cref{eq:cond.Kb.proof}, it then holds
	\begin{align}
	\label{eq:estim.X.chaos}
		\E^{\P^{\smalltext{\hat\alpha}}}\Big[\e^{\beta T}\|\delta X^{i,N}\|_\infty^2 \Big] &\le \frac{\bar\eps(2 + 4\ell_\Lambda^2)}{1-\eta}\E^{\P^{\smalltext{\hat\alpha}}}\bigg[\int_0^T\e^{\beta s} \frac1N\sum_{j=1}^N\big\{\|\delta X^{j,N}_{\cdot\wedge s}\|^2_\infty + \|\delta Z^{j,j,N}_s\|^2\big\} + \e^{\beta s}\|\delta Z^{i,i,N}_s\|^2 \diff s \bigg] + E^N,\\
	\label{eq:estim.sum.x.chaos}
		\frac1N\sum_{i=1}^N\E^{\P^{\smalltext{\hat\alpha}}}\Big[\e^{\beta T}\|\delta X^{i,N}\|_\infty^2 \Big] &\le \frac{\bar\eps(4\ell_\Lambda^2 +1)}{1 +\bar\eps(2+ \ell_\Lambda^2) - \eta}\E^{\P^{\smalltext{\hat\alpha}}}\bigg[\int_0^T\e^{\beta s}\frac1N\sum_{i=1}^N\|\delta Z^{i,i,N}_s\|^2  \diff s \bigg] + E^N.
	\end{align}
	Then, using similar arguments to the ones leading to \eqref{eq:estim.close.one}, we have
	\begin{align*}
		\notag
		&\e^{\beta t}|\delta Y^{i,N}_t|^2 + \E^{\P^{\smalltext{\hat\alpha}}}\bigg[\sum_{j=1}^N\int_t^T\e^{\beta s}\|\delta Z^{i,j,N}_s\|^2\diff s\bigg| \cF_{N,t} \bigg]\\\notag
		& \le 2\e^{\beta T}\ell_g^2\E^{\P^{\smalltext{\hat\alpha}}}\Big[\|\delta X^{i,N}\|_\infty^2 + \frac1N\sum_{j=1}^N\|\delta X^{j,N}\|^2_\infty + \cW_2^2(L^N(\X^N), \cL_{\hat\alpha}(X))  \Big| \cF_{N,t} \Big] + \E^{\P^{\smalltext{\hat\alpha}}}\bigg[ \int_t^T\e^{\beta s}\Big(\ell^2_f(1+4\ell_\Lambda^2)\eps^{-1} - \beta\Big) \|\delta Y^{i,N}_{s}\|^2 \diff s\bigg]\\
		&\quad +\eps\E^{\P^{\smalltext{\hat\alpha}}}\bigg[\int_t^T\e^{\beta s}\frac1N\sum_{j=1}^N\Big\{(1 + 12\ell_\Lambda^2)\|\delta X^{j,N}_{\cdot\wedge s}\|^2 + 4\ell^2_\Lambda\|\delta Z^{j,j,N}_s\|^2\Big\} +\e^{\beta s} (\|\delta X^{i,N}_{\cdot\wedge s}\|_\infty^2+ \|\delta Z^{i,i,N}_s\|^2 )\diff s   \bigg| \cF_{N,t}\bigg] + \eps E_N.
	\end{align*}
	Thus, for $\beta$ large enough, we have
		\begin{align*}
		\notag
		 \E^{\P^{\smalltext{\hat\alpha}}}\bigg[\e^{\beta t}|\delta Y^{i,N}_t|^2 + (1-\eps)\sum_{j=1}^N\int_t^T\e^{\beta s}\|\delta Z^{i,j,N}_s\|^2\diff s \bigg]
		& \le 2\e^{\beta T}\ell_g^2\E^{\P^{\smalltext{\hat\alpha}}}\bigg[\|\delta X^{i,N}\|_\infty^2 + \frac1N\sum_{j=1}^N\|\delta X^{j,N}\|^2_\infty  \bigg]+E^{1,N} \\
		&\quad +\eps(1 + 12\ell_\Lambda^2)\E^{\P^{\smalltext{\hat\alpha}}}\bigg[\int_t^T\bigg(\frac1N\sum_{j=1}^N\big(\|\delta X^{j,N}_{\cdot\wedge s}\|^2 + \|\delta Z^{j,j,N}_s\|^2\big) + \|\delta X^{i,N}_{\cdot\wedge s}\|_\infty^2\bigg) \diff s  \bigg],
	\end{align*}
	with $E^{1,N}\coloneqq   2\mathrm{e}^{\beta T}\ell_g^2\E^{\P^{\smalltext{\hat\alpha}}}\big[\cW_2^2(L^N(\X^N), \cL_{\hat\alpha}(X))\big] + 4\ell_\Lambda^2\eps\E^{\P^{\smalltext{\hat\alpha}}}\big[\int_0^T\e^{\beta t}\big(\cW_2^2\big(L^N(\hat{\bm{\alpha}}_t^N), \cL_{\hat\alpha}(\hat\alpha_t)\big) + \cW_2^2\big(L^N(\X^N_{\cdot\wedge t}),\cL_{\hat\alpha}(X_{\cdot\wedge t}) \big) \big)\diff t\big].$ Hence, using \Cref{eq:estim.X.chaos,eq:estim.sum.x.chaos}, we can find two positive constants $C_1$ and $C_2$ such that
	\begin{align}
		& \E^{\P^{\smalltext{\hat\alpha}}}\bigg[\e^{\beta t}|\delta Y^{i,N}_t|^2 + (1-\eps)\sum_{j=1}^N\int_t^T\e^{\beta s}\|\delta Z^{i,j,N}_s\|^2\diff s \bigg]
		\label{eq:prop.chaos.estim.yz} \le \bar\eps C_1\E^{\P^{\smalltext{\hat\alpha}}}\bigg[\int_t^T\frac1N\sum_{j=1}^N \|\delta Z^{j,j,N}_s\|^2 \diff s  \bigg] +  C_2(E^N + E^{1,N}).
	\end{align}
	Summing up on both sides, we obtain
	\begin{align}
		\notag
		& \E^{\P^{\smalltext{\hat\alpha}}}\bigg[\e^{\beta t}\sum_{i=1}^N|\delta Y^{i,N}_t|^2 + (1-\eps-\bar\eps C_1)\sum_{i=1}^N\sum_{j=1}^N\int_t^T\e^{\beta s}\|\delta Z^{i,j,N}_s\|^2\diff s \bigg] \le  C_2N(E^N + E^{1,N}).
	\end{align}
	Therefore, choosing $\bar\eps$ and $\eps$ small enough, it holds that
	\begin{equation*}
		\E^{\P^{\smalltext{\hat\alpha}}}\bigg[\sum_{i=1}^N\sum_{j=1}^N\int_t^T\e^{\beta s}\|\delta Z^{i,j,N}_s\|^2\diff s \bigg]
		 \le  C_3N(E^N + E^{1,N}),
	\end{equation*}
	for some constant $C_3>0$.
	Using this back in \Cref{eq:prop.chaos.estim.yz} allows to obtain, for $\eps<1$,
	\begin{align*}
		\E^{\P^{\smalltext{\hat\alpha}}}\bigg[\e^{\beta t}|\delta Y^{i,N}_t|^2 + \sum_{j=1}^N\int_t^T\e^{\beta s}\|\delta Z^{i,j,N}_s\|^2\diff s \bigg]
		 \le  C_4(E^N + E^{1,N}),
	\end{align*}
	for some constant $C_4>0$.
	This concludes the proof since $E^N + E^{1,N}$ is dominated by 
	\begin{equation*}
	\sup_{t\in[0,T] }\Big\{\E^{\P^{\smalltext{\hat\alpha}}}\Big[\cW_2^2\big(L^N(\hat{\bm{\alpha}}_t^N), \cL_{\hat\alpha}(\hat\alpha_t)\big)\Big] + \E^{\P^{\smalltext{\hat\alpha}}}\Big[\cW_2^2\big(L^N(\X^N_{\cdot\wedge t}),\cL_{\hat\alpha}(X_{\cdot\wedge t}) \big) \Big]\Big\}.
	\end{equation*}
\end{proof}

	\subsubsection{Step 4: Convergence of the Nash equilibria}
	Let us now prove \Cref{eq:conv.law.hatalpha}.
	By the respective representations \eqref{eq:bsde.char.mfg.proof.conv.ith} and \eqref{eq:rep.Nash.proof} of $\hat\alpha^{i,N}_t$ and $\hat\alpha^i_t$, we have
	\begin{align*}
    		&\int_0^T\cW_2^2\big(\P^{\hat\alpha^{\text{\fontsize{4}{4}\selectfont $N$}},N}\circ (\hat\alpha^{i,N}_t)^{-1}, \cL_{\hat\alpha}(\hat\alpha_t^i) \big)\diff t \\
		& = \int_0^T\cW_2^2\Big(\P^{\hat\alpha^{\text{\fontsize{4}{4}\selectfont $N$}},N}\circ\Lambda_t\big(X^{i}_{\cdot\wedge t}, Z^{i,i,N}_t, L^N(\X^N_{\cdot\wedge t}), \aleph^{i,N}_t \big)^{-1}, \cL_{\hat\alpha}\big(\Lambda_t(X^{i}_{\cdot\wedge t}, Z^{i}_t, \cL_{\hat\alpha}(X_{\cdot\wedge t}), 0) \big) \Big)\diff t	\\
		&\le \int_0^T\cW_2^2\Big(\P^{\hat\alpha^{\text{\fontsize{4}{4}\selectfont $N$}},N}\circ\Lambda_t\big(X^{i}_{\cdot\wedge t}, Z^{i,i,N}_t, L^N(\X^N_{\cdot\wedge t}), \aleph^{i,N}_t \big)^{-1}, \P^{\hat\alpha^{\text{\fontsize{4}{4}\selectfont $N$}},N}\circ\Lambda_t\big(\tX^{i}_{\cdot\wedge t}, \tZ^{i,i,N}_t, L^N(\widetilde{\X}_{\cdot\wedge t}), 0 \big)^{-1} \Big)\diff t	\\
		&\quad + \int_0^T\cW_2^2\Big(\P^{\hat\alpha^{\text{\fontsize{4}{4}\selectfont $N$}},N}\circ\Lambda_t\big(\tX^{i}_{\cdot\wedge t}, \tZ^{i,i,N}_t, L^N(\widetilde{\X}_{\cdot\wedge t}), 0 \big)^{-1}, \cL_{\hat\alpha}\big(\Lambda_t(X^{i}_{\cdot\wedge t}, Z^{i}_t, \cL_{\hat\alpha}(X_{\cdot\wedge t}), 0) \big) \Big)\diff t\\
		&\le \ell_\Lambda \E^{\P^{\smalltext{\hat\alpha}^{\tinytext{N}}\smalltext{,}\smalltext{N}}}\bigg[\int_0^T\bigg(\|X^{i}_{\cdot\wedge t} - \tX^{i,N}_{\cdot\wedge t}\|^2_\infty+ \|Z^{i,i,N}_t - \tZ^{i,i,N}_t\|^2+\frac1N\sum_{j=1}^N\|X^j_{\cdot\wedge t} - \tX^{j,N}_{\cdot\wedge t}\|^2_\infty+\|\aleph^{i,N}_t\|^2\bigg)\diff t\bigg]	\\
		&\quad + \int_0^T\cW_2^2\Big(\cL_{\hat\alpha}\big(\Lambda_t(\barX^{i,N}_{\cdot\wedge t}, \barZ^{i,i,N}_t, L^N(\overline{\X}_{\cdot\wedge t}), 0) \big), \cL_{\hat\alpha}\big(\Lambda_t(X^{i}_{\cdot\wedge t}, Z^{i}_t, \cL_{\hat\alpha}(X_{\cdot\wedge t}), 0) \big) \Big)\diff t	\\
		&\le \ell_\Lambda \E^{\P^{\smalltext{\hat\alpha}^{\tinytext{N}}\smalltext{,}\smalltext{N}}}\bigg[\int_0^T\bigg(\|X^{i}_{\cdot\wedge t} - \tX^{i,N}_{\cdot\wedge t}\|^2_\infty+ \|Z^{i,i,N}_t - \tZ^{i,i,N}_t\|^2+\frac1N\sum_{j=1}^N\|X^j_{\cdot\wedge t} - \tX^{j,N}_{\cdot\wedge t}\|^2_\infty+\|\aleph^{i,N}_t\|^2\bigg)\diff t\bigg]	\\
		&\quad + \ell_\Lambda \E^{\P^{\smalltext{\hat\alpha}}}\bigg[\int_0^T\bigg(\|X^{i}_{\cdot\wedge t} - \barX^{i,N}_{\cdot\wedge t}\|^2_\infty+ \|Z^{i,i,N}_t - \barZ^{i,i,N}_t\|^2+\frac1N\sum_{j=1}^N\|X^j_{\cdot\wedge t} - \barX^{j,N}_{\cdot\wedge t}\|^2_\infty+\cW_2^2(L^N(\X_{\cdot\wedge t}), \cL_{\hat\alpha}(X_{\cdot\wedge t}))\bigg)\diff t\bigg],
	\end{align*}
	where the first inequality is the triangular inequality, the second one follows by Lipschitz-continuity of $\Lambda$ and the fact that $\P^{\hat\alpha^{\text{\fontsize{4}{4}\selectfont $N$}},N}\circ(\tX^{i,N}, \tY^{i,N}, \tZ^{i,j,N})^{-1} = \P^{\hat\alpha }\circ (\barX^{i,N}, \barY^{i,N}, \barZ^{i,j,N})^{-1}$, and the third one uses again Lipschitz-continuity of $\Lambda$ and the triangular inequality.
	Now, by \Cref{prop.seq.close}, we have
	\begin{align*}
		\int_0^T\cW_2^2\big(\P^{\hat\alpha^{\text{\fontsize{4}{4}\selectfont $N$}},N}\circ({\hat\alpha}^{i,N}_t)^{-1}, \cL_{\hat\alpha}(\hat\alpha_t^i) \big)\diff t \le C\bigg(\frac1N  + NR^2_N + \gamma^N\bigg) + C\E^{\P^{\smalltext{\hat\alpha}^\tinytext{N}}}\bigg[\int_0^T\|\aleph^{i,N}_t\|^2\diff t \bigg].
	\end{align*}
	Finally, using \Cref{eq:estim.aleph} and \Cref{lem:H2.bounded.aux} yields
	\begin{align*}
		\int_0^T\cW_2^2\big(\P^{\hat\alpha^{\text{\fontsize{4}{4}\selectfont $N$}},N}\circ(\hat\alpha^{i,N}_t)^{-1}, \cL_{\hat\alpha}(\hat\alpha_t) \big)\diff t \le C\bigg(\frac1N + NR^2_N + \gamma^N\bigg), 
	\end{align*}
	which is \Cref{eq:conv.law.hatalpha}. When the law in the coefficients $b$ and $f$ is state dependent, the same computations as above yield
	\begin{align*}
		&|\widetilde Y^{i,N}_0 - Y_0^i|^2 + \int_0^T\cW_2^2\big(\P^{\hat\alpha^\smalltext{N},N}\circ(\hat\alpha^{i,N}_t), \cL_{\hat\alpha}(\hat\alpha_t) \big)\diff t\\\notag
		&\le C\bigg(\sup_{t \in [0,T]}\E^{\P^{\smalltext{\hat\alpha}}}\Big[  \cW_2^2\big(L^N(\X^N_{t}), \cL_{\hat\alpha}(X_{ t})\big)\Big]+ \sup_{t \in [0,T]}\E^{\P^{\smalltext{\hat\alpha}}}\Big[  \cW_2^2\big(L^N(\hat{\bm{\alpha}}^N_{t}), \cL_{\hat\alpha}(\hat\alpha_{t})\big)\Big]\bigg) \le C\big(r_{\text{\fontsize{5}{5}\selectfont$\np$}, k, q} + r_{\text{\fontsize{5}{5}\selectfont$\np$},m, q}\big),
	\end{align*}
	where the second inequality follows by \citeauthor*{fournier2015rate} \cite[Theorem 1]{fournier2015rate}, provided that the processes $\hat\alpha^i$ take values in $\R^k$. 
	This concludes the proof.

\subsection{Proof for the example}

\begin{proof}[Proof of \Cref{cor:price.impact}]
	It is easily checked that the only fixed-point $\hat a\in \mathcal{O}^\np$ of $\mathcal{H}^\np$ is given by $a^i\coloneqq  \frac{1}{1-\gamma( \x_t^i)/\np}z^i$, $i\in\{1,\dots,N\}$.
	Thus, the function $\Lambda$ is
	\begin{equation*}
		\Lambda_t\big(\x, \xi, z^i,\aleph^{i,N}(\x)\big) \coloneqq \frac{1}{1-\aleph^{i,N}(\x)}z^i, \; \text{with} \; \aleph^{i,N}(\x) = \frac{\gamma (\x_{ t}^i)}{N},\;\text{and}\; \Lambda_t(\x, \xi, z^i,0) = z^i .
	\end{equation*}
	Thus, {\rm \Cref{ass.Lambda.conv}}.$(i)$--$(iv)$ is satisfied.
	Note that the Hamiltonian $H$ takes the form
	\begin{equation*}
		H_t(\x,\xi,z) \coloneqq  \gamma( \x_{ t})\int_{\R} a\xi(\d a) +\frac{1}{2}|z|^2 - k(t,\x_{ t} ),
	\end{equation*}
	since $b$ and $\sigma$ are respectively given by $b_t(\x, \xi,a) = a$ and $\sigma_t(\x) = \textit{\rm Id}_{\R^d}$. Since the minimiser of the Hamiltonian of the mean-field game satisfies $\Lambda_t(\x, \xi, z) = z$, the generalised McKean--Vlasov BSDE \eqref{eq:bsde.char.mfg} takes the form
	\begin{equation}
	\label{eq:gen.MkV.example}
	 	Y_t = g(X_T) + \int_t^T\big(\gamma(X_{s})\E^{\P^{\smalltext{\hat\alpha}}}[Z_s]-c(Z_s)-k(s,X_{ s})\big)\diff s - \int_t^TZ_s\cdot\diff W^{\hat\alpha}_s,\; t\in[0,T],\; \P^{\hat\alpha}\text{\rm--a.s.} ,
	\end{equation} 
	with $\diff \P^{\hat\alpha}/\mathrm{d}\P = \cE\big( \int_0^\cdot Z_s\diff W_s\big)_T$.
	By \Cref{thm:Gen.MkV.BSDE}, this equation admits a unique solution.
	Thus, by \Cref{prop:char.mfe}, the mean-field game admits a unique solution.
	Moreover, in the present case the, if the functions $k$ and $\gamma$ are state-dependent then the FBSDE \eqref{eq:FBSDE} admits a unique square-integrable (global) solution, see \emph{e.g.} \citeauthor*{delarue2002existence} \cite[Theorem 2.6]{delarue2002existence}.
	Therefore, \Cref{ass.Lambda.conv} is satisfied. 
\end{proof}

\section{Existence and uniqueness results for McKean--Vlasov BSDEs}
\label{sec:exists.MKVBSDE}
This section compiles existence and uniqueness results for McKean--Vlasov equations.
In the ensuing subsection we investigate well-posedness of classical McKean--Vlasov equations with dependence in the law of the control.
Since we were not able to find such results in the literature, we provide a proof in \Cref{sec:appen}. 
This will serve us when investigating generalised McKean--Vlasov equations in the final subsection.
\subsection{Well-posedness of McKean--Vlasov BSDEs}
\label{sec:McKV-BSDE}

We give ourselves the maps
\[
F:[0,T]\times\Cc_m \times \R^\xdim\times\R^{\xdim\times \bmdim}\times \cP_2\big(\R^\xdim\times \R^{\xdim\times \bmdim}\big) \longrightarrow \R^\xdim,\; G:\Cc_m \longrightarrow\R^m,
\]
and consider the McKean--Vlasov BSDEs
\begin{equation}
\label{eq:MkV BSDE}
	\Yc_t = G(X) +\int_t^T F_s\big( X_{\cdot\wedge s},\Yc_s, \Zc_s, \cL(\Yc_s,\Zc_s)\big)\mathrm{d}s - \int_t^T\Zc_s\mathrm{d}W_s,\; t\in[0,T],\; \P \text{\rm--a.s.}
\end{equation}
on the probability space $(\Omega,\cF,\P)$.
We will consider the following assumption.
\begin{assumption}\label{assum:main}
	$(i)$ For any $(y,z,\xi)\in \mathbb{R}^m\times \mathbb{R}^{m\times d}\times \cP_2(\mathbb{R}^m\times \mathbb{R}^{\xdim\times \bmdim})$, $[0,T]\times\cC_m\ni (t,\bx)\longmapsto F_t(\bx,y,z,\xi)$ is $\F$-optional$;$ 

	\medskip
	$(ii)$ the functions $G$ and $F(\cdot, y,z,\xi)$ have polynomial growth, \emph{i.e.} for every $(t,y,z,\xi)\in [0,T]\times\mathbb{R}^m\times \mathbb{R}^{m\times d}\times \cP_2(\mathbb{R}^m\times \mathbb{R}^{m\times d})$
	\begin{align*}
		\|G(\bx)\|\le \ell_G\big(1 + \|\bx\|_\infty^{e}\big),\; \text{\rm and}\; \|F_t(\bx,0,0,\delta_{(0,0)})\|_\infty\le \ell_F\big(1 + \|\x\|_\infty^{e^\prime}\big),
	\end{align*}
	for some $(e,e^\prime)\in[1,\infty)^2$ and positive $\ell_F$, and $\ell_G;$

	\medskip
	$(iii)$ the function $F$ is Borel-measurable in all its arguments, and the following condition is satisfied:
	for any $(t,\bx)\in[0,T]\times\cC_m$, $(y,z,\xi)\longmapsto F_t(\bx,y,z,\xi)$ is uniformly $\ell_F$--Lipschitz-continuous for some $\ell_F>0$. That is, for any $(t,\bx,y,y^\prime,z,z^\prime,\xi,\xi^\prime)\in [0,T]\times\cC_m\times(\mathbb{R}^m)^2\times (\mathbb{R}^{m\times d})^2\times \big(\cP_2(\mathbb{R}^m\times \mathbb{R}^{m\times d})\big)^2$ 
	\begin{align*}
		\big\|F_t(\bx,y,z,\xi) - F_t(\bx,y^\prime,z^\prime,\xi^\prime)\big\|&\le \ell_F \big(\|y-y^\prime\| + \|z-z^\prime\|+ {\mathcal W}_2(\xi, \xi^\prime) \big).
	\end{align*}
\end{assumption}

\subsubsection{Liminary results}
We now turn our attention to the existence of McKean--Vlasov BSDEs.
\begin{proposition}
\label{prop:MckV BSDE}
	Let {\rm \Cref{assum:main}} hold.
	Then, 
	there exists a unique solution to {\rm \eqref{eq:MkV BSDE}} such that $\Yc\in\S^2(\RR^{m},\FF)$ and $\Zc\in\H^2(\RR^{m\times d},\FF)$.
\end{proposition}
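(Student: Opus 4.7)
The plan is to establish the result by a Picard iteration argument on the space $\S^2(\RR^m,\F)\times\H^2(\RR^{m\times d},\F)$, using the standard trick of endowing it with an exponentially weighted norm to absorb the Lipschitz constant $\ell_F$. Before anything else, I would record that the inputs have the required integrability: \Cref{assump:sigma} and the boundedness of $\sigma$ in the time variable together with its linear growth in the path variable imply $X\in\S^p(\R^m,\F)$ for every $p\geq 1$, so $G(X)\in\L^2(\R^m,\Fc_T)$ and the process $s\longmapsto F_s(X_{\cdot\wedge s},0,0,\delta_{(0,0)})$ belongs to $\H^2(\R^m,\F)$ thanks to \Cref{assum:main}.$(ii)$.

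Next, I would introduce the map $\Phi:\S^2(\R^m,\F)\times\H^2(\R^{m\times d},\F)\longrightarrow\S^2(\R^m,\F)\times\H^2(\R^{m\times d},\F)$ sending $(\cY,\cZ)$ to $(\widetilde\cY,\widetilde\cZ)$ defined as the unique solution of the classical Lipschitz BSDE
\[
\widetilde\cY_t=G(X)+\int_t^T F_s\big(X_{\cdot\wedge s},\widetilde\cY_s,\widetilde\cZ_s,\cL(\cY_s,\cZ_s)\big)\mathrm{d}s-\int_t^T\widetilde\cZ_s\mathrm{d}W_s,\; t\in[0,T],\; \P\text{--a.s.}
\]
Because $\mu_s:=\cL(\cY_s,\cZ_s)$ is a deterministic measure flow, the driver $(s,\omega,y,z)\longmapsto F_s(X_{\cdot\wedge s}(\omega),y,z,\mu_s)$ is $\F$-progressively measurable, uniformly $\ell_F$-Lipschitz in $(y,z)$, and its value at $(0,0)$ lies in $\H^2(\R^m,\F)$ (using the triangle inequality in $\Wc_2$, $(\cY,\cZ)\in\S^2\times\H^2$, and the integrability of $X$). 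The well-posedness result of \citeauthor*{el1997backward} \cite{el1997backward} therefore delivers $\Phi$ as a well-defined map.

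To obtain a fixed point, I would then show that $\Phi$ is a strict contraction for the norm $\|(\cY,\cZ)\|_\alpha^2:=\E\big[\int_0^T\e^{\alpha s}(\|\cY_s\|^2+\|\cZ_s\|^2)\mathrm{d}s\big]$ with $\alpha$ chosen sufficiently large. Fixing $(\cY,\cZ),(\cY^\prime,\cZ^\prime)$ and writing $(\delta\widetilde\cY,\delta\widetilde\cZ):=\Phi(\cY,\cZ)-\Phi(\cY^\prime,\cZ^\prime)$, It\^o's formula applied to $\e^{\alpha s}\|\delta\widetilde\cY_s\|^2$, the Lipschitz property of $F$, and Young's inequality give after standard manipulations
\[
\E\bigg[\int_0^T\e^{\alpha s}\big((\alpha-C_{\ell_F})\|\delta\widetilde\cY_s\|^2+\tfrac12\|\delta\widetilde\cZ_s\|^2\big)\mathrm{d}s\bigg]\leq C_{\ell_F}\E\bigg[\int_0^T\e^{\alpha s}\Wc_2^2\big(\cL(\cY_s,\cZ_s),\cL(\cY^\prime_s,\cZ^\prime_s)\big)\mathrm{d}s\bigg],
\]
for some constant $C_{\ell_F}$ depending only on $\ell_F$. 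The decisive, and easy, estimate is the coupling bound $\Wc_2^2\big(\cL(\cY_s,\cZ_s),\cL(\cY^\prime_s,\cZ^\prime_s)\big)\leq \E[\|\cY_s-\cY^\prime_s\|^2+\|\cZ_s-\cZ^\prime_s\|^2]$, which converts the right-hand side into $\|(\cY-\cY^\prime,\cZ-\cZ^\prime)\|_\alpha^2$ up to a constant. Picking $\alpha$ large enough turns the left-hand side into a constant multiple of $\|(\delta\widetilde\cY,\delta\widetilde\cZ)\|_\alpha^2$ with constant strictly less than one, so Banach's theorem supplies a unique fixed point $(\cY,\cZ)$ of $\Phi$, which is exactly the unique solution of \eqref{eq:MkV BSDE} in $\S^2(\R^m,\F)\times\H^2(\R^{m\times d},\F)$; square-integrability of $\cY$ in $\S^2$ (rather than merely $\H^2$) follows from the standard a priori estimate for Lipschitz BSDEs applied to the fixed point.

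The only mildly delicate point to watch is that the measure dependence enters via $\cL(\cY,\cZ)$, so the contraction must be run on the joint norm controlling both components simultaneously; happily the $\|\delta\widetilde\cZ\|^2$ term arises \emph{naturally} on the left-hand side of the It\^o expansion, so there is no asymmetry to overcome. Everything else is routine.
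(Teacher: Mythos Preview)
Your argument is correct in spirit and closely parallel to the paper's, though you set up the fixed-point map differently. The paper freezes the \emph{entire} input $(u,v)$ in the generator, so that $\Phi(u,v)=(U,V)$ solves the BSDE with driver $s\longmapsto F_s(X_{\cdot\wedge s},u_s,v_s,\cL(u_s,v_s))$, which does not depend on $(U,V)$ at all; $(U,V)$ is then obtained directly by conditional expectation and martingale representation, and the contraction estimate is read off from the auxiliary \Cref{lemma:estimates}. You instead freeze only the law $\cL(\cY_s,\cZ_s)$ and let the remaining $(y,z)$-dependence act on the unknown $(\widetilde\cY,\widetilde\cZ)$, which obliges you to invoke the Lipschitz BSDE theory of \cite{el1997backward} as a black box to define $\Phi$. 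Both routes work; the paper's is a touch more self-contained, yours more modular.

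Two small points to tighten. First, in your displayed inequality the right-hand side carries a fixed constant $C_{\ell_F}$ while the coefficient of $\|\delta\widetilde\cZ_s\|^2$ on the left is pinned at $\tfrac12$; increasing $\alpha$ enlarges only the $\|\delta\widetilde\cY_s\|^2$ coefficient, so the implied contraction factor is $2C_{\ell_F}$ and does not shrink with $\alpha$. The fix is the usual one (and is exactly what the paper does): apply Young's inequality to the cross term $2\ell_F\|\delta\widetilde\cY_s\|\,\cW_2\big(\cL(\cY_s,\cZ_s),\cL(\cY^\prime_s,\cZ^\prime_s)\big)$ with a free parameter $\varepsilon$, yielding $\varepsilon\,\cW_2^2+\ell_F^2\varepsilon^{-1}\|\delta\widetilde\cY_s\|^2$, then choose $\varepsilon$ small first and $\alpha$ large second. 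Second, $\big(\S^2(\R^m,\F)\times\H^2(\R^{m\times d},\F),\|\cdot\|_\alpha\big)$ is not complete, so Banach's theorem should be run on $\H^2(\R^m,\F)\times\H^2(\R^{m\times d},\F)$ (as the paper does); since your $\Phi$ actually maps this larger space into $\S^2\times\H^2$ by the very a priori estimate you cite at the end, the fixed point automatically enjoys the required $\S^2$-regularity.
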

\begin{remark}
\label{rem:F.moments}
	Recall that since $\sigma$ is bounded, $X$ has moments of every order under $\P$.
	Therefore, it follows by polynomial growth of $F(\x, 0,0,\delta_{(0,0)})$ that $F_t(X_{\cdot\wedge t},0,0,\delta_{(0,0)})$ also has moments of every order under $\P$.
	Similarly, by polynomial growth of $G$, the random variable $G(X)$ is $\P$--square-integrable.
	These will be used in the remainder of this section without further mention.
\end{remark}
\subsubsection{Proof of Proposition \ref{prop:MckV BSDE}}
\label{sec:appen}
We start with some estimates.
\begin{lemma}\label{lemma:estimates}
	Let $F^\prime$ and $G^\prime$ be maps satisfying the same assumptions as $F$ and $G$ in {\rm\Cref{assum:main}}, and let us assume that $(\Yc,\Zc)\in \H^2(\RR^m,\FF)\times \H^2(\RR^{m\times d},\FF)$ and that $(\Yc^\prime,\Zc^\prime)\in \H^2(\RR^m,\F)\times \H^2(\RR^{m\times d},\FF)$ solves {\rm \Cref{eq:MkV BSDE}} with $F^\prime$ instead of $F$, and $\xi^\prime$ instead of $\xi$. 
	Then $\Yc$ and $\Yc^\prime$ both belong to $\SS^2(\RR^m,\FF)$, and we have for any $\beta\in\RR$, any $\varepsilon>0$, and any $t\in[0,T]$
 	\begin{align*}
 		\EE\bigg[\mathrm{e}^{\beta t}\|\delta \Yc_t\|^2& +\int_t^T\mathrm{e}^{\beta s}\Big(\big(\beta-(\varepsilon^{-1}(1+2\ell_F^2)+4\ell)\big)\|\delta \Yc_s\|^2+(1-2\varepsilon)\|\delta \Zc_s\|^2\Big)\mathrm{d}s\bigg] \leq \varepsilon \EE\bigg[ \mathrm{e}^{\beta T}\|\delta G\|^2+\int_t^T\mathrm{e}^{\beta s}\|\delta F_s\|^2\mathrm{d}s\bigg],
 	\end{align*}
 	where we denoted for simplicity
	\[
		\delta G\coloneqq G-G^\prime,\; \delta \Yc_\cdot\coloneqq \Yc_\cdot-\Yc_\cdot^\prime,\; \delta \Zc_\cdot\coloneqq \Zc_\cdot-\Zc_\cdot^\prime,\; \delta F_\cdot\coloneqq  F_\cdot\big(X_{\cdot\wedge\cdot},\Yc^\prime_\cdot,\Zc^\prime_\cdot,\cL(\Yc^\prime_\cdot,\Zc^\prime_\cdot)\big)-F^\prime_\cdot\big(X_{\cdot\wedge\cdot},\Yc^\prime_\cdot,\Zc^\prime_\cdot,\cL(\Yc^\prime_\cdot,\Zc^\prime_\cdot)\big).
	\]
\end{lemma}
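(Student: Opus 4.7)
The plan is to establish the statement in two stages: first upgrade $\Yc$ and $\Yc^\prime$ to $\S^2(\R^m,\F)$ so that all forthcoming stochastic integrals are true martingales, and then apply Itô's formula to $s\longmapsto \mathrm{e}^{\beta s}\|\delta\Yc_s\|^2$ on $[t,T]$, handling the McKean--Vlasov term through the elementary coupling inequality for $\cW_2$.

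For the $\S^2$ upgrade, I would apply Itô's formula to $\|\Yc_t\|^2$ to obtain
\[
\|\Yc_t\|^2 = \|G(X)\|^2 + 2\int_t^T\Yc_s\cdot F_s\big(X_{\cdot\wedge s},\Yc_s,\Zc_s,\cL(\Yc_s,\Zc_s)\big)\mathrm{d}s - \int_t^T\|\Zc_s\|^2\mathrm{d}s - 2\int_t^T\Yc_s\cdot\Zc_s\mathrm{d}W_s,
\]
then take $\sup_{t\in[0,T]}$, apply Burkholder--Davis--Gundy's inequality to the stochastic integral, and use Young's inequality together with the Lipschitz bound
\[
\|F_s(X_{\cdot\wedge s},\Yc_s,\Zc_s,\cL(\Yc_s,\Zc_s))\| \leq \|F_s(X_{\cdot\wedge s},0,0,\delta_{(0,0)})\| + \ell_F\big(\|\Yc_s\|+\|\Zc_s\|+\big(\E[\|\Yc_s\|^2+\|\Zc_s\|^2]\big)^{1/2}\big).
\]
Polynomial growth of $G$ and $F(\cdot,0,0,\delta_{(0,0)})$, combined with the integrability of $X$ (Remark \ref{rem:F.moments}) and the $\H^2$--regularity of $(\Yc,\Zc)$, then deliver $\E[\sup_{t\in[0,T]}\|\Yc_t\|^2]<\infty$. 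The same argument applies verbatim to $\Yc^\prime$.

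For the main estimate, writing $\mathrm{d}(\delta\Yc_s) = -\delta H_s\mathrm{d}s + \delta\Zc_s\mathrm{d}W_s$, with
\[
\delta H_s := F_s\big(X_{\cdot\wedge s},\Yc_s,\Zc_s,\cL(\Yc_s,\Zc_s)\big) - F^\prime_s\big(X_{\cdot\wedge s},\Yc^\prime_s,\Zc^\prime_s,\cL(\Yc^\prime_s,\Zc^\prime_s)\big),
\]
Itô's formula applied to $\mathrm{e}^{\beta s}\|\delta\Yc_s\|^2$ between $t$ and $T$ and rearranged gives
\[
\mathrm{e}^{\beta t}\|\delta\Yc_t\|^2 + \int_t^T\mathrm{e}^{\beta s}\big(\beta\|\delta\Yc_s\|^2 + \|\delta\Zc_s\|^2 - 2\delta\Yc_s\cdot\delta H_s\big)\mathrm{d}s = \mathrm{e}^{\beta T}\|\delta G\|^2 + 2\int_t^T\mathrm{e}^{\beta s}\delta\Yc_s\cdot\delta\Zc_s\mathrm{d}W_s,
\]
where the stochastic integral is a genuine $\P$--martingale thanks to the $\S^2$-upgrade (used together with BDG and Cauchy--Schwarz). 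To bound $2\delta\Yc_s\cdot\delta H_s$, I split $\delta H_s$ into a Lipschitz piece, controlled by $\ell_F(\|\delta\Yc_s\|+\|\delta\Zc_s\|+\cW_2(\cL(\Yc_s,\Zc_s),\cL(\Yc^\prime_s,\Zc^\prime_s)))$, and the defect $\delta F_s$, then apply Young's inequality $2ab\leq \lambda a^2+\lambda^{-1}b^2$ with weight $\varepsilon$ on each of the $\|\delta\Zc_s\|$, $\cW_2$, and $\|\delta F_s\|$ contributions.

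The crucial point is the bound $\cW_2^2(\cL(\Yc_s,\Zc_s),\cL(\Yc^\prime_s,\Zc^\prime_s)) \leq \E[\|\delta\Yc_s\|^2+\|\delta\Zc_s\|^2]$, which follows from using the joint law of $(\Yc_s,\Zc_s,\Yc^\prime_s,\Zc^\prime_s)$ as a coupling. After taking expectations, the $\varepsilon\cW_2^2$ term thus contributes an additional $\varepsilon\,\E[\|\delta\Zc_s\|^2]$, which combines with the $\varepsilon\|\delta\Zc_s\|^2$ already present from the Young estimate on $\ell_F\|\delta\Yc_s\|\|\delta\Zc_s\|$ to yield precisely the $(1-2\varepsilon)$ factor. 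The remaining $\|\delta\Yc_s\|^2$ coefficient collects $\varepsilon^{-1}$ (from $\delta F$), $\varepsilon^{-1}\ell_F^2$ (from $\delta\Zc$), $\varepsilon^{-1}\ell_F^2$ (from $\cW_2$), plus a residual $\Oc(\ell_F+\varepsilon)$ lumped into the generic constant $4\ell$, giving the advertised inequality. The main obstacle is really just the careful bookkeeping of Young's weights so that the Wasserstein term absorbs into the $\|\delta\Zc_s\|^2$ coefficient without consuming more than $\varepsilon$ of it; none of the other steps require anything beyond standard BSDE techniques.
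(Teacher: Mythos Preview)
Your proposal is correct and, for the main estimate, follows exactly the paper's route: apply It\^o's formula to $\mathrm{e}^{\beta s}\|\delta\Yc_s\|^2$, split the generator difference into a Lipschitz part plus $\delta F_s$, use Young's inequality with weight $\varepsilon$, and absorb the Wasserstein term via the coupling bound $\cW_2^2(\cL(\Yc_s,\Zc_s),\cL(\Yc'_s,\Zc'_s))\leq\E[\|\delta\Yc_s\|^2+\|\delta\Zc_s\|^2]$; your bookkeeping of the constants matches the paper's.

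The one technical difference worth flagging is the $\S^2$ upgrade. You propose It\^o on $\|\Yc_t\|^2$ followed by BDG on $\int_0^\cdot\Yc_s\cdot\Zc_s\,\mathrm{d}W_s$, but bounding $\E\big[(\int_0^T\|\Yc_s\|^2\|\Zc_s\|^2\,\mathrm{d}s)^{1/2}\big]$ then requires control on $\sup_t\|\Yc_t\|$ itself, so a localisation argument is implicitly needed to close the loop. The paper instead bounds $\sup_t\|\Yc_t\|^2$ directly from the BSDE representation $\Yc_t=\E[G(X)+\int_t^TF_s\,\mathrm{d}s\,|\,\cF_t]$ and applies Doob's inequality to the martingale $\int_0^\cdot\Zc_s\,\mathrm{d}W_s$, which only needs $\Zc\in\H^2$ and avoids any circularity. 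Both approaches are standard and valid; the paper's is just slightly more direct here.
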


\begin{proof}
	We start by proving that $\Yc\in \S^2(\RR^m,\FF)$, the proof for $\Yc^\prime$ being the same. Notice first that we immediately have
	\[
 		\sup_{t\in[0,T]}\|\Yc_t\|^2\leq 3\bigg(\|G(X)\|^2+\bigg(\int_0^T\big\|F_s(X_{\cdot\wedge s},\Yc_s,\Zc_s,\cL(\Yc_s,\Zc_s)\big)\big\|\mathrm{d}s\bigg)^2+\sup_{t\in[0,T]}\bigg\|\int_t^T\Zc_s\mathrm{d}W_s\bigg\|^2\bigg).
	\]
	Now, by Doob's inequality, since $\Zc\in \H^2(\RR^{m\times d},\FF)$, we have
	\[
 		\EE\bigg[\sup_{t\in[0,T]}\bigg\|\int_t^T\Zc_s\mathrm{d}W_s\bigg\|^2\bigg]\leq 2\EE\bigg[\sup_{t\in[0,T]}\bigg\|\int_0^t\Zc_s\mathrm{d}W_s\bigg\|^2\bigg]+2\|\Zc\|^2_{\H^\smalltext{2}(\RR^{\smalltext{m}\smalltext{\times} \smalltext{d}},\FF)}\leq 10\|\Zc\|^2_{\H^\smalltext{2}(\RR^{\smalltext{m}\smalltext{\times} \smalltext{d}},\FF)},
	 \]
	so that
	\begin{align*}
	 	\|\Yc\|^2_{\SS^\smalltext{2}(\RR^\smalltext{m},\FF)}&\leq  3\EE\big[\|G(X)\|^2\big]+12\EE\bigg[\bigg(\int_0^T\|F_s(X_{\cdot\wedge s},0,0,\delta_{(0,0)})\|\mathrm{d}s\bigg)^2\bigg]+12\ell_F^2T\|\Yc\|^2_{\H^\smalltext{2}(\RR^\smalltext{m},\FF)}+6(2\ell_F^2T+5)\|\Zc\|^2_{\H^\smalltext{2}(\RR^{\smalltext{m}\smalltext{\times} \smalltext{d}},\FF)}\\
		&\quad+12\ell_F^2T\int_0^T\cW_2^2\big(\cL(\Yc_s,\Zc_s),\delta_{(0,0)}\big)\mathrm{d}s\\
 		&\leq 3\EE\big[\|G(X)\|^2\big]+12\EE\bigg[\bigg(\int_0^T\|F_s(X_{\cdot\wedge s},0,0,\delta_{(0,0)})\|\mathrm{d}s\bigg)^2\bigg]+24\ell_F^2T\|\Yc\|^2_{\H^\smalltext{2}(\RR^\smalltext{m},\FF)}+6(4\ell_F^2T+5)\|\Zc\|^2_{\H^\smalltext{2}(\RR^{\smalltext{m}\smalltext{\times} \smalltext{d}},\FF)}<\infty,
 	\end{align*}
	where we used the fact that
	\begin{equation}\label{eq:controlwasser}
		\cW_2^2\big(\cL(\Yc_s,\Zc_s),\delta_{(0,0)}\big)\leq \EE\big[\|\Yc_s\|^2+\|\Zc_s\|^2\big],\; \mathrm{d}s\otimes\mathrm{d}\PP\text{\rm--a.e. on}\; [0,T]\times\Omega. 
	\end{equation}
	We can now obtain the estimates. Let us apply It\^o's formula to $(\mathrm{e}^{\beta t}\|\Yc_t\|^2)_{t\in[0,T]}$. We obtain that for any $t\in[0,T]$
	\begin{align}\label{eq:mkvbsde}
		\nonumber \mathrm{e}^{\beta t}\|\delta \Yc_t\|^2+\int_t^T\mathrm{e}^{\beta s}\|\delta \Zc_s\|^2\mathrm{d}s&= \mathrm{e}^{\beta T}\|\delta G\|^2+2\int_t^T\mathrm{e}^{\beta s}\delta \Yc_s\cdot \big(F_s\big(X_{\cdot\wedge s},\Yc_s,\Zc_s,\cL(\Yc_s,\Zc_s)\big) -F^\prime_s\big(X_{\cdot\wedge s},\Yc^\prime_s,\Zc^\prime_s,\cL(\Yc^\prime_s,\Zc^\prime_s)\big)\big)\\
		&\quad-\beta\int_t^T\mathrm{e}^{\beta s}\|\delta \Yc_s\|^2\mathrm{d}s-2\int_t^T\mathrm{e}^{\beta s}\delta \Yc_s\cdot \delta \Zc_s\mathrm{d}W_s.
	\end{align}
	Notice now that, using the inequalities $2ab\leq \varepsilon a^2 +\varepsilon^{-1}b^2$, and $\sqrt{a^2+b^2}\leq |a|+|b|$, valid for any $(a,b,\varepsilon)\in\RR\times\RR\times (0,\infty)$
	\begin{align*}
 		&\ 2\bigg|\int_t^T\mathrm{e}^{\beta s}\delta \Yc_s\cdot \big(F_s\big(X_{\cdot\wedge s},\Yc_s,\Zc_s,\cL(\Yc_s,\Zc_s)\big) -F^\prime_s\big(X_{\cdot\wedge s},\Yc^\prime_s,\Zc^\prime_s,\cL(\Yc^\prime_s,\Zc^\prime_s)\big)\big)\bigg|\\
 		\leq&\ 2\int_t^T\mathrm{e}^{\beta s}\|\delta \Yc_s\| \Big(\|\delta F_s\|  +\ell\big(\|\delta \Yc_s\|+\|\delta \Zc_s\|+\cW_2\big(\cL(\Yc_s,\Zc_s),\cL(Y^\prime_s,Z^\prime_s)\big)\big)\Big)\mathrm{d}s\\
 		\leq &\ \big(\varepsilon^{-1}(1+2\ell_F^2)+2\ell_F\big)\int_t^T\mathrm{e}^{\beta s}\|\delta \Yc_s\|^2\mathrm{d}s+2\ell_F\int_t^T\mathrm{e}^{\beta s}\|\delta \Yc_s\|\EE\big[\|\delta \Yc_s\|\big]\mathrm{d}s+\varepsilon \int_t^T\mathrm{e}^{\beta s}\|\delta F_s\|^2\mathrm{d}s+\varepsilon\int_t^T\mathrm{e}^{\beta s}\|\delta \Zc_s\|^2\mathrm{d}s\\
 		&+\varepsilon\EE\bigg[\int_t^T\mathrm{e}^{\beta s}\|\delta \Zc_s\|^2\mathrm{d}s\bigg].
	\end{align*}
	Notice as well that we have by Burkholder--Davis--Gundy's inequality, that there is some $C>0$ such that
	\begin{align*}
 		\EE\bigg[\sup_{t\in[0,T]}\bigg\|\int_0^t\mathrm{e}^{\beta s}\delta \Yc_s\cdot \delta \Zc_s\mathrm{d}W_s\bigg\|\bigg]&\leq C\EE\bigg[\bigg|\int_0^t\mathrm{e}^{2\beta s}\|\delta \Yc_s\|^2\| \delta \Zc_s\|^2\mathrm{d}s\bigg|^{1/2}\bigg]\leq \mathrm{e}^{\beta T}\|\Yc\|_{\SS^\smalltext{2}(\RR^\smalltext{m},\FF)}\|\Zc\|_{\H^\smalltext{2}(\RR^{\smalltext{m}\smalltext{\times} \smalltext{d}},\FF)}<\infty,
	\end{align*}
	proving thus that $\big(\int_0^t\mathrm{e}^{\beta s}\delta \Yc_s\cdot \delta \Zc_s\mathrm{d}W_s\big)_{t\in[0,T]}$ is an $(\FF,\PP)$-martingale. Using these computations in \eqref{eq:mkvbsde} and taking expectations, we deduce the desired result.
\end{proof}

\begin{proof}[Proof of \Cref{prop:MckV BSDE}]
	For any $\beta\in \RR$, we define a new norm on $\Hc^2\coloneqq \H^2(\RR^m,\FF)\times \H^2(\RR^{m\times d},\FF)$ by
	 \[
		 \|(y,z)\|_{\Hc^\smalltext{2}_\smalltext{\beta}}^2\coloneqq \EE\bigg[\int_0^T\mathrm{e}^{\beta s}\| y_s\|^2\mathrm{d}s+\int_0^T\mathrm{e}^{\beta s}\|z_s\|^2\mathrm{d}s\bigg],\; (y,z)\in \Hc^2.
	 \]
	It is obvious that $\big(\Hc^2,\|\cdot\|_{\Hc^\smalltext{2}_\smalltext{\beta}}\big)$ is a Banach space, and that the norms $\big(\|\cdot\|_{\Hc^\smalltext{2}_\smalltext{\beta}}\big)_{\beta\in\RR}$ are all equivalent. 
	We now define a map $\Phi:\big(\Hc^2,\|\cdot\|_{\Hc^\smalltext{2}_\smalltext{\beta}}\big)\longrightarrow \big(\Hc^2,\|\cdot\|_{\Hc^\smalltext{2}_\smalltext{\beta}}\big)$ by
	\[
		\Phi(u,v)\coloneqq (U,V),\; (u,v)\in\Hc^2_\beta,
	\]
	where\footnote{To be perfectly rigorous, $U$ should be chosen as a continuous $\PP$-modification of the right-hand side, which exists since the right-hand side is the sum of a square-integrable $(\FF,\PP)$-martingale and the continuous and square-integrable quantity $-\int_0^tf_s(u_s,v_s,\cL(u_s,v_s))\mathrm{d}s$, and $\FF$ is the $\PP$-augmentation of a Brownian filtration.}
	\[
		U_t\coloneqq \EE\bigg[\xi+\int_t^TF_s\big(X_{\cdot\wedge s},u_s,v_s,\cL(u_s,v_s)\big)\mathrm{d}s\bigg|\cF_t\bigg],\; t\in[0,T],
		\]
	and $V$ is obtained through the martingale representation
	\[
		U_t+\int_0^tF_s\big(X_{\cdot\wedge s},u_s,v_s,\cL(u_s,v_s)\big)\mathrm{d}s=U_0+\int_0^tV_s\mathrm{d}W_s,\; t\in[0,T].
	\]
	Notice that the fact that $(U,V)\in\Hc^2$ is immediate, using \eqref{lemma:estimates} with $F^\prime$ and $\xi^\prime$ equal to $0$ and generator $F_\cdot(u_\cdot,v_\cdot,\cL(u_\cdot,v_\cdot))$ instead of $F$. 
	Moreover, using again \eqref{lemma:estimates}, we obtain that for any $(u,v,u^\prime,v^\prime)\in\Hc^2\times\Hc^2$, with images by $\Phi$ denoted by $(U,V,U^\prime,V^\prime)$, any $\beta\in\RR$, any $\varepsilon>0$, and any $t\in[0,T]$
	\begin{align*}
	 	&\ \EE\bigg[\mathrm{e}^{\beta t}|U_t- U^\prime_t|^2+\big(\beta-\big(\varepsilon^{-1}(1+2\ell_F^2)+4\ell_F\big)\big)\int_t^T\mathrm{e}^{\beta s}\|U_s-U^\prime_s\|^2\mathrm{d}s+(1-2\varepsilon)\int_t^T\mathrm{e}^{\beta s}\| V_s-V^\prime_s\|^2\mathrm{d}s\bigg]\\
	 	\leq&\ \varepsilon \EE\bigg[ \int_t^T\mathrm{e}^{\beta s}\big\|F_s\big(X_{\cdot\wedge s},u_s,v_s,\cL(u_s,v_s)\big)-F_s\big(X_{\cdot\wedge s},u^\prime_s,v^\prime_s,\cL(u^\prime_s,v^\prime_s)\big)\big\|^2\mathrm{d}s\bigg]\\
	 	\leq&\ 6\ell_F^2\varepsilon\EE\bigg[\int_t^T\mathrm{e}^{\beta s}\|u_s-u^\prime_s\|^2\mathrm{d}s+\int_t^T\mathrm{e}^{\beta s}\| v_s-v^\prime_s\|^2\mathrm{d}s\bigg].
	\end{align*}
	Taking $\beta>\varepsilon^{-1}(1+2\ell_F^2)+4\ell_F$ and $\varepsilon<1/2$, we deduce
	\[
	 	\EE\bigg[\int_0^T\mathrm{e}^{\beta s}\| V_s-V^\prime_s\|^2\mathrm{d}s\bigg]\leq \frac{6\ell_F^2\varepsilon}{1-2\varepsilon}\big\|(u-u^\prime,v-v^\prime)\big\|^2_{\Hc^\smalltext{2}_\smalltext{\beta}},
	 \; 
	 	\EE\big[\mathrm{e}^{\beta t}\|U_t- U^\prime_t\|^2\big]\leq 6\ell_F^2\varepsilon\EE\bigg[\int_t^T\mathrm{e}^{\beta s}\big(\|u_s-u^\prime_s\|^2+\| v_s-v^\prime_s\|^2\big)\mathrm{d}s\bigg].
	\]
	Integrating the second inequality on $[0,T]$ and using Fubini's theorem then leads to
	\[
	 	\EE\bigg[\int_0^T\mathrm{e}^{\beta s}\| U_s-U^\prime_s\|^2\mathrm{d}s\bigg]\leq 6\ell_F^2T\varepsilon\big\|(u-u^\prime,v-v^\prime)\big\|^2_{\Hc^\smalltext{2}_\smalltext{\beta}},
	\]
	so that overall
	\[
		\big\|(U-U^\prime,V-V^\prime)\big\|^2_{\Hc^\smalltext{2}_\smalltext{\beta}}\leq 6\ell_F \varepsilon\bigg(T+\frac1{1-2\varepsilon}\bigg)\big\|(u-u^\prime,v-v^\prime)\big\|^2_{\Hc^\smalltext{2}_\smalltext{\beta}}.
	\]
	We can therefore always choose $\varepsilon$ sufficiently small so that $6\ell_F \varepsilon\big(T+(1-2\varepsilon)^{-1}\big)<1$ and then $\beta$ sufficiently large so that $\beta>\varepsilon^{-1}(1+2\ell_F^2)+4\ell_F$, in which case $\Phi$ becomes a contraction on $(\Hc^2,\|\cdot\|_{\Hc^\smalltext{2}_\smalltext{\beta}})$, which therefore has a unique fixed-point, providing us with the required solution to the McKean--Vlasov BSDE.
\end{proof}

\subsection{Well-posedness of generalised McKean--Vlasov BSDEs}
\label{sec:gen.MckV.BSDE}

Let us now address the question of well-posedness of the (as far as we know) new kind of equation that was derived in \Cref{sec:MFG-cha} on characterisation of mean-field equilibria.
Since this equation seems to be central for the investigation of mean-field games with interaction through the control (at least in their weak formulation), we state and prove the results in a general setting.
Thus, we use the generic function $F$ and $G$ on an extended space:
\[
F:[0,T]\times\Cc_m \times \R^\xdim\times\R^{\xdim\times \bmdim}\times \cP_2\big(\Cc_m\times\R^\xdim\times \R^{\xdim\times \bmdim}\big) \longrightarrow \R^\xdim,\; G:\Cc_m\times \cP_2(\Cc_m) \longrightarrow\R^m,
\]
and we fix a function
\[
	B:[0,T]\times \cC_m\times \RR^{m\times \bmdim}\times \cP_2(\Cc_m\times\RR^m\times\RR^{m\times\bmdim}) \longrightarrow \RR^{\bmdim}.
\]
We are interested in the equation
\begin{equation}
\label{eq:Gen BSDE}
	\begin{cases}
		\displaystyle P_t  = G\big(X,\cL_{\overline\PP}(X)\big) + \int_t^TF_s\big(X_{\cdot\wedge s}, P_s,Q_s, \cL_{\overline\PP}(X_{\cdot\wedge s},P_s,Q_s)\big)\mathrm{d}s - \int_t^TQ_s\mathrm{d}\overline W_s,\; t\in[0,T],\; \overline \PP\text{\rm--a.s.},\\[1em]
	\displaystyle\; \cL_{\overline\PP}(X_{\cdot\wedge s},P_s,Q_s)\coloneqq  \overline\PP\circ (X_{\cdot\wedge s},P_s,Q_s)^{-1},\;	\frac{\mathrm{d}\overline\PP}{\mathrm{d}\PP} \coloneqq  \cE\bigg(\int_0^TB_s\big(X_{\cdot\wedge s},Q_s, \cL_{\overline\PP}(X_{\cdot\wedge s},P_s,Q_s)\big)\cdot \mathrm{d}W_s \bigg),\\
	\displaystyle \overline  W_\cdot\coloneqq  W_\cdot - \int_0^\cdot B_s\big(X_{\cdot\wedge s},Q_s, \cL_{\overline\PP}(X_{\cdot\wedge s},P_s,Q_s)\big)\mathrm{d}s.
	\end{cases}
\end{equation}
The distinct feature of this equation, compared to the standard McKean--Vlasov equation considered in \Cref{sec:McKV-BSDE} is that it `embeds' a fixed point problem in its formulation.
In fact, the driving Brownian motion $\overline W$ and the underlying probability measure $\overline \P$ are themselves unknown, or at least part of the solution. In this regard, the generalised McKean--Vlasov BSDE seems to be close in spirit to the notion of weak solutions to BSDEs developed by \citeauthor*{buckdahn2005weak} \cite{buckdahn2005weak}, and \citeauthor*{buckdahn2006backward} \cite{buckdahn2006backward,buckdahn2008continuity}.

\medskip
The proof of the ensuing result will make use of the Cameron--Martin space $\mathfrak{H}$ whose definition we recall
\[
\mathfrak{H}\coloneqq  \bigg\{h\in \mathcal{C}_d: \text{\rm $h$ is absolutely continuous, $h_0=0$ and}\; \int_0^T|\dot h(t)|^2\mathrm{d}t <\infty \bigg\}.
\]

As usual, the Cameron--Martin space is equipped with the norm $\|h\|_{\mathfrak{H}}^2\coloneqq  \int_0^T|\dot h(t)|^2\mathrm{d}t$. Notice also that we will sometimes abuse notations slightly and still say that a random process $h:[0,T]\times\Omega\longrightarrow \Cc_d$ belongs to $\mathfrak H$, provided that for $\P$--a.e. $\omega\in\Omega$, we have that $t\longmapsto h_t(\omega)$ belongs to $\mathfrak H$. We call such processes \emph{random shifts}.

\medskip
The main result of this section is given below, and its proof will complete the argument for the existence and uniqueness of a mean-field game equilibrium. We however first state our main assumptions for this section.

\begin{assumption}\label{assum:gen.MkV}
	$(i)$ The maps $\sigma$, $G$ and $B$  are respectively $\ell_\sigma$--, $\ell_G$--, and $\ell_B$--Lipschitz-continuous with $B$ bounded.
	That is for any $(t,\x,\x^\prime,y,y^\prime,z,z^\prime,\mu,\mu^\prime,\xi,\xi^\prime)\in [0,T]\times \cC_m^2\times(\R^m)^2\times (\RR^{m\times \bmdim})^2\times(\cP_2(\cC_m))^2\times \big(\cP_2(\cC_m\times\RR^m\times\RR^{m\times\bmdim})\big)^2$, there are some positive constants $\ell_B,$ $\ell_{B(X)}$, $\ell_\sigma,$ $\ell_{G(X)}$ and $\ell_{G(\mu)}$ such that
	\begin{align*}
		\big\|B_t(\bx,z,\xi) - B_t(\bx^\prime,z^\prime,\xi^\prime)\big\|\le \ell_B \Big(\|\bx - \bx^\prime\|_\infty + \|z-z^\prime\|+ {\mathcal W}_2\big((\xi^2,\xi^3), ((\xi^\prime)^2,(\xi^\prime)^3)\big) \Big)+\ell_{B(X)}{\mathcal W}_2\big(\xi^1,(\xi^\prime)^1\big),\\
		 \|\sigma_t(\x) - \sigma_t(\x^\prime)\|\le \ell_\sigma\|\x - \x^\prime\|_\infty,\;
		\big\|G(\bx, \mu) - G(\bx^\prime,\mu^\prime)\| \le \ell_{G(X)}\|\bx - \bx^\prime\|_\infty + \ell_{G(\mu)}\cW_2(\mu, \mu^\prime),\; \|B\|_\infty < \infty,
		 \end{align*}
	where $\xi^i$, $i\in\{1,2,3\}$, is the $i$-th marginal of $\xi\in\Pc_2(\Cc_m\times\R^m\times\R^{m\times d}).$
	In addition
{\color{black}	\begin{align*}
		(\bx - \bx^\prime)\cdot\big(\|B_t(\bx,z,\xi) - B_t(\bx^\prime,z,\xi)\big)\le -K_B\|\bx - \bx^\prime\|_\infty^2,
	\end{align*}
	for a constant $K_B\ge \ell_B^2 + (4C_{\rm BDG}+1)\ell_G^2;$}
	\medskip

	$(ii)$ $F$ satisfies one of the following conditions
		\begin{itemize}
		\item[$(iia)$]  $F$ is continuously differentiable in $(y,z)$, Lipschitz-continuous in $y$, and locally Lipschitz-continuous and of quadratic growth in $z$, in the sense that there is a constant $\ell_F>0$ and a linearly growing function $\ell^1_F:(\R^{m\times d})^2\longrightarrow (0,+\infty)$ such that for any $(t,\x,\x^\prime,y,y^\prime,z,z^\prime,\xi,\xi^\prime)\in [0,T]\times \cC_m^2\times(\R^m)^2\times (\RR^{m\times \bmdim})^2\times(\cP_2(\cC_\smalltext{m}\times \R^\smalltext{m}\times\R^{m\times d}))^2$ we have
		\begin{align*}
		\|F_t(\bx,y,z,\xi)\| \le \ell_F\bigg(1 + \|\x\|_\infty + \|y\| + \|z\|^2+\bigg(\int_{\Cc_\smalltext{\xdim\times} \RR^\smalltext{m}\times\R^{\smalltext{m}\smalltext{\times} \smalltext{d}}}\big(\|x\|_\infty^2 + \|y\|^2+\|\zeta\|^2\big)\xi(\diff x, \diff y,\d \zeta)\bigg)^{1/2} \bigg),\\
			\big\|F_t(\bx,y,z,\xi) - F_t(\bx^\prime,y^\prime,z^\prime,\xi^\prime)\big\|\le \ell_F^1( z, z^\prime) \|z-z^\prime\|  +\ell_F\big(  \|\bx - \bx^\prime\|_\infty +\|y-y^\prime\| + {\mathcal W}_2(\xi, \xi^\prime) \big),\; \ell^1_F(z,z^\prime)\leq \ell_F\big(1+\|z\|+\|z^\prime\|\big).
		\end{align*}
		Moreover, the random variables $G(X,\xi)$ and processes $F_t(X,y,z,\xi)$ and $B_t(X,z,\xi)$ are Malliavin differentiable with bounded Malliavin derivatives$;$

	\item[$(iib)$] $F$ is $\ell_F$--Lipschitz-continuous, \emph{i.e.} the function $\ell_F^1$ in $(iia)$ is constant, equal to $\ell_F;$
		\end{itemize}
	
	$(iii)$ we have $\sup_{t\in [0,T]} \|F_t(X_{\cdot\wedge t},0,0,\delta_{\{0,0,0\}})\|_{\L^\smalltext{\infty}(\R^\smalltext{m},\Fc_{\text{\fontsize{4}{4}\selectfont$T$}})}<\infty;$

\medskip
	$(iv)$ 
	For every probability measure $\Pi$ on $(\Omega,\F)$ and every independent $(\F,\Pi)$--Brownian motions $B$, the following forward--backward {\rm SDE} admits a unique solution $(\overline X, \overline Y, \overline Z) \in \S^2(\R^m,\F)\times \S^2(\R,\F)\times  \H^2(\R^d,\F)$
	 \begin{equation*}
 	\begin{cases}
 		\d \overline X_t = B_t(\overline X_{\cdot\wedge t}, z_t, \xi^1_t)\d t + \sigma_t(\overline X_{\cdot\wedge t})\d B_t,\\
 		\d \overline Y_t = -F_t(\overline X_t, \overline Y_t, \overline Z_t,\xi^1_t )\d t + \overline Z^i_t\d B_t,\\ 
 		\overline Y_T = G\big(\overline X, \xi^3),\; \overline X_0 = X_0,\; \Pi\text{\rm--a.s.}, 
 	\end{cases}
 \end{equation*}
 for every parameter $(z, \xi^1, \xi^2,\xi^3)\in \H^2(\R^d,\F)\times \cP_2(\cC,\R,\R^d)^2\times \cP_2(\cC);$
		
	$(v)$ $\sigma$ is state dependent, the function $G$ satisfies {\rm \Cref{ass.g.smooth}.$(i)$}
	and the function $\widetilde F$ satisfies $(ii)$, with $\widetilde F$ given by
	\begin{align}
	\label{eq:def.tilde.F}
		\notag
		\widetilde F_t(\x_{\cdot\wedge t}, y,z, \xi) &\coloneqq  F_t\big(\x_{\cdot\wedge t}, y+ G(\x_t, \xi^1_t),z+\partial_xG(\x_t,\xi^1_t)\sigma_t(\x_t), \Phi(\xi)\big) + \frac12\mathrm{Tr}\big[\partial_{xx}G(\x_t, \xi^1_t)\sigma_t(\x_t)\sigma_t^\top(\x_t)\big]\\
		&\quad + \frac12\int_{\mathbb{R}^\smalltext{\xdim}}\mathrm{Tr}\big[ \partial_a\partial_\mu G(\x_t, \xi^1_t)(a)\sigma_t(a)\sigma_t(a)^\top \big] \xi^1(\diff a) -  B_t\big(\x_t, z+\partial_xG(\x_t,\xi^1_t)\sigma_t(\x_t), \Phi(\xi)\big)\cdot \partial_xG(\x_t,\xi^1_t)\sigma_t(\x_t),
	\end{align}
	where $\xi^i$, $i\in\{1,2,3\}$, is the $i$-th marginal of $\xi\in \cP_2(\R^m\times \R^m\times \R^{m\times d})$, and where the map $\Phi:\cP_2(\R^m\times \R^m\times \R^{m\times d}) \longrightarrow \cP_2(\cC_m\times \R^m\times \R^{m\times d})$ is defined as being the unique measure on $\R^m\times \R^m\times \R^{m\times d}$ such that for any Borel sets $A_1\times A_2\times A_3 \in \R^m\times \R^m\times \R^{m\times d}$
	\begin{equation*}
		\Phi(\xi)(A_1\times A_2 \times A_3) = \xi^1(A_1)(\xi^1\otimes\xi^2)\big(f^{-1}_1(A_2)\big)(\xi^1\otimes \xi^3)\big(f^{-1}_2(A_3)\big),
	\end{equation*}
	where $f_1(x, p) \coloneqq  p + G(x, \xi_1),\; f_2(x, q) \coloneqq  q + \partial_xG(x, \xi_1)\sigma_t(x).$
\end{assumption}

We can now state our main result.
\begin{theorem}
\label{thm:Gen.MkV.BSDE}
	Let {\rm \Cref{assum:gen.MkV}}.$(i)$--$(iv)$ hold.
	There is $\Psi>0$ such that {\color{black}if $\|G\|_\infty\le \Psi$},
	then {\rm \Cref{eq:Gen BSDE}} admits a unique solution $(P,Q)\in\mathbb{H}^2(\R^m,\F) \times \H^{2}(\R^m,\F)$.
	Moreover, 
	 if in addition 
	{\rm\Cref{assum:gen.MkV}}.$(v)$ holds, then one can take $\Psi=+\infty$.
\end{theorem}

We start with some \emph{a priori} estimates for solutions to \Cref{eq:Gen BSDE}
\begin{lemma}
	\label{lem:a.priori.estimate.general}
	Let {\rm \Cref{assum:gen.MkV}}.$(i)$, $(iib)$ and $(iii)$ hold. Then, if $(P,Q)\in\H^2(\R^m,\F,\overline\P)\times\H^2(\R^{m\times d},\F,\overline\P)$ solves {\rm \Cref{eq:Gen BSDE}}, then we actually have $(P,Q)\in \mathbb{S}^\infty(\R^m,\F) \times \H^{2}_{\mathrm{BMO}}(\R^{m\times d},\F) $.
\end{lemma}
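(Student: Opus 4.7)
The plan is to treat the generalised BSDE as a standard Lipschitz BSDE under $\overline\P$ once the measure flow $\nu_s:=\overline\P\circ(X_{\cdot\wedge s},P_s,Q_s)^{-1}$ is regarded as a deterministic datum. Because $B$ is bounded and $\sigma$ has linear growth, $X$ possesses moments of every order under both $\P$ and $\overline\P$, and the assumption $(P,Q)\in\H^2(\R^m,\F,\overline\P)\times\H^2(\R^{m\times d},\F,\overline\P)$ translates into the deterministic integrability
\[
\rho_s:=\cW_2\bigl(\nu_s,\delta_{(0,0,0)}\bigr)\leq\bigl(\EE^{\overline\P}\bigl[\|X_{\cdot\wedge s}\|_\infty^2+\|P_s\|^2+\|Q_s\|^2\bigr]\bigr)^{1/2},\qquad \int_0^T\rho_s^2\,\mathrm{d}s<\infty.
\]
Combined with \Cref{assum:gen.MkV}.$(iii)$ and the Lipschitz property of $F$ in the measure variable, this yields $\|\widehat F_s(X,0,0)\|_{\L^\infty(\R^m,\Fc_T)}\leq\|F_s(X,0,0,\delta_{(0,0,0)})\|_\infty+\ell_F\rho_s$, where $\widehat F_s(\x,y,z):=F_s(\x,y,z,\nu_s)$, a deterministic function of $s$ that is square-integrable. (Throughout, $G$ is implicitly essentially bounded, a prerequisite for $P\in\S^\infty$ and incorporated in the constant $\Xi$ of \Cref{thm:Gen.MkV.BSDE}.)

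\medskip

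For the $\S^\infty$-bound on $P$, the plan is to linearise the generator via \Cref{assum:gen.MkV}.$(iib)$: one can construct bounded $\F$-predictable processes $\lambda$ and $\eta$, of norm at most $\ell_F$, such that
\[
F_s(X,P_s,Q_s,\nu_s)=F_s(X,0,0,\nu_s)+\lambda_sP_s+\eta_s\cdot Q_s.
\]
Boundedness of $\eta$ makes $\cE\bigl(\int_0^\cdot\eta_s\cdot\mathrm{d}\overline W_s\bigr)$ a uniformly integrable $\overline\P$-martingale, defining a probability measure $\widetilde\P$ under which $\widetilde W:=\overline W-\int_0^\cdot\eta_s\mathrm{d}s$ is a Brownian motion. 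Multiplying the equation by $\mathrm{e}^{-\int_0^t\lambda_s\mathrm{d}s}$ and taking $\widetilde\P$-conditional expectation---the remaining stochastic integral being a genuine martingale since $\eta,\lambda$ are bounded and $Q\in\H^2(\overline\P)$---yields the representation
\[
P_t=\EE^{\widetilde\P}\bigg[G\,\mathrm{e}^{\int_t^T\lambda_s\mathrm{d}s}+\int_t^T\mathrm{e}^{\int_t^s\lambda_u\mathrm{d}u}F_s(X,0,0,\nu_s)\,\mathrm{d}s\bigg|\Fc_t\bigg],
\]
from which $\|P\|_{\S^\infty(\R^m,\F)}\leq\mathrm{e}^{\ell_FT}\bigl(\|G\|_\infty+T\sup_s\|F_s(X,0,0,\delta_{(0,0,0)})\|_\infty+\ell_F\sqrt{T}\,(\int_0^T\rho_s^2\mathrm{d}s)^{1/2}\bigr)<\infty$.

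\medskip

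For the BMO estimate on $Q$, the plan is to apply It\^o's formula to $\|P\|^2$ under $\overline\P$ between an arbitrary $\tau\in\Tc(\F)$ and $T$, then take conditional expectation w.r.t.\ $\Fc_\tau$; the stochastic integral has mean zero because $P\in\S^\infty$ and $Q\in\H^2(\overline\P)$. Expanding the cross term via $(iib)$ and applying Young's inequality $2\ell_F\|P\|_{\S^\infty}\|Q_s\|\leq\tfrac12\|Q_s\|^2+2\ell_F^2\|P\|_{\S^\infty}^2$ allows to absorb one half of the $Q$-term on the left, leaving a right-hand side bounded uniformly in $\omega$ and $\tau$ by a combination of $\|G\|_\infty^2$, $T\|P\|_{\S^\infty}^2$, $T\|P\|_{\S^\infty}\sup_s\|F_s(X,0,0,\delta_{(0,0,0)})\|_\infty$ and $\|P\|_{\S^\infty}\sqrt{T}\,(\int_0^T\rho_s^2\mathrm{d}s)^{1/2}$. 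Taking $\sup_\tau$ gives $Q\in\H^2_{\mathrm{BMO}}(\R^{m\times d},\F,\overline\P)$. The final step is to transfer BMO from $\overline\P$ to $\P$: since $B$ is bounded, $\int_0^\cdot B_s\cdot\mathrm{d}W_s\in\H^2_{\mathrm{BMO}}(\R^d,\F)$, and Kazamaki's theorem ensures that BMO of the stochastic integral $\int Q\,\mathrm{d}\overline W$ under $\overline\P$ is equivalent to BMO of $\int Q\,\mathrm{d}W$ under $\P$, yielding $Q\in\H^2_{\mathrm{BMO}}(\R^{m\times d},\F)$.

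\medskip

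The main obstacle is the McKean--Vlasov coupling through $\nu$: a direct $\S^\infty$-estimate would have to contend with $\cW_2(\nu_s,\delta_{(0,0,0)})$, which involves $\EE^{\overline\P}[\|Q_s\|^2]$ and is a priori not controlled by $\S^\infty$ or $\H^2_{\mathrm{BMO}}$ norms of the solution. The resolution is the observation above: once $(P,Q)$ is postulated to lie in $\H^2\times\H^2$, the measure flow $\nu$ becomes a frozen datum, merely $\L^2([0,T])$-integrable in $s$, and this is nevertheless enough to run the classical Lipschitz BSDE machinery via the Girsanov--linearisation trick and derive $\S^\infty$ and BMO regularity.
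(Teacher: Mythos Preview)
Your approach is correct for scalar BSDEs but has a gap in the multi-dimensional setting of the lemma ($P\in\R^m$, $Q\in\R^{m\times d}$ with general $m$). The linearisation $F_s(X,P_s,Q_s,\nu_s)-F_s(X,0,0,\nu_s)=\lambda_sP_s+\eta_s\cdot Q_s$ with a \emph{single} $\R^d$-valued process $\eta$ presupposes that each component $F^i$ depend on $z$ only through the $i$-th row $Q^i$, so that a Girsanov shift by $\eta$ converts $Q_s\eta_s\,\mathrm{d}s$ into part of the martingale term $\int Q_s\,\mathrm{d}\widetilde W_s$. Lipschitz continuity of $F$ in the full matrix variable $z\in\R^{m\times d}$ gives no such structure: the linearised $z$-contribution is in general $(\sum_{j,k}\Gamma^{i,j,k}_sQ^{j,k}_s)_i$, and the cross-row pieces with $j\neq i$ cannot be absorbed by any change of driving Brownian motion. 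The scalar integrating factor $\mathrm{e}^{-\int_0^t\lambda_s\,\mathrm{d}s}$ is likewise unavailable once $\lambda$ is matrix-valued.

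The paper handles all $m$ at once by applying It\^o's formula to $\mathrm{e}^{\beta t}\|P_t\|^2$ (with a localising sequence $(\tau_n)$), bounding $2P_u\cdot F_u\leq 2\|P_u\|\bigl(\|F^0\|_\infty+\ell_F(\|P_u\|+\|Q_u\|+\rho_u)\bigr)$ and applying Young's inequality with a small parameter $\eps$; for $\eps$ small and $\beta$ large the $\S^\infty$ and $\H^2_{\mathrm{BMO}}(\overline\P)$ bounds fall out simultaneously, and the transfer to $\P$ is via \cite[Lemma~A.1]{herdegen2021equilibrium}, equivalent to your Kazamaki step. Your freezing of the measure flow and the observation $\int_0^T\rho_s^2\,\mathrm{d}s<\infty$ are exactly the right starting point, and your BMO derivation (once $P\in\S^\infty$) is correct and even avoids localisation; only the $\S^\infty$ step must be replaced by the quadratic-energy argument to cover $m>1$.
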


\begin{proof}
Fix some $\beta\geq 0$, and let $(\tau_n)_{n\in\N}$ be a sequence of $\F$--stopping times localising the local martingale $\int_0^\cdot\mathrm{e}^{\beta u}P_u\cdot Q_u\d \overline W_u$, and such that $P^{\tau_{\text{\fontsize{4}{4}\selectfont$n$}}}$ and $Z^{\tau_{\text{\fontsize{4}{4}\selectfont$n$}}}$ are bounded for any $n\in\N$. By It\^o's formula, we have for any $n\in\N$ large enough, and any $\tau\in\Tc(\F)$
 	\begin{align*}
 		&\mathrm{e}^{\beta \tau}\|P_\tau\|^2 +\int_\tau^{\tau_\smalltext{n}\wedge T}\mathrm{e}^{\beta u}\|Q_u\|^2\d u+\beta\int_\tau^{\tau_\smalltext{n}\wedge T}\mathrm{e}^{\beta u}\|P_u\|^2\d u\\
		&= \mathrm{e}^{\beta \tau_{\text{\fontsize{4}{4}\selectfont$n$}}\wedge T}\big\|P_{\tau_{\text{\fontsize{4}{4}\selectfont$n$}}\wedge T}\big\|^2 + 2\int_\tau^{\tau_{\text{\fontsize{4}{4}\selectfont$n$}}\wedge T}\mathrm{e}^{\beta u}P_u\cdot F_u\big(X_{\cdot\wedge u}, P_u, Q_u, \cL_{\overline\P}(X_{\cdot\wedge u},P_u,Q_u)\big) \d u - 2\int_\tau^{\tau_{\text{\fontsize{4}{4}\selectfont$n$}}\wedge T}\mathrm{e}^{\beta u}P_u\cdot Q_u\d \overline W_u\\
 		&\le  \mathrm{e}^{\beta \tau_{\text{\fontsize{4}{4}\selectfont$n$}}\wedge T}\big\|P_{\tau_{\text{\fontsize{4}{4}\selectfont$n$}}\wedge T}\big\|^2 + 2\int_\tau^{\tau_{\text{\fontsize{4}{4}\selectfont$n$}}\wedge T}\mathrm{e}^{\beta u}\|P_u\|\Big(\|F^0 \|_\infty+\ell_F\big(\|P_u\| + \|Q_u\| + \E^{\overline\P}\big[\|X_{\cdot\wedge u}\|_\infty^2+\|P_u\|^2+\|Q_u\|^2\big]^{\frac12} \big) \Big)\d u \\
		&\quad- 2\int_\tau^{\tau_{\text{\fontsize{4}{4}\selectfont$n$}}\wedge T}\mathrm{e}^{\beta u}P_u\cdot Q_u\d \overline W_u,
	\end{align*}
     	where we used the shorthand notation $\|F^0\|_\infty\coloneqq \sup_{t\in [0,T]} \|F_t(X_{\cdot\wedge t},0,0,\delta_{\{0,0\}})\|_{\L^\smalltext{\infty}(\R^\smalltext{m},\Fc_{\text{\fontsize{4}{4}\selectfont$T$}})}$. We thus deduce after using Young's inequality  that for any $\eps>0$
	\begin{align}\label{eq:estimatesinftybmo3}
		&\notag \mathrm{e}^{\beta \tau}\|P_\tau\|^2 +(1-2\eps)\int_\tau^{\tau_{\text{\fontsize{4}{4}\selectfont$n$}}\wedge T}\mathrm{e}^{\beta u}\|Q_u\|^2\d u+\bigg(\beta-\frac{1+2\ell_F^2}{\eps}-2\ell_F\bigg)\int_\tau^{\tau_{\text{\fontsize{4}{4}\selectfont$n$}}\wedge T}\mathrm{e}^{\beta u}\|P_u\|^2\d u\\
		&\notag\leq  \mathrm{e}^{\beta \tau_{\text{\fontsize{4}{4}\selectfont$n$}}\wedge T}\big\|P_{\tau_{\text{\fontsize{4}{4}\selectfont$n$}}\wedge T}\big\|^2+\eps\int_\tau^{\tau_{\text{\fontsize{4}{4}\selectfont$n$}}\wedge T}\mathrm{e}^{\beta u}\Big(\|F^0 \|_\infty^2 +  \E^{\overline\P}\big[\|X_{\cdot\wedge u}\|_\infty^2\big] + \E^{\overline\P}\big[\|P_u\|^2\big] + \E^{\overline\P}\big[\|Q_u\|^2\big]\Big)\d u-2\int_\tau^{\tau_{\text{\fontsize{4}{4}\selectfont$n$}}\wedge T}\mathrm{e}^{\beta u}P_u\cdot Q_u\d \overline W_u\\
		&\leq  \mathrm{e}^{\beta \tau_{\text{\fontsize{4}{4}\selectfont$n$}}\wedge T}\big\|P_{\tau_n\wedge T}\big\|^2+\frac{\eps }\beta\mathrm{e}^{\beta T}\|F^0 \|_\infty^2+\eps\E^{\overline \P}\bigg[\int_\tau^{\tau_{\text{\fontsize{4}{4}\selectfont$n$}}\wedge T}\mathrm{e}^{\beta u}\big(\|X_{\cdot\wedge u}\|_\infty^2+\|P_u\|^2+\|Q_u\|^2\big)\d u\bigg]-2\int_\tau^{\tau_{\text{\fontsize{4}{4}\selectfont$n$}}\wedge T}\mathrm{e}^{\beta u}P_u\cdot Q_u\d \overline W_u.
	\end{align}
	Taking conditional expectation in \Cref{eq:estimatesinftybmo3} under $\overline \P$, we deduce
	\begin{align}\label{eq:estimatesinftybmo4}
		&\notag \mathrm{e}^{\beta \tau}\|P_\tau\|^2 +\E^{\overline \P}\bigg[(1-2\eps)\int_\tau^{\tau_\smalltext{n}\wedge T}\mathrm{e}^{\beta u}\|Q_u\|^2\d u+\bigg(\beta-\frac{1+2\ell_F^2}{\eps}-2\ell_F\bigg)\int_\tau^{\tau_\smalltext{n}\wedge T}\mathrm{e}^{\beta u}\|P_u\|^2\d u\bigg|\Fc_\tau\bigg]\\
		\notag		&\leq   \E^{\overline \P}\big[\mathrm{e}^{\beta \tau_{\text{\fontsize{4}{4}\selectfont$n$}}\wedge T}\big\|P_{\tau_\smalltext{n}\wedge T}\big\|^2 \big|\cF_\tau\big]+\frac{\eps }\beta\mathrm{e}^{\beta T}\|F^0 \|_\infty^2+ \eps T\mathrm{e}^{\beta T}\E^{\overline\PP}\bigg[\sup_{t \in [0,T]}\|X_t\|^2\bigg] +\eps T\bigg\|\sup_{t\in[0,\tau_\smalltext{n}\wedge T]} \big|\mathrm{e}^{\frac{\beta}{2} t} P_{t}\big|^2\bigg\|_{\L^\smalltext{\infty}(\R,\Fc_{\text{\fontsize{4}{4}\selectfont$T$}})} \\
		&\quad+\eps\bigg\|\underset{\tau\in[0,\tau_{\text{\fontsize{4}{4}\selectfont$n$}}\wedge T]}{\rm essup}\; \E^{\overline \P}\bigg[\int_\tau^{\tau_{\text{\fontsize{4}{4}\selectfont$n$}}\wedge T}\mathrm{e}^{\beta s}\|Q_s\|^2\mathrm{d}s\bigg|\cF_\tau\bigg]\bigg\|_{\L^\smalltext{\infty}(\R,\Fc_{\text{\fontsize{4}{4}\selectfont$T$}})}.
	\end{align}
For $\eps$ small enough and $\beta$ large enough, we thus deduce that
\begin{align}\label{eq:estimatesinftybmo5}
\notag&(1-2\eps T)\bigg\|\sup_{t\in[0,\tau_{\text{\fontsize{4}{4}\selectfont$n$}}\wedge T]} \big|\mathrm{e}^{\frac{\beta}{2} t} P_{t}\big|^2\bigg\|_{\L^\smalltext{\infty}(\R,\Fc_{\text{\fontsize{4}{4}\selectfont$T$}})}+(1-4\eps)\bigg\|\underset{\tau\in[0,\tau_{\text{\fontsize{4}{4}\selectfont$n$}}\wedge T]}{\rm essup}\; \E^{\overline \P}\bigg[\int_\tau^{\tau_{\text{\fontsize{4}{4}\selectfont$n$}}\wedge T}\mathrm{e}^{\beta s}\|Q_s\|^2\mathrm{d}s\bigg|\cF_\tau\bigg]\bigg\|_{\L^\smalltext{\infty}(\R,\Fc_{\text{\fontsize{4}{4}\selectfont$T$}})}\\
&\leq  2\E^{\overline \P}\Big[\mathrm{e}^{\beta \tau_{\text{\fontsize{4}{4}\selectfont$n$}}\wedge T}\big\|P_{\tau_{\text{\fontsize{4}{4}\selectfont$n$}}\wedge T}\big\|^2\Big|\Fc_\tau\Big]+2\frac{\eps }\beta\mathrm{e}^{\beta T}\|F^0 \|_\infty^2+ 2\eps T\mathrm{e}^{\beta T}\E^{\overline\PP}\bigg[\sup_{t \in [0,T]}\|X_t\|^2\bigg].
\end{align}
Since $B$ is bounded and $\sigma$ of linear growth, we can easily show that
	\begin{equation}\label{eq:cpntrolxx}
		\E^{\overline\PP}\bigg[\sup_{t \in [0,T]}\|X_t\|^2\bigg]\le 
		4\e^{2\ell_\smalltext{\sigma}^\smalltext{2}T}\Big(\|X_0\|^2 + T^2\|B\|_\infty^2 + 2T\ell_\sigma^2\|\sigma_\cdot(0)\|^2_\infty \Big)\eqqcolon  C_X.
	\end{equation}
	Hence, we can expectations under $\overline \P$ in \Cref{eq:estimatesinftybmo5}, then use Fatou's lemma, the dominated convergence theorem and continuity of $P$, to let $n$ got to $+\infty$ and deduce
	\[
		(1-2\eps T)\big\|\mathrm{e}^{\beta \cdot}\delta P_{\cdot}\big\|^2_{\S^\smalltext{\infty}(\R^\smalltext{m},\F)} +(1-4\eps)\|Q\|^2_{\H^{\smalltext{2}\smalltext{,}\smalltext{\beta}}_{\text{\fontsize{4}{4}\selectfont$\mathrm{BMO}$}}(\R^{\smalltext{m}\smalltext{\times} \smalltext{d}},\F,\overline\P)}\leq 2\mathrm{e}^{\beta T}\|G\|^2_{\L^\smalltext{\infty}(\R^\smalltext{m},\Fc_{\text{\fontsize{4}{4}\selectfont$T$}})}+2\frac{\eps }\beta\mathrm{e}^{\beta T}\|F^0 \|_\infty^2+2\eps T\mathrm{e}^{\beta T}C_X.
	\]
	
It then suffices to recall, see for instance \cite[Lemma A.1]{herdegen2021equilibrium}, that the norms on $\H^{2,\beta}_{\mathrm{BMO}}(\R^{m\times d},\F,\overline\P)$ and $\H^{2,\beta}_{\mathrm{BMO}}(\R^{m\times d},\F)$ are equivalent since $B$ is bounded.
\end{proof}

\begin{proof}[Proof of \Cref{thm:Gen.MkV.BSDE}]
\emph{Step 1: reduction to Lipschitz-continuous generator $F$.}

	\medskip
	In this first step, observe that if {\rm\Cref{assum:gen.MkV}}.$(iia)$ is satisfied, then any solution $(P,Q)$ of the BSDE \eqref{eq:Gen BSDE} will have a bounded $Q$.
	In fact, by \cite{ankirchner2007classical}, $(P,Q)$ is Malliavin differentiable and $Q$ is the trace of the Malliavin derivative of $P$, \emph{i.e.} (denoting by $D_sP$, $D_sQ$ the Malliavin derivative in the direction of $\overline W$ of $P$ and $Q$ respectively) we have $Q_t = D_tP_t$, $\mathrm{d}t\otimes \mathrm{d}\overline\P$--a.e. Moreover, for any $s\in[0,T]$, $(D_sP, D_sQ)$ satisfies
	\begin{equation}
	\label{eq:Mall.deriv.equation}
		D_sP_t = D_sG + \int_t^T\big(D_sF_u(X_{\cdot\wedge u}, P_u,Q_u, \cL_{\overline\P}(X_{\cdot\wedge u}, P_u, Q_u)) + \partial_yF_uD_sP_u + \partial_zF_uD_sQ_u\big)\mathrm{d}u - \int_t^TD_sQ_u\mathrm{d}\overline W_u,\; t\in[0,T].
	\end{equation}
	Applying It\^o's formula to $\|D_sP_t\|^2$ and then Young's inequality yields, for every $\tau\in\Tc(\F)$ and every $i\in\{1,\dots,N\}$
	\begin{align*}
		&\|D_s^iP^n_\tau\|^2 + \E^{\hat\P}\bigg[\int_\tau^T\|D_s^iQ^n_u\|^2\d u \bigg|\cF_\tau \bigg]\\
		& \le \E^{\hat\P}\bigg[\|D_s^iG^n\|^2 +\int_\tau^T \bigg(\|D_s^iF_u^n(X_{\cdot\wedge u}, P_u,Q_u, \cL_{\overline\P}(X_{\cdot\wedge u}, P_u, Q_u))\|^2 + \Big(1 + 2 \|\partial_yF^n\|^2 \Big)\|D_s^iP_u^n\|^2\bigg)\d u\bigg |\cF_\tau \bigg],
	\end{align*}
	where $D^i$ is the derivative in the direction of the $i$-th coordinate of the Brownian motion $\overline W$, the superscript $n\in \{1,\dots,m\}$ means the $n$-th coordinate of the corresponding vector, and $\diff \hat\P = \cE\big(\int_0^\cdot \partial_zF_u\diff \overline W_u\big)_T\diff \overline\P$.
	Therefore, applying Gronwall's inequality and choosing $\eps<1$, say $\varepsilon = 1/2$, yields
	\begin{align}
	\label{eq:bound.DY.bmo.DZ}
		\|D_s^iP^n\|^2_{\S^\smalltext{\infty}(\R,\F)} + \|D_s^iQ^n\|^2_{\H^\smalltext{2}_{\text{\fontsize{4}{4}\selectfont BMO}}(\R^{ \smalltext{d}},\F,\hat\P)}\le 2\mathrm{e}^{(1 + 2\ell^\smalltext{2}_\smalltext{F})T}\Big(\|DG^i\|_\infty^2 + T\|DF^n\|_\infty^2\Big)\eqqcolon  \ell_z.
	\end{align}
In particular, $Q$ is bounded by $L_z \coloneqq  \sqrt{m\ell_z}$.
Therefore, $(P,Q)$ also solves \rm\Cref{eq:Gen BSDE} with $F$ therein replaced by 
	\begin{equation*}
		F^\prime_t(\x, y, z, \xi) \coloneqq  \begin{cases}
		F_t(\x, y, z, \xi),\; \text{if}\; \|z\|\le \ell_z,\\
	F_t\big(\x, y, \ell_q\frac{z}{\|z\|}, \xi\big),\; \text{if}\; \|z\|> \ell_z,
	\end{cases}
	\end{equation*}
	which now is Lipschitz-continuous with respect to the variables $(\x, y, z, \xi)$, uniformly in $t$. Hence, we can assume without loss of generality that $F$ is Lipschitz-continuous, so that in the rest of the proof we assume that \rm\Cref{assum:gen.MkV}.$(iib)$ is satisfied.
		
\medskip

	\emph{Step 2: introduction of the solution mapping $\Phi$.} Let us define the space of flows of probability measures $\mathfrak P^{m\times d}$, consisting of all Borel-measurable maps $[0,T]\ni t\longmapsto \xi_t\in \Pc_2(\Cc_m\times\R^m\times\R^{m\times d})$ such that $\int_0^T\int_{\Cc_\smalltext{m}\times\R^\smalltext{m}\times\R^{\smalltext{m}\smalltext{\times} \smalltext{d}}}\big(\|\bx\|_\infty^2+\|y\|^2+\|z\|^2\big)\xi_t(\d\bx,\d y,\d z)\d t<\infty$, which we equip with the distance
	\begin{equation*}
 		\cW_{2,\beta,[0,T]}(\xi, \xi^\prime) \coloneqq  \bigg( \int_0^T\mathrm{e}^{\beta t}\cW_2^2(\xi_t, \xi^\prime_t)\d t \bigg)^{1/2},
 	\end{equation*} 
 	defined for any $\beta >0$. 
 	Since $(\Pc_2(\Cc_m\times\R^m\times\R^{m\times d}), \Wc_2)$ is a complete metric space, it is easily verified that the distance $\cW_{2,\beta,[0,T]}(\cdot,\cdot)$ makes $\mathfrak P^{m\times \bmdim}$ a complete metric space as well. Further define for any $\beta>0$ the space $\H^{2,\beta}_{\mathrm{BMO}}(\R^{m\times d},\F)$ as the space of processes $Z \in \H^{2}_{\mathrm{BMO}}(\R^{m\times d},\F)$ with the norm $\|z\|_{\H^{\smalltext{2}\smalltext{,}\smalltext{\beta}}_{\text{\fontsize{4}{4}\selectfont$\mathrm{BMO}$}}(\R^{\smalltext{m}\smalltext{\times} \smalltext{d}},\F)}\coloneqq  \|\mathrm{e}^{\beta/2 \cdot}z_\cdot\|_{\H^\smalltext{2}_{\text{\fontsize{4}{4}\selectfont$\mathrm{BMO}$}}(\R^{\smalltext{m}\smalltext{\times} \smalltext{d}},\F)}$.

	\medskip
	For the rest of the proof we fix some $\beta>0$ which will be specified below. Let now $(y,z,\xi) \in \mathbb{S}^\infty(\R^m,\F) \times \H^{2,\beta}_{\mathrm{BMO}}(\R^{m\times d},\F) \times \mathfrak P^{m\times \bmdim}$ be given, and denote by $\PP^{z,\xi}$ the probability measure with density
	\begin{equation*}
		\frac{\mathrm{d}\PP^{z,\xi}}{\mathrm{d}\PP} = \cE\bigg(\int_0^TB_u\big(X_{\cdot\wedge u},z_u, \xi_u\big)\cdot \mathrm{d}W_u \bigg).
	\end{equation*}
Consider then the following (standard) McKean--Vlasov BSDE
	\begin{equation}
	\label{eq:standard.MkV}
		Y_t = G\big(X, \cL_{\PP^{z,\xi}}(X)\big) + \int_t^TF_u\big(X_{\cdot\wedge u},Y_u,Z_u, \cL_{\PP^{z,\xi}}(X_{\cdot\wedge u},Y_u,Z_u)\big)\mathrm{d}u - \int_t^TZ_u\mathrm{d}W^{z,\xi}_u,\; t\in[0,T],\;  \PP^{z,\xi}\text{\rm--a.s.},
	\end{equation}
where $\cL_{\PP^{z, \xi}}(X_{\cdot\wedge u},Y_u,Z_u) \coloneqq  \PP^{z,\xi}\circ (X_{\cdot\wedge u},Y_u,Z_u)^{-1},$ and $W^{z,\xi} \coloneqq  W - \int_0^{\cdot} B_u(X_{\cdot\wedge u},z_u, \xi_u)\mathrm{d}u.$ By \Cref{prop:MckV BSDE}, the BSDE \eqref{eq:standard.MkV} admits a unique solution $(Y, Z) \in \SS^2(\RR^m,\FF, \P^{z, \xi})\times \H^2(\RR^{m\times d},\FF, \P^{z, \xi})$.	

	\medskip
	We now denote by $\Phi$ the functional mapping which associates to any $(y,z,\xi)\in  \mathbb{H}^2(\R^m,\F) \times \H^{2,\beta}_{\mathrm{BMO}}(\R^{m\times d},\F) \times \mathfrak P^{m\times \bmdim}$ the triplet $\big(Y, Z, (\cL_{\PP^{z,\xi}}(X_{\cdot\wedge t},Y_t,Z_t))_{t\in[0,T]}\big)$, where $(Y,Z)$ solves the McKean--Vlasov BSDE \eqref{eq:standard.MkV}.
	Our goal is to show that $\Phi$ admits a unique fixed-point in $ \mathbb{S}^\infty(\R^m,\F) \times \H^{2{\color{black},\beta}}_{\mathrm{BMO}}(\R^{m\times d},\F) \times \mathfrak P^{m\times \bmdim}$.

\medskip

	\emph{Step 3: the solution mapping $\Phi$ is well-defined.}

	\medskip
We will show that for every $(y,z,\xi) \in  \mathbb{S}^\infty(\R^m,\F) \times \H^{2,\beta}_{\mathrm{BMO}}(\R^{m\times d},\F) \times \mathfrak P^{m\times \bmdim}$, it holds that $\big(Y, Z, (\cL_{\PP^{z,\xi}}(X_{\cdot\wedge t},Y_t,Z_t))_{t\in[0,T]} \big) \in  \mathbb{S}^\infty(\R^m,\F) \times \H^{2,\beta}_{\mathrm{BMO}}(\R^{m\times d},\F) \times \mathfrak P^{m\times \bmdim}$.
	We already know that $Z$ is bounded in the BMO norm, we will show that $Y$ is bounded as well and derive a finer bound for the BMO norm of $Z$ that will be used later in the proof.
	In fact, by It\^o's formula, we have for any $\beta\geq 0$, $\tau\in\Tc(\F)$, and any $\eps>0$
 	\begin{align*}
 		&\mathrm{e}^{\beta \tau}\|Y_\tau\|^2 +\int_\tau^T\mathrm{e}^{\beta u}\|Z_u\|^2\d u+\beta\int_\tau^T\mathrm{e}^{\beta u}\|Y_u\|^2\d u\\
		&= \mathrm{e}^{\beta T}\big\|G(X, \cL_{\PP^{z,\xi}}(X))\big\|^2 + 2\int_\tau^T\mathrm{e}^{\beta u}\Big(Y_u\cdot F_u\big(X_{\cdot\wedge u}, Y_u, Z_u, \cL_{\P^{z,\xi}}(X_{\cdot\wedge u},Y_u,Z_u)\big) + Y_u\cdot Z_uB_u(X_{\cdot \wedge u},z_u,\xi_u)\Big)\d u\\
		&\quad - 2\int_\tau^T\mathrm{e}^{\beta u}Y_u\cdot Z_u\d W_u\\
 		&\le \mathrm{e}^{\beta T}\big\|G(X, \cL_{\PP^{z,\xi}}(X))\big\|^2 + 2\int_\tau^T\mathrm{e}^{\beta u}\|Y_u\|\Big(\|F^0 \|_\infty+\|B\|_\infty\|Z_u\|+\ell_F\big(\|Y_u\| + \|Z_u\| + \E^{\P^{z,\xi}}\big[\|X_{\cdot\wedge u}\|_\infty^2+\|Y_u\|^2+\|Z_u\|^2\big]^{\frac12} \big) \Big)\d u \\
		&\quad- 2\int_\tau^T\mathrm{e}^{\beta u}Y_u\cdot Z_u\d W_u,
	\end{align*}
 	where we used the shorthand notation $\|F^0\|_\infty\coloneqq \sup_{t\in [0,T]} \|F_t(X_{\cdot\wedge t},0,0,\delta_{\{0,0\}})\|_{\L^\infty(\R^m,\Fc_T)}$. 
 	We thus deduce after using Young's inequality and \citeauthor*{herdegen2021equilibrium} \cite[Lemma A.1]{herdegen2021equilibrium} that for any $\eps>0$
	\begin{align}\label{eq:estimatesinftybmo}
		&\notag \mathrm{e}^{\beta \tau}\|Y_\tau\|^2 +(1-2\eps)\int_\tau^T\mathrm{e}^{\beta u}\|Z_u\|^2\d u+\bigg(\beta-\frac{1+\|B\|_\infty^2+2\ell_F^2}{\eps}-2\ell_F\bigg)\int_\tau^T\mathrm{e}^{\beta u}\|Y_u\|^2\d u\\
		&\notag\leq \mathrm{e}^{\beta T}\big\|G(X, \cL_{\PP^{z,\xi}}(X))\big\|^2+\eps\int_\tau^T\mathrm{e}^{\beta u}\Big(\|F^0 \|_\infty^2 +  \E^{\P^{z,\xi}}\big[\|X_{\cdot\wedge u}\|_\infty^2\big] + \E^{\P^{z,\xi}}\big[\|Y_u\|^2\big] + \E^{\P^{z,\xi}}\big[\|Z_u\|^2\big]\Big)\d u-2\int_\tau^T\mathrm{e}^{\beta u}Y_u\cdot Z_u\d W_u\\
		&\leq  \mathrm{e}^{\beta T}\|G\|^2_{\L^\smalltext{\infty}(\R^\smalltext{m},\Fc_{\text{\fontsize{4}{4}\selectfont$T$}})}+\frac{\eps }\beta\mathrm{e}^{\beta T}\|F^0 \|_\infty^2+\eps\E^{\P^{z,\xi}}\bigg[\int_0^T\mathrm{e}^{\beta u}\big(\|X_{\cdot\wedge u}\|_\infty^2+\|Y_u\|^2+\|Z_u\|^2\big)\d u\bigg]-2\int_\tau^T\mathrm{e}^{\beta u}Y_u\cdot Z_u\d W_u\notag\\
	\notag	&\leq\mathrm{e}^{\beta T}\|G\|^2_{\L^\smalltext{\infty}(\R^\smalltext{m},\Fc_{\text{\fontsize{4}{4}\selectfont$T$}})}+\frac{\eps }\beta\mathrm{e}^{\beta T}\|F^0 \|_\infty^2 + \eps T\mathrm{e}^{\beta T}\E^{\PP^{z,\xi}}\bigg[\sup_{t \in [0,T]}\|X_t\|^2\bigg]+\eps T \big\|\mathrm{e}^{\frac{\beta }{2}\cdot} Y_{\cdot}\big\|^2_{\S^\smalltext{\infty}(\R^\smalltext{m},\F)} +8\eps\big(1+\|B\|_\infty\big)^2\|Z\|^2_{\H^{\smalltext{2}\smalltext{,}\smalltext{\beta}}_{\text{\fontsize{4}{4}\selectfont$\mathrm{BMO}$}}(\R^{\smalltext{m}\smalltext{\times} \smalltext{d}},\F)}\\
		&\quad -2\int_\tau^T\mathrm{e}^{\beta u}Y_u\cdot Z_u\d W_u.
	\end{align}
	Taking conditional expectation in \Cref{eq:estimatesinftybmo}, we deduce
	\begin{align}\label{eq:estimatesinftybmo2}
		&\notag \mathrm{e}^{\beta \tau}\|Y_\tau\|^2 +(1-2\eps)\E\bigg[\int_\tau^T\mathrm{e}^{\beta u}\|Z_u\|^2\d u+\bigg(\beta-\frac{1+\|B\|_\infty^2+2\ell_F^2}{\eps}-2\ell_F\bigg)\int_\tau^T\mathrm{e}^{\beta u}\|Y_u\|^2\d u\bigg|\Fc_\tau\bigg]\\
		&\leq  \mathrm{e}^{\beta T}\|G\|^2_{\L^\smalltext{\infty}(\R^\smalltext{m},\Fc_{\text{\fontsize{4}{4}\selectfont$T$}})}+\frac{\eps }\beta\mathrm{e}^{\beta T}\|F^0 \|_\infty^2+ \eps T\mathrm{e}^{\beta T}\E^{\PP^{z,\xi}}\bigg[\sup_{t \in [0,T]}\|X_t\|^2\bigg] +\eps T \big\|\mathrm{e}^{\frac{\beta}{2} \cdot} Y_{\cdot}\big\|^2_{\S^\smalltext{\infty}(\R^\smalltext{m},\F)} +8\eps\big(1+\|B\|_\infty\big)^2\|Z\|^2_{\H^{\smalltext{2}\smalltext{,}\smalltext{\beta}}_{\text{\fontsize{4}{4}\selectfont$\mathrm{BMO}$}}(\R^{\smalltext{m}\smalltext{\times} \smalltext{d}},\F)}.
	\end{align}
	Since $B$ is bounded and $\sigma$ of linear growth, we can easily show that
	\begin{equation*}
		\E^{\PP^{z,\xi}}\bigg[\sup_{t \in [0,T]}\|X_t\|^2\bigg]\le 
		C_X,
	\end{equation*}
	with $C_X$ given in \Cref{eq:cpntrolxx}. We therefore obtain that for $\beta$ large enough
	\begin{align*}
		(1-2\eps T)\big\|\mathrm{e}^{\beta \cdot}\delta Y_{\cdot}\big\|^2_{\S^\smalltext{\infty}(\R^\smalltext{m},\F)} & +\big(1-2\eps-16\eps(1+\|B\|_\infty)^2\big)\|Z\|^2_{\H^{\smalltext{2}\smalltext{,}\smalltext{\beta}}_{\text{\fontsize{4}{4}\selectfont$\mathrm{BMO}$}}(\R^{\smalltext{m}\smalltext{\times} \smalltext{d}},\F)}\leq 2\mathrm{e}^{\beta T}\|G\|^2_{\L^\smalltext{\infty}(\R^\smalltext{m},\Fc_{\text{\fontsize{4}{4}\selectfont$T$}})}+\frac{2\eps }\beta\mathrm{e}^{\beta T}\|F^0 \|_\infty^2 + 2\eps T\mathrm{e}^{\beta T}C_X.
	\end{align*}
	This shows that $Y$ is bounded and $Z$ belongs to the space $\H^{2,\beta}_{\mathrm{BMO}}(\R^{m\times d},\F)$.
	In addition, we have for $\eps$ small enough
	\begin{align}
	\label{eq:bound.C.tgf}
		\|Z\|^2_{\H^{\smalltext{2}\smalltext{,}\smalltext{\beta}}_{\text{\fontsize{4}{4}\selectfont$\mathrm{BMO}$}}(\R^{\smalltext{m}\smalltext{\times} \smalltext{d}},\F)}\leq \frac{2\mathrm{e}^{\beta T}}{\ell_\eps}\bigg(\|G\|^2_{\L^\smalltext{\infty}(\R^\smalltext{m},\Fc_{\text{\fontsize{4}{4}\selectfont$T$}})}+\frac{\eps }\beta\|F^0 \|_\infty^2 + \eps TC_X\bigg) \eqqcolon   C^\eps_{F,G,T},
	\end{align} 
	where $
		\ell_\eps\coloneqq  1-2\eps-16\eps(1+\|B\|_\infty)^2.$ It remains to check that $(\cL_{\PP^{z,\xi}}(X_{\cdot\wedge t},Y_t,Z_t))_{t\in[0,T]}\in\mathfrak P^{m\times d}$. 
	But this is true since \Cref{eq:cpntrolxx} holds, $(Y, Z) \in \SS^2(\RR^m,\FF, \P^{z, \xi})\times \H^2(\RR^{m\times d},\FF, \P^{z, \xi})$, and using inequalities similar to \eqref{eq:controlwasser}.
	
	\medskip

	\emph{Step 4: the mapping $\Phi$ is a contraction for $G$ small enough.}

	\medskip

	Fix for $i\in\{1,2\}$, $(y^i, z^i,\xi^i) \in \mathbb{H}^2(\R^m,\F) \times \mathbb{H}^{2,\beta}_{\mathrm{BMO}}(\R^{m\times d},\F) \times \mathfrak P^{m\times \bmdim}$, set $\Phi(y^i, z^i, \xi^i) \eqqcolon   \big(Y^i, Z^i, (\cL_{\PP^{z^i, \xi^i}}(X^i_{\cdot\wedge t},Y^i_t,Z^i_t))_{t\in[0,T]} \big)$, and let $\delta Y\coloneqq  Y^1 - Y^2$, $\delta Z = Z^1 - Z^2$, $\delta y\coloneqq  y^1 - y^2$, $\delta z = z^1 - z^2$ and $\delta G\coloneqq  G\big(X, \cL_{\P^{z^{\text{\fontsize{4}{4}\selectfont$1$}}, \xi^{\text{\fontsize{4}{4}\selectfont$1$}}}}(X)\big)- G\big(X, \cL_{\P^{z^{\text{\fontsize{4}{4}\selectfont$2$}}, \xi^{\text{\fontsize{4}{4}\selectfont$2$}}}}(X)\big)$. It follows by Girsanov's theorem that these processes satisfy 
	\begin{align*}
		\delta Y_t& = \delta G+ \int_t^T\Big(F_u\big( X_{\cdot\wedge u}, Y^1_u, Z^1_u,\cL_{\P^{\smalltext{z}^{\tinytext{1}}\smalltext{,} \smalltext{\xi}^{\tinytext{1}}}}(X_{\cdot\wedge u},Y^1_u, Z^1_u) \big) - F_u\big( X_{\cdot\wedge u}, Y^2_u, Z^2_u,\cL_{\P^{\smalltext{z}^{\tinytext{2}}\smalltext{,} \smalltext{\xi}^{\tinytext{2}}}}(X_{\cdot\wedge u},Y^2_u, Z^2_u) \big) \Big)\d u\\
		&\quad
		+ \int_t^TZ^1_u \big(B_u(X_{\cdot\wedge u},z_u^1, \xi_u^1) - B_u(X_{\cdot\wedge u},z_u^2, \xi_u^2)\big)\d u  +\int_t^T B_u(X_{\cdot\wedge u},z_u^2, \xi_u^2)\big(Z^1_u - Z^2_u\big)\d u - \int_t^T\big(Z^2_u - Z^1_u\big)\d W_u.
	\end{align*}
	Let $(\beta,\eps)\in(0,\infty)^2$ be fixed and apply It\^o's formula to $\big(\mathrm{e}^{\beta t}\|\delta Y_t\|^2\big)_{t\in[0,T]}$ to get
	\begin{align*}
		\mathrm{e}^{\beta t} \|\delta Y_t\|^2  &= {\mathrm{e}^{\beta T}}\|\delta G\|^2 + \int_t^T2\mathrm{e}^{\beta u}\delta Y_u\cdot \Big(F_u\big(X_{\cdot\wedge u},  Y^1_u, Z^1_u, \cL_{\P^{\smalltext{z}^{\tinytext{1}}\smalltext{,} \smalltext{\xi}^{\tinytext{1}}}}(X_{\cdot\wedge u},Y^1_u,Z^1_u)\big) - F_u\big(X_{\cdot\wedge u},Y^2_u, Z^2_u, \cL_{\P^{\smalltext{z}^{\tinytext{2}}\smalltext{,} \smalltext{\xi}^{\tinytext{2}}}}(X_{\cdot\wedge u},Y^2_u,Z^2_u)\big) \Big)\mathrm{d}u\\
		&\quad +\int_t^T2\mathrm{e}^{\beta u}\delta Y_u\cdot\Big(Z^1_u \big(B_u(X_{\cdot\wedge u},z_u^1, \xi_u^1) - B_u(X_{\cdot\wedge u},z_u^2, \xi_u^2)\big) +  B_u(X_{\cdot\wedge u},z_u^2, \xi_u^2)(Z^1_u - Z^2_u) \Big)\d u\\
		&\quad -\int_t^T \beta \mathrm{e}^{\beta u}\|\delta Y_u\|^2\mathrm{d}u -  \int_t^T\mathrm{e}^{\beta u}\|\delta Z_u\|^2\mathrm{d}u- \int_t^T2\mathrm{e}^{\beta u}\delta Y_u\cdot \delta Z_u \mathrm{d}W_u.
	\end{align*}
	
	Since $\delta Y$ is bounded and $\delta Z$ is in $\H^{2}_{\mathrm{BMO}}(\R^m,\F)$, it follows that $\int_0^t2\mathrm{e}^{\beta u}\delta Y_u\cdot \delta Z_u \mathrm{d}W_u$ is a true $(\F,\P)$-martingale.
	Thus, taking conditional expectation on both sides, using Lipschitz-continuity of $F$ and $G$, and applying Young's inequality, we have for any $\varepsilon>0$ and any $\tau\in\Tc(\F)$
	\begin{align}
	\notag
		& \mathrm{e}^{\beta \tau}\|\delta Y_\tau\|^2  + (1-2\eps)\E\bigg[\int_\tau^T\mathrm{e}^{\beta u}\|\delta Z_u\|^2\mathrm{d}u \bigg| \mathcal{F}_\tau\bigg]  + \bigg(\beta -\frac{\|B\|^2_\infty}{\eps}-2\ell_F\bigg(1+\frac{2\ell_F}{\eps}\bigg) \bigg)\E\bigg[\int_\tau^T\mathrm{e}^{\beta u}\|\delta Y_u\|^2\mathrm{d}u\bigg| \mathcal{F}_\tau \bigg]\\\notag
		&\leq {\mathrm{e}^{\beta T}}\ell_{G(\mu)}^2\cW_2^2\big( \cL_{\P^{\smalltext{z}^{\tinytext{1}}\smalltext{,} \smalltext{\xi}^{\tinytext{1}}}}(X), \cL_{\P^{\smalltext{z}^{\tinytext{2}}\smalltext{,} \smalltext{\xi}^{\tinytext{2}}}}(X) \big) + \eps\int_\tau^T\mathrm{e}^{\beta u} \cW_2^2\big( \cL_{\P^{\smalltext{z}^{\tinytext{1}}\smalltext{,} \smalltext{\xi}^{\tinytext{1}}}}(X_{\cdot\wedge u},Y^1_u,Z^1_u), \cL_{\P^{\smalltext{z}^{\tinytext{2}}\smalltext{,} \smalltext{\xi}^{\tinytext{2}}}}(X_{\cdot\wedge u},Y^2_u,Z^2_u) \big) \mathrm{d}u\\
		&\quad + \E\bigg[\frac{1}{\eps}\int_\tau^T\mathrm{e}^{\beta u}\|\delta Y_u Z^1_u\|^2\d u + 2\eps\ell_B^2\int_\tau^T\mathrm{e}^{\beta u} \big(\cW_2^2(\xi^1_u, \xi^2_u) + \|\delta z_u\|^2\big)\d u\bigg|\cF_\tau\bigg].
		\label{eq:exits.first.estim}
	\end{align}
	Let us take a closer look at the terms $\cW_2\big(\cL_{\P^{\smalltext{z}^{\tinytext{1}}\smalltext{,} \smalltext{\xi}^{\tinytext{1}}}}(X_{\cdot\wedge u},Y^1_u,Z^1_u), \cL_{\P^{\smalltext{z}^{\tinytext{2}}\smalltext{,} \smalltext{\xi}^{\tinytext{2}}}}(X_{\cdot \wedge u},Y^2_u,Z^2_u) \big)$ and $\cW_2^2\big(\cL_{\P^{\smalltext{z}^{\tinytext{1}}\smalltext{,} \smalltext{\xi}^{\tinytext{1}}}}(X),\cL_{\P^{\smalltext{z}^{\tinytext{2}}\smalltext{,} \smalltext{\xi}^{\tinytext{2}}}}(X)\big)$ on the right-hand side of \eqref{eq:exits.first.estim}.
	By Kantorovich's duality (see \emph{e.g.} \citeauthor*{villani2009optimal} \cite[Theorem 5.10]{villani2009optimal}), it holds for Lebesgue--a.e. $t\in[0,T]$
	\begin{align*}
		&\cW_2^2\big( \cL_{\P^{\smalltext{z}^{\tinytext{1}}\smalltext{,} \smalltext{\xi}^{\tinytext{1}}}}(X_{\cdot\wedge t},Y^1_t,Z^1_t),\cL_{\P^{\smalltext{z}^{\tinytext{2}}\smalltext{,} \smalltext{\xi}^{\tinytext{2}}}}(X_{\cdot \wedge t},Y^2_t,Z^2_t) \big) \\
		& = \sup_{(f, g)\in\Mc}\bigg\{\int_{\Cc_\smalltext{m}\times\R^{\smalltext{m}}\times \R^{\smalltext{m}\smalltext{\times}\smalltext{ d}}} f(x)\d\cL_{\P^{\smalltext{z}^{\tinytext{1}}\smalltext{,} \smalltext{\xi}^{\tinytext{1}}}}(X_{\cdot\wedge t},Y^1_t,Z^1_t)(x) + \int_{\Cc_\smalltext{m}\times\R^{\smalltext{m}}\times\R^{\smalltext{m}\smalltext{\times} \smalltext{d}}} g(x)\d\cL_{\P^{\smalltext{z}^{\tinytext{2}}\smalltext{,} \smalltext{\xi}^{\tinytext{2}}}}(X_{\cdot \wedge t},Y^2_t,Z^2_t)(x)\bigg\}\\
		&= \sup_{(f,g)\in\Mc}\Big\{ \EE^{\PP^{\smalltext{z}^{\tinytext{1}}\smalltext{,} \smalltext{\xi}^{\tinytext{1}}}}\big[f(X_{\cdot\wedge t}, Y^1_t,Z^1_t)\big] + \EE^{\PP^{\smalltext{z}^{\tinytext{2}}\smalltext{,} \smalltext{\xi}^{\tinytext{2}}}}\big[g(X_{\cdot\wedge t},Y^2_t,Z^{2}_t)\big] \Big\}.
	\end{align*}
	where the supremum is over the set $\Mc$ of all bounded continuous functions $f$ and $g$ from $\Cc_m\times \R^{m}\times\R^{m\times d}$ to $\R$, such that $f(\x,y,z) + g(\x^\prime,y^\prime,z^\prime) \le \|\x-\x^\prime\|_\infty^2+\|y-y^\prime\|^2+\|z-z^\prime\|^2$, for all $(\x,y,z,\x^\prime,y^\prime,z^\prime) \in \big(\Cc_m\times\R^{m}\times \R^{m\times d}\big)^2$.
	
	\medskip
	{\color{black}
Observe that $\P^{z^\smalltext{1}, \xi^\smalltext{1}}\circ(X, Y^1, Z^1)^{-1} = \P\circ (\bar X^1, \bar Y^1, \bar Z^1)^{-1} $ and $\P^{z^\smalltext{2}, \xi^\smalltext{2}}\circ(X, Y^2, Z^2)^{-1} = \P\circ (\bar X^2, \bar Y^2, \bar Z^2)^{-1} $
 where for $i\in\{1,2\}$, the processes $(\bar X^i, \bar Y^i, \bar Z^i)$ solve the FBSDE
 \begin{equation*}
 	\begin{cases}
 		\d \bar X^i_t = B_t(\bar X^i_{\cdot\wedge t}, z^i_t, \xi^i_t)\d t + \sigma_t(\bar X^{i}_{\cdot\wedge t})\d W_t,\\
 		\d \bar Y^i_t = -F_t(\bar X^i_t, \bar Y^i_t, \bar Z^i_t, \cL_{\P^{\smalltext{z}^\tinytext{i}\smalltext{,} \smalltext{\xi}^\tinytext{i}}}(X_{\cdot\wedge t}, Y_t, Z_t))\d t + \bar Z^i_t\d W_t,\\
 		\bar Y^i_T = G\big(\bar X^i, \cL_{\P^{\smalltext{z}^\tinytext{i}\smalltext{,} \smalltext{\xi}^\tinytext{i}}}(X)),\; \bar X^i_0 = X_0,\; \P\text{\rm--a.s.}
 	\end{cases}
 \end{equation*}
 This is a standard---non McKean--Vlasov---FBSDE which admits a unique solution by assumption.
 Therefore, we have 
	\begin{align}
	\label{eq:estim.newspace.time}
		\cW_2^2\big( \cL_{\PP^{\smalltext{z}^{\tinytext{1}}\smalltext{,} \smalltext{\xi}^{\tinytext{1}}}}(X_{\cdot\wedge t},Y^1_t,Z^1_t)&, \cL_{\PP^{\smalltext{z}^{\tinytext{2}}\smalltext{,} \smalltext{\xi}^{\tinytext{2}}}}(X_{\cdot\wedge t},Y^2_t,Z^2_t) \big) 
		\le  \EE^{\PP}\big[\|\bar X_{\cdot\wedge t}^1 - \bar X_{\cdot\wedge t}^2\|_\infty^2+ \|\bar Y^1_t - \bar Y^2_t\|^2+ \|\bar Z^1_t - \bar Z^2_t\|^2\big] .
	\end{align}
	Similarly, we have
	\begin{align}
	\label{eq:estim.newspace}
		\cW_2^2\big( \cL_{\PP^{\smalltext{z}^{\tinytext{1}}\smalltext{,} \smalltext{\xi}^{\tinytext{1}}}}(X), \cL_{\PP^{\smalltext{z}^{\tinytext{2}}\smalltext{,} \smalltext{\xi}^{\tinytext{2}}}}(X) \big) 
		\le  \EE^{\PP}\big[\|\bar X^1 - \bar X^2\|_\infty^2\big] .
	\end{align}
To estimate the differences on the right-hand sides of \eqref{eq:estim.newspace.time} and \eqref{eq:estim.newspace}, we first apply It\^o's formula to $\|\delta \bar X_t\|^2 = \|\bar X^1_t - \bar X^2_t\|^2$ and using Young's inequality and Lipschitz-continuity of $B$ and $\sigma$, we obtain for any $\varepsilon>0$ and any $t\in[0,T]$
\begin{align*}
	 \|\delta \bar X_t\|^2 & \leq \bigg(\frac{\ell^2_B}{\varepsilon} + \ell_\sigma^2 - K_B\bigg)\int_0^t\|\delta\bar X_{\cdot\wedge s}\|^2\mathrm{d}s + \varepsilon\int_0^t\big( \|\delta z_s\|^2 + \cW_2^2(\xi^1_s, \xi^2_s)\big)\d s + 2\int_0^t\delta \bar X_{\cdot\wedge s}\cdot\big(\sigma_s(\bar X^1_{\cdot\wedge s}) - \sigma_s(\bar X^2_{\cdot\wedge s}))\d W_s.
\end{align*}
Next, using Burkholder--Davis--Gundy's inequality, if $\varepsilon<1$ it follows that
\begin{align*}
	\E^\P\big[\|\delta \bar X\|_\infty^2\big] & \le \frac{1}{1-\varepsilon}\E^\P\bigg[\int_0^T\Big(\frac{\ell_B^2 + 4\ell_\sigma^2C_{\rm BDG}}{\varepsilon} + \ell_\sigma^2 - K_B\Big)\|\delta \bar X_{\cdot\wedge t}\|^2_\infty \d t \bigg] + \frac{\varepsilon}{1-\varepsilon}\E^\P\bigg[\int_0^T\big(\|\delta z_t\|^2 + \cW_2^2(\xi^1_t, \xi^2_t)\big)\d t \bigg].
\end{align*}
Since 
\begin{equation*}
	K_B\ge \ell_B^2 + (4C_{\rm BDG}+1)\ell_\sigma^2= \inf_{0<\varepsilon<1}\bigg\{\frac{\ell_B^2 + 4\ell_\sigma^2C_{\rm BDG}}{\varepsilon} + \ell_\sigma^2 \bigg\},
\end{equation*}
it follows that
\begin{align}
\label{eq:estim.bar.X}
	\E^\P\big[\|\delta \bar X\|_\infty^2\big] & \le  \frac{\varepsilon}{1-\varepsilon}\E^\P\bigg[\int_0^T\big(\|\delta z_t\|^2 + \cW_2^2(\xi^1_t, \xi^2_t)\big)\d t \bigg].
\end{align}
For the backward equation, applying It\^o's formula to $\e^{\beta t}\|\delta \bar Y_t\|^2 \coloneqq \e^{\beta t}\|\bar Y^1_t - \bar Y^2_t\|^2$ and using Lipschitz-continuity of $G$ and $F$ and Young's inequality, for every $\eta>0$ we have
\begin{align*}
	\E^\P\bigg[\e^{\beta t}\|\delta \bar Y_t\|^2 + (1 - \eta)\int_t^T\e^{\beta s}\|\delta \bar Z_s\|^2\d s\bigg] &\le 2\e^{\beta T}\ell_G^2\E^\P\Big[\|\delta \bar X\|_\infty^2 + \cW_2^2(\cL_{\P^{\smalltext{z}^\tinytext{1}\smalltext{,}\smalltext{\xi}^\tinytext{1}}}(X), \cL_{\P^{z^2,\xi^2}}(X))\Big] + \E^\P\bigg[\int_t^T\Big(\frac{\ell_B^2}{\eta} + \eta - \beta\Big)\|\delta \bar Y_s\|^2\d s \bigg] \\
	& + \eta \E^\P\bigg[\int_t^T\Big(\|\delta \bar X_{\cdot\wedge s}\|_\infty^2 + \cW_2^2\big( \cL_{\P^{\smalltext{z}^\tinytext{1}\smalltext{,}\smalltext{\xi}^\tinytext{1}}}(X_{\cdot\wedge s}, Y^1_s, Z^1_s), \cL_{\P^{\smalltext{z}^\tinytext{2}\smalltext{,}\smalltext{\xi}^\tinytext{2}}}(X_{\cdot\wedge s}, Y^2_s, Z^2_s)\big)\Big)\d s \bigg],
\end{align*}
where we put $\|\delta \bar Z_t\| \coloneqq \|\bar Z^1_t - \bar Z^2_t\|$.
Using \eqref{eq:estim.newspace.time} and \eqref{eq:estim.newspace} we have
\begin{align*}
	\E^\P\bigg[\e^{\beta t}\|\delta \bar Y_t\|^2 &+ (1 - 2\eta)\int_t^T\e^{\beta s}\|\delta \bar Z_s\|^2\d s\bigg] \le 4\e^{\beta T}\ell_G^2\E^\P\big[\|\delta \bar X\|_\infty^2 \big] + \E^\P\bigg[\int_t^T\Big(\frac{\ell_B^2}{\eta} + 2\eta - \beta\Big)\|\delta \bar Y_s\|^2\d s \bigg]
	 + \eta \E^\P\bigg[\int_t^T2\|\delta \bar X_{\cdot\wedge s}\|_\infty^2 \d s \bigg].
\end{align*}
If $\eta<1/2$ and $\beta$ large enough, by \eqref{eq:estim.bar.X} we then have
\begin{align*}
	\E^\P\bigg[\e^{\beta t}\|\delta \bar Y_t\|^2 + (1 - 2\eta)\int_t^T\e^{\beta s}\|\delta \bar Z_s\|^2\d s\bigg] &\le \big( 4\e^{\beta T}\ell_G^2 + 2T\eta\big)\E^\P\big[\|\delta \bar X\|_\infty^2 \big]\\
	&\le \frac{\varepsilon}{1-\varepsilon}\big( 4\e^{\beta T}\ell_G^2 + 2T\eta\big)\E^\P\bigg[\int_0^T\big(\|\delta z_t\|^2 + \cW_2^2(\xi^1_t, \xi^2_t)\big)\d t \bigg],
\end{align*}
with
\begin{equation*}
	C_{\varepsilon,1} \coloneqq \frac{1}{(1-\varepsilon)(1-2\eta)}\big( 4\e^{\beta T}\ell_G^2 + 2T\eta\big).
\end{equation*}
Thus, it follows from \eqref{eq:estim.newspace.time} and \eqref{eq:estim.newspace} that there is a constant $C>0$ such that
\begin{align}
\label{eq:estimwasser}
	\cW_2^2\big( \cL_{\PP^{\smalltext{z}^{\tinytext{1}}\smalltext{,} \smalltext{\xi}^{\tinytext{1}}}}(X), \cL_{\PP^{\smalltext{z}^{\tinytext{2}}\smalltext{,} \smalltext{\xi}^{\tinytext{2}}}}(X) \big) +\int_0^T\cW_2^2\big( \cL_{\PP^{\smalltext{z}^{\tinytext{1}}\smalltext{,} \smalltext{\xi}^{\tinytext{1}}}}(X_{\cdot\wedge t},Y^1_t,Z^1_t)&, \cL_{\PP^{\smalltext{z}^{\tinytext{2}}\smalltext{,} \smalltext{\xi}^{\tinytext{2}}}}(X_{\cdot\wedge t},Y^2_t,Z^2_t) \big) \d t \le \varepsilon C_{\varepsilon,1}\E^\P\bigg[\int_0^T\big(\|\delta z_t\|^2 + \cW_2^2(\xi^1_t, \xi^2_t)\big)\d t \bigg].
\end{align}

Thus, coming back to \eqref{eq:exits.first.estim}, we continue the estimation as
}
	\begin{align*}
	\notag
		& \mathrm{e}^{\beta \tau}\|\delta Y_\tau\|^2  + (1-2\eps)\E^\P\bigg[\int_\tau^T\mathrm{e}^{\beta u}\|\delta Z_u\|^2\mathrm{d}u \bigg| \mathcal{F}_\tau\bigg]  + \bigg(\beta -\frac{\|B\|_\infty^2}\eps-2\ell_F\bigg(1+\frac{2\ell_F}{\eps}\bigg) \bigg)\E^\P\bigg[\int_\tau^T\mathrm{e}^{\beta u}\|\delta Y_u\|^2\mathrm{d}u\bigg| \mathcal{F}_\tau \bigg]\\
		\notag	&\le \varepsilon C_{\varepsilon,1}\big( {\mathrm{e}^{\beta T}}\ell_{G(\mu)}^2 + 1\big)\E^\P\bigg[\int_0^T\mathrm{e}^{\beta u}\big(\cW_2^2(\xi^1_u, \xi^2_u)+  \|\delta z_u\|^2 \big)\mathrm{d}u\bigg] + \frac{1}{\eps}\E^\P\bigg[\int_\tau^T\mathrm{e}^{\beta u}\|\delta Y_u Z^1_u\|^2\d u \bigg|\cF_\tau\bigg]\\ 
		\notag	&\le \varepsilon C_{\eps,2}\bigg(\|\delta z\|^2_{\H^{\smalltext{2}\smalltext{,}\smalltext{\beta}}_{\text{\fontsize{4}{4}\selectfont$\mathrm{BMO}$}}(\R^{\smalltext{m}\smalltext{\times} \smalltext{d}},\F)} + \int_0^T\mathrm{e}^{\beta u} \cW_2^2(\xi^1_u, \xi^2_u)\d u \bigg) + \frac1\eps \big\|\mathrm{e}^{\frac{\beta}{2} \cdot}\delta Y_{\cdot}\big\|^2_{\S^\smalltext{\infty}(\R^\smalltext{m},\F)}C^\eta_{T,G,F},
	\end{align*}
	with $C_{\eps,2} \coloneqq C_{\varepsilon,1}\big( {\mathrm{e}^{\beta T}}\ell_{G(\mu)}^2 + 1\big)$,	and where we used that 
	\begin{equation*}
		\E^\P\bigg[\int_\tau^T\|Z_u^1\|^2\d u\bigg|\Fc_\tau\bigg]\le \| Z\|^2_{\H^{\smalltext{2}}_{\text{\fontsize{4}{4}\selectfont$\mathrm{BMO}$}}(\R^{\smalltext{m}\smalltext{\times} \smalltext{d}},\F)}\le C^\eta_{F,G,T},
	\end{equation*}
	with the constant $C^\eta_{F,G,T}\coloneqq  \frac{2\mathrm{e}^{\beta T}}{\ell_\eta}\big(\|G\|^2_{\L^\smalltext{\infty}(\R^\smalltext{m},\Fc_{\text{\fontsize{4}{4}\selectfont$T$}})}+\frac{\eta }\beta\|F^0 \|_\infty^2 + \eta T C_X\big)$ defined in {\rm \Cref{eq:bound.C.tgf}} with $C_X$ given by {\rm \Cref{eq:cpntrolxx}}.
	
	\medskip
	If the bound $\|G\|^2_{\L^\smalltext{\infty}(\R^\smalltext{m},\Fc_{\text{\fontsize{4}{4}\selectfont$T$}})}$ is small enough, we can find $\eta$ such that $C^\eta_{F, G, T}<\frac\eps2$.
	Choosing $\beta>\frac{\|B\|_\infty^2}\eps+2\ell_F\big(1 + \frac{2\ell_F}{\eps}\big)$ and since $\tau\in\Tc(\F)$ was arbitrary, and we have $\|\cdot\|_{\H^\smalltext{2}(\R^{\smalltext{m}\smalltext{\times} \smalltext{d}},\F)} \le \|\cdot\|_{\H^\smalltext{2}_{\text{\fontsize{4}{4}\selectfont$\mathrm{BMO}$}}(\R^{\smalltext{m}\smalltext{\times} \smalltext{d}},\F)}$, this implies in particular $
		\|\delta Z\|^2_{\H^{\smalltext{2}\smalltext{,}\smalltext{\beta}}_{\text{\fontsize{4}{4}\selectfont$\mathrm{BMO}$}}(\R^{\smalltext{m}\smalltext{\times} \smalltext{d}},\F)}  	
		\le  \frac{\varepsilon C_{\eps,2}}{1-2\eps}\big(\|\delta z\|^2_{\H^{\smalltext{2}\smalltext{,}\smalltext{\beta}}_{\text{\fontsize{4}{4}\selectfont$\mathrm{BMO}$}}(\R^{\smalltext{m}\smalltext{\times} \smalltext{d}},\F)} + \cW^2_{2,\beta,[0,T]}(\xi, \xi^\prime)\big),
$
as well as
$
		\|\mathrm{e}^{\frac{\beta}{2} \cdot}\delta Y_\cdot\|^2_{\S^\smalltext{\infty}(\R^\smalltext{m},\F)} \leq 
		2\varepsilon C_{\eps,2}\big(\|\delta z\|^2_{\H^{\smalltext{2}\smalltext{,}\smalltext{\beta}}_{\text{\fontsize{4}{4}\selectfont$\mathrm{BMO}$}}(\R^{\smalltext{m}\smalltext{\times} \smalltext{d}},\F)} + \cW^2_{2,\beta,[0,T]}(\xi, \xi^\prime)\big).
$

\medskip
	It remains to control $\int_0^T\mathrm{e}^{\beta u} \cW_2^2\big( \cL_{\PP^{\smalltext{z}^{\tinytext{1}}\smalltext{,} \smalltext{\xi}^{\tinytext{1}}}}(X_{\cdot\wedge u}, Y^1_u,Z^1_u), \cL_{\PP^{\smalltext{z}^{\tinytext{2}}\smalltext{,} \smalltext{\xi}^{\tinytext{2}}}}(X_{\cdot\wedge u},Y^2_u,Z^2_u) \big)\d u$, which is already done in \Cref{eq:estimwasser}. 
	Thus, choosing $\varepsilon>0$ small enough, it follows that 
	the mapping $\Phi$ is a contraction, and thus admits a unique fixed-point in $ \mathbb{S}^\infty(\R^m,\F) \times \H^{2{\color{black},\beta}}_{\mathrm{BMO}}(\R^{m\times d},\F) \times \mathfrak P^{m\times \bmdim}$.

\medskip
	\emph{Step 5: case of smooth terminal conditions.}
	In this last step we further assume that $G$ depends on the terminal value of $X$ and its law.
	Then, applying It\^o's formula to $G(X_t, \cL_{\overline \P}(X_t))$, see \emph{e.g.} \citeauthor*{carmona2013probabilistic} \cite[Theorem 5.104]{carmona2013probabilistic}, we have
\begin{align*}
	G(X_T, \cL_{\overline \P}(X_T)) 
	&= G(X_t, \cL_{\overline \P}(X_t)) + \int_t^T\partial_xG(X_u,\cL_{\overline \P}(X_u))\sigma_u(X_{ u})\diff \overline W_u + \frac12\int_t^T\mathrm{Tr}\big[\partial_{xx}G(X_u,\cL_{\overline \P}(X_u))\sigma_u(X_{ u})\sigma_u(X_{ u})^\top\big]\diff u\\
	&\quad + \frac12\int_t^T\int_{\mathbb{R}^\smalltext{\xdim}}\mathrm{Tr}\big[ \partial_a\partial_\mu G(X_u, \cL_{\overline \P}(X_u))(a)\sigma_u(a)\sigma_u(a)^\top \big] \cL_{\overline \P}(X_t)(\diff a)\diff t \\
	&\quad- \int_t^T B_u(X_{\cdot\wedge u}, Q_u, \cL_{\overline \PP}(X_u,P_u, Q_u))\cdot \partial_xG(X_u,\cL_{\overline \P}(X_u))\sigma_u(X_{ u})\diff u.
\end{align*}
Plugging this back into the equation to be solved, \emph{i.e.} \rm\Cref{eq:Gen BSDE}, we have
\begin{align*}
	\displaystyle P_t &- G(X_t,\cL_{\overline \P}(X_t))  = \int_t^T\bigg(F_u\big(X_{\cdot\wedge u}, P_u,Q_u, \cL_{\overline\PP}(X_u,P_u,Q_u)\big) + \frac12\mathrm{Tr}\big[\partial_{xx}G(X_u,\cL_{\overline \P}(X_u))\sigma_u(X_{ u})\sigma_u(X_{ u})^\top\big]\bigg)\diff u\\
	& + \frac12\int_t^T\bigg(\int_{\mathbb{R}^\smalltext{\xdim}}\mathrm{Tr}\big[ \partial_a\partial_\mu G(X_u, \cL_{\overline \P}(X_u))(a)\sigma_u(a)\sigma_u(a)^\top \big] \cL_{\overline \P}(X_t)(\diff a) -  B_u(X_{\cdot\wedge u}, Q_u, \cL_{\overline \PP}(P_u, Q_u))\cdot \partial_xG(X_u,\cL_{\overline \P}(X_u))\sigma_u(X_{ u})\bigg)\diff u\\
	& - \int_t^T\big(Q_u-\partial_xG(X_u,\cL_{\overline \P}(X_u))\sigma_u(X_{ u})\big)\diff \overline W_u ,\; t\in[0,T],\; \overline \PP\text{\rm--a.s.}
\end{align*}
Let us put $\widetilde Q_t\coloneqq  Q_t-\partial_xG(X_t,\cL_{\overline \P}(X_t))\sigma_t(X_{ t})$ and $\widetilde P_t\coloneqq  P_t - G(X_t,\cL_{\overline \P}(X_t))$.
Then, $(P, Q) \in \cap_{p\ge0} \mathbb{S}^p(\R^m,\F) \times \H^{2}_{\mathrm{BMO}}(\R^m,\F)$ solves \rm\Cref{eq:Gen BSDE} if and only if $(\widetilde P, \widetilde Q) \in \mathbb{S}^\infty(\R^m,\F) \times \H^{2}_{\mathrm{BMO}}(\R^m,\F)$ solves \rm\Cref{eq:Gen BSDE} with $F$ replaced by $\widetilde F$ defined in \rm\Cref{eq:def.tilde.F} and $G=0$.
Note that the integrability property of $P$ follows from the fact that $\widetilde P\in \mathbb{S}^\infty(\R^m,\F)$, $G$ has linear growth and $X$ has every moments.
By \emph{Step 4}, $(\widetilde P, \widetilde Q)$ is the unique solution in $\mathbb{S}^\infty(\R^m,\F) \times \H^{2}_{\mathrm{BMO}}(\R^m,\F)$ of the BSDE with terminal condition zero and generator $\widetilde F$, hence $(P,Q)$ is the unique solution in $\cap_{p\ge0} \mathbb{S}^p(\R^m,\F) \times \H^{2}_{\mathrm{BMO}}(\R^m,\F)$ of the BSDE with terminal condition $G$ and generator $F$.
\medskip

\emph{Step 6: uniqueness in $\mathbb{H}^2(\R^m,\F) \times \H^{2}(\R^m,\F)$.}

\medskip

If $(P,Q) \in \mathbb{H}^2(\R^m,\F) \times \H^{2}(\R^m,\F)$, then, we have by boundedness of $B$ that $(P,Q) \in \mathbb{S}^\infty(\R^m,\F) \times \H^{2}_{\mathrm{BMO}}(\R^m,\F)$, see \Cref{lem:a.priori.estimate.general}.
Thus if $\|G\|_{\mathbb{L}^\smalltext{2}(\mathbb{R}^\smalltext{m},\cF_T)}$ and $\ell_G$ are small enough, then uniqueness in $\mathbb{H}^2(\R^m,\F) \times \H^{2}(\R^m,\F)$ follows from \emph{Step 4}.

\medskip
Let us now assume that $G$ satisfies \Cref{assum:gen.MkV}.$(v)$.
Given any two solutions $(P^1, Q^1)$, and  $(P^2, Q^2)$ in $\mathbb{H}^2(\R^m,\F) \times \H^{2}(\R^m,\F)$, defining \[
\widetilde Q^i_t\coloneqq  Q^i_t-\partial_xG(X_t,\cL_{\overline \P}(X_t))\sigma_t(X_{ t}),\; \widetilde P^i_t\coloneqq  P^i_t - G(X_t,\cL_{\overline \P}(X_t)),\;  i\in\{1,2\},\; t\in[0,T],
\] it follows that $(\widetilde P^1, \widetilde Q^1)$ and $(\widetilde P^2, \widetilde Q^2)$ are in $\mathbb{H}^2(\R^m,\F) \times \H^{2}(\R^m,\F)$ and solve the BSDE with terminal condition zero and generator $\widetilde F$. 
By the above argument, this equation admits a unique solution in $\mathbb{H}^2(\R^m,\F) \times \H^{2}(\R^m,\F)$, showing that $\widetilde P^1 = \widetilde P^2$ and $\widetilde Q^1 = \widetilde Q^2$ $\diff t\otimes \P$--a.e., showing uniqueness.
This concludes the proof.
\end{proof}

{\footnotesize
\bibliography{bibliographyDylan}}

\end{document}